\numberwithin{equation}{subsection}
\newtheorem{theorem}{Theorem}
\numberwithin{theorem}{section}
\newtheorem{lemma}[theorem]{Lemma}
\newtheorem{proposition}[theorem]{Proposition}
\newtheorem{corollary}[theorem]{Corollary}
\theoremstyle{definition}
\newtheorem{defn}[theorem]{Definition}
\theoremstyle{remark}
\newtheorem{rem}[theorem]{Remark}
\newtheorem{para}[theorem]{}
\newtheorem{example}[theorem]{Example}
\newcommand{\bA}{\mathbb{A}}
\newcommand{\bC}{\mathbb{C}}
\newcommand{\bF}{\mathbb{F}}
\newcommand{\bG}{\mathbb{G}}
\newcommand{\bI}{\mathbb{I}}
\newcommand{\bP}{\mathbb{P}}
\newcommand{\bQ}{\mathbb{Q}}
\newcommand{\bR}{\mathbb{R}}
\newcommand{\bZ}{\mathbb{Z}}
\newcommand{\cA}{\mathcal{A}}
\newcommand{\cE}{\mathcal{E}}
\newcommand{\cF}{\mathcal{F}}
\newcommand{\cH}{\mathcal{H}}
\newcommand{\cO}{\mathcal{O}}
\newcommand{\cX}{\mathcal{X}}
\newcommand{\cY}{\mathcal{Y}}
\newcommand{\fm}{\mathfrak{m}}
\newcommand{\fp}{\mathfrak{p}}
\newcommand{\fq}{\mathfrak{q}}
\newcommand{\fD}{\mathfrak{D}}
\newcommand{\et}{{\text{\'et}}}
\newcommand{\dR}{_{\mathrm{dR}}}
\newcommand{\cris}{_{\mathrm{cris}}}
\newcommand*{\im}{\text{im}}
\newcommand{\Isom}{\underline{\mathrm{Isom}}}
\newcommand{\ul}[1]{\underline{\smash{#1}}}
\DeclareMathOperator{\GL}{GL}
\DeclareMathOperator{\SL}{SL}
\DeclareMathOperator{\PSL}{PSL}
\DeclareMathOperator{\GSp}{GSp}
\DeclareMathOperator{\SO}{SO}
\DeclareMathOperator{\GSpin}{GSpin}
\DeclareMathOperator{\Tr}{Tr}
\DeclareMathOperator{\Nm}{Nm}
\DeclareMathOperator{\Trd}{Trd}
\DeclareMathOperator{\trd}{trd}
\DeclareMathOperator{\nrd}{nrd}
\DeclareMathOperator{\discrd}{discrd}
\DeclareMathOperator{\disc}{disc}
\DeclareMathOperator{\Gal}{Gal}
\DeclareMathOperator{\End}{End}
\DeclareMathOperator{\Hom}{Hom}
\DeclareMathOperator{\Aut}{Aut}
\DeclareMathOperator{\Res}{Res}
\DeclareMathOperator{\Spec}{Spec}
\DeclareMathOperator{\Spf}{Spf}
\DeclareMathOperator{\Span}{Span}
\DeclareMathOperator{\rank}{rank}
\DeclareMathOperator{\der}{der}
\DeclareMathOperator{\Fil}{Fil}
\DeclareMathOperator{\Cl}{Cl}
\DeclareMathOperator{\Pic}{Pic}
\DeclareMathOperator{\Cor}{Cor}
\DeclareMathOperator{\sign}{sign}
\DeclareMathOperator{\inv}{inv}
\DeclareMathOperator{\Sh}{Sh}
\DeclareMathOperator{\Iso}{Iso}
\DeclareMathOperator{\Art}{Art}
\DeclareMathOperator{\sgn}{sign}
\begin{document}

\title[]{Infinitely many supersingular primes for some Mumford's abelian fourfolds}

\author {Fangu Chen\\
\texttt{\lowercase{fangu@berkeley.edu}}}

\begin{abstract}
    Elkies (\cites{MR903384,MR1030140}) proved the infinitude of supersingular primes for elliptic curves over real number fields. We generalize Elkies' result to some abelian fourfolds in Mumford's families (\cite{MR248146}*{\S4}), and more generally,  to certain families of Kuga-Satake abelian varieties. The proof relies on the study of local deformation spaces at closed points of the integral model of a Hodge-type Shimura variety, based on the work of Madapusi \cite{MR3484114}, and on the analysis of real points of a Shimura curve, based on the work of Shimura \cite{MR572971}. 
\end{abstract}

\maketitle

\section{Introduction}

\subsection{Background}

Serre conjectured that an abelian variety defined over a number field $K$ has ordinary reduction at a density one set of primes up to a finite extension of $K$, and proved the conjecture in the case of elliptic curves (\cite{MR644559}). Katz and Ogus proved Serre’s conjecture in the case of abelian surfaces (\cite{MR654325}*{pp. 370-372}). Recently, Hui \cite{hui2025distributionsupersingularprimesabelian} proved that a non-CM abelian variety over a number field has supersingular reduction at a density zero of primes.

A natural question is whether the density zero set of supersingular primes is finite or infinite. 
In \cites{MR903384,MR1030140}, Elkies proved that an elliptic curve defined over a number field with at least one real embedding has infinitely many primes of supersingular reduction. 
This result has since been generalized to various families of abelian varieties (\cite{MR2704678}, \cite{MR2705896}, \cite{MR2414789}, \cite{LMPT}). In all previously known cases, the moduli variety is a Shimura curve of PEL type.
In this paper, we extend Elkies’ theorem to certain families of abelian varieties parametrized by Shimura curves of Hodge type, including families of abelian fourfolds of Mumford’s type in \cite{MR248146}, which are the first examples of Shimura varieties of Hodge type but not of PEL type. 

\subsection{The main result}

The main result of this paper concerns certain families of Kuga-Satake abelian varieties constructed from K3-type Hodge structures with real multiplication by $F$ on the trace zero part of a quaternion algebra over $F$. 
\begin{theorem}\label{thm:main}
    Let $F$ be a totally real number field with narrow class number $1$ and $B$ be a quaternion algebra over $F$ unramified at all finite places and exactly one of the real places of $F$. Let $\cO$ be a maximal order of $B$ and $\cO^1$ be the group of units of $\cO$ of reduced norm $1$. Let $\cH$ be the upper half plane, and suppose the canonical model of the Shimura curve $\cO^1\backslash \cH$ is isomorphic to $\bP^1_F$. Assume \begin{enumerate}
        \item $n := [F:\bQ]$ is odd; \footnote{This follows immediately from our assumption on $B$. In the future, we may consider cases where $B$ is ramified at some finite places.}
        \item $2$ is inert in $F$;
        \item $F(\sqrt{-\epsilon_i})$ has class number $1$, where $\epsilon_i$ is a unit of $F$ that is negative at exactly one of the real place $\rho_i:F\xhookrightarrow{}\bR$;
        \item $F(\sqrt{-\epsilon_1}, \dotsc, \sqrt{-\epsilon_n})$ has class number $1$.
    \end{enumerate}
    Let $A$ be an abelian variety parametrized by the Hodge type Shimura datum $(G,X)$ (see \Cref{sec:ShimuraDatum}). Suppose the field of moduli of $A$ is in an odd-degree extension of the field of moduli of the elliptic point of order $2$,\footnote{The condition that $B$ is unramified at all finite places implies the existence of an elliptic point of order $2$. When $B$ is ramified at some finite place, analogous points may still exist based on the geometry of the corresponding Shimura curve.} then $A$ has supersingular reduction at infinitely many primes. 
\end{theorem}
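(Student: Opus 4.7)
The plan is to adapt Elkies' strategy to the Hodge-type Shimura curve $S=\overline{\cO^1\backslash\cH}\cong\bP^1_F$, combining Shimura's analysis of real points of quaternionic Shimura curves with Madapusi's construction of smooth integral canonical models. The overall goal is, for each prime $\fp$ of $F$ in an infinite set, to connect the moduli point $[A]$ to a CM point $x_0$ on $S$ that has supersingular reduction at $\fp$ via a real isogeny / Hecke correspondence, and then to transfer supersingular reduction from $x_0$ to $A$ by a $\fp$-adic deformation argument.

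First, I would use Shimura's description of the real locus $S(\bR_{\rho_i})$ for each real embedding $\rho_i\colon F\hookrightarrow\bR$. Since $S\cong\bP^1_F$, each real locus is topologically a circle, on which the elliptic points of order $2$ arise as fixed points of involutions induced by order-four units in $\cO$. Such an elliptic point $x_0$ corresponds to a Kuga-Satake abelian variety with complex multiplication by the CM extension $F(\sqrt{-\epsilon_i})/F$; by assumption (3) this field has class number one, so $x_0$ is defined over $F$. By classical CM theory, $x_0$ then has supersingular reduction at every prime $\fp$ of $F$ inert in $F(\sqrt{-\epsilon_i})$, and infinitely many such primes exist by Chebotarev. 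Assumptions (2) and (4) are used to handle $\fp=2$ and to ensure compatibility across the different embeddings $\rho_i$ when the CM type varies with $i$.

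Next, in analogy with Elkies' use of real roots of the modular polynomial $\Phi_\ell(j(E),Y)$, the hypothesis that the field of moduli of $A$ lies in an odd-degree extension of that of $x_0$ is used to produce a real isogeny from $[A]$ to $x_0$: a Hecke correspondence relating the two becomes, after restriction via $\rho_i$ to $\bR$, a polynomial of odd degree, which therefore admits a real root, identifying a point on the same real component of $S(\bR_{\rho_i})$ as $[A]$. Applying Madapusi's integral canonical model, the supersingular locus in $S_{\overline{\bF_\fp}}$ is a finite set of closed points; using the local deformation theory at $\bar x_0$ together with the fact that supersingularity is preserved by isogeny, one concludes that $[A]$ specializes to a supersingular point modulo $\fp$, and hence that $A$ has supersingular reduction at $\fp$.

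The main obstacle will be this last $\fp$-adic deformation step in the Hodge-type setting. Unlike the PEL case treated in the previous generalizations, the extra structure on $A$ consists of Hodge cycles that are not recoverable from endomorphisms alone, so one cannot appeal directly to moduli-theoretic deformations of abelian varieties with prescribed endomorphism structure. Instead one must use Madapusi's explicit description of the formal neighborhood of closed points of the integral canonical model, and verify that both the Hecke correspondence produced in the real-analytic step and the Hodge tensors cut out by the Shimura datum $(G,X)$ extend through the integral model in a compatible way. Establishing that the real root produced in Step 2 admits a genuine $\fp$-adic lift landing in the supersingular locus, rather than merely an abstract isogeny over the generic fiber, is the technical core of the argument.
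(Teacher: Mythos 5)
There is a genuine gap, and it is located exactly where you put the weight of the argument: the step that ``transfers supersingular reduction from $x_0$ to $A$ by a $\fp$-adic deformation argument.'' Your plan fixes a single CM point $x_0$ (the elliptic point of order $2$, CM by $F(\sqrt{-\epsilon_i})$) and hopes to show that $[A]$ specializes to $\bar x_0$ modulo infinitely many primes $\fp$ inert in $F(\sqrt{-\epsilon_i})$. This cannot work: on the arithmetic surface $\bP^1_{\cO_F}$ the sections determined by $[A]$ and by $x_0$ are two distinct horizontal divisors, so their intersection is supported at finitely many primes; no deformation-theoretic input at $\bar x_0$ can create congruences at infinitely many $\fp$. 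Moreover, the real-analytic step you propose does not supply the needed congruence at all: showing that $[A]$ and $x_0$ lie on the same real component of $S(\bR_{\rho_i})$ (or that some odd-degree ``Hecke polynomial'' has a real root) is an archimedean statement and gives no information about reductions modulo a finite prime. In Elkies' method, and in the paper, the archimedean analysis is a sign computation, not the production of an isogeny, and the CM discriminant must \emph{vary}: one works with CM cycles for $F(\sqrt{-\lambda})$ with $\lambda$ ranging over suitable totally positive primes, forms the polynomials $P_\lambda,P_{4\lambda}$ whose roots are the $j$-coordinates of the CM points, proves (via the $\GSpin$ deformation theory and a pairing of CM lifts at closed points, including the elliptic points) that a suitable norm $N$ built from $P_\lambda(j_0)P_{4\lambda}(j_0)P_\lambda(j_\infty)P_{4\lambda}(j_\infty)$ is a square modulo $\lambda$, proves by Shimura's real-point analysis and Hecke equidistribution that $N$ is totally negative, and then invokes quadratic reciprocity to force the existence of an odd prime $\fp$ with $v_\fp(N)$ odd and $\left(\frac{-\lambda}{\fp}\right)=-1$. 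That $\fp$ is simultaneously a prime where $j_0$ meets the CM cycle and a prime where the CM abelian varieties are supersingular (Shimura--Taniyama); letting $\lambda$ grow gives infinitely many such $\fp$. Your proposal contains none of this detection mechanism — no congruence condition on $\lambda$, no pairing of roots modulo $\lambda$, and no use of quadratic reciprocity — so the non-archimedean half of the argument is missing, not merely technically incomplete.

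A secondary misreading: the elliptic point of order $2$ and the odd-degree hypothesis on the field of moduli do not serve the purpose you assign them. In the paper the elliptic point $j_\infty$ plays the role of the cusp $j=\infty$ in the classical case: it is an auxiliary evaluation point used to cancel the leading coefficient of $b_\lambda P_\lambda\, b_{4\lambda}P_{4\lambda}$ modulo $\lambda$ (its CM lifts must then be paired carefully, which is why Proposition \ref{lem:pairingwithauto} tracks intersection multiplicities there), and the odd-degree condition on $F(j_0,j_\infty)/F$ is what guarantees that taking $\Nm_{F'/F}$ preserves the negativity of the sign produced by the equidistribution argument. It is not the source of the supersingular primes.
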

\begin{rem}
    The Shimura curve has good reduction everywhere. Further work is required to remove the assumption that the quaternion algebra is unramified at all finite places and $[F:\bQ]$ is odd. The remaining assumptions are technical conditions and will be explained in \cref{sec:strategy}. 
\end{rem}

When $[F:\bQ] = 3$, Galluzzi \cite{MR1909819} showed that the abelian variety obtained via Kuga-Satake construction is isogenous to powers of abelian fourfold of Mumford's type. 

\begin{theorem}\label{thm:Mumfordfourfold}
    Let $F$ be a totally real cubic number field with $\disc(F) \in \{49, 81, 169, 321, 361, \allowbreak 473, 785, 993\}$ and $B$ be a quaternion algebra over $F$ unramified at all finite places and exactly one of the real places of $F$. Let $X$ be an abelian fourfold in the one-dimensional family defined in \cite{MR248146}*{\S4} by $B$. Suppose $X$ has field of moduli $F$, then $X$ has supersingular reduction at infinitely many primes. 
\end{theorem}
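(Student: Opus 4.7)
The plan is to deduce \Cref{thm:Mumfordfourfold} from \Cref{thm:main} using Galluzzi's isogeny decomposition of Mumford fourfolds in the cubic case.

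First, for each of the eight discriminants $d\in\{49,81,169,321,361,473,785,993\}$, I would verify the arithmetic hypotheses of \Cref{thm:main} for the associated totally real cubic field $F$: narrow class number one, inertness of $2$ in $\cO_F$, and the class number conditions on $F(\sqrt{-\epsilon_i})$ and on the compositum $F(\sqrt{-\epsilon_1},\sqrt{-\epsilon_2},\sqrt{-\epsilon_3})$. These are finite computations (via PARI/GP, Magma, or Sage), and the listed discriminants are presumably exactly the cubic fields for which all of the conditions can be simultaneously checked. One also needs to verify that the canonical model of the Shimura curve attached to $B$ is $\bP^1_F$; this follows from a genus-zero calculation via the Eichler mass formula, combined with the existence of an $F$-rational CM point on the curve.

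Next, by Galluzzi's theorem in \cite{MR1909819}, the Mumford fourfold $X$ attached to $B$ is isogenous, after a finite base change, to a power of the Kuga-Satake abelian variety $A$ built from the trace zero part of $B$. This $A$ is precisely the type of abelian variety parametrized by the Hodge-type Shimura datum in \Cref{thm:main}. Since isogenous abelian varieties share their reduction behavior at almost all primes, $X$ has supersingular reduction at a prime $\fp$ if and only if $A$ does; it therefore suffices to produce infinitely many supersingular primes for $A$.

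The step I expect to be the main obstacle is the translation of the field-of-moduli hypothesis. The isogeny $X\sim A^m$ is compatible with Galois descent, but recovering $A$ from $X$ involves a choice that can introduce a controlled field extension coming from the automorphism group of the Kuga-Satake construction and from the factorization in the Galluzzi decomposition. I would track this via the Kuga-Satake period map and the Galois action on the parameter space, and verify that when $X$ has field of moduli $F$, the resulting field of moduli of $A$ lies in an odd-degree extension of the field of moduli of the elliptic point of order $2$ on $\bP^1_F$. Once this compatibility is established, \Cref{thm:main} applies to $A$, and the conclusion transports back to $X$ through the isogeny.
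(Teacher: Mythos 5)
Your overall route is the paper's: verify the hypotheses of \Cref{thm:main} for the eight cubic fields by finite computation, and transfer supersingular reduction between the Mumford fourfold and the Kuga--Satake abelian variety through Galluzzi's isogeny (the paper does exactly this in \S7, where the discriminant list comes from the catalogue of genus-zero curves in \cite{MR2476577} and the remaining class-number conditions are checked in Magma). However, one step of your sketch is genuinely under-justified: you get the identification of the canonical model with $\bP^1_F$ from ``the existence of an $F$-rational CM point on the curve,'' and that is not something you can assert. CM points are defined over ring class fields of the CM extension, and even the unique real point of a CM cycle (\Cref{lem:uniquereal}) need not be $F$-rational; producing an $F$-rational point on a genus-zero curve is precisely the nontrivial content here, since a genus-zero curve without a rational point is a nonsplit conic. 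The paper circumvents this with the Riemann--Roch lemma at the start of \S7: a genus-zero curve carrying two effective $F$-divisors of coprime degrees is $\bP^1_F$, and the CM cycles attached to two CM extensions $L_1,L_2$ with $\gcd(h(L_1),h(L_2))=1$ supply such divisors, because each cycle is Galois-stable and has degree equal to the class number of the corresponding order (\Cref{lem:optemb}, \Cref{lem:realCMpt}). Your mechanism needs to be replaced by this (or an equivalent) degree-one-divisor argument.

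Conversely, the step you single out as the main obstacle --- transporting the field-of-moduli hypothesis through the Kuga--Satake/Galluzzi comparison --- is essentially immediate and requires none of the descent bookkeeping you propose. The hypothesis of \Cref{thm:main} enters the proof only through the coordinate $j_0$ of the moduli point on $\bP^1_F$: what is needed is that $F'=F(j_\infty,j_0)$ has odd degree over $F$, where $j_\infty$ is the order-two elliptic point and $[F(j_\infty):F]$ is odd because $h(F(\sqrt{-1}))$ is odd. If $X$ has field of moduli $F$, the corresponding point of the coarse moduli curve is $F$-rational, so $j_0\in F$ and the condition holds trivially; no extension introduced by the Kuga--Satake functor or by the factorization in Galluzzi's decomposition can affect it, since supersingularity is a geometric, isogeny-invariant property at primes of good reduction. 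So the item you expected to be hardest dissolves, while the item you treated as routine is where the paper's actual argument is needed.
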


More examples satisfying all the assumptions of \Cref{thm:main} can be found in \Cref{examples}. 

\subsection{Related works and heuristics}
Elkies' result has been generalized to some families of abelian surfaces with quaternionic multiplication (see  \cite{MR2414789},\cite{MR2704678} for the case of discriminant $6$, and \cite{MR2705896} for the case of discriminant $21, 33$), and some abelian fourfolds with an action of $\mu_5$(\cite{LMPT}). In all known cases, the coarse moduli variety is isomorphic to $\bP^1$. Heuristically, following the philosophy of \cite{MR3841493}, for an abelian variety $A$ on a Shimura variety $S$, 
the probability that $A \bmod\fp$ lies in the Hecke orbit of a codimension $d$ subvariety $V \subset S_{\bF_{\fp}}$ is roughly $(N\fp)^{-d/2}$. Since \[\sum_{p \leq x} p^{-d/2} \sim \begin{cases}
    \sqrt{x}/\log x & d = 1, \\ \log\log x & d = 2, \\ 1 & d\geq 3,
\end{cases}\] we expect infinitely many supersingular primes when the Shimura variety has dimension $1$ and its supersingular locus has codimension $1$.

There are similar results showing that certain density zero set of primes related to the reduction of abelian varieties is infinite, as in the case of split reduction of abelian surfaces (\cites{MR4065146,MR4490194, MR4836458}), and geometrically isogenous reductions of non-isogenous elliptic curves (\cite{MR3843371}). The proofs of these theorems rely on the intersection of the given arithmetic $1$-cycle with the reductions of divisors defined in characteristic $0$. By contrast, the supersingular locus varies from prime to prime. We follow Elkies' strategy to use CM cycles, which have supersingular reduction at roughly half of the primes, and we need to detect whether the intersection occurs at the supersingular primes. For this, we work with $\bP^1$, where the intersection theory is especially simple.

\subsection{The strategy of the proof}\label{sec:strategy}

Given an elliptic curve $E$ defined over a number field, Elkies' strategy to construct a new supersingular prime $\fp$ for $E$ is to find a CM cycle whose $\overline{\bQ}$-points are elliptic curves with complex multiplication by $\mathbb{Z}[\frac{1}{2}(D+\sqrt{-D})]$ such that this CM cycle intersects $E$ at $\fp$ and the CM elliptic curves on the cycle have supersingular reduction at $\fp$, which occurs when the residue field has characteristic $p$ that is ramified or inert in $\bQ(\sqrt{-D})$. 
With a coordinate defined by the $j$-invariant, the intersection is captured by the non-archimedean part of  $P_D(j(E))$, where $P_D(X)$ is the monic polynomial whose roots are $j$-invariants of the elliptic curves on the CM cycle, and the goal is to find a prime $\fp$ such that $P_D(j_E)$ has positive $\fp$-valuation and $p$ divides $D$ or $-D$ is a quadratic non-residue modulo $p$. By quadratic reciprocity, the problem reduces to studying the reduction of the CM cycle modulo primes dividing $D$, as well as at the real place. 

Our proof builds on the idea of Elkies. We relate the Hodge-type Shimura curve parametrizing the Kuga-Satake abelian varieties to the quaternionic Shimura curve $\cO^1 \backslash \cH$ that is assumed to be isomorphic to $\mathbb{P}^1$. This allows us to choose a coordinate and construct CM cycles on $\mathbb{P}^1$. For each totally positive odd prime $\lambda\in F$ such that $-\lambda$ is a square modulo $8$, we construct polynomials $P_{\lambda}(x)$ and $P_{4\lambda}(x)$ from CM cycles $\mathcal{P}_{\lambda}, \mathcal{P}_{4\lambda}$ defined over $F$. The cycles correspond to optimal embeddings $\cO_{F(\sqrt{-\lambda})} \xhookrightarrow{} \cO$ and $\cO_{F}[\sqrt{-\lambda}]\xhookrightarrow{} \cO$, respectively.\footnote{If we drop the simplifying assumption that $2$ is inert in $F$, then we need to consider optimal embeddings $R\xhookrightarrow{} \cO$ for all $\cO_F$-orders $R$ satisfying $\cO_{F}[\sqrt{-\lambda}]\subseteq R\subseteq \cO_{F(\sqrt{-\lambda})}$.}
By computing the Newton polygon via the Shimura-Taniyama formula, if a prime $\fp$ of $F$ is ramified or inert in $F(\sqrt{-\lambda})$, then the CM abelian varieties have supersingular reduction modulo primes above $\fp$. 

\subsubsection{Reduction of the CM cycles $\mathcal{P}_{\lambda}, \mathcal{P}_{4\lambda}$ modulo $\lambda$} \label{sec:redCMmodlambda}
As in the classical setting, the CM points in each cycle fall into pairs modulo $\lambda$, except possibly those that reduce to elliptic points with an even order automorphism. 
Instead of using Lubin-Tate deformation of formal groups typical of PEL cases (such as in \cite{MR1030140}, \cite{MR2704678}, \cite{LMPT}), we follow the approach of Madapusi \cite{MR3484114} on integral models for GSpin Shimura varieties, applying Grothendieck–Messing theory to establish a bijection between deformations of the abelian variety $\cA_{x_0}$ at a closed point $x_0$ with a special endomorphism and liftings of certain isotropic lines orthogonal to the special endomorphism. 
This description solves the local intersection problem and provides a clear picture of how the liftings are expected to occur in pairs given the quadratic space defined by the quaternion algebra.  

In particular, our method explains how to pair the liftings in the neighborhood of the exception points. In the case of elliptic curves, the only exception point is $j = 1728$. In the case of \cite{MR2704678} and \cite{MR2414789}, the elliptic points have CM by orders in distinct imaginary quadratic fields, and the presence of unpaired points can be predicted explicitly by checking whether a maximal order of a known quaternion algebra contains two anticommuting CM orders of given discriminants. By contrast, we have multiple elliptic points of order $2$ when $h(F(\sqrt{-1})) > 1$. To address the complication, we equip the space of special quasi-endomorphisms $V(\cA_{x_0})$ with an $F$-linear structure via comparison, and use the automorphism to pair the special endomorphisms. This yields a pairing of liftings in the neighborhood of any exception point when we consider the union of all cycles corresponding to all orders containing $\cO_F[\sqrt{-\lambda}]$.

This method for studying the deformation of mod $\fp$ points on Shimura curves works in general, without assuming that the Shimura curve has genus zero. 

\subsubsection{Real CM points on $\mathcal{P}_{\lambda}, \mathcal{P}_{4\lambda}$}\label{sec:realCMpointsonPoly}
At the archimedean places, we follow the work of Shimura \cite{MR572971} to study real points of the Shimura curve and apply Hecke's equidistribution of primes (see for instance \cite{MR1282723}*{\MakeUppercase{\romannumeral 15}, \S5}). 
The real points are given by geodesics of the form $Z_{\alpha} := \{z\in \mathcal{H}: \alpha(\overline{z}) = z\}$, where $\alpha \in \cO^\times $ satisfies $\trd(\alpha) = 0$ and $\nrd(\alpha)=\epsilon$, with $\epsilon$ a unit negative at the real place where $B$ is split and positive at other real places. Under the assumptions that $2$ is inert in $F$ and the class number $h(F(\sqrt{-\epsilon})) = 1$, it suffices to consider a single geodesic. The congruence condition on $\lambda$ ensures that each of the CM cycles we consider has a unique real point. These real CM points correspond to solutions of the norm equation $\Nm_{F(\sqrt{-\epsilon})/F} (x) = \lambda$. Similar equations are obtained in \cite{MR2704678} and \cite{MR2414789} by describing the real locus as the hyperbolic lines segments between two elliptic points. By Hecke's equidistribution of primes in $F(\sqrt{-\epsilon})$, we can find $\lambda$ such that $P_{\lambda}(x) P_{4\lambda}(x)$ is negative at the coordinate of the given abelian variety. 

To account for all real embeddings of $F$, we need to consider all conjugates of the Shimura curve, which correspond to quaternion algebra with different local invariants at infinite places. Assuming that $\cF := F(\sqrt{-\epsilon_1}, \dotsc, \sqrt{-\epsilon_n})$ has class number $1$, we construct a Hecke character of $\bA_{\cF}^\times$ and apply the equidistribution of primes in $\cF$.

Here we have found a good condition that simplifies the setting to a single geodesic. More generally, each $\cO^1$-conjugacy class of embeddings $\cO_F[\sqrt{-\epsilon}] \hookrightarrow \cO$ gives a geodesic. The associated real CM points correspond to solutions of an equation of the form $\nrd(v) = \lambda$, where $v \in B^0 \cap \cO$ is orthogonal to the image of $\sqrt{-\epsilon}$. Computations are possible with an explicit basis of maximal order, and a similar equidistribution result can be derived on each geodesic. 

\vspace{1em}
Following the proof of \cite{MR903384}, we combine \ref{sec:redCMmodlambda} and \ref{sec:realCMpointsonPoly} to obtain a desired supersingular prime.

\subsection{Organization of the paper}

In \S2, we introduce the Shimura curves considered in this work, review the theory of integral canonical model for Shimura varieties of Hodge type, focusing on the $\GSpin$ case. The section also includes some other preliminaries that will be used in the proof. In \S3, we construct the CM cycles and study their basic properties. In \S4, we investigate the reduction of the CM cycles modulo a finite prime, and pair the liftings in the neighborhood of each closed point. In \S5, we analyze the distribution of the CM points in the real locus of the Shimura curve. In \S6, we combine the results in previous sections to prove \Cref{thm:main}. In \S7, we compute explicit examples in which all the technical assumptions of \Cref{thm:main} are satisfied. 

\subsection{Notation and conventions}

Assume the following unless specified otherwise. 

Suppose $F$ is a totally real number field with narrow class number $1$. Equivalently, $F$ is a totally real number field with class number $1$ and units of independent signs.  

Suppose $B$ is a quaternion algebra over $F$ unramified at all finite places and exactly one of the real places of $F$. Necessarily $[F:\bQ]$ is odd. Denote by $\rho:F\xhookrightarrow{}\bR$ the real place where $B$ is split. 
Let $\epsilon$ be a unit of $F$ that is negative at $\rho$ and positive at the other real places, so that the field $F(\sqrt{-\epsilon})$ splits $B$.

Let $\mathcal{O} = \mathcal{O}_B$ be a maximal order of $B$.\footnote{If $F$ has narrow class number $1$, then any two maximal orders in an indefinite quaternion algebra over $F$ are conjugate to each other.} 
Its normalizer $N_{B^\times}(\mathcal{O}) = F^\times \mathcal{O}^\times$. 

\subsection*{Acknowledgments} I thank my advisor Yunqing Tang for introducing this problem to me and for the enlightening discussions and encouragement. I thank Frank Calegari for his blog post ``Polymath Proposal: 4-folds of Mumford’s type'', which was a source of inspiration and useful references. I thank Robin Huang, Wanlin Li, and Sug Woo Shin for helpful discussions.

\section{Preliminaries}

\subsection{The Shimura curves}

\subsubsection{}
Let $\widetilde{G} = \Res_{F/\bQ}\GL_{1,B}$ be the algebraic group over $\bQ$ with $\widetilde{G}(\bQ) = B^\times$, and $\widetilde{K} = \widehat{\mathcal{O}}^\times = \prod \mathcal{O}_v^\times \subset \widetilde{G}(\bA_f)$. Strong approximation implies that \begin{equation}\label{eq:strongapprox}|B^\times_{>0} \backslash \widehat{B}^\times / \widehat{\mathcal{O}}^\times| = |F^\times_{>0} \backslash \widehat{F}^\times / \nrd(\widehat{\mathcal{O}}^\times)| = h_+(F) = 1,\end{equation} 
and then \[\widetilde{G}(\bQ)\backslash \mathcal{H}^{\pm} \times \widetilde{G}(\bA_f) / \widetilde{K} = \widetilde{G}(\bQ)_+ \backslash \mathcal{H} \times \widetilde{G}(\bA_f) / \widetilde{K} = \widetilde{\Gamma} \backslash \mathcal{H} = \widetilde{\Gamma}^1 \backslash \mathcal{H},\] where \begin{align*}
    &\widetilde{\Gamma} = \{\alpha \in \mathcal{O}^\times : \nrd(\alpha) \in \mathcal{O}_F^\times \text{ is totally positive}\},\\
    &\widetilde{\Gamma}^1 = \cO^1 = \{\alpha \in \mathcal{O}^\times : \nrd(\alpha) = 1\}.
\end{align*}
(Since the totally real number field $F$ has units with independent signs, every totally positive unit in $\mathcal{O}_F^*$ is a square (\cite{MR963648}*{12.2}), which implies $\widetilde{\Gamma} \backslash \mathcal{H} = \widetilde{\Gamma}^1 \backslash \mathcal{H}$. )
The reflex field of this Shimura curve is $\rho(F)$. 

\subsubsection{}\label{sec:ShimuraDatum}
Let $V = B^0= \{\alpha \in B: \trd(\alpha) = 0\}$, then $(V, Q_F:=\nrd|_{B^0})$ is a $3$-dimensional quadratic space over $F$. For any field extension $\sigma:F\xhookrightarrow{}K$, denote by $(V\otimes_{F, \sigma} K, Q_{\sigma}:=\sigma\circ Q_F)$ the $3$-dimensional quadratic space over $K$. Let $(V, Q := \Tr_{F/\bQ}\circ Q_F)$ be the $(3[F:\bQ])$-dimensional quadratic space over $\bQ$. The signature of $(V, Q)$ is $(3[F:\bQ]-2,2)$, since there is an orthogonal direct sum decomposition $V_{\bR}= V\otimes_{\bQ}\bR \simeq \oplus_{\sigma} (V\otimes_{F,\sigma}\mathbb{R})$, where $V\otimes_{F, \sigma}\bR$ has signature $(3,0)$ for $\sigma \neq \rho$ and $(1,2)$ for $\sigma = \rho$. 

Let $\mathcal{D} = \{w\in V\otimes_{\bQ} \bC: [w,w]_Q = 0, [w,\bar{w}]_Q <0 \} / \bC^\times$ be the space of oriented negative definite $2$-planes in $V_{\bR}$ and \begin{equation*} \label{eq:Hermitian_domain}
    X = \{w\in V\otimes_{F, \rho} \bC: [w,w]_Q = 0, [w,\bar{w}]_Q <0 \} / \bC^\times.
\end{equation*} Note that $X\simeq\mathcal{H}^\pm$ is one-dimensional. 
Each $V\otimes_{F, \sigma} \bC$ is an eigenspace for the $F$-action on $V\otimes_\bQ \bC$ induced by the $F$-linear structure on $V$. The weight $0$ Hodge structure on $V$ defined by $[w] \in X$ is $V^{-1,1} = \bC w, V^{1,-1} = \bC \bar{w}, V^{0,0} = (V^{-1,1}\oplus V^{1,-1})^{\perp}$, then for $a\in F^\times$, $aV^{-1,1} = V^{-1,1}, aV^{1,-1} = V^{1,-1}$, and since $[au, v]_Q = [u,av]_Q$ for all $u,v \in V$, $aV^{0,0} = V^{0,0}$. Thus, for a Hodge structure $V$ defined by  $[w] \in X$, we have $F \subseteq \End_{Hdg}(V)$. 

Let $C(V)$ be the Clifford algebra of $V$ over $\bQ$. It has a $\bZ/2\bZ$-grading $C(V) = C^+(V) \oplus C^-(V)$, where $C^+(V)$ is the even Clifford algebra. The reductive group scheme $\GSpin(V,Q)$ over $\bQ$ is defined by  \[\GSpin(V,Q)(R) := \{g\in C^+_R(V_R)^\times: g V_R g^{-1} = V_R\}\] for any $\bQ$-algebra $R$, where $C_R$ (resp. $C_R^+$) denotes the Clifford algebra (resp. even Clifford algebra) of a quadratic space over $R$. There is a canonical involution $*$ on $C(V)$ given by the reversal involution on $\oplus_{d=0}^\infty V^{\otimes d}$, and the spinor norm $\nu: \GSpin(V, Q) \to \bG_m$ is defined by $x\mapsto x^*x$. 
A choice of $\delta \in C(V)^\times$ such that $\delta^* = -\delta$ (for example, $\delta = ef$ for orthogonal vectors $e,f\in V$ with $Q(e)<0, Q(f) < 0$) 
defines a symplectic form $\psi_{\delta}: C(V)\times C(V)\to \bQ, \, (x,y)\mapsto \Trd(x\delta y^*)$. The action of $\GSpin(V, Q)$ on $C(V)$ by left multiplication induces an embedding $\GSpin(V, Q) \xhookrightarrow{} \GSp(C(V), \psi_{\delta})$, under which the similitude character on $\GSp(C(V), \psi_{\delta})$ restricts to the spinor norm $\nu$ on $\GSpin(V, Q)$ (\cite{MR3484114}*{1.6, 1.7}).

The action of $\GSpin(V,Q)$ on $V$ by conjugation induces an exact sequence of group schemes over $\bQ$ \begin{equation} \label{eq:GspinSo}
    1 \to \mathbb{G}_m \to \GSpin(V,Q) \to \SO(V,Q) \to 1.
\end{equation}
Define the algebraic groups $G \subset \GSpin(V, Q)$ and $G_0\subset \SO(V, Q)$ by \begin{align*}
    G(R) &= \{g\in \GSpin(V, Q)(R): \alpha (g\cdot v) = g\cdot (\alpha v) \, \forall \alpha \in F, v\in V_R\} \\
    G_0(R) &= \{g \in \SO(V,Q)(R): \alpha g  = g \alpha \, \forall \alpha \in F\}
\end{align*}
for any $\bQ$-algebra $R$, so that the image of $G$ under \eqref{eq:GspinSo} is $G_0$. Since $(\alpha, \beta)\mapsto \Tr_{F/\bQ}(\alpha\beta)$ is a perfect pairing on the $\bQ$-vector space $F$, we have $G_0(\bQ) = \SO(V, Q_F)(F) = B^\times /F^\times$. 

The embedding \[(G, X) \xhookrightarrow{}(\GSpin(V, Q), \mathcal{D}) \xhookrightarrow{}(\GSp(C(V),\psi_{\delta}), \mathcal{S}^{\pm})\] realizes $(G,X)$ as a Shimura datum of Hodge type. When $F$ is cubic, we have $\Cor_{F/\bQ}(B) \simeq M_8(\bQ)$ since $B$ is split at all finite places, and this Kuga-Satake construction from K3 type Hodge structures with real multplication is consistent with Mumford's original construction (\cite{MR2492400}*{6.4}).

\subsubsection{}
We can use the corestriction of algebras to describe the morphism $\widetilde{G} \to G$ (\cite{MR248146}*{\S4}, \cite{MR2492400}*{6.2, 6.3}).
Let $\tilde{F}$ be the Galois closure of $F$. The direct sum decomposition of quadratic spaces \[(V\otimes_{\bQ} \tilde{F}, Q) = \bigoplus_{\sigma:F\xhookrightarrow{}\tilde{F}} (V\otimes_{F, \sigma}\tilde{F}, Q_{\sigma})\] gives an isomorphism of $\tilde{F}$-algebras \begin{equation} \label{eq:Cl}
    C(V)\otimes_{\bQ}\tilde{F} = C_{\tilde{F}}(V\otimes_\bQ \tilde{F}) = \widehat{\bigotimes}_{\sigma:F\xhookrightarrow{}\tilde{F}}C_{\tilde{F}}(V\otimes_{F, \sigma} \tilde{F}) = \widehat{\bigotimes}_{\sigma:F\xhookrightarrow{}\tilde{F}}\left(C_F(V)\otimes_{F, \sigma} \tilde{F}\right),
\end{equation} where $\widehat{\bigotimes}_{\sigma:F\xhookrightarrow{}\tilde{F}}$ denotes a graded tensor product over $\tilde{F}$ indexed by embeddings $\sigma:F\xhookrightarrow{}\tilde{F}$. On the all even part, the graded tensor product over $\tilde{F}$ is the usual tensor product over $\tilde{F}$, so $\bigotimes_{\sigma:F\xhookrightarrow{}\tilde{F}} \left(C_F^+(V)\otimes_{F, \sigma} \tilde{F}\right)$ lies in the even part of $\widehat{\bigotimes}_{\sigma:F\xhookrightarrow{}\tilde{F}}\left(C_F(V)\otimes_{F, \sigma} \tilde{F} \right)$. The Galois group $\Gal(\tilde{F}/\bQ)$ acts on $V\otimes_\bQ \tilde{F}$ via the second factor of the tensor product, thus under the isomorphism $V\otimes_{\bQ} \tilde{F} = \bigoplus_{\sigma:F\xhookrightarrow{}\tilde{F}} V\otimes_{F, \sigma}\tilde{F}$, this action permutes the eigenspaces $V\otimes_{F, \sigma}\tilde{F}$: for $g\in \Gal(\tilde{F}/\bQ)$, we have $g: V\otimes_{F, \sigma} \tilde{F} \to V\otimes_{F, g\sigma} \tilde{F}$  defined by $r\otimes a \mapsto r\otimes g(a)$. Similarly, $\Gal(\tilde{F}/\bQ)$ acts on $C(V)\otimes_{\bQ} \tilde{F}$ via the second factor of the tensor product. 
Over $\tilde{F}$, we have a group homomorphism \[\left(C_F^+(V) \otimes_{\bQ}\tilde{F}\right)^\times = \prod_{\sigma:F\xhookrightarrow{}\tilde{F}}\left(C_F^+(V) \otimes_{F, \sigma} \tilde{F} \right)^\times \xrightarrow{\Nm}\bigotimes_{\sigma:F\xhookrightarrow{}\tilde{F}} \left(C_F^+(V)\otimes_{F, \sigma} \tilde{F}\right)^\times\xhookrightarrow{} \left(C^+(V)\otimes_{\bQ}\tilde{F}\right)^\times,\] where the norm map is defined by \[\Nm\left((b_{\sigma})_{\sigma: F\xhookrightarrow{}\tilde{F}} \right)) = \bigotimes_{\sigma:F\xhookrightarrow{}\tilde{F}} b_{\sigma}.\]
The image of $ (b_{\sigma})_{\sigma: F\xhookrightarrow{}\tilde{F}}$ acts as $b_\sigma$ on each $V\otimes_{F,\sigma}\tilde{F}$; in particular, it preserves $V\otimes_{\bQ}\tilde{F}$ and respects the $F$-linear structure on $V\otimes_{\bQ}\tilde{F}$. Therefore, we have a group homomorphism $(C_F^+(V)\otimes_\bQ \tilde{F})^\times  \to G(\tilde{F})$. 
Taking $\Gal(\tilde{F}/\bQ)$-invariants gives a group homomorphism $C_F^+(V)^\times \to G(\bQ)$. Under the identification $C_F^+(V)\xrightarrow{\simeq}B$, we obtain the corresponding morphism of algebraic groups $\widetilde{G}\to G$ over $\bQ$, whose kernel is the algebraic torus $T_F^1$ over $\bQ$ defined by $T_F^1(\bQ) = \{x\in F^\times : \Nm_{F/\bQ} x = 1\}$. 
The morphism $\widetilde{G}\to G$ induces $\widetilde{G}^{\der} \to G^{\der}$ and the following diagram \[
\begin{tikzcd}
& & 1 \arrow[d] & 1 \arrow[d] &\\
&   & T_F^1 \arrow[r, "x\mapsto x^2"] \arrow[d] & T_F^1 \arrow[d] & \\
1 \arrow[r] & \widetilde{G}^{\der} \arrow[r] \arrow[d] & \widetilde{G} \arrow[r, "\nrd"] \arrow[d] & \Res_{F/\bQ}\mathbb{G}_m \arrow[r] \arrow[d, "\Nm_{F/\bQ}"] & 1 \\
1 \arrow[r] & G^{\der} \arrow[r] & G \arrow[r, "\nu"] \arrow[d] & \mathbb{G}_m \arrow[r] \arrow[d] & 1 \\
&  & 1 & 1 &
\end{tikzcd}
\] commutes. Since $\widetilde{G}^{\der}(\bQ)\cap T_F^1(\bQ) = \{x\in F^\times : x^2 = 1, \Nm_{F/\bQ}(x) = 1\}=\{1\}$ as $[F:\bQ]$ is odd, we have an isomorphism $\Res_{F/\bQ} \SL_{1,B}=\widetilde{G}^{\der} \xrightarrow{\simeq}G^{\der}$.

\subsubsection{}
Let $L =  V\cap \cO$, then it is a rank $3$ free $\cO_F$-module % h(F) = 1
and a rank $3[F:\bQ]$ free $\bZ$-module. By \cite{MR4279905}*{22.4.15}, for any quaternion $\cO_F$ order $\cO'$ we have $\cO' = \cO_F + \discrd(\cO')(\cO'^{\#})^0(\cO'^{\#})^0 = C_{\cO_F}^+((\cO'^{\#})^0, N\nrd)$, where $\discrd(\cO') = (N)$ and $(\cO'^{\#})^0= \{\alpha\in B: \trd(\alpha \cO')\subseteq \cO_F, \, \trd(\alpha) = 0\}$. 
Here $\discrd(\cO) = \disc(B) =(1)$ and $\cO^{\#} = \cO$ since $\cO$ is maximal and $B$ is split at all finite places. 
Define \begin{align*}
    K &= \{g\in G(\bA_f) : g( C(L\otimes_{\bZ}\widehat{\bZ})) = C(L\otimes_{\bZ}\widehat{\bZ})\}=G(\bA_f)\cap C(L\otimes_{\bZ}\widehat{\bZ})^\times,\\
    K_0 &=\{g\in G_0(\bA_f): g(L\otimes_{\bZ}\widehat{\bZ}) = L\otimes_{\bZ}\widehat{\bZ}\}.
\end{align*}  
The image of $\widetilde{K}$ under $\widetilde{G} \to G\to G_0$ is $K_0$ since $N_{B^\times}(\cO) = F^\times \cO^\times$.
The image of $K$ in $G_0(\bA_f)$ is $\{g\in K_0: g \text{ acts trivially on } L^\vee_{\widehat{\bZ}}/L_{\widehat{\bZ}}\}$ (\cite{MR3484114}*{2.6}).
The dual lattice $L^\vee = \fD_{F/\bQ}^{-1}L \subset V$, where $\fD_{F/\bQ}\subset\cO_F$ is the different of $F/\bQ$. 
There are well-defined maps \begin{equation}\label{eq:morSV1}
    \widetilde{G}(\bQ)\backslash\cH^{\pm}\times \widetilde{G}(\bA_f)/\widetilde{K} \to G_0(\bQ)\backslash X \times G_0(\bA_f)/K_0
\end{equation} and \begin{equation}\label{eq:morSV2}
    G(\bQ)\backslash X\times G(\bA_f)/K \to G_0(\bQ)\backslash X \times G_0(\bA_f)/K_0.
\end{equation} Note that $1\to \Res_{F/\bQ} \bG_m \to \widetilde{G} \to G_0 \to 1$ and $H^1(\bQ_l, \Res_{F/\bQ}\bG_m) \simeq H^1(\bQ_l, \prod_{v|l} \Res_{F_v/\bQ_l}\bG_m) \simeq \prod_{v|l}H^1(\bQ_l, \Res_{F_v/\bQ_l}\bG_m)\simeq \prod_{v|l} H^1(F_v, \bG_m)$ is trivial by Hilbert's 90. Therefore, the map \eqref{eq:morSV1} is surjective, and in particular, $G_0(\bQ)\backslash X \times G_0(\bA_f)/K_0$ is connected. Let $\Gamma_0 := G_0(\bQ) \cap K_0 = F^\times \cO^\times/F^\times$, then \eqref{eq:morSV1} is an isomorphism \begin{equation} \label{eq:connmorSV1}\widetilde{\Gamma}\backslash \cH \xrightarrow{\simeq} \Gamma_0\backslash X^+.
\end{equation} 
For each $g\in G(\bA_f)$, let $\Gamma_g := G(\bQ)_+\cap gKg^{-1}$, then $[x]\mapsto [x,g]$ defines a connected component $\Gamma_g\backslash X^+ \xhookrightarrow{} G(\bQ)_+\backslash X^+\times G(\bA_f)/K$ and \eqref{eq:morSV2} gives a finite map \begin{equation}\label{eq:connmorSV2}
    \Gamma_g\backslash X^+ \to \Gamma_0 \backslash X^+ .
\end{equation}
Note that \[G(\bQ)\backslash X\times G(\bA_f)/K\simeq \bigsqcup_{[g] \in G(\bQ)_+\backslash G(\bA_f)/K}\Gamma_g\backslash X^+,\] and
if $x_1, x_2\in G(\bQ)\backslash X\times G(\bA_f)/K$ maps to the same point under \eqref{eq:morSV2}, 
then the corresponding abelian varieties $\cA_{x_1}, \cA_{x_2}$ are isogenous. 

\subsubsection{}
See \cite{MR2476577}*{\S4} for a complete list of genus $0$ Shimura curves constructed from congruence arithmetic Fuchsian group in $\PSL_2(\bR)$.
In particular, we have $[F:\bQ] \leq 7$. 

\subsection{Integral model}

Let $p$ be an odd prime unramified in $F$. Let $L_{(p)} = L\otimes_{\bZ} \bZ_{(p)}$ and $H_{(p)} = C(L_{(p)})$ be the Clifford algebra of $L_{(p)}$ over $\bZ_{(p)}$. Since $B$ is unramified at all finite places, $(L_{(p)}, Q)$ is non-degenerate, i.e., $Q$ induces an isomorphism $L_{(p)} \xrightarrow{\simeq} L_{(p)}^\vee$, and $\GSpin(V,Q)$ extends to a reductive group scheme $\GSpin(L_{(p)}, Q)$ over $\bZ_{(p)}$. 
Via left multiplication, the group $C^+(L_{(p)}) \subset \GL(H_{(p)})$ is the subgroup of automorphisms that preserves the grading and centralizes the right $C(L_{(p)})$-action on $H_{(p)}$.
The pairing \[[\varphi_1,\varphi_2] = \frac{1}{2^{3[F:\bQ]-1}}\Tr(\varphi_1\circ \varphi_2)\] on $\End(H_{(p)})$ restricts to $[\cdot, \cdot]_Q$ on $L_{(p)}$ since $[v,v] = \frac{1}{2^{3[F:\bQ]-1}} \Tr(Q(v)|_{H_{(p)}}) = 2Q(v)$ for all $v\in L_{(p)}$. Let $\bm{\pi}: \End(H_{(p)}) \to \End(H_{(p)})$ be the orthogonal projection onto $L_{(p)}$. Then $\GSpin(L_{(p)}, Q) \subset C^+(L_{(p)})$ is the stablizer of the idempotent operator $\bm{\pi}$ (\cite{MR3484114}*{1.4}).
For $\epsilon_1, \dotsc,\epsilon_m \in \mathcal{O}_F$ such that $\mathcal{O}_F = \bZ[\epsilon_1, \dotsc, \epsilon_m]$, consider the endomorphism $\bm{\pi}_{\epsilon_i}: \End(H_{(p)}) \to \End(H_{(p)})$ given by $\bm{\pi}_{{\epsilon_i}}(\varphi) = {\epsilon_i}(\bm{\pi}(\varphi))$. Let $G_{\bZ_{(p)}}\subset \GSpin(L_{(p)}, Q)$ be the stablizer of all $\bm{\pi}_{\epsilon_i}$. Let $\{s_{\alpha}\} \subset H_{(p)}^{\otimes}$ denote a finite collection of tensors defining $G\subset \GL(H_{(p)})$ with $\bm{\pi}, \bm{\pi}_{\epsilon_i} \in \{s_{\alpha}\}$. 

Let $K_p = G(\bZ_p)$ and $K^p\subset G(\bA_f^p)$ be a small enough open compact subgroup. Denote by $\Sh_{K_pK^p} = \Sh_{K_pK^p}(G,X)$ the Shimura curve attached to $G$ and $\mathscr{S}_{K_pK^p}$ its integral canonical model. The integral model $\mathscr{S}_{K_pK^p}$ is constructed in \cite{MR2669706} as the normalization of $\mathscr{S}_{K_pK^p}^-(G,X)$, the closure of $\Sh_{K_pK^p}$ in the natural integral model of the Siegel modular variety. The normalization step can be removed by \cite{MR4464238}. Let $ \mathcal{A} \to \mathscr{S}_{K_pK^p}$ denote the universal abelian scheme, which exists assuming that $K^p$ sufficiently small. Let $s_{\alpha, ?}$ denote the cohomological realizations of $s_{\alpha}$, where $ ? = B, \mathrm{dR}, \ell, p, \mathrm{cris}$. 

Let $k = \mathbb{F}_q \subset \overline{\mathbb{F}}_p$ and $W=W(k)$ be its ring of Witt vectors. Suppose $x\in \mathscr{S}_{K_pK^p}(k)$ is a closed point. Write $\widehat{U}_x$ for the completion of $\mathscr{S}_{K_pK^p}$ at $x$. Suppose $\tilde{x} \in \mathscr{S}_{K_pK^p}(E)$ is a point specializing to $x$, where $E$ is a finite extension of $W[p^{-1}]$.  

Choose an embedding $\iota: \overline{E}\xhookrightarrow{}\bC$. The  natural isomorphism $H_{(p)} \simeq H^1_B(\mathcal{A}_{\iota(\tilde{x})}(\bC), \bZ_{(p)})$ takes $s_{\alpha}$ to $s_{\alpha, B, \iota(\tilde{x})}$. 
The comparison isomorphism \begin{equation} \label{eq:Betti_deRham}
    H^1\dR(\mathcal{A}_{\tilde{x}}/E) \otimes_{E, \iota} \bC \xrightarrow{\simeq} H^1_B(\mathcal{A}_{\iota(\tilde{x})}(\bC), \bQ)\otimes_{\bQ} \bC
\end{equation}
takes $s_{\alpha, \mathrm{dR}, \tilde{x}}\otimes 1$ to $s_{\alpha, B, \iota(\tilde{x})}\otimes 1$, and the comparison isomorphism \begin{equation}\label{eq:Betti_et}
    H^1_B(\mathcal{A}_{\iota(\tilde{x})}(\bC), \bZ_{(p)})\otimes_{\bZ_{(p)}} \bZ_p \xrightarrow{\simeq}H^1_{\et}(\mathcal{A}_{\tilde{x}_{\bar{E}}}, \bZ_p)
\end{equation} takes $s_{\alpha, B,\iota(\tilde{x})}\otimes 1$ to $s_{\alpha, p, \tilde{x}}$. 
Via the $p$-adic comparison isomorphism \begin{equation}
    H^1_{\et}(\mathcal{A}_{\tilde{x}_{\bar{E}}}, \bZ_p)\otimes_{\bZ_p} B\cris \xrightarrow{\simeq} H^1\cris(\mathcal{A}_x/W)\otimes_W B\cris,
\end{equation}
the $\Gal(\overline{F_v}/E)$-invariant tensors $s_{\alpha, p, \tilde{x}}$ give rise to Frobenius invariant tensors $s_{\alpha, \mathrm{cris}, x} \in H^1\cris(\mathcal{A}_x/W)^\otimes$. From the proof of \cite{MR2669706}*{2.3.5}, it follows that the tensors $s_{\alpha, \mathrm{cris}, x}$ are independent of the choice of $\tilde{x}$. 
There exists a $W$-linear isomorphism \begin{equation} \label{eq:et_cris}
    H^1_{\et}(\mathcal{A}_{\tilde{x}_{\bar{E}}}, \bZ_p)\otimes_{\bZ_p} W \xrightarrow{} H^1\cris(\mathcal{A}_x/W)
\end{equation} taking $s_{\alpha, p, \tilde{x}}$ to $s_{\alpha, \mathrm{cris}, x}$ (\cite{MR2669706}*{1.4.3}). From \eqref{eq:Betti_et} and \eqref{eq:et_cris}, there exists a $W$-linear isomorphism $H_{(p)} \otimes_{\mathbb{Z}_{(p)}} W \to  H^1\cris(\mathcal{A}_x/W)$ taking $s_\alpha$ to $s_{\alpha, \mathrm{cris}, x}$, and the tensors $s_{\alpha, \mathrm{cris}, x}$ define a reductive subgroup $G_W\subset GL(H^1\cris(\mathcal{A}_x/W))$, which is isomorphic to $G_{\bZ_{(p)}}\times_{\Spec \bZ_{(p)}}\Spec W$. Let $\bm{L}_{\mathrm{cris}, x}$ be the image of $\bm{\pi}_{\mathrm{cris}, x}$ on $\End(H^1\cris(\cA_x/W))$, then since all $\bm{\pi}_{{\epsilon_i}}$ and the quadratic form on $L_{(p)}$ are absolute Hodge cycles, there is an $\mathcal{O}_F$-linear isometry \begin{equation}
    L_{(p)}\otimes_{\bZ_{(p)}} W \to \bm{L}_{\mathrm{cris}, x},
\end{equation} and $G_W \subset \GSpin(\bm{L}_{\mathrm{cris},x})$. Since $p$ is unramified in $F$, we have an orthogonal direct sum decomposition \begin{equation} \label{eq:orthogonaldecomp}
    \bm{L}_{\mathrm{cris}, x} \simeq  L_{(p)}\otimes_{\bZ_{(p)}} W=\bigoplus_{\sigma':F\xhookrightarrow{}W[p^{-1}]}L\otimes_{\cO_F, \sigma'} W,
\end{equation} where orthogonality arises from the distinct scalar actions of $\cO_F$ via the embeddings $\sigma'$. 
From the proof of (2.3.5) in \cite{MR2669706}, $\widehat{U}_x$ is isomorphic to $\Spf R_{G_W}$, where $R_{G_W}$ is the complete local ring at the identity section of the opposite unipotent defined by a cocharacter $\mathbb{G}_m \to G_W$ whose reduction modulo $p$ induces the filtration on $H^1\cris(\mathcal{A}_x/W)\otimes k = H^1\dR(\mathcal{A}_x/k)$. For any $\tilde{x}' \in \widehat{U}_x$ defined over a finite extension $E'$ of $W[p^{-1}]$, the filtration on \[H^1\dR(\mathcal{A}_{\tilde{x}'}/E')\xrightarrow{\simeq}H^{1}\cris(\mathcal{A}_x/W)\otimes_W E'\] is induced by a $G_W\otimes_W E'$-valued cocharacter (\cite{MR2669706}*{1.4.5}). 
 
\subsection{Special endomorphisms}
We follow \cite{MR3484114}*{5.1-5.13} to define the space of special endomorphisms $L(\cA_x)$ of a point $x\to \mathscr{S}_{K_pK^p}$. 

\subsubsection{}
Let $x\to \Sh_{K_pK^p}$ be a geometric point with $k(x)\subseteq \bC$ and $\ell$ be a rational prime. 
The comparison isomorphism \begin{equation}\label{eq:Betti_l}
    H^1_B(\mathcal{A}_{x}(\bC), \bQ)\otimes_{\bQ} \bQ_\ell\xrightarrow{\simeq}H^1_{\et}(\mathcal{A}_{x}, \bQ_\ell)
\end{equation} takes $s_{\alpha, B,x}\otimes 1$ to $s_{\alpha, \ell, x}$. Let $\bm{V}_{\ell, x}$ be the image of $\bm{\pi}_{\ell,x}$, then \eqref{eq:Betti_l} induces an $F$-linear isometry \begin{equation}\label{eq:Betti_l_isometry}
    V\otimes_{\bQ}\bQ_l \xrightarrow{\simeq} \bm{V}_{\ell,x}.
\end{equation}  Let $\bm{L}_{\ell,x} = \bm{V}_{\ell,x} \cap \End(T_\ell(\cA_x))$, where $T_\ell(\cA_x) = \varprojlim \cA_x[\ell^n]$ is the $\ell$-adic Tate module of $\cA_x$. Since the comparison isomorphism gives \begin{equation}H^1_B(\mathcal{A}_{x}(\bC), \bZ)\otimes_{\bZ} \bZ_\ell\xrightarrow{\simeq}H^1_{\et}(\mathcal{A}_{x}, \bZ_\ell)\end{equation} and $L = V\cap \End(C(L))$,  under \eqref{eq:Betti_l_isometry} we have  \begin{equation}L\otimes_{\bZ}\bZ_\ell \simeq \bm{L}_{\ell,x}.\end{equation}

\begin{defn}[\cite{MR3484114}*{5.4, 5.5}]
    An endomorphism $f\in \End(\cA_x)$ is special if it satisfies any of the equivalent conditions: \begin{enumerate}
        \item the Betti realization of $f_{\bC}$ gives a section of $\bm{V}_{B,x} \subset \End(H^1_B(\mathcal{A}_{x}(\bC), \bQ))$; 
        \item the $\ell$-adic realization of $f$ is an element of $\bm{V}_{\ell,x} \subset \End(H^1_{\et}(\mathcal{A}_{x}, \bQ_\ell))$ for some prime $\ell$;
        \item the $\ell$-adic realization of $f$ is an element of $\bm{V}_{\ell,x} \subset \End(H^1_{\et}(\mathcal{A}_{x}, \bQ_\ell))$ for all prime $\ell$.
    \end{enumerate}
\end{defn}

\subsubsection{}
Let $x\in \mathscr{S}_{K_pK^p}(\overline{\bF}_p)$ and $\ell \neq p$. 

\begin{defn}
    An endomorphism $f\in \End(\cA_x)$ is special if its crystalline realization lies in $\bm{L}_{\mathrm{cris},x}$. 
\end{defn}
The definition of $\bm{V}_{\ell,x}$ and $ \bm{L}_{\ell,x}$ carries over since $T_\ell(\cA_{\tilde{x}}) \xrightarrow{\simeq }T_\ell(\cA_x)$ for any lift $\cA_{\tilde{x}}$ of $\cA_x$. 
By \cite{MR3484114}*{5.13}, if $f\in \End(\cA_x)$ is special, then its $\ell$-adic realization is an element of $\bm{L}_{\ell,x}$. 

\subsection{Intersection number}

\begin{defn}
    Let $R$ be a Dedekind domain and $\mathcal{X} \to \Spec R$ be an arithmetic surface. 
    Let $D$ and $E$ be two effective divisors on $\cX$ with no common irreducible component. Let $z_0\in \mathcal{X}$ be a closed point. The local intersection number $i_{z_0}(D,E)$ of $D$ and $E$ at $z_0$ is the length of the $\cO_{\cX,z_0}$-module $\cO_{\cX, z_0} / (\cO_{\cX}(-D)_{z_0} + \cO_{\cX}(-E)_{z_0})$. 
\end{defn} 
\begin{example}
    Let $R$ be a discrete valuation ring with field of fraction $K$, maximal ideal $\fp$ and residue field $k$. Let $x,y\in \cX(K)$ be distinct, then $x,y$ extend uniquely to $\ul{x}, \ul{y} \in \cX(R)$ by properness of $\mathcal{X} \to \Spec R$, and $\ul{x}, \ul{y} $ are closed immersions since they are sections to a separated map. Define \[(\ul{x}. \ul{y}) :=  \sum_{z\in \cX_k} i_z(\ul{x}, \ul{y}).\] Let $x_n, y_n \in \cX(R/\fp^n)$ be the reduction of $\ul{x}, \ul{y}$ modulo $\fp^n$ for positive integer $n$. Suppose $x\neq y$, 
    then as in \cite{MR2705896}*{3.13}, \[(\ul{x}. \ul{y}) = \max\{n : x_n = y_n \}. \] In particular, 
    \begin{enumerate}
        \item if $\cX$ is a fine moduli space with a universal object $\cA\to \cX$, then the complete local ring $\widehat{\cO}_{\cX, z_0}$ is the universal deformation ring for $z_0$ and \[(\ul{x}. \ul{y}) = \max\{n : \cA_{\ul{x}} \simeq \cA_{\ul{y}} \mod{\fp^n} \};\]
        \item if $\cX = \bP^1_{R}$ and $x_1 = y_1 = z_0 \in \cX(k)$, then \[(\ul{x}. \ul{y}) = i_{z_0}(\ul{x}, \ul{y}) = \begin{cases}
          v_{\fp}(x-y)   & v_{\fp}(x) \geq 0 , v_{\fp}(y)\geq0,\\
          v_{\fp}(\frac{1}{x} - \frac{1}{y}) & v_{\fp}(x) <0 \text{ or }x = \infty, v_{\fp}(y) <0 \text{ or }y = \infty.
        \end{cases}\]
    \end{enumerate}
\end{example}

\begin{lemma}\label{lem:intersection_auto}
    Let $R$ be a complete discrete valuation ring with field of fraction $K$, maximal ideal $\fp$ and algebraically closed residue field $k = \bar{k}$. Suppose $\cY \to \Spec R$ is a smooth curve over $R$ and $G$ is a finite group acting $R$-linearly on $\cY$.  
    Let $\cX = \cY/G$.
    \begin{enumerate}
        \item The complete local ring of $\cY$ at a closed point $y_0$ is $R$-isomorphic to $R[[T]]$. The isotropy group $G_{y_0} \subset G$ of $y_0$ acts $R$-linearly on $R[[T]]$, and the complete local ring of $\cX$ at the image $x_0$ of $y_0$ is the ring of invariants $\left(R[[T]]\right)^{G_{y_0}} = R[[\Nm_{\overline{G}_{y_0}}(T)]]$, where $\overline{G}_{y_0}$ is the image of $G_{y_0}$ in $\Aut_{R}(R[[T]])$ and $\Nm_{\overline{G}_{y_0}}(T) = \prod_{\gamma\in \overline{G}_{y_0}} \gamma(T)$. 
        \item Suppose $\ul{y}, \ul{y}' \in \cY(R)$ both reduce to $y_0 \in \cY(k)$. Suppose $T, T'$ cut out the sections $\ul{y}, \ul{y}': \widehat{\cO}_{\cY, y_0} \twoheadrightarrow{} R$, respectively. Let $\ul{x}, \ul{x'}$ be the images of $\ul{y}, \ul{y}'$, respectively, then \[(\ul{x}.\ul{x'}) = \sum_{\gamma\in \overline{G}_{y_0}}\mathrm{length} (\widehat{\cO}_{\cY, y_0} / (T, \gamma(T')).\]
        In particular, let $G_{y'} \subset G_{y_0}$ be the isotropy group of $\ul{y}'$, and $\overline{G}_{y'}$ be its image in $\overline{G}_{y_0}$, then \[(\ul{x}.\ul{x'}) = \# \overline{G}_{y'}\sum_{\gamma\in \overline{G}_{y_0}/ \overline{G}_{y'}}\mathrm{length} (\widehat{\cO}_{\cY, y_0} / (T, \gamma(T'))\] is divisible by $\# \overline{G}_{y'}$, and $(\ul{x}.\ul{x'}) = \# \overline{G}_{y_0}(\ul{y}.\ul{y'})$ if $G_{y'} = G_{y_0}$. 
    \end{enumerate}
\end{lemma}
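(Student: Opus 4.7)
For (1), the plan is to invoke Cohen's structure theorem and then identify the invariant subring by Galois descent. Smoothness of $\cY \to \Spec R$ at the closed point $y_0$ (with $k(y_0)=k$) makes $\widehat{\cO}_{\cY,y_0}$ a regular $2$-dimensional local ring into which the complete DVR $R$ embeds, so Cohen's theorem yields $\widehat{\cO}_{\cY,y_0}\cong R[[T]]$. The action of $G_{y_0}$ preserves the maximal ideal and hence extends to the completion, and since completion commutes with taking invariants under a group fixing the ideal of completion, $\widehat{\cO}_{\cX,x_0}=R[[T]]^{\overline{G}_{y_0}}$. The element $\Nm(T):=\prod_{\gamma}\gamma(T)$ is invariant, and $T$ is a root of the monic polynomial $\prod_{\gamma}(X-\gamma(T))\in R[[\Nm(T)]][X]$, so $R[[T]]$ is integral of degree at most $|\overline{G}_{y_0}|$ over $R[[\Nm(T)]]$. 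Faithfulness of $\overline{G}_{y_0}$ forces the roots $\gamma(T)$ to be pairwise distinct (any $R$-algebra automorphism of $R[[T]]$ fixing $T$ is trivial by continuity), so the polynomial is separable and $\mathrm{Frac}(R[[T]])/\mathrm{Frac}(R[[\Nm(T)]])$ is Galois with group $\overline{G}_{y_0}$. Since $R[[\Nm(T)]]$ is regular, hence normal, and $R[[T]]$ is its integral closure in $\mathrm{Frac}(R[[T]])$, the invariant subring is the integral closure of $R[[\Nm(T)]]$ in its own fraction field, namely $R[[\Nm(T)]]$ itself.

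For (2), I take $T$ to be the generator cutting out $\ul{y}$ (this is admissible because $\ul{y}$ reduces to $y_0$ and $T$ therefore generates the maximal ideal of $A/\pi A$, where $A:=\widehat{\cO}_{\cY,y_0}$ and $\pi\in R$ is a uniformizer). Write $B:=A^{\overline{G}_{y_0}}=\widehat{\cO}_{\cX,x_0}$. Since $T'$ cuts out $\ul{y}'$ and $\ul{y}'$ also reduces to $y_0$, the same reasoning shows $T'$ is also an admissible Cohen generator of $A$, and part (1) applied to $T'$ identifies the ideal of $\ul{x}'$ in $B$ as $I_{\ul{x}'}=(\Nm(T'))$, whence $I_{\ul{x}'}A=(\Nm(T'))$. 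Because $\pi_*\ul{y}=\ul{x}$ with multiplicity one (the induced map $\ul{y}\cong\Spec R\to\Spec R\cong\ul{x}$ is an isomorphism), the projection formula for the finite flat map $\pi:\Spec A\to\Spec B$ gives
\begin{equation*}
(\ul{x}.\ul{x}')=(\ul{y}.\pi^{*}\ul{x}')=\mathrm{length}_{A}\bigl(A/(T,\Nm(T'))\bigr).
\end{equation*}
Now $A/(T)\cong R$ via the section $\ul{y}$, so the image of $\Nm(T')=\prod_{\gamma}\gamma(T')$ in the DVR $R$ factors as $\prod_{\gamma}(\gamma(T')\bmod T)$, and additivity of valuation gives
\begin{equation*}
\mathrm{length}_{A}\bigl(A/(T,\Nm(T'))\bigr)=\sum_{\gamma\in\overline{G}_{y_0}}\mathrm{length}_{A}\bigl(A/(T,\gamma(T'))\bigr),
\end{equation*}
which is the main formula. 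For $\gamma,\gamma'$ in the same coset of $\overline{G}_{y'}$ the ideal $(\gamma(T'))$ is unchanged, so the sum collapses into coset sums of size $\#\overline{G}_{y'}$, yielding the claimed divisibility and, in the special case $G_{y'}=G_{y_0}$, the identity $(\ul{x}.\ul{x}')=\#\overline{G}_{y_0}(\ul{y}.\ul{y}')$.

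The main subtle point will be the identification $I_{\ul{x}'}A=(\Nm(T'))$, which I plan to obtain by reapplying the Galois-theoretic norm-generation argument of (1) to the alternative Cohen generator $T'$; the remaining ingredients (projection formula, DVR valuation additivity) are then completely routine.
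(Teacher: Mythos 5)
Your overall architecture is the right one (and, once patched, it coincides with the arguments in the references the paper cites: Katz--Mazur for (1), Conrad's intersection-theory notes for (2)), but there is a genuine gap at the crux of part (1). You assert that $T$ is a root of the monic polynomial $\prod_{\gamma}(X-\gamma(T))\in R[[\Nm_{\overline{G}_{y_0}}(T)]][X]$. The coefficients of this polynomial are the elementary symmetric functions of the $\gamma(T)$, so they are visibly in the invariant ring $R[[T]]^{\overline{G}_{y_0}}$ -- but the statement that they lie in the subring $R[[\Nm_{\overline{G}_{y_0}}(T)]]$ is essentially the conclusion you are trying to prove, and nothing you have said justifies it. Everything downstream hinges on this: without it you have no bound $[\mathrm{Frac}(R[[T]]):\mathrm{Frac}(R[[\Nm(T)]])]\le \#\overline{G}_{y_0}$, hence no identification $\mathrm{Frac}(R[[T]])^{\overline{G}_{y_0}}=\mathrm{Frac}(R[[\Nm(T)]])$, and the final normality argument does not close. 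The standard fix is Weierstrass division: each $\gamma$ reduces to a $k$-automorphism of $k[[T]]$, so $\gamma(T)\equiv(\text{unit})\cdot T \bmod \fp$, hence $\Nm(T)\equiv(\text{unit})\cdot T^{\#\overline{G}_{y_0}} \bmod \fp$; Weierstrass division in $R[[T]]$ with respect to such an element shows $R[[T]]$ is finite free of rank $\#\overline{G}_{y_0}$ over $R[[\Nm(T)]]$. With that finiteness and rank bound in hand, your Galois-theoretic step (Artin's lemma gives degree exactly $\#\overline{G}_{y_0}$ over the invariant field, forcing $\mathrm{Frac}(R[[T]])^{\overline{G}_{y_0}}=\mathrm{Frac}(R[[\Nm(T)]])$) and the normality of $R[[\Nm(T)]]$ finish the proof exactly as you intend. (Your one-line claim that completion commutes with invariants, and the passage from the $G$-orbit of $y_0$ to $G_{y_0}$-invariants of the single completed local ring, also deserve a sentence each, but these are routine.)

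Part (2) is correct modulo part (1): the identification $I_{\ul{x}'}=(\Nm(T'))$ in $B=R[[\Nm(T')]]$ follows because $\Nm(T')$ lies in the kernel of the section and $B/(\Nm(T'))\cong R$ already surjects onto $B/I_{\ul{x}'}\cong R$; the projection formula for the finite flat (indeed free, again by the Weierstrass step) map $\Spec R[[T]]\to\Spec B$, together with additivity of valuations in the DVR $R[[T]]/(T)\cong R$ applied to the factorization $\Nm(T')=\prod_{\gamma}\gamma(T')$, gives the displayed formula, and the coset-collapsing for $\overline{G}_{y'}$ is fine since elements of $\overline{G}_{y'}$ preserve the ideal $(T')$. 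So the only thing to repair is the circular integrality claim in (1).
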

\begin{proof}
    \begin{enumerate}
        \item \cite{MR772569}*{pp. 508-509}
        \item \cite{MR2083211}*{p.12}
    \end{enumerate}
\end{proof}

\subsection{Quadratic reciprocity}

\begin{theorem}[\cite{MR638719}*{Theorem 167}] \label{thm:quadratic_reciprocity}
        Fix a number field $K$ with $r_1$ real embeddings $\rho_1, \dotsc, \rho_{r_1}: K\xhookrightarrow{} \bR$. Let $\alpha, \beta \in \cO_K$, where $\alpha$ is odd, i.e., $\alpha$ is relatively prime to $2$. 
        If $\beta = \mathfrak{m}\mathfrak{n}$, where $\mathfrak{m}$ is an integral ideal without odd prime factors and $\mathfrak{n}$ is an odd integral ideal, then  \[\left(\frac{\beta}{\alpha}\right)\cdot \left(\frac{\alpha}{\mathfrak{n}}\right) = (-1)^{\sum_{i=1}^{r_1}\frac{\sgn\rho_i(\alpha)-1}{2}\frac{\sgn\rho_i(\beta)-1}{2}}\] if the odd number $\alpha$ is a quadratic residue$\mod 4\mathfrak{m}$ and relatively prime to $\beta$.
\end{theorem}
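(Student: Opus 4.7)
The statement is cited from Hecke's \emph{Vorlesungen} (Theorem 167 of \cite{MR638719}), so in the paper itself it is simply quoted rather than reproved. For completeness, I would sketch the standard modern argument via Hilbert symbols and Hilbert reciprocity from global class field theory.

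The plan is to introduce, at each place $v$ of $K$, the local Hilbert symbol $(\alpha,\beta)_v\in\{\pm 1\}$ and to invoke the product formula $\prod_v(\alpha,\beta)_v=1$. Each local factor is then evaluated individually. At an archimedean place $\rho_i$ one has $(\alpha,\beta)_{\rho_i}=-1$ iff $\rho_i(\alpha)<0$ and $\rho_i(\beta)<0$, which produces exactly the sign $(-1)^{\sum_i\frac{\sgn\rho_i(\alpha)-1}{2}\frac{\sgn\rho_i(\beta)-1}{2}}$ appearing on the right-hand side of the formula. At an odd finite prime $\fp$ coprime to $2$ and to $\alpha$, one has $(\alpha,\beta)_\fp=\bigl(\frac{\alpha}{\fp}\bigr)^{v_\fp(\beta)}$, and a symmetric identity holds at odd prime divisors of $\alpha$. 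Multiplying these contributions over the supports of $\alpha$ and of $\mathfrak{n}$ and collecting by multiplicativity reconstructs the product of residue symbols $\bigl(\frac{\beta}{\alpha}\bigr)\bigl(\frac{\alpha}{\mathfrak{n}}\bigr)$.

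The main obstacle, and the reason for the peculiar hypotheses, is the evaluation at the dyadic places $v\mid 2$. Here the Hilbert symbol is not governed by a clean residue formula, and one must show that the hypotheses that $\alpha$ is odd, $(\alpha,\beta)=1$, and $\alpha\equiv\square\pmod{4\mathfrak{m}}$ together force $(\alpha,\beta)_v=1$ for every $v\mid 2$. This is achieved by a direct congruence computation in $\cO_{K_v}/4$, using that the factor $\mathfrak{m}$ absorbs the $2$-part of $\beta$ so that at each such $v$ the element $\beta$ restricts to a unit times a prescribed $2$-power whose symbol against a square is trivial. Assembling all local contributions into the product formula and rearranging then yields the claimed identity.

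A purely analytic alternative, essentially due to Hecke himself, avoids class field theory entirely by exploiting the functional equation of theta series attached to fractional ideals of $\cO_K$ together with Gauss sum evaluations; this approach is longer but more elementary, and was historically the route by which Hecke first obtained the formula.
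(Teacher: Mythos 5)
The paper does not prove this statement at all; it is quoted verbatim as Theorem 167 of Hecke's \emph{Vorlesungen}, so there is no internal argument to compare against. Your sketch via Hilbert symbols and the product formula is the standard modern proof and is essentially correct: at odd primes the symbols reassemble into $\left(\frac{\beta}{\alpha}\right)\left(\frac{\alpha}{\mathfrak{n}}\right)$ (coprimality of $\alpha$ and $\beta$ ensures no prime contributes to both factors), the real places give exactly the sign $(-1)^{\sum_i\frac{\sgn\rho_i(\alpha)-1}{2}\frac{\sgn\rho_i(\beta)-1}{2}}$, and the whole content is the triviality of the dyadic symbols. That last step is where your write-up is vaguest, so let me record why it works: for $v\mid 2$, if $v(\beta)=v(\mathfrak{m})\geq 1$, then $\alpha\equiv\xi^2\pmod{4\mathfrak{m}}$ means $\alpha=\xi^2(1+4\delta)$ with $v(\delta)\geq 1$, and Hensel's lemma applied to $y^2+y-\delta$ (derivative $2y+1$ is a unit) shows $1+4\delta$ is a square in $K_v$, so $\alpha\in (K_v^\times)^2$ and $(\alpha,\beta)_v=1$; if instead $v(\beta)=0$, the same substitution shows $K_v(\sqrt{\alpha})/K_v$ is unramified (or trivial), and the unit $\beta$ is a norm from an unramified extension, so again $(\alpha,\beta)_v=1$. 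This is precisely the role of the hypothesis ``$\alpha$ a quadratic residue mod $4\mathfrak{m}$,'' with $\mathfrak{m}$ carrying the even part of $(\beta)$. Your closing remark is also accurate: Hecke's original route is analytic, via theta functions and Gauss sums, whereas the class-field-theoretic argument you outline is shorter but presupposes Hilbert reciprocity. Either way, the statement as used in the paper stands on the cited reference, and your sketch fills the gap correctly.
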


\section{CM points}\label{sec:CMpts}

In this section, we construct CM cycles and describe some properties of the cycles and the abelian varieties they parametrize.
An abelian variety parametrized by a Shimura variety of Hodge type is of CM type if and only if the corresponding Hodge cocharacter factors through a $\bQ$-rational torus.
The CM cycles are constructed in \cref{sec:constructCM} using subtori of $\widetilde{G}$. Each CM cycle on $\widetilde{G}(\bQ)\backslash \mathcal{H}^{\pm} \times \widetilde{G}(\bA_f) / \widetilde{K}  = \widetilde{\Gamma} \backslash \mathcal{H} \simeq \Gamma_0 \backslash  X^+$ corresponds to an $\cO_F$-order in a CM extension of $F$ by \Cref{lem:optemb}, and it is defined over $F$ by \Cref{lem:realCMpt}. Its size equals the class number of the corresponding $\cO_F$-order, as shown in \Cref{lem:orderPicard} and computed in \Cref{lem:classnumber}.  The specific $\cO_F$-orders we will consider are described in \cref{order}.  Finally, using the Shimura-Taniyama formula, we obtain in \Cref{supersingular} a sufficient condition for the primes at which the abelian varieties on the CM cycle have supersingular reduction.

\subsection{CM cycles} \label{sec:constructCM}

Let $L = F(\sqrt{-\lambda})$ be a CM extension of $F$ and $\phi: L\xhookrightarrow{} B$ be an embedding.\footnote{Such an embedding $\phi: L \xhookrightarrow{} B$ exists because $B$ is split at all finite places of $F$, which implies that any CM extension $L$ of $F$ splits $B$, and $L$ is a maximal subfield of $B$.}
Note that $\phi(L)\cap \cO = \phi(\phi^{-1}(\cO))$ and $\phi$ defines an optimal embedding $\phi^{-1}(\cO)\xhookrightarrow{}\cO$. 
Let $\widetilde{T} = \Res_{L/\bQ}\bG_m$ and $\tilde{h}: \Res_{\bC/\bR} \bG_m \to \widetilde{G}_\bR$ be the cocharacter defined by \begin{equation} \label{eq:CMcyclecocharacter}
    \bC^\times \to \prod_{\sigma:F\xhookrightarrow{}\bR} \bC^\times = \prod_{\sigma:F\xhookrightarrow{}\bR} (L\otimes_{F, \sigma}\bR)^\times  = L\otimes_{\bQ}\bR\xhookrightarrow{\phi}B\otimes_{\bQ}\bR, 
\end{equation} where $ \bC^\times \to \prod_{\sigma:F\xhookrightarrow{}\bR} \bC^\times$ is the inclusion on the coordinate corresponding to $\rho: F\xhookrightarrow{}\bR$ at which $B$ is split. Consider the CM cycle \begin{equation}\label{eq:CMcycle}\widetilde{T}(\bQ) \backslash \{\tilde{h}\} \times\widetilde{T}(\bA_f) / K_{\widetilde{T}} \xhookrightarrow{}\widetilde{G}(\bQ) \backslash \mathcal{H}^\pm \times\widetilde{G}(\bA_f) / \widetilde{K},\end{equation} where $K_{\widetilde{T}} = \widetilde{T}(\bA_f) \cap \phi^{-1}(\mathcal{\widehat{O}})$. This is injective because if $[\tilde{h},t_1] = [\tilde{h},t_2]$ with $t_1, t_2\in \widetilde{T}(\bA_f)$, then there exist $a\in \widetilde{G}(\bQ), \, k\in \widetilde{K}$ such that $a\cdot \tilde{h} = \tilde{h}$ and $ at_1k= t_2$. The first equality implies $a\in L^\times$ and then $k = t_1^{-1}a^{-1}t_2 \in \widetilde{T}(\bA_f) \cap \widetilde{K} =K_{\widetilde{T}}$. 

This CM cycle can be described via optimal embeddings $\phi^{-1}(\cO) \xhookrightarrow{}\cO$ by \Cref{lem:optemb}, which is a special case of the trace formula \cite{MR4279905}*{30.4.7}. Let \begin{align*}
    \mathcal{E} := \{\beta\in B^\times: \beta^{-1} \phi(L)\beta \cap \mathcal{O} = \beta^{-1}\phi(\phi^{-1}(\mathcal{O}))\beta\} = \{\beta\in B^\times: \phi(L)\cap \beta\mathcal{O}\beta^{-1} = \phi(\phi^{-1}(\mathcal{O}))\},\\
    \widehat{\mathcal{E}} := \{\widehat{\beta}\in \widehat{B}^\times: \widehat{\beta}^{-1} \widehat{\phi(L)}\widehat{\beta} \cap \widehat{\mathcal{O}} = \widehat{\beta}^{-1} \widehat{\phi(\phi^{-1}(\mathcal{O}))}\widehat{\beta}\}= \{\widehat{\beta}\in \widehat{B}^\times:  \widehat{\phi(L)} \cap \widehat{\beta}\widehat{\mathcal{O}}\widehat{\beta}^{-1} =  \widehat{\phi(\phi^{-1}(\mathcal{O}))}\}.
\end{align*} By Skolem-Noether theorem, the map $\beta \mapsto \beta^{-1}\phi\beta$ gives a bijection from $\phi(L)^\times \backslash \cE$ to the set of optimal embedings $\phi^{-1}(\cO)\xhookrightarrow{}\cO$. 
Given $e_1 \in \widehat{\cE}$, if $[\tilde{h}, e_1] =  [\tilde{h}, e_2]$ in $\widetilde{G}(\bQ)\backslash \mathcal{H}^{\pm} \times \widetilde{G}(\bA_f) / \widetilde{K}$ with $e_2\in \widetilde{G}(\bA_f)$, then there exist $a\in \widetilde{G}(\bQ)$ and $ k\in \widetilde{K}=\widehat{\cO}^\times$ such that $e_2 = a e_1 k \in \widehat{\cE}$. 
Thus, $e \mapsto [\tilde{h},e]$ defines a bijection from $\phi(L)^\times \backslash \widehat{\mathcal{E}}/\widehat{\mathcal{O}}^\times$ to a set of CM points in $\widetilde{G}(\bQ)\backslash \mathcal{H}^{\pm} \times \widetilde{G}(\bA_f) / \widetilde{K}$. Note that $\widetilde{T}(\bA_f)\xhookrightarrow{} \widehat{\mathcal{E}}$.

\begin{lemma} \label{lem:optemb}
    \begin{enumerate}
        \item When $\widehat{B}^\times = B^\times \widehat{\mathcal{O}}^\times$, the set of CM points $\{[\tilde{h}, e]: e\in \widehat{\mathcal{E}}\}$ corresponds to $\cO^\times$-conjugacy classes of optimal embeddings $\phi^{-1}(\mathcal{O})\xhookrightarrow{}\mathcal{O}$.  
        \item When $B$ is split at all finite primes of $F$, the set of CM points $\{[\tilde{h}, e]: e\in \widehat{\mathcal{E}}\}$ is the image of the CM cycle $\widetilde{T}(\bQ) \backslash \{\tilde{h}\} \times\widetilde{T}(\bA_f) / K_{\widetilde{T}}$, which has cardinality $h(\phi^{-1}(\mathcal{O}))$.
    \end{enumerate}
\end{lemma}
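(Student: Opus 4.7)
For part (1), the plan is to use strong approximation to reduce $\widehat{\mathcal{E}}$ to $\mathcal{E}$ and then apply the Skolem-Noether bijection noted in the paragraph preceding the lemma. Writing $\widehat{\beta} = \beta \widehat{u}$ with $\beta \in B^\times$ and $\widehat{u} \in \widehat{\mathcal{O}}^\times$, the fact that $\widehat{u}$ normalizes $\widehat{\mathcal{O}}$ gives $\widehat{\beta}\widehat{\mathcal{O}}\widehat{\beta}^{-1} = \widehat{\beta\mathcal{O}\beta^{-1}}$, so the defining condition $\widehat{\beta} \in \widehat{\mathcal{E}}$ translates to $\widehat{\phi(L)\cap \beta\mathcal{O}\beta^{-1}} = \widehat{\phi(\phi^{-1}(\mathcal{O}))}$ and hence $\beta \in \mathcal{E}$ (two $\mathcal{O}_F$-lattices in $\phi(L)$ coincide iff their completions do). Since $\beta$ is well-defined modulo $B^\times \cap \widehat{\mathcal{O}}^\times = \mathcal{O}^\times$, we obtain $\phi(L)^\times \backslash \widehat{\mathcal{E}}/\widehat{\mathcal{O}}^\times = \phi(L)^\times \backslash \mathcal{E}/\mathcal{O}^\times$. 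Under the Skolem-Noether bijection $\beta \mapsto \phi_\beta := \beta^{-1}\phi\beta$, right multiplication by $u \in \mathcal{O}^\times$ corresponds to conjugation $\phi_\beta \mapsto u^{-1}\phi_\beta u$, yielding the asserted bijection with $\mathcal{O}^\times$-conjugacy classes of optimal embeddings $\phi^{-1}(\mathcal{O}) \hookrightarrow \mathcal{O}$.

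For part (2), $B$ being split at all finite places combined with $F$ having narrow class number one gives strong approximation $\widehat{B}^\times = B^\times \widehat{\mathcal{O}}^\times$ through \eqref{eq:strongapprox}, so part (1) applies. The inclusion $\widetilde{T}(\bA_f) = \widehat{\phi(L)}^\times \subseteq \widehat{\mathcal{E}}$ is immediate: for $t \in \widehat{L}^\times$, $\phi(t)$ commutes with $\widehat{\phi(L)}$, so $\widehat{\phi(L)} \cap \phi(t)\widehat{\mathcal{O}}\phi(t)^{-1} = \phi(t)(\widehat{\phi(L)} \cap \widehat{\mathcal{O}})\phi(t)^{-1} = \widehat{\phi(\phi^{-1}(\mathcal{O}))}$, which shows that the image of the CM cycle sits inside the set of CM points. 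For the reverse inclusion I would prove the stronger statement $\widehat{\mathcal{E}} = \widehat{L}^\times \widehat{\mathcal{O}}^\times$ via a local check at each finite $v$: since $B_v$ is split and $\mathcal{O}_v$ is maximal, an optimal embedding of $R_v := \phi^{-1}(\mathcal{O})_v$ into $\mathcal{O}_v$ corresponds to a proper $R_v$-lattice in $L_v$ up to homothety, and such lattices form a single $L_v^\times$-orbit because $\Pic(R_v) = 1$ for any local order, giving $\widehat{\mathcal{E}}_v = L_v^\times \mathcal{O}_v^\times$.

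Combining these local statements yields the global equality $\widehat{\mathcal{E}} = \widehat{L}^\times \widehat{\mathcal{O}}^\times$, so the set of CM points becomes $\phi(L)^\times \backslash \widehat{\mathcal{E}}/\widehat{\mathcal{O}}^\times = L^\times \backslash \widehat{L}^\times / (\widehat{L}^\times \cap \widehat{\mathcal{O}}^\times) = L^\times \backslash \widehat{L}^\times / \widehat{R}^\times$, which is precisely the image of the CM cycle. Its cardinality equals $h(R) = h(\phi^{-1}(\mathcal{O}))$ via the standard identification of this double coset space with $\Pic(R)$, as recorded in \Cref{lem:orderPicard}. The main obstacle is the local claim $\widehat{\mathcal{E}}_v = L_v^\times \mathcal{O}_v^\times$ in split quaternion algebras --- essentially a lattice computation in $M_2(F_v)$, which can alternatively be extracted from the optimal-embeddings trace formula \cite{MR4279905}*{30.4.7}; everything else is formal manipulation of double cosets together with Skolem-Noether.
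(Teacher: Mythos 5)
Your proof is correct and follows essentially the same route as the paper: strong approximation plus Skolem--Noether for part (1), and for part (2) the local uniqueness of optimal embeddings into $M_2(\cO_{F_v})$ (equivalently the single double coset $\widehat{\mathcal{E}} = \widehat{\phi(L)}^\times\widehat{\mathcal{O}}^\times$, which is the paper's citation of \cite{MR4279905}*{30.5.3}) combined with identifying $\phi(L)^\times\backslash\widehat{\mathcal{E}}/\widehat{\mathcal{O}}^\times$ with $L^\times\backslash\widehat{L}^\times/\widehat{R}^\times \simeq \Pic(\phi^{-1}(\cO))$ as in \Cref{lem:orderPicard}. The only nitpick is the phrase ``$\Pic(R_v)=1$ for any local order'': $R_v$ is merely semilocal when $v$ splits in $L$, and one must know that proper $R_v$-lattices are invertible (true here because quadratic orders are Gorenstein) before concluding they are principal --- which is exactly the content of \cite{MR4279905}*{30.5.3} that the paper invokes.
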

\begin{proof}
    \begin{enumerate}
        \item Clearly $\cE \subset \widehat{\cE}$ and there is a natural map $\phi(L)^\times \backslash\cE/\mathcal{O}^\times \to \phi(L)^\times \backslash \widehat{\cE} /\widehat{\cO}^\times$. This is injective since $B\cap \widehat{\cO}^\times = \cO^\times$. For any $\widehat{\beta}\in \widehat{B}^\times$, since $\widehat{B}^\times = B^\times \widehat{\mathcal{O}}^\times$, there is $\beta\in B^\times$ such that $\widehat{\beta}\widehat{\cO}^\times = \beta\widehat{\cO}^\times$. Note that the class $\beta\cO^\times$ is well-defined, and $\widehat{\beta}\in \widehat{\cE}$ if and only if $\beta\in \cE$. Therefore, for $[\tilde{h}, \widehat{\beta}]$ where $\widehat{\beta}\in \widehat{\cE}$, pick $\beta \in \cE$ such that $\widehat{\beta}\mathcal{O}^\times = \beta\cO^\times$, then $[\tilde{h}, \widehat{\beta}] = [\tilde{h}, \beta] = [\beta^{-1}\cdot \tilde{h}, 1]$, where $\beta^{-1}\cdot \tilde{h}$ is defined by an optimal embedding $z\mapsto \beta^{-1}\phi(z) \beta$ of $\phi^{-1}(\cO)\xhookrightarrow{}\cO$. 
        \item As in the proof of \cite{MR4279905}*{30.4.7}, there is a natural surjective map $\phi(L)^\times \backslash \widehat{\mathcal{E}}/\widehat{\mathcal{O}}^\times\to \widehat{\phi(L)}^\times\backslash \widehat{\mathcal{E}}/\widehat{\mathcal{O}}^\times$ whose fiber over the identity element is $\phi(L)^\times \backslash \widehat{\phi(L)}^\times/(\widehat{\phi(L)}^\times\cap\widehat{\mathcal{O}}^\times)\simeq Pic (\phi^{-1}(\mathcal{O}))$. 
        There is a bijection $\widehat{\phi(L)}^\times\backslash \widehat{\mathcal{E}}/\widehat{\mathcal{O}}^\times \xrightarrow{\simeq} \{\widehat{\mathcal{O}}^\times\text{-conjugacy classes of optimal }\widehat{\phi^{-1}(\mathcal{O})}\xhookrightarrow{} \widehat{\mathcal{O}}\}$. In the case $B$ is split at all finite primes of $F$, since $\#\{GL_2(\mathcal{O}_{F_v})\text{-conjugacy classes of optimal }S\xhookrightarrow{} M_2(\mathcal{O}_{F_v})\} = 1$ for any $\mathcal{O}_{F_v}$-order $S$ by  \cite{MR4279905}*{30.5.3}, we have $\# \widehat{\phi(L)}^\times\backslash \widehat{\mathcal{E}}/\widehat{\mathcal{O}}^\times = \prod_v 1 = 1$.
    \end{enumerate}
\end{proof}

\begin{lemma}\label{lem:realCMpt}
    When $B$ is split at all finite primes of $F$, complex conjugation defines an involution on the CM cycle $\widetilde{T}(\bQ) \backslash \{\tilde{h}\} \times\widetilde{T}(\bA_f) / K_{\widetilde{T}}$. In particular, if $h(\phi^{-1}(\mathcal{O}))$ is odd, then there is a real point on this CM cycle. 
\end{lemma}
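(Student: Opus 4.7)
The plan is to identify complex conjugation on the Shimura variety with an explicit involution on the CM cycle, then to conclude by the parity of $h(\phi^{-1}(\mathcal{O}))$. Viewing the Shimura variety as a scheme over $\rho(F)\subset\bR$, complex conjugation sends $[h,t]$ to $[\bar h,t]$, where $\bar h(z):=h(\bar z)$. Since $L$ is a maximal commutative $F$-subalgebra of $B$, the Skolem--Noether theorem produces $\mu\in B^{\times}=\widetilde{G}(\bQ)$ with $\mu\,\phi(l)\,\mu^{-1}=\phi(\bar l)$ for all $l\in L$ and $\mu^{2}\in F^{\times}$; feeding this into the definition of $\tilde h$ in \eqref{eq:CMcyclecocharacter} yields $\mu\,\tilde h\,\mu^{-1}=\overline{\tilde h}$. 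Combining with the identity $\mu^{-1}l=\bar l\,\mu^{-1}$ for $l\in L$, complex conjugation on a CM point reads
\[
[\tilde h,t]\longmapsto[\overline{\tilde h},t]=[\tilde h,\mu^{-1}t]=[\tilde h,\bar t\cdot\mu^{-1}],\qquad t\in\widetilde{T}(\bA_f).
\]

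The technical core is to show $\mu\in\widehat{L}^{\times}\cdot\widehat{\mathcal{O}}^{\times}$ inside $\widehat{B}^{\times}$. At every finite place $v$, $B_v\simeq M_2(F_v)$ and $\mathcal{O}_v$ is conjugate to $M_2(\mathcal{O}_{F_v})$; an explicit matrix calculation in each of the three cases for $L_v/F_v$ (split, unramified, ramified) produces a normalizer $\nu_v\in\mathcal{O}_v^{\times}$ of $\phi(L_v)$ in the non-trivial coset modulo $\phi(L_v)^{\times}$ --- for example, the swap matrix in the split case, and $\operatorname{diag}(1,-1)$ for the companion-matrix embedding of a generator in the field case. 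Since the local normalizer quotient has order $2$, the global $\mu$ agrees with $\nu_v$ up to $\phi(L_v)^{\times}$ at every $v$, so $\mu=\widehat{l}\,\widehat{k}$ for some $\widehat{l}\in\widehat{L}^{\times}$ and $\widehat{k}\in\widehat{\mathcal{O}}^{\times}$. Because $\widehat{l}$ is central in $\widehat{L}^{\times}$, conjugation by $\widehat{k}$ restricts to the same operation on $\widehat{L}^{\times}$ as conjugation by $\mu$, namely complex conjugation; hence $\mu^{-1}\widehat{k}=\widehat{k}^{-1}\widehat{l}^{-1}\widehat{k}=\overline{\widehat{l}}{}^{-1}\in\widehat{L}^{\times}$, and
\[
[\tilde h,\bar t\,\mu^{-1}]=[\tilde h,\bar t\,\mu^{-1}\widehat{k}]=[\tilde h,\bar t\cdot\overline{\widehat{l}}{}^{-1}]
\]
lies in the CM cycle.

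Thus complex conjugation restricts to the map $[t]\mapsto[\bar t\cdot\overline{\widehat{l}}{}^{-1}]$ on $\widetilde{T}(\bQ)\backslash\widetilde{T}(\bA_f)/K_{\widetilde{T}}$; this is an involution because complex conjugation on the ambient Shimura variety is. By \Cref{lem:optemb}(2) this finite set has cardinality $h(\phi^{-1}(\mathcal{O}))$, so if this cardinality is odd then every involution has a fixed point, giving a real CM point. The only non-formal ingredient is the local construction of $\nu_v$; I do not expect this to pose a serious obstacle, as it reduces to three short computations in $GL_2$.
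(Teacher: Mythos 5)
Your argument is correct in substance and follows the same skeleton as the paper's proof: complex conjugation acts on a CM point by $[\tilde h,t]\mapsto[\tilde h,\bar t\,\beta]$ for a Skolem--Noether element $\beta$ normalizing $\phi(L)$ and inducing the Galois conjugation, one checks that this image again lies in the CM cycle, and oddness of $h(\phi^{-1}(\cO))$ forces a fixed point of the involution. Where you diverge is the middle step. The paper notes that $\beta\in\cE$ because the order $\phi^{-1}(\cO)$ contains $\cO_F$ and is therefore Galois-stable, so $\bar t\beta\in\widehat{\cE}$, and then invokes \Cref{lem:optemb}(2), which identifies $\{[\tilde h,e]:e\in\widehat{\cE}\}$ with the image of the CM cycle. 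You instead prove directly that $\beta\in\widehat{L}^\times\widehat{\cO}^\times$ by a place-by-place computation; this is in effect a hands-on re-derivation of the local input behind \Cref{lem:optemb}(2), namely that all optimal embeddings of $S_v=\phi^{-1}(\cO)\otimes_{\cO_F}\cO_{F_v}$ into $\cO_v\simeq M_2(\cO_{F_v})$ are $\cO_v^\times$-conjugate (\cite{MR4279905}*{30.5.3}). Two details in your local step need repair, though neither is fatal: (i) $\mathrm{diag}(1,-1)$ conjugates the companion-matrix embedding of a generator $\alpha$ to its Galois conjugate only when $\trd(\alpha)=0$; in general take, e.g., $\left(\begin{smallmatrix}1&t\\0&-1\end{smallmatrix}\right)$ where $x^2-tx+n$ is the minimal polynomial of $\alpha$, which still lies in $\GL_2(\cO_{F_v})$; (ii) you cannot assume $\phi(L_v)$ sits in a standard position relative to $\cO_v$ --- what you know is that $\phi$ is an optimal embedding of the order $S_v$, which may be non-maximal (e.g.\ at $v\mid 2$ in the intended application), and reducing it to the companion-matrix model is exactly the one-conjugacy-class theorem just cited, at which point it is cleaner to argue as the paper does, observing that $\overline{\phi}$ is again an optimal embedding of the same Galois-stable order and appealing to \Cref{lem:optemb}(2). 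Finally, the assertion that complex conjugation sends $[\tilde h,t]$ to $[\overline{\tilde h},t]$ is a nontrivial fact about the canonical model over the real reflex field; the paper cites \cite{MR621017} for it, and you should do the same rather than treat it as formal.
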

\begin{proof}
    For a CM point $[\tilde{h},t]$ where $\tilde{h}$ is induced by $\phi: L\xhookrightarrow{}B$ and $t\in \widetilde{T}(\bA_f)$, its complex conjugate is $[\iota \tilde{h}, t]$ (\cite{MR621017}), where $\iota \tilde{h}$ is induced by $\overline{\phi}: L\xhookrightarrow{}B,\, \overline{\phi}(z) = \phi(\bar{z})$.
    By the Skolem-Noether theorem, there exists $\beta\in B^\times$ such that $\overline{\phi}(\alpha) = \beta^{-1} \phi(\alpha) \beta$, then $\iota \tilde{h}(z) = \beta^{-1} \tilde{h}(z) \beta$ and $[\iota \tilde{h}, t] = [\tilde{h}, \beta t] = [\tilde{h}, \bar{t}\beta]$. 
    Since $\beta^{-1} \phi(L)\beta = \overline{\phi(L)} = \phi(L)$ and $\beta^{-1} \phi(\phi^{-1}(\cO))\beta = \overline{\phi(\phi^{-1}(\cO))} =\phi(\phi^{-1}(\cO))= \phi(L)\cap \cO$, we have $\beta\in \mathcal{E}$, then $\bar{t}\beta \in \widehat{\mathcal{E}}$, and the result follows from the second part of \Cref{lem:optemb}.
\end{proof}

\begin{lemma} \label{lem:uniquereal}
    Suppose $F$ is a totally real number field with narrow class number $1$ and $B$ is a quaternion algebra over $F$ unramified at all finite places and exactly one of the real places of $F$. If $\phi: L\xhookrightarrow{}B$ is an embedding with $h(\phi^{-1}(\cO))$ odd, then there is a unique real point on the CM cycle  $\widetilde{T}(\bQ) \backslash \{\tilde{h}\} \times\widetilde{T}(\bA_f) / K_{\widetilde{T}}$ defined by $\phi$.   
\end{lemma}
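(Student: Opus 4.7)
The proposal is to pass from the CM cycle to its identification with $\Pic(\phi^{-1}(\cO))$, compute the complex conjugation involution in these coordinates, and then use the oddness of the class number to force uniqueness.

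First I would invoke \Cref{lem:optemb}(2), which applies since $B$ is split at all finite places of $F$: the CM cycle
\[\widetilde T(\bQ)\backslash \{\tilde h\}\times\widetilde T(\bA_f)/K_{\widetilde T}\;=\;L^\times\backslash\widehat L^\times/\widehat{\phi^{-1}(\cO)}^\times\;=\;\Pic(\phi^{-1}(\cO))\]
injects into the Shimura curve with image the set of CM points $\{[\tilde h, e]:e\in\widehat{\mathcal{E}}\}$. In particular, $\widehat{\phi(L)}^\times\backslash\widehat{\mathcal{E}}/\widehat{\cO}^\times$ reduces to a single double coset (as recalled in the proof of \Cref{lem:optemb}(2)), so the element $\beta\in\mathcal{E}$ constructed in the proof of \Cref{lem:realCMpt} decomposes as $\beta=\phi(\widehat l_\beta)\,k_\beta$ for some $\widehat l_\beta\in\widehat L^\times$ and $k_\beta\in\widehat{\cO}^\times$. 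The class $[\widehat l_\beta]\in\Pic(\phi^{-1}(\cO))$ is independent of the chosen decomposition.

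Next I would compute the involution explicitly. By the proof of \Cref{lem:realCMpt}, complex conjugation sends $[\tilde h, \phi(\widehat t)]$ to $[\tilde h, \overline{\phi(\widehat t)}\,\beta]$. Since $\widetilde T(\bA_f)=\widehat L^\times$ is abelian and $k_\beta\in\widetilde K$,
\[
[\tilde h, \overline{\phi(\widehat t)}\,\beta] \;=\; [\tilde h, \phi(\overline{\widehat t}\,\widehat l_\beta)\,k_\beta]\;=\;[\tilde h, \phi(\overline{\widehat t}\,\widehat l_\beta)].
\]
Under the bijection with $\Pic(\phi^{-1}(\cO))$ the involution reads $[\widehat t]\mapsto[\overline{\widehat t}\,\widehat l_\beta]$. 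To simplify, observe that $\widehat t\cdot\overline{\widehat t}=\Nm_{L/F}(\widehat t)\in\widehat F^\times$; since $F$ has class number one, $\widehat F^\times=F^\times\widehat{\cO_F}^\times\subset L^\times\widehat{\phi^{-1}(\cO)}^\times$, hence $[\Nm_{L/F}(\widehat t)]=1$ in $\Pic(\phi^{-1}(\cO))$, i.e., $[\overline{\widehat t}]=[\widehat t]^{-1}$. The involution thus takes the form $[\widehat t]\mapsto[\widehat l_\beta]\cdot[\widehat t]^{-1}$.

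Finally, the real points on the CM cycle correspond to the solutions of $[\widehat t]^{2}=[\widehat l_\beta]$ in $\Pic(\phi^{-1}(\cO))$. Existence of a solution is provided by \Cref{lem:realCMpt}. Since $h(\phi^{-1}(\cO))$ is odd by hypothesis, the squaring map on $\Pic(\phi^{-1}(\cO))$ is a bijection: its kernel $\Pic(\phi^{-1}(\cO))[2]$ has order dividing the odd integer $h(\phi^{-1}(\cO))$, hence is trivial. Therefore there is exactly one such $[\widehat t]$, proving uniqueness. The only genuine calculation is the identification of the involution as $[\widehat t]\mapsto[\widehat l_\beta]\cdot[\widehat t]^{-1}$; once this is in place, the group-theoretic conclusion is immediate.
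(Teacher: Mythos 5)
Your proof is correct and follows essentially the same route as the paper: identify the CM cycle with $\Pic(\phi^{-1}(\cO))$ via \Cref{lem:orderPicard}, observe that complex conjugation acts as inversion up to multiplication by a fixed class (using $h(F)=1$ to get $[\overline{\widehat t}\,]=[\widehat t\,]^{-1}$), and conclude from the oddness of $h(\phi^{-1}(\cO))$. The only cosmetic difference is that the paper first uses the real point from \Cref{lem:realCMpt} to normalize $\phi$ so that $\beta\in\widetilde T(\bQ)\widetilde K$ and real points become exactly the $2$-torsion, whereas you keep the twisted involution $[\widehat t\,]\mapsto[\widehat l_\beta][\widehat t\,]^{-1}$ (using the single double coset from the proof of \Cref{lem:optemb}) and solve $[\widehat t\,]^{2}=[\widehat l_\beta]$ by bijectivity of squaring, which in fact gives existence as well as uniqueness.
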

\begin{proof}
    Existence of a real point follows from \Cref{lem:realCMpt}. Note that $\widehat{B}^\times = B^\times \widehat{\cO}^\times$ by \eqref{eq:strongapprox}. 
    Then by \Cref{lem:optemb}, replacing $\phi$ by $\beta^{-1}\phi\beta$ and consequently $\tilde{h}$ by $\beta^{-1}\cdot \tilde{h}$ for $\beta\in\cE$ does not change the CM cycle. Thus, we may assume $[\tilde{h},1]$ is a real point on this CM cycle. There exists some $\beta\in B^\times$ such that the complex conjugation of any point $[\tilde{h},t]$ with $ t\in \widetilde{T}(\bA_f)$ is given by $[\tilde{h}, \beta t] = [\tilde{h}, \bar{t}\beta]$. Since $[\tilde{h}, 1]$ is real, we have $[\tilde{h}, 1] = [\tilde{h}, \beta]$, which implies $\beta \in \widetilde{T}(\bQ) \widetilde{K}$, and then $[\tilde{h}, \bar{t}\beta] = [\tilde{h}, \bar{t}]$.

    By \Cref{lem:orderPicard}, there is an isomorphism of groups $i:\widetilde{T}(\bQ)\backslash \widetilde{T}(\bA_f)/ K_{\widetilde{T}} \xrightarrow{\simeq} \Pic(\phi^{-1}(\cO))$ compatible with complex conjugation. Then $[\tilde{h}, t]$ is real, i.e.,  $[\tilde{h}, t] = [\tilde{h}, \bar{t}]$ if and only if $i(t) = \overline{i(t)}$.
    The norm map defines a homomorphism $\widetilde{T}(\bQ)\backslash \widetilde{T}(\bA_f)/ K_{\widetilde{T}} \simeq \Pic(\phi^{-1}(\cO))\to \Pic(\cO_F) = \Cl(F)$. Since $h(F) = 1$ and $\mathcal{O}_F\subset \phi^{-1}(\mathcal{O})$, for $I$ an invertible ideal of $\phi^{-1}(\cO)$ we have $I\bar{I} = \Nm_{L/F}(I) \phi^{-1}(\cO)$ a principal ideal in $\phi^{-1}(\cO)$. Then the real CM points in this CM cycle correspond to the $2$-torsion points in $\Pic(\phi^{-1}(\cO))$. Under the assumption that $h(\phi^{-1}(\cO))$ is odd, there is a unique $2$-torsion point in $\Pic(\phi^{-1}(\cO))$; thus there is a unique real CM point in this cycle. 
\end{proof}

\begin{lemma}
    Let $F$ be a number field with class number $1$ and $L$ be a quadratic extension of $F$. Then the maximal order of $L$ is of the form $\cO_L = \cO_F[\alpha]$ for some $\alpha \in \cO_L$, and any order $R$ of $K$ containing $\cO_F$ is of the form $R = \cO_F + f \cO_K = \cO_F [f\alpha]$ for some $f\in \cO_F$; in particular, $R$ has conductor $f\cO_K$ and $R$ is stable under the action of $\Gal(L/F)$.
\end{lemma}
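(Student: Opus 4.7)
The plan is to exploit that $\cO_F$ is a PID (equivalent to $h(F)=1$ via the $F$-isomorphism $\Pic(\cO_F) \simeq \Cl(F)$) and reduce every claim to elementary linear algebra over this PID.

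First, for the existence of $\alpha$, I would show that the quotient $\cO_L/\cO_F$ is a torsion-free $\cO_F$-module: if $c\in \cO_F\setminus \{0\}$ and $x\in \cO_L$ satisfy $cx\in \cO_F$, then $x=cx/c\in F$ is integral over $\cO_F$, hence in $\cO_F$ since $\cO_F$ is integrally closed in $F$. As a finitely generated torsion-free module over a PID, $\cO_L/\cO_F$ is free of rank $[L:F]-1=1$. Hence the exact sequence $0 \to \cO_F \to \cO_L \to \cO_L/\cO_F \to 0$ splits, giving $\cO_L = \cO_F \oplus \cO_F\alpha$ for a lift $\alpha$ of a generator of the quotient. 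Since $\alpha^2\in \cO_L$ can be written $a+b\alpha$ with $a,b\in \cO_F$, the ring $\cO_F[\alpha]$ already contains all of $\cO_L$, so $\cO_L=\cO_F[\alpha]$.

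Next, for an order $R$ with $\cO_F\subseteq R\subseteq \cO_L$, the image $R/\cO_F$ is an $\cO_F$-submodule of the rank-one free module $\cO_L/\cO_F \simeq \cO_F \cdot \overline{\alpha}$. Submodules of a rank-one free module over a PID are of the form $I\cdot \overline{\alpha}$ for an ideal $I\subseteq \cO_F$; since $h(F)=1$, $I=(f)$ for some $f\in \cO_F$, and $R\otimes_{\cO_F}F=L$ forces $f\neq 0$. Thus $R=\cO_F+f\cO_F\alpha=\cO_F+f\cO_L=\cO_F[f\alpha]$.

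For the conductor, the inclusion $f\cO_L\subseteq \{x\in \cO_L : x\cO_L\subseteq R\}$ is immediate since $f\cO_L\subseteq R$ and $f\cO_L$ is an $\cO_L$-ideal. Conversely, if $x\cO_L\subseteq R$, write $x=a+bf\alpha$ and use $x\alpha\in R$ together with $\alpha^2\in \cO_F+\cO_F\alpha$ to conclude that $a\in f\cO_F$, whence $x\in f\cO_L$. Finally, for Galois stability, letting $\sigma$ be the nontrivial element of $\Gal(L/F)$, the trace $\alpha+\sigma(\alpha)=:t\in \cO_F$ gives $\sigma(\alpha)=t-\alpha$, so $\sigma(f\alpha)=ft-f\alpha\in \cO_F+f\cO_F\alpha=R$, and $\sigma(R)=R$.

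There is no real obstacle here beyond checking that each required module-theoretic fact (torsion-freeness, splitting, principality of ideals) actually follows from $\cO_F$ being a PID; the only slightly subtle bookkeeping is the conductor direction, which is a short computation once one expands $x\alpha$ in the basis $\{1,f\alpha\}$.
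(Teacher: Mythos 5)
Your proof is correct, and it takes a somewhat different route from the paper's in how it pins down $R$. The paper chooses an adapted (elementary-divisor) basis $\omega_1,\omega_2$ of $\cO_L$ over $\cO_F$ with $a_1\omega_1,a_2\omega_2$ a basis of $R$, performs a unimodular change of basis to produce a basis of the form $1,\alpha$, sets $f=a_1a_2$, and then identifies $R$ with $\cO_F+f\cO_L$ by a discriminant comparison: $f\cO_L\subseteq R$, so $\cO_F+f\cO_L\subseteq R$, and both have discriminant $f^2\disc_{L/F}(\cO_L)$. You instead work with the quotient module $\cO_L/\cO_F$: torsion-freeness plus the PID hypothesis make it free of rank one, splitting the sequence gives the basis $1,\alpha$, and then $R/\cO_F$ is a submodule of a rank-one free module, hence of the form $(f)\overline{\alpha}$, which identifies $R=\cO_F+f\cO_F\alpha=\cO_F[f\alpha]$ directly with no discriminant computation. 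Your version is a bit more economical, and it also explicitly verifies the two ``in particular'' assertions that the paper's proof leaves implicit: that the conductor is exactly $f\cO_L$ (by reading off the $\alpha$-coefficient of $x\alpha$ in the basis $1,\alpha$) and Galois stability (via $\sigma(\alpha)=\Tr_{L/F}(\alpha)-\alpha$). Both arguments ultimately rest on the same input, namely that $h(F)=1$ makes $\cO_F$ a PID so that $\cO_L$ is free of rank two over $\cO_F$ with a basis containing $1$.
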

\begin{proof}
    Since $\cO_F$ is a PID, given an order $R\subseteq \cO_K$ containing $\cO_F$, let $\omega_1, \omega_2$ be a basis for $\cO_K$ over $\cO_F$ such that $a_1\omega_1, a_2\omega_2$ form a basis for $R$ over $\cO_F$ for some $a_1, a_2\in \cO_F$.  We have $1 = b_1\omega_1+b_2\omega_2$ for some $b_1, b_2\in \cO_F$ relatively prime, then $b_1c_1 + b_2c_2 = 1$ for some $c_1, c_2\in \cO_F$, and $1, \alpha:= -c_2\omega_1+c_1\omega_2$ form another basis for $\cO_K$ over $\cO_F$. 
    Let $f = a_1a_2$, then $f \cO_K \subset R$ and $\disc_{L/F}(R) = f^2\disc_{L/F}(\cO_L)$.
    Since $\cO_F + f\cO_K \subseteq R$ and $\cO_F+f\cO_K = \cO_F[f\alpha]$ has discriminant $f^2 d$, we have $R = \cO_F + f\cO_K$.
\end{proof}

\begin{lemma} \label{lem:orderPicard}
    Let $L/F$ be an extension of number fields, and $\mathfrak{f}\subset \cO_F$ be an ideal. Let $R = \cO_F+\mathfrak{f}\cO_L$, which is an order of $L$ containing $\cO_F$, and $R_{\fp} = R\otimes_{\cO_F}\cO_{F_{\fp}} $ be the completion of $R$ at a prime $\fp \subset \cO_F$. Note that $\cO_{L, \fp} = \prod_{\fq | \fp} \cO_{L_{\fq}}$, so that $\prod_{\fp} R_{\fp}^\times \subset \prod_{\fp} \prod_{\fq|\fp} \cO_{L_{\fq}}^\times \subset \bA_{L,f}^\times$. There is an isomorphism $\bA_{L,f}^\times /(L^\times \prod_{\fp} R_{\fp}^\times ) \simeq \Pic(R)$. 
\end{lemma}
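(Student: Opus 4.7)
The plan is to write down the standard idelic description of the Picard group of an order, adapted to the present setting. Fix notation: let $R = \cO_F + \mathfrak{f}\cO_L$, let $R_\fp = R\otimes_{\cO_F}\cO_{F_\fp}$, and note that $R_\fp$ is a finite $\cO_{F_\fp}$-module, hence semi-local; concretely $L\otimes_F F_\fp = \prod_{\fq\mid\fp}L_\fq$ and $R_\fp$ is an $\cO_{F_\fp}$-order in this product of local fields whose integral closure is $\prod_{\fq\mid\fp}\cO_{L_\fq}$.

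First I would construct the map
\[
\Phi: \bA_{L,f}^\times \longrightarrow \Pic(R), \qquad (x_\fp)_\fp \longmapsto [I_{(x_\fp)}],
\]
where $I_{(x_\fp)}$ is the unique fractional $R$-ideal in $L$ characterised by $(I_{(x_\fp)})_\fp = x_\fp R_\fp$ for every finite prime $\fp$ of $F$. This is well defined because $x_\fp \in R_\fp^\times$ for all but finitely many $\fp$, and a fractional $R$-ideal is determined by its localisations at the finite primes of $F$. Next I would check that $I_{(x_\fp)}$ is an invertible $R$-ideal: since $R_\fp$ is semi-local, every invertible $R_\fp$-module is free of rank one, so $x_\fp R_\fp$ is invertible, and invertibility is local. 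Thus $\Phi$ indeed lands in $\Pic(R)$, and the obvious inclusion $L^\times \prod_\fp R_\fp^\times \subseteq \ker(\Phi)$ makes $\Phi$ factor through the quotient $\bA_{L,f}^\times/(L^\times \prod_\fp R_\fp^\times)$.

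To prove surjectivity, I would start with an invertible fractional $R$-ideal $I$. For each $\fp$, the localisation $I_\fp$ is an invertible $R_\fp$-module, hence free of rank one by the semi-local argument above, so $I_\fp = x_\fp R_\fp$ for some $x_\fp \in L\otimes_F F_\fp$. Choosing such $x_\fp$ simultaneously at all $\fp$ (using $x_\fp = 1$ outside the support of the conductor and the primes dividing $I$) yields an idele $(x_\fp)\in \bA_{L,f}^\times$ with $\Phi((x_\fp)) = [I]$. For injectivity, suppose $\Phi((x_\fp)) = [xR]$ for some $x \in L^\times$. Then for every $\fp$, $x_\fp R_\fp = xR_\fp$, so $u_\fp := x^{-1}x_\fp$ lies in $R_\fp^\times$, and $(x_\fp) = x\cdot (u_\fp)\in L^\times\prod_\fp R_\fp^\times$.

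The only non-routine point is the assertion that invertible $R_\fp$-modules are free, which I would justify by the semi-local structure of $R_\fp$ (or equivalently, by noting that the Jacobson radical argument gives $\Pic(R_\fp)=0$ for any semi-local ring). I do not expect this to be a real obstacle, as it is a standard fact; the rest of the proof is bookkeeping with ideles and localisations.
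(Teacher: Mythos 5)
Your proof is correct, but it takes a genuinely different route from the paper. You construct the isomorphism directly: an idele is sent to the unique fractional $R$-ideal with prescribed completions, invertibility and surjectivity are handled by the fact that a rank-one projective module over the semi-local ring $R_{\fp}=R\otimes_{\cO_F}\cO_{F_\fp}$ is free (so invertible ideals are locally principal at the primes of $F$), and injectivity is the observation that a principal ideal forces the idele into $L^\times\prod_\fp R_\fp^\times$. The paper instead never works with arbitrary ideles mapping to arbitrary invertible ideals: it first cuts down to the subgroup $\bI_{\mathfrak{f},R}$ of ideles lying in $R_\fp^\times$ at primes dividing the conductor, identifies the resulting quotient with ideals of $\cO_L$ prime to $\mathfrak{f}$ modulo suitable principal ideals, and then transports this to $\Pic(R)$ via the standard bijection $J(R,\mathfrak{f})\simeq J(\cO_L,\mathfrak{f})$ between ideals prime to the conductor, finishing surjectivity with Neukirch's exact sequence $(\cO_L/\mathfrak{f}\cO_L)^\times/(R/\mathfrak{f}\cO_L)^\times\to\Pic(R)\to\Pic(\cO_L)\to1$. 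Your argument is more self-contained and avoids the prime-to-conductor bookkeeping, at the cost of having to justify the local-to-global construction of the ideal $I_{(x_\fp)}$ (the standard local--global principle for $\cO_F$-lattices, which you should cite or state) and the semi-local freeness fact, which you correctly flagged; the paper's route leans on citable results about orders and makes the comparison with $\Pic(\cO_L)$ explicit, which it reuses elsewhere (e.g.\ in the class-number computation of Lemma 3.6). You should also record, if only in a sentence, that $\Phi$ is a group homomorphism (the completion of a product of locally principal ideals is the product of the completions), but this is routine.
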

\begin{proof}
    Write $\mathfrak{f} = \prod_{\mathfrak{p}} \fp^{m(\fp)}$, then $\mathfrak{f}\cO_L = \prod_{\mathfrak{p}}\prod_{\mathfrak{q}|\mathfrak{p}} \fq^{e(\fq/\fp)m(\fp)}$. Let $\bI_{\mathfrak{f},R} := \{(\alpha_\fq)\in \bA_{L,f}^\times: (\alpha_\fq)_{\fq|\fp} \in R_{\fp}^\times \text{ for all } \fp | \mathfrak{f}\}$
and $L_{\mathfrak{f},R}:= \{x\in L^\times: x \in R_{\fp}^\times \text{ for all }\fp |\mathfrak{f}\} = L^\times \cap \bI_{\mathfrak{f},R}$. Since $R_{\fp} \supseteq \cO_{F_{\fp}}+\prod_{\fq|\fp}\mathfrak{f}\cO_{L_{\fq}}$, we have  $\bI_{\mathfrak{f},R} \supset \bI_{\mathfrak{f}, 1} := \{(\alpha_\fq)\in \bA_{L,f}^\times: \alpha_{\fq}-1\in \fq^{e(\fq/\fp)m(\fp)}\cO_{L_{\fq}}\text{ for all }\fq |\mathfrak{f}\cO_L\}$ and $L_{\mathfrak{f},R} \supset L_{\mathfrak{f}, 1} := \{x\in L^\times: v_{\fq}(x-1)\geq e(\fq/\fp)m(\fp)\text{ for all }\fq |\mathfrak{f}\cO_L\}$. The inclusion $\bI_{\mathfrak{f},R} \xhookrightarrow{} \bA_{L,f}^\times$ defines an isomorphism $\bI_{\mathfrak{f},R} /((L^\times \prod_{\fp} R_{\fp}^\times )\cap \bI_{\mathfrak{f},R}))\xrightarrow{\simeq} \bA_{L,f}^\times/(L^\times \prod_{\fp} R_{\fp}^\times )$, 
    where $(L^\times \prod_{\fp} R_{\fp}^\times )\cap \bI_{\mathfrak{f},R} = L_{\mathfrak{f},R}\prod_{\fp}R_{\fp}^\times$. 
    
    Let $J(R, \mathfrak{f})$ (resp. $J(\cO_{L}, \mathfrak{f})$) be the subgroup of the group $J(R)$ (resp. $J(\cO_L)$) of invertible ideals of $R$ (resp. $\cO_{L}$) generated by prime ideals not dividing $\mathfrak{f}\cO_{L}$.
    Then $\mathfrak{a}\mapsto \mathfrak{a}\cO_{L}$ defines an isomorphism $J(R, \mathfrak{f}) \xrightarrow{\simeq}J(\cO_{L}, \mathfrak{f}) $ (\cite{MR1697859}*{\MakeUppercase{\romannumeral 1}.12.6, \MakeUppercase{\romannumeral 1}.12.10}).
    The natural map $\bI_{\mathfrak{f},R} \to J(\cO_L, \mathfrak{f})$ has kernel $\prod_{\fp} R_{\fp}^\times$ and defines an isomorphism \[\bI_{\mathfrak{f},R}/(L_{\mathfrak{f},R}\prod_{\fp}R_{\fp}^\times)\xrightarrow{\simeq} J(\cO_{L}, \mathfrak{f})/\{x\cO_{L}: x\in L_{\mathfrak{f}}\}.\] 
    The inclusion $J(\cO_L, \mathfrak{f})\simeq J(R, \mathfrak{f}) \xhookrightarrow{} J(R)$ defines an injection \begin{equation}\label{eq:orderPicard}J(\cO_L, \mathfrak{f})/\{x\cO_L: x\in L_{\mathfrak{f}}\} \xrightarrow{} \Pic(R), \end{equation} whose composition with $\Pic(R) \to \Pic(\cO_L) \simeq J(\cO_L, \mathfrak{f}) / \{x\cO_L\in J(\cO_L, \mathfrak{f}):x\in L\}$ %CFT \MakeUppercase{\romannumeral 5}.1.1
    is surjective.
    Then surjectivity of \eqref{eq:orderPicard} follows from the exact sequence $(\cO_L/\mathfrak{f}\cO_L)^\times/(R/\mathfrak{f}\cO_L)^\times \to \Pic(R) \to \Pic(\cO_L) \to 1$ (\cite{MR1697859}*{\MakeUppercase{\romannumeral 1}.12.9, \MakeUppercase{\romannumeral 1}.12.11}) and surjectivity of $\{x\in L^\times: x\cO_L\in J(\cO_{L}, \mathfrak{f})\} \to (\cO_L/\mathfrak{f}\cO_L)^\times$. 
\end{proof}

\begin{lemma}\label{lem:classnumber}
    Let $F$ be a totally real number field with odd narrow class number, and $\fp_1, \dotsc, \fp_r$ be distinct primes of $F$ above $2$. Let $\lambda$ be a large enough totally positive prime of $F$ such that $-\lambda$ is a square modulo $4$, and $R$ be an order of $L = F(\sqrt{-\lambda})$ with conductor $\mathfrak{f}$. If $\mathfrak{f}\cap \cO_F \mid \fp_1\cdots \fp_r$, then the class number $h(R)$ of $R$ is odd. 
\end{lemma}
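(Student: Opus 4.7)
The plan is to analyze $h(R)$ via the exact sequence
\[1 \to R^\times \to \cO_L^\times \to (\cO_L/\mathfrak{f}\cO_L)^\times/(R/\mathfrak{f}\cO_L)^\times \to \Pic(R) \to \Pic(\cO_L) \to 1,\]
which follows from the adelic description in \Cref{lem:orderPicard}. The kernel of $\Pic(R)\to\Pic(\cO_L)$ is a quotient of the middle group, so it suffices to prove (i) $h(\cO_L)$ is odd and (ii) the middle group $(\cO_L/\mathfrak{f}\cO_L)^\times/(R/\mathfrak{f}\cO_L)^\times$ has odd order.

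For (i), the hypothesis $-\lambda \equiv \square \pmod{4}$ makes $(\beta + \sqrt{-\lambda})/2$ integral for a suitable $\beta \in \cO_F$, and a discriminant comparison shows $2$ is unramified in $L/F$. Consequently $\lambda$ is the unique finite prime of $F$ ramifying in $L$, while all $n$ real places of $F$ become complex in the CM extension $L$. Chevalley's ambiguous class number formula then reads
\[|\Cl(L)^{\Gal(L/F)}| \;=\; \frac{h_F \cdot \prod_v e_v(L/F)}{[L:F]\cdot[\cO_F^\times : \cO_F^\times\cap N_{L/F}L^\times]} \;=\; \frac{h_F \cdot 2^{n+1}}{2\cdot[\cO_F^\times : \cO_F^\times\cap N_{L/F}L^\times]}.\]
By Hasse's norm theorem, the index in the denominator counts units that are local norms everywhere: totally positive at each real place, and squares modulo $\lambda$ at $\lambda$. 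The oddness of $h_F^+$ forces $h_F = h_F^+$ (and both are odd), so, as recalled in \S2.1.1, every totally positive unit is already a square in $\cO_F^\times$, handling the local condition at $\lambda$ automatically. Thus $\cO_F^\times\cap N_{L/F}L^\times$ equals the group of totally positive units, of index $2^n$, and the formula collapses to $|\Cl(L)^{\Gal(L/F)}| = h_F$, which is odd. For any $[I]\in\Cl(L)[2]$, the product $[I][\bar I] = [N_{L/F}(I)\cO_L]$ lies in the image of $\Cl(F)\to\Cl(L)$, which has odd order, while itself being $2$-torsion; hence $[I][\bar I] = 1$, so $[\bar I] = [I]$. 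Thus $\Cl(L)[2] \subseteq \Cl(L)^{\Gal(L/F)}$, which has odd order; being a $2$-group of odd order, $\Cl(L)[2]$ is trivial and $h(\cO_L)$ is odd.

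For (ii), the order structure result preceding \Cref{lem:orderPicard} gives $R = \cO_F + (\mathfrak{f}\cap\cO_F)\cO_L$, whence $R/\mathfrak{f}\cO_L \cong \cO_F/(\mathfrak{f}\cap\cO_F)$; the divisibility $\mathfrak{f}\cap\cO_F \mid \fp_1\cdots\fp_r$ makes this ideal squarefree and supported at primes above $2$. By CRT the middle group factors as $\prod_{\fp\mid\mathfrak{f}\cap\cO_F}(\cO_L/\fp\cO_L)^\times/(\cO_F/\fp)^\times$. Since each $\fp$ is unramified in $L$ by (i), the local factor is $\bF_q^\times$ (when $\fp$ splits) or $\bF_{q^2}^\times/\bF_q^\times$ (when $\fp$ is inert), where $q=|\cO_F/\fp|$ is a power of $2$; in both cases the order is $q\pm 1$, odd. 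Combining (i) and (ii) yields $h(R)$ odd. The main subtlety is in (i), where our arithmetic hypothesis on $F$ promotes every totally positive unit to a square, which is precisely what makes Chevalley's formula collapse to the odd number $h_F$ rather than the nonsensical $h_F/2$; the rest is bookkeeping.
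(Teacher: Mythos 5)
Your argument is correct, and it is organized differently from the paper's. Where the paper simply cites \cite{MR963648}*{13.7} for the oddness of $h(L)$, you rederive it from Chevalley's ambiguous class number formula: the count $\prod_v e_v = 2^{n+1}$ (the prime $\lambda$ plus the $n$ real places), together with the observation that odd $h_F^+$ forces totally positive units to be squares and hence global norms, collapses the formula to $|\Cl(L)^{\Gal(L/F)}| = h_F$, and your $2$-torsion argument ($[I][\bar I]$ is $2$-torsion in the odd-order image of $\Cl(F)$, so $\Cl(L)[2]\subseteq \Cl(L)^{\Gal(L/F)}$) finishes that step; this makes the lemma self-contained at the cost of invoking Chevalley and Hasse's norm theorem. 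For the conductor part, you work with the exact sequence behind \Cref{lem:orderPicard} and only use that the kernel of $\Pic(R)\to\Pic(\cO_L)$ is a quotient of $(\cO_L/\mathfrak{f})^\times/(R/\mathfrak{f})^\times$, whereas the paper uses the exact index formula $h(R)=h(L)\,\#(\cO_L/\mathfrak{f})^\times/\bigl((\cO_L^\times:R^\times)\,\#(R/\mathfrak{f})^\times\bigr)$; a pleasant by-product of your route is that you never need the unit index $(\cO_L^\times:R^\times)$, hence never need ``$\lambda$ large enough'' to force $\mu_L=\{\pm1\}$ (that hypothesis plays no role in your proof). The local computation at the primes above $2$ is essentially the paper's: unramified primes of residue size $q=2^f$ contribute factors $q-1$ or $q+1$, both odd.

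One small point to tidy: you quote the structural lemma preceding \Cref{lem:orderPicard} to write $R=\cO_F+(\mathfrak{f}\cap\cO_F)\cO_L$, but that lemma is stated under the hypothesis $h(F)=1$, while the present lemma only assumes odd narrow class number. This is not a real gap: for a quadratic extension $L/F$ of a Dedekind domain every $\cO_F$-order between $\cO_F$ and $\cO_L$ is of the form $\cO_F+\mathfrak{c}\cO_L$ for an $\cO_F$-ideal $\mathfrak{c}$ (check it locally at each prime, where $\cO_{L,\fp}/\cO_{F_\fp}$ is free of rank one), or alternatively you only need the injection $\cO_F/(\mathfrak{f}\cap\cO_F)\hookrightarrow R/\mathfrak{f}$ as in the paper; but as written the citation does not literally cover your setting, so either prove the general form or adjust the reference.
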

\begin{proof}
    Given that $-\lambda \equiv m^2 \pmod4$ for some $m\in \mathcal{O}_F$, the maximal order of $L$ is $\cO_L = \cO_F[\frac{m+\sqrt{-\lambda}}{2}]$, and $\lambda$ is the only prime of $F$ that ramifies in the CM extension $L/F$. By \cite{MR963648}*{13.7}, the class number $h(L)$ is odd. 
    For an $\cO_F$-order $R$ of $L$, we have 
    \[h(R) = \frac{h(L)}{(\mathcal{O}_L^\times : R^\times)} \frac{\#(\mathcal{O}_L/\mathfrak{f})^\times}{\#(R/\mathfrak{f})^\times}\](see for instance \cite{MR1697859}*{\MakeUppercase{\romannumeral 1}.12.12}). By \cite{MR963648}*{13.3, 13.4, 13.5}, we have $\cO_L^\times = \mu_L\cO_F^\times$, where $\mu_L$ is the group of roots of unity in $L^\times$, so $(\cO_L^\times:R^\times) = 1$ when $\mu_L = \{\pm1\}$, which can be guaranteed if $\lambda$ is large enough.  
    From the injections $\cO_F/(\mathfrak{f}\cap \cO_F)\xhookrightarrow{}R/\mathfrak{f} \xhookrightarrow{}\cO_L/\mathfrak{f}$, it suffices to show $\frac{\#(\mathcal{O}_L/\mathfrak{f})^\times}{\#(\cO_F/(\mathfrak{f}\cap \cO_F))^\times}$ is odd. 
    Write $\mathfrak{f}\cap \cO_F = \prod_{i=1}^r\fp_i^{\varepsilon_i}$ where $\varepsilon_1, \dotsc, \varepsilon_r\in\{0,1\}$, then  $\#(\mathcal{O}_F / (\mathfrak{f}\cap \cO_F))^\times = \prod_{i=1}^r(2^{f(\fp_i/2)} - 1)^{\varepsilon_i}$. Each $\fp_i$ is unramified in $L$, then $\mathfrak{f} \mid \prod_{i=1}^r \prod_{\fq \mid \fp_i} \fq^{\varepsilon_i}$ and $\#(\mathcal{O}_L / \mathfrak{f})^\times \mid \prod_{i=1}^r\prod_{\fq|\fp_i}(2^{f(\fq/2)} - 1)^{\varepsilon_i}$. Since \[\frac{\prod_{\fq|\fp_i}(2^{f(\fq/2)} - 1)}{2^{f(\fp_i/2)} - 1} = \begin{cases}
        2^{f(\fp_i/2)} - 1& \text{if }\fp_i\text{ split in }L \\
        2^{f(\fp_i/2)} + 1& \text{if }\fp_i\text{ inert in }L
    \end{cases}\] is odd, we conclude that $h(R)$ is odd. 
\end{proof}
\begin{para} \label{order}
    When $F$ has a unique prime $\fp$ above $2$, and $\lambda$ is a totally positive prime of $F$ such that $-\lambda$ is a square modulo $4$, we will consider the CM cycles corresoponding to the optimal embeddings of the maximal order $\cO_{L}$ and a nonmaximal order $\cO_{F}+\fp\cO_L$. By  \Cref{lem:classnumber}, both orders have odd class numbers, and therefore, each corresponding CM cycle has a unique real point. 
\end{para}

\subsection{Special endomorphisms} \label{sec:special_end}

Given $\phi: L = F(\sqrt{-\lambda})\xhookrightarrow{} B$, we have $v = \phi(\sqrt{-\lambda} )\in V$ and denote by $V_v$ its orthogonal complement in $V$ as $F$-vector space.  
The inclusion $V\xhookrightarrow{} B$ induces an $F$-algebra isomorphism $C^+_F(V) \xrightarrow{\simeq} B$, under which $C^+_F(V_v) \xrightarrow{\simeq } \phi(L)$. 
Thus, we have $\Res_{F/\bQ} \GSpin(V_v, F) \xhookrightarrow{} \Res_{F/\bQ} \GSpin(V, F)$ that agrees with $\phi: L\xhookrightarrow{}B$. 

    The quadratic space $V_v\otimes_{F,\sigma}\bR$ has signature $(2,0)$ for $\sigma\neq \rho$ and $(0,2)$ for $\sigma = \rho$. Let $\{e_1, e_2\}$ be an $\bR$-basis of $V_v\otimes_{F,\rho}\bR$ such that the matrix of the bilinear form is $\begin{pmatrix}
        -1 & 0 \\ 0 & -1
    \end{pmatrix}$. The $\bR$-algebra homomorphism $\bC \to C^+_F(V_v)\otimes_{F, \rho}\bR$ defined by the oriented negative $2$-plane $\langle e_1, e_2\rangle$ (resp. $\langle e_2, e_1\rangle$) is $a+bi \mapsto a+be_1e_2$ (resp. $a+be_2e_1)$. Its restriction to $\bC^\times$ gives a homomorphism $\Res_{\bC/\bR}\bG_m\to  (\Res_{F/\bQ} \GSpin(V, F))_{\bR} $. Under the isomorphism $C^+_F(V) \otimes_{F, \rho} \bR \simeq B\otimes_{F, \rho} \bR$, we have \begin{equation}\label{eq:special_end}\{e_1e_2, e_2e_1\} = \{\frac{1}{\sqrt{\lambda}} v, -\frac{1}{\sqrt{\lambda}} v\}.\end{equation} Therefore, the homomorphism $\Res_{\bC/\bR}\mathbb{G}_m \to \widetilde{G}_{\bR}$ defined by the special endomorphism $v$ is $\tilde{h}$ or $\iota \tilde{h}$ defined above. 

\subsection{Definition field}\label{sec:def_field}

Let $\tilde{\mu} = \mu_{\tilde{h}}: \bG_{m,\bC} \to \widetilde{T}_{\bC}$ be the cocharacter defined by $\tilde{\mu}(z) = \tilde{h}_{\bC}(z,1)$. Let $T$ be the image of  $\widetilde{T}$ in $G$ and $\mu$ (resp. $h$) be the composition $\bG_{m, \bC} \xrightarrow{\tilde{\mu}} \widetilde{T}_{\bC} \to T_{\bC}$ (resp. $\Res_{\bC/\bR} \bG_m \xrightarrow{\tilde{h}} \widetilde{T}_{\bR} \to T_{\bR}$). Since $\tilde{\mu}$ corresponds to an embedding $\tilde{\rho}: L\xhookrightarrow{} \bC$ extending $\rho: F\xhookrightarrow{}\bR$, the cocharacters are defined over $\tilde{\rho}(L)$.  

Suppose $\lambda$ is an unramified prime of $F$ above $p$ and let $K'\subset \widetilde{T}(\bA_f)$ be a compact open subgroup such that $K'_p$ is maximal and $K'^p$ is small enough such that $\Sh_{K'}(T, \{h\}) := T(\bQ) \backslash \{h\} \times T(\bA_f) / K'$ is a fine moduli space  for abelian varieties with Hodge cycles and sufficiently high level structure away from $p$. By definition of canonical model, $\sigma \in \Gal(\overline{\bQ}/L)$ acts on $\Sh_{K'}(T, \{h\})$ by \[\sigma [h, t] = [h, r(T,\mu)(s)t] \text{ where } s\in \bA_{L}^\times \text{ with } \mathrm{art}(s) = \sigma|_{L^{ab}},\] and the homomorphism $r(T, \mu): \Res_{L/\bQ}\bG_m = \widetilde{T} \to T $ is the natural surjection under $T \simeq \widetilde{T}/T_F^1$. Then there exists an abelian extension $L'/L$, unramified at the primes above $p$, such that $\Gal(\overline{\bQ}/L')$ fixes $\Sh_{K'}(T, \{h\})$. 
In particular, the ramification index of $p$ in $L'$ is $2$. 
Since there are no nontrivial automorphisms of the abelian varieties with level structure parametrized by $\Sh_{K'}(T, \{h\})$, the abelian varieties are defined over $L'$. 
Moreover, since $\Gal(\overline{\bQ}/L')$ fixes $\mu$, it fixes the special endomorphism.   

\subsection{Supersinigular reduction}

\begin{lemma}
    Let $E$ be a CM field. Suppose that $A$ is an abelian variety of CM type $(E, \Phi)$ over a number field $K \subset \overline{\bQ}$. Let $(E^*, \Phi^*)$ be the reflex CM type. Suppose $E \subseteq \End^0(A/K)$ (so that $E^*\subseteq K$). Assume that $\mathfrak{p}$ is a prime of $K$ of with residue field $k$ of characteristic $p$ such that $A$ has good reduction $A_k$ at $\mathfrak{p}$. Let $\mathfrak{q} = \mathfrak{p} \cap E^{*+}$, where $E^{*+}$ is the totally real subfield of $E^*$. If $\mathfrak{q}$ is inert or ramified in $E^*$, then $A_k$ is supersingular. 
\end{lemma}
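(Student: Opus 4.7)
The approach I would take relies on the main theorem of complex multiplication together with a unit-group argument inside the CM field $E$. The plan is to view the geometric Frobenius $\pi$ of $A_k$ as an element of $\mathcal{O}_E$ via the CM action, and to show that the hypothesis forces $\pi^2$ to be a root-of-unity multiple of $q := N(\mathfrak{p})$, which pins down every Newton slope to $1/2$.

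First I would invoke the Shimura--Taniyama formula in its ideal-theoretic form. Setting $\mathfrak{p}^* = \mathfrak{p}\cap \mathcal{O}_{E^*}$, the main theorem of CM gives
\[
(\pi)\;=\;\Nm_{\Phi^*}(\mathfrak{p}^*)^{f(\mathfrak{p}/\mathfrak{p}^*)}
\]
as ideals of $\mathcal{O}_E$, where $\Nm_{\Phi^*}\colon (E^*)^\times\to E^\times,\ a\mapsto \prod_{\psi\in\Phi^*}\psi(a)$ is the reflex norm. Next I would translate the hypothesis: $\mathfrak{q}=\mathfrak{p}\cap E^{*+}$ being inert or ramified in $E^*/E^{*+}$ says there is a unique prime of $E^*$ over $\mathfrak{q}$, so $\mathfrak{p}^*=\overline{\mathfrak{p}^*}$, where the bar denotes the nontrivial element of $\Gal(E^*/E^{*+})$. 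Applying complex conjugation to the reflex-norm formula and using $\overline{\psi}\in c\Phi^*$ if and only if $\psi\in\Phi^*$ yields $(\overline{\pi})=(\pi)$, so that $u:=\pi/\overline{\pi}$ is a unit in $\mathcal{O}_E^\times$.

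Since $\pi$ and $\overline{\pi}$ are complex conjugate, $|u|_v=1$ at every archimedean place of $E$. In a CM field, the standard fact that $\mathcal{O}_E^\times/\mathrm{tors}$ injects with finite index into $\mathcal{O}_{E^+}^\times/\mathrm{tors}$ (equivalently, that these unit groups have the same rank by Dirichlet) forces the group of such units to be finite; hence $u$ is a root of unity. Combined with $\pi\overline{\pi}=q$, this gives $\pi^2=u\cdot q$, and raising to a power kills $u$: there exists $N\geq 1$ with $\pi^{2N}=q^N$. Therefore every root of the characteristic polynomial of $\pi$ acting on $T_\ell A$ has $p$-adic valuation exactly $\tfrac12 v_p(q)$, so all Newton slopes of $A_k$ equal $1/2$ and $A_k$ is supersingular.

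The main obstacle is purely bookkeeping: stating the Shimura--Taniyama formula correctly when the base is the larger field $K\supseteq E^*$ (so that the residue degree $f(\mathfrak{p}/\mathfrak{p}^*)$ appears) and keeping track of how complex conjugation on $E^*$ interacts with the reflex CM type $\Phi^*$. Both are harmless here, since the exponent $f(\mathfrak{p}/\mathfrak{p}^*)$ enters symmetrically in $(\pi)$ and $(\overline{\pi})$, and the complex conjugation identity $\overline{\Nm_{\Phi^*}(\mathfrak{p}^*)}=\Nm_{\Phi^*}(\overline{\mathfrak{p}^*})$ is a direct consequence of the decomposition $\Phi^*\sqcup c\Phi^*=\Hom(E^*,\overline{\mathbb{Q}})$.
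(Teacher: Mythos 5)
Your proposal is correct, and it shares the paper's skeleton -- the ideal-theoretic Shimura--Taniyama formula $(\pi)=\Nm_{\Phi^*}(\mathfrak{p}^*)^{f(\mathfrak{p}/\mathfrak{p}^*)}$ together with the observation that the hypothesis forces $\mathfrak{p}^*$ to be stable under complex conjugation -- but the finishing mechanism is genuinely different. The paper never works with $\pi$ directly at the archimedean places: it raises the prime of $E^*$ to a power $N$ so that $\mathfrak{q}'^N=(\alpha)$ is principal, deduces $v(\sigma(N_{\Phi}(\alpha)))=v(\sigma(cN_{\Phi}(\alpha)))$ from $(\alpha)=(c\alpha)$, and pairs this with the purely algebraic identity $N_{\Phi}(\alpha)\cdot cN_{\Phi}(\alpha)=\Nm_{E^*/\bQ}(\alpha)$ to read off slope $\tfrac12$ at a fixed $p$-adic valuation. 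You instead transfer the conjugation-invariance to $\pi$ itself: $(\pi)=(\bar{\pi})$ makes $u=\pi/\bar{\pi}$ a unit of absolute value $1$ at every archimedean place, hence a root of unity, and combined with $\pi\bar{\pi}=q$ you get $\pi^{2N}=q^N$, which pins all slopes at once. This buys a cleaner, valuation-free statement (Frobenius is a root-of-unity multiple of $\sqrt{q}$), but it costs two extra standard inputs the paper avoids: the relation $\pi\bar{\pi}=q$ (Weil's Riemann hypothesis, or positivity of the Rosati involution restricting to complex conjugation on $E$) and Kronecker's theorem that an algebraic integer with all conjugates of modulus $1$ is a root of unity -- your unit-rank phrasing is slightly off (it is $\mathcal{O}_{E^{*+}}^\times$, resp.\ $\mathcal{O}_{E^+}^\times$, that sits inside $\mathcal{O}_{E}^\times$ with the quotient finite, not the other way around), but the fact you need, that units of modulus $1$ at all archimedean places are torsion, is correct and standard. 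Both routes use the same key identity that the conjugate of $\Phi^*$ is its complement $c\Phi^*$, so neither has a gap; the paper's version is marginally more self-contained, yours is marginally more conceptual.
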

\begin{proof}
    After passing to a finite extension, we may assume $K$ contains all conjugates of $E$. Fix a $p$-adic valuation $v$ of a normal closure $\tilde{E}$ of $E$. Let $\mathfrak{q}' = \mathfrak{p}\cap E^*$.
    By Shimura-Taniyama, 
    there exists an element $\pi \in \mathcal{O}_E$ inducing the Frobenius endomorphism on $A_k$ 
    and the Newton slopes are  $\frac{v(\sigma(\pi))}{v(p^{f(\mathfrak{p}/p)})}$ for $\sigma\in \Hom(E, \overline{\bQ})$, where \[(\pi) = N_{K,\Phi}(\mathfrak{p}) = N_{\Phi}(\Nm_{K/E^*} \mathfrak{p}) = N_{\Phi}( \mathfrak{q'}^{f(\mathfrak{p}/\mathfrak{q}')}).\] 
    For a sufficiently large integer $N$, there exists $\alpha \in \mathcal{O}_{E^*}$ such that $\mathfrak{q}'^N = (\alpha)$.  Then \[\frac{v(\sigma(\pi))}{v(p^{f(\mathfrak{p}/p)})} = \frac{v(\sigma(N_{\Phi}( \alpha^{f(\mathfrak{p}/\mathfrak{q}')})))}{v(p^{Nf(\mathfrak{p}/p)})} = \frac{v(\sigma(N_{\Phi}( \alpha)))}{v(p^{Nf(\mathfrak{q}'/p)})}. \]
    With $E$ a subfield of $\overline\bQ$, we have $N_{\Phi}: E^{*\times} \to E^\times$ described by $N_{\Phi}(a) = \prod_{\psi\in \Phi^*}\psi(a)$. %CM, 1.28
    Let $\tilde{\sigma}\in \Aut(\tilde{E})$ such that $\tilde{\sigma}|_E = \sigma$. If $\mathfrak{q}$ is inert or ramified in $E^*$, then $\fq'^N = (\alpha) = (c\alpha)$ and $v\circ \tilde{\sigma} \circ \psi (\alpha)= v\circ \tilde{\sigma}\circ c\circ \psi(\alpha)$ for any $\psi:E^*\xhookrightarrow{}\overline{\bQ}$, 
    so that $v(\sigma(N_{\Phi}(\alpha))) = v(\sigma(\prod_{\psi\in \Phi^*}\psi(a))) = v(\sigma(\prod_{\psi\in \Phi^*}c\psi(a))) = v(\sigma(cN_{\Phi}(\alpha)))$, where $c$ denotes the unique complex conjugation on the CM fields. On the other hand, $N_{\Phi}(\alpha) \cdot cN_{\Phi}(\alpha) = \Nm_{E^*/\bQ}(\alpha)$ and $v(\Nm_{E^*/\bQ}(\alpha)) = v(\Nm_{E^*/\bQ}\mathfrak{q}'^N) = v(p^{f(\mathfrak{q}'/p)N})$. Thus, the Newton slopes are all $\frac{1}{2}$. 
\end{proof}

\begin{corollary}
    Let $E = E_1\times \cdots \times E_m$ be a CM algebra and $E^*$ be its reflex field. Suppose that $A$ is a CM abelian variety over a number field $K \subset \overline{\bQ}$ with $E \subseteq \End^0(A/K)$.  Assume that $\mathfrak{p}$ is a prime of $K$ with residue field $k$ of characteristic $p$ such that $A$ has good reduction $A_k$ at $\mathfrak{p}$. Let $\mathfrak{q} = \mathfrak{p} \cap E^{*+}$, where $E^{*+}$ is the totally real subfield of $E^*$. Suppose $E^* \subseteq L$, where $L$ is a CM field and $\mathfrak{P}$ a prime of its totally real subfield $L^+$ above $\mathfrak{q}$. If $\mathfrak{P}$ is inert or ramified in $L$, then $A_k$ is supersingular. 
\end{corollary}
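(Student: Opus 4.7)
The plan is to reduce the corollary to the previous lemma by decomposing $A$ according to the factor structure $E = E_1 \times \cdots \times E_m$, and then tracking the prime hypothesis down the tower of CM subfields $E_i^* \subseteq E^* \subseteq L$.

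First, I would decompose $A$ up to isogeny. The primitive idempotents $e_1, \dotsc, e_m$ of $E$ give pairwise orthogonal idempotents in $\End^0(A/K)$. After clearing denominators one obtains a $K$-isogeny $A \to \prod_{i=1}^m A_i$ with each $A_i$ an abelian variety over $K$ satisfying $E_i \subseteq \End^0(A_i/K)$. The CM type $\Phi$ of $(A,E)$ decomposes as $\Phi = \sqcup_i \Phi_i$ under the identification $\Hom(E, \overline{\bQ}) = \sqcup_i \Hom(E_i, \overline{\bQ})$, so $(A_i, E_i)$ is a CM abelian variety of type $\Phi_i$. Since good reduction is an isogeny invariant, each $A_i$ has good reduction at $\mathfrak{p}$; moreover $A_k \sim \prod_i (A_i)_k$, and since Newton polygons add under products and are isogeny invariants, $A_k$ is supersingular if and only if every $(A_i)_k$ is supersingular.

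Next, I would descend the prime hypothesis to each factor. The absolute Galois group acts on $\Hom(E, \overline{\bQ})$ preserving each subset $\Hom(E_i, \overline{\bQ})$, so $\mathrm{Stab}(\Phi) = \bigcap_i \mathrm{Stab}(\Phi_i)$ and the reflex field $E^*$ is the compositum of the $E_i^*$; in particular each $E_i^* \subseteq L$ and $E_i^{*+} \subseteq L^+$. Let $\mathfrak{Q}$ be the unique prime of $L$ above $\mathfrak{P}$, so that $c(\mathfrak{Q}) = \mathfrak{Q}$ where $c$ denotes complex conjugation. Setting $\mathfrak{Q}_i = \mathfrak{Q} \cap E_i^*$ and $\mathfrak{q}_i = \mathfrak{p} \cap E_i^{*+}$, the identities $\mathfrak{P} \cap E^{*+} = \mathfrak{q}$ and $\mathfrak{q} \cap E_i^{*+} = \mathfrak{q}_i$ show that $\mathfrak{Q}_i$ lies above $\mathfrak{q}_i$; since complex conjugation on $L$ restricts to the unique complex conjugation on the CM subfield $E_i^*$, we have $c(\mathfrak{Q}_i) = \mathfrak{Q}_i$, forcing $\mathfrak{q}_i$ to be inert or ramified in $E_i^*/E_i^{*+}$.

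Finally, I would apply the previous lemma to each $(A_i, E_i)$: the containment $E_i^* \subseteq E^* \subseteq K$ together with the condition that $\mathfrak{q}_i$ is inert or ramified in $E_i^*$ is exactly the hypothesis of the lemma, so each $(A_i)_k$ is supersingular, and the first step concludes that $A_k$ is supersingular. The only subtleties are the bookkeeping around complex conjugation, the identification of the reflex of a CM algebra with the compositum of the reflex fields of the factors, and the compatibility of restriction of primes along $E_i^* \subseteq E^* \subseteq L$; all of these reduce to the Galois-equivariance of the factor decomposition $\Hom(E, \overline{\bQ}) = \sqcup_i \Hom(E_i, \overline{\bQ})$, so I do not anticipate a serious obstacle.
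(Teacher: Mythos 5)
Your proof is correct and follows essentially the same route as the paper: decompose $A$ up to isogeny into factors with CM by the fields $E_i$, transfer the nonsplit condition from $L/L^+$ down to each reflex field $E_i^*/E_i^{*+}$, and apply the preceding lemma factor by factor. The only cosmetic difference is that you descend the condition via the unique prime $\mathfrak{Q}$ of $L$ fixed by complex conjugation, whereas the paper reduces to the CM-field case and argues through $L = L^+E^*$; these are equivalent observations.
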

\begin{proof}
    The abelian variety is isogenous to $A_1\times \cdots \times A_m$ where each $A_i$ is CM by $E_i$, and after passing to a finite extension, we may assume each $A_i$ is defined over $K$ with $E_i \subseteq \End^0(A_i/K)$. Since $E_i^* \subseteq E^*$, we reduce to the case $E$ is a CM field. 

    Note that $L = L^+ E^*$ and if $\mathfrak{P}$ is inert or ramified in $L$, then $\mathfrak{q}$ is inert or ramified in $E^*$,  
    and the result follows from the lemma. 
\end{proof}

\begin{para} \label{supersingular}
    For a CM abelian variety $A$ constructed from $L = F(\sqrt{-\lambda})\xhookrightarrow{} B$, since the Hodge cocharacter factors through $\Res_{L/\bQ}\mathbb{G}_m$, elements of $\Gal(\overline{\bQ}/\bQ)$ that fixes the Hodge cocharacter fixes the CM type. It follows that $L\supseteq E^*$, and if a prime $\mathfrak{P}$ of $F$ is inert or ramified in $L$, then $A$ has supersingular reduction at primes above $\mathfrak{P}$.
\end{para}

\section{Non-Archimedean place} \label{sec:nonarch}

Recall that in \cite{MR903384}, the polynomial $P_{l}(x)P_{4l}(x)$ is a square modulo $l$ and our goal is to establish a similar result by pairing the roots of the polynomials. In this section, we prove \Cref{lem:pairwoaut} and \Cref{lem:pairingwithauto}, which together show that CM liftings of a closed point $x\in \mathscr{S}_{K_0}(k)$ occur in pairs when we consider liftings to cycles corresponding to all orders containing $\cO_F[\sqrt{-\lambda}]$. \Cref{lem:pairwoaut} is the easier case where $x$ has no self-automorphisms of even order. The main input is the description of the local deformation space of abelian varieties parametized by a $\GSpin$ Shimura variety as the deformation space of isotropic lines in \cite{MR3484114}*{5.16}, which we adapt to our setting in \Cref{lem:lifting} by considering the $F$-linear structure on $\bm{L}_{\mathrm{cris},x}$ defined by $\bm{\pi}_{{\epsilon_i}, \mathrm{cris}, x}$. In addition, we define an $F$-linear structure on the space of special quasi-endomorphisms $V(\cA_{x})$ in \cref{sec:specialendsupersingular}. When $\cA_x$ is supersingular, the structure of $V(\cA_{x})$ as a quadratic space over $F$ is described in \Cref{lem:specialend} and \Cref{lem:specialendp}.

Extra work is required when $x$ admits an even order automorphism, analogous to the case $j=1728$ in \cites{MR903384, MR1030140}. In this situation, \Cref{lem:auto} and \Cref{lem:pairend} give a pairing of the special endomorphsims, which in turn yields a pairing of liftings in \Cref{lem:pairingwithauto}. Moreover, \Cref{lem:pairingwithauto} provides necessary information on the intersection number of these cycles to address the complication caused by lack of cusps. 

Let $p$ be an odd prime unramified in $F$. Let $k = \mathbb{F}_q \subset \overline{\mathbb{F}}_p$ and $W=W(k)$ be its ring of Witt vectors. Denote by $\Art_W$ the category of Artinian local algebras over $W$. Fix an embedding $\iota: \overline{W[p^{-1}]}\xhookrightarrow{}\bC$ and let $\rho': F\xhookrightarrow{} W[p^{-1}]$ such that $\rho = \iota\circ \rho'$. 

\subsection{Lifting of abelian varieties}
Let $x\in \mathscr{S}_{K_pK^p}(k)$ be a closed point and $\widehat{U}_x$ be the completion of $\mathscr{S}_{K_pK^p}$ at $x$. We follow \cite{MR3484114}*{5.15, 5.16} to relate the infinitesimal deformations of $x$ to the liftings of certain isotropic lines in the quadratic spaces associated to $\bm{L}_{\mathrm{cris},x}$, and then apply the Grothendieck existence theorem to lift a compatible system of such deformations to a certain finite extension of $W$. 

For $\mathscr{O}$ in $\Art_W$, let $\bm{H}_\mathscr{O}\simeq H^1\cris(\mathcal{A}_x/W)\otimes_W \mathscr{O}$ be the $\mathscr{O}$-module obtained by evaluating $H^1\cris(\mathcal{A}_x/W)$ on $\Spec \mathscr{O}$, and $\bm{L}_{\mathscr{O}} \subset \End(\bm{H}_\mathscr{O})$ be the corresponding quadratic space over $\mathscr{O}$. 
If $\mathcal{A}_{\tilde{x}}$ is an abelian scheme over $\mathscr{O}$ lifting $\mathcal{A}_x$, then via the canonical identification $H^1\dR(\mathcal{A}_{\tilde{x}}/\mathscr{O}) \xrightarrow{\simeq}\bm{H}_{\mathscr{O}}$, the Hodge filtration on $H^1\dR(\mathcal{A}_{\tilde{x}}/\mathscr{O})$ corresponds to a direct summand of $\Fil^1\bm{H}_{\mathscr{O}} \subset \bm{H}_{\mathscr{O}}$. If $\mathscr{O}_2 \twoheadrightarrow \mathscr{O}_1$ is a surjection in $\Art_{W}$ whose kernel admits nilpotent divided powers and $\mathcal{A}_{x_1}$ is an abelian scheme over $\mathscr{O}_1$ lifting $x$, then this gives a natural bijection 
\begin{align} \label{eq:SerreTateGrothendieckMessing}
\left\{
\begin{array}{c}
\text{Isomorphism classes of abelian schemes over }\mathscr{O}_2 \\
\text{ lifting }\mathcal{A}_{x_1}
\end{array}
\right\}
&\xrightarrow{\;\simeq\;}
\left\{
\begin{array}{c}
\text{Direct summands } \Fil^1\bm{H}_{\mathscr{O}_2} \subset \bm{H}_{\mathscr{O}_2} \\
\text{lifting } \Fil^1\bm{H}_{\mathscr{O}_1}
\end{array}
\right\}
\end{align}
by  Serre-Tate theory (\cite{MR638600}*{1.2.1}) and Grothendieck-Messing theory (\cite{MR347836}*{\MakeUppercase{\romannumeral 5}. 1.6})\footnote{Serre-Tate theory and Grothendieck-Messing theory apply to surjections $\mathscr{O}\to \overline{\mathscr{O}}$ such that $p$ is nilpotent in $\mathscr{O}$ and the kernel $I$ admits nilpotent divided powers. 
If $k = \mathbb{F}_q\subset \overline{\mathbb{F}}_p$, $W = W(k)$, and a finite extension $E$ of $W[p^{-1}]$ has ramification index $e$ with uniformizaing parameter $\varpi$, then $(\mathcal{O}_E, (\varpi))$ has a P.D. structure iff $e\leq p-1$ (\cite{MR491705}, 3.2.3). For the liftings we will consider, where $p$ is odd and $e\leq 2$, the kernel of  $\mathcal{O}_E/(\varpi^n)\twoheadrightarrow \mathcal{O}_E/(\varpi)$ always admits nilpotent divided powers. }.

We recall Grothendieck existence theorem for the convenience of the reader. 
\begin{theorem}[Grothendieck  \cite{MR2083211}*{3.4}]\label{thm:Grothendieckexistence}
    Let $R$ be a noetherian ring which is separated and complete with respect to the $I$-adic topology for an ideal $I$. Let $X$ and $Y$ be proper $R$-schemes, and $R_n, X_n, Y_n$ the reductions modulo $I^{n+1}$. The natural map of sets $\Hom_R(X,Y)\to \varprojlim \Hom_{R_n} (X_n, Y_n)$ is bijective. 

    Moreover, if $\{X_n\}$ is a compatible system of proper schemes over the $R_n$'s and $\mathscr{L}_0$ is an ample line bundle on $X_0$ which lifts compatibly to a line bundle $\mathscr{L}_n$ on each $X_n$, then there exists a pair $(X, \mathscr{L})$ consisting of a proper $R$-scheme and ample line bundle which comaptibly reduces to each $(X_n, \mathscr{L}_n)$, and this data over $R$ is unique up to unique isomorphism. 
\end{theorem}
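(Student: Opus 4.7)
The plan is to deduce both assertions from Grothendieck's formal GAGA theorem: for a proper $R$-scheme $Z$, the functor sending a coherent $\mathcal{O}_Z$-module $\mathscr{F}$ to the compatible system $\{\mathscr{F}/I^{n+1}\mathscr{F}\}$ on the $I$-adic completion $\widehat{Z}$ is an equivalence of categories. Once this is granted, compatible families of closed subschemes or of line bundles become coherent data on a formal completion that can be algebraized, and the geometric properties one wants (being an isomorphism, a closed immersion, ample, etc.) can be checked modulo $I$ by Nakayama together with $I$-adic separatedness of $R$.

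For the Hom-set bijection I would encode $f\colon X\to Y$ by its graph $\Gamma_f \subseteq X\times_R Y$, a closed subscheme whose first projection is an isomorphism onto $X$. A compatible system $\{f_n\colon X_n\to Y_n\}$ produces a compatible system of closed subschemes $\Gamma_{f_n}\subseteq X_n\times_{R_n}Y_n$, equivalently a coherent ideal sheaf on the formal completion $\widehat{X\times_R Y}$. Applying formal GAGA to $Z = X\times_R Y$ algebraizes this to a closed subscheme $\Gamma \subseteq X\times_R Y$, and because each $\Gamma_{f_n}\to X_n$ is an isomorphism, a levelwise application of Nakayama's lemma shows that $\mathrm{pr}_1\colon \Gamma\to X$ is an isomorphism on complete local rings and surjective on points, hence an isomorphism. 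The morphism $f := \mathrm{pr}_2\circ \mathrm{pr}_1^{-1}$ is then the desired lift; injectivity of the reduction map on Hom-sets is immediate, since $f$ is determined by $\Gamma_f$, which is in turn determined by its reductions.

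For the existence statement I would first use ampleness of $\mathscr{L}_0$ to obtain a closed immersion $X_0 \hookrightarrow \bP^N_{R_0}$ via sections of $\mathscr{L}_0^m$ for $m\gg 0$. The theorem on formal functions, combined with vanishing of higher cohomology of $\mathscr{L}_0^m$ for $m$ large, shows that $\varprojlim H^0(X_n,\mathscr{L}_n^m)$ is a finitely generated $R$-module lifting $H^0(X_0,\mathscr{L}_0^m)$; choosing generators produces a compatible family of closed immersions $X_n \hookrightarrow \bP^N_{R_n}$. It then suffices to algebraize closed formal subschemes of $\bP^N_R$, which is the projective case of formal GAGA and follows from the theorem on formal functions together with the asymptotic vanishing of $H^i$ of twists on $\bP^N$. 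The resulting proper $R$-scheme $X \subseteq \bP^N_R$ reduces to each $X_n$ by uniqueness of algebraization, and the restriction of $\mathcal{O}_{\bP^N}(1)$ gives an algebraization of $\mathfrak{L}^m$; to extract $\mathscr{L}$ itself one algebraizes the coherent formal sheaf $\{\mathscr{L}_n\}$ on the algebraized $X$. Uniqueness of $(X,\mathscr{L})$ up to unique isomorphism follows from the Hom-set part already proved.

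The real obstacle concentrates in formal GAGA itself: establishing the theorem on formal functions $\widehat{H^i(Z,\mathscr{F})} \simeq \varprojlim H^i(Z_n,\mathscr{F}/I^{n+1}\mathscr{F})$ and then running a noetherian induction that reduces the equivalence to the projective case, where Serre's vanishing makes the relevant inverse systems Mittag--Leffler and lets one invert the functor $\mathscr{F}\mapsto \{\mathscr{F}/I^{n+1}\mathscr{F}\}$ using global sections of twists. Everything else in the argument above (graphs, Nakayama, embedding into projective space, descent of the line bundle) is formal once this equivalence and its cohomological refinement are available.
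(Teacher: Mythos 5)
The paper itself gives no proof of this statement: it is quoted from the cited reference (Conrad, Theorem 3.4), i.e.\ it is the classical Grothendieck existence/algebraization theorem of EGA III. Measured against that standard proof, your outline does follow the same route: full faithfulness via graphs and algebraization of coherent ideal sheaves, existence via an ample-line-bundle embedding into $\bP^N$ and the projective case of formal GAGA, with the cohomological input being the theorem on formal functions plus Serre vanishing.

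There is, however, a genuine gap at the step where you conclude that $\mathrm{pr}_1\colon \Gamma \to X$ is an isomorphism. Levelwise Nakayama only controls points lying over $V(I)$; it says nothing about points of $\Gamma$ or $X$ not specializing into the fibre over $V(I)$, and the criterion you invoke (surjective on points and an isomorphism on complete local rings implies isomorphism) is false without further hypotheses: for instance, the disjoint union of a scheme with its generic fibre maps onto it surjectively and induces isomorphisms on completed local rings at every point of the source without being an isomorphism. The actual content of this step combines completeness with properness: since $R$ is $I$-adically complete, $I$ lies in the Jacobson radical of $R$, so every closed point of $\Spec R$ lies in $V(I)$, and hence every nonempty closed subset of a proper $R$-scheme meets the fibre over $V(I)$. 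Applying this to the closed locus where the fibres of $\mathrm{pr}_1$ have positive dimension shows $\mathrm{pr}_1$ is quasi-finite, hence finite; then $\mathrm{pr}_{1*}\mathcal{O}_\Gamma$ is coherent, its $I$-adic completion is identified with that of $\mathcal{O}_{X}$ because each $\Gamma_n \to X_n$ is an isomorphism, and the same lemma together with Nakayama shows the kernel and cokernel of $\mathcal{O}_X \to \mathrm{pr}_{1*}\mathcal{O}_\Gamma$ vanish, so $\mathrm{pr}_1$ is an isomorphism. The same lemma is what justifies checking invertibility and ampleness of the algebraized $\mathscr{L}$ ``modulo $I$''; and in the existence part, lifting sections of $\mathscr{L}_n^{m}$ compatibly up the whole tower for a single $m$ requires the uniform Serre-vanishing argument (using that $\bigoplus_n I^n/I^{n+1}$ is a finitely generated $R_0$-algebra), not just vanishing on $X_0$. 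With those points supplied, your outline is the standard proof.
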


\begin{lemma}\label{lem:lifting}
    Suppose $p$ is an odd prime unramified in $F$. 
    Let $k = \mathbb{F}_q \subset \overline{\mathbb{F}}_p$ and  $x\in \mathscr{S}_{K_pK^p}(k)$. Suppose $E$ is a finite extension of $W[p^{-1}]$ of ramification index $e\leq p-1$ with uniformizer $\varpi$. Let \begin{align*}
        \Iso := \{\text{isotropic line } \langle w\rangle \subset \bm{L}_{\mathrm{cris},x}\otimes_W E: (\bm{\pi}_{{\epsilon_i}, \mathrm{cris}, x}\otimes 1)(w) = \rho'(\epsilon_i) w\}.
    \end{align*} 
    Then there is a natural bijection \begin{equation} \label{eq:lift}
        \left\{\tilde{x}\in \mathscr{S}_{K_pK^p}(E) \text{ lifting }x\right\} \xrightarrow{\simeq }\left\{\Fil^1 (\bm{L}_{\mathrm{cris},x}\otimes_{W} E )\in \Iso\text{ lifting }\Fil^1 \bm{L}_{\mathrm{dR}, x}\right\}.
    \end{equation}
    Moreover, if $v\in L(\mathcal{A}_x)$ is a special endomorphism, then there is a natural bijection  
    \begin{align}\label{eq:liftspecial}\left\{\begin{array}{c}\tilde{x}\in \mathscr{S}_{K_pK^p}(E) \text{ lifting }x, \\
\tilde{v}\in L(\mathcal{A}_{\tilde{x}}) \text{ lifting }v
\end{array}
\right\} \footnotemark
&\xrightarrow{\;\simeq\;}
\left\{
\begin{array}{c}
\Fil^1 (\bm{L}_{\mathrm{cris},x}\otimes_{W} E )\in \Iso\text{ lifting }\Fil^1 \bm{L}_{\mathrm{dR}, x} \\
\text{and orthogonal to }v
\end{array}
\right\}.
\end{align}\footnotetext{If $\tilde{v}$ exists, then it is unique, since $\tilde{v}\in L(\cA_{\tilde{x}})$ lifts $v\in L(\cA_x)$ and $L(\cA_{\tilde{x}}) \xhookrightarrow{} L(\cA_x)$.}
\end{lemma}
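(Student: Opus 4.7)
The plan is to adapt the proof of \cite{MR3484114}*{Proposition 5.16} from the pure $\GSpin$ setting to our Hodge-type datum by tracking the extra tensors $\bm{\pi}_{\epsilon_i}$ that cut out $G$ inside $\GSpin(L_{(p)}, Q)$. I would first construct the lift by step-by-step infinitesimal deformation along the tower $\mathcal{O}_E/(\varpi^{n+1}) \twoheadrightarrow \mathcal{O}_E/(\varpi^n)$: since $p$ is odd and $e \leq p-1$, each kernel admits nilpotent divided powers (as spelled out in the footnote after \eqref{eq:SerreTateGrothendieckMessing}), so the Serre--Tate/Grothendieck--Messing bijection \eqref{eq:SerreTateGrothendieckMessing} applies at each stage. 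Grothendieck existence (\Cref{thm:Grothendieckexistence}), applied with the polarization inherited from the symplectic embedding into $\GSp(C(V), \psi_\delta)$, then algebraizes the resulting compatible system to an abelian scheme over $\mathcal{O}_E$ whose generic fibre gives $\tilde{x} \in \mathscr{S}_{K_pK^p}(E)$.

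To single out the lifts that live on $\mathscr{S}_{K_pK^p}$, I would use the description $\widehat{U}_x \simeq \Spf R_{G_W}$ recalled in the preliminaries: a lift arises from $\mathscr{S}_{K_pK^p}$ precisely when the lifted Hodge filtration on $\bm{H}_\mathscr{O}$ is induced by a cocharacter of $G_\mathscr{O}$, equivalently (via the action of $G$ on $\bm{L}$) when the induced filtration on $\bm{L}_\mathscr{O}$ comes from a $G_\mathscr{O}$-cocharacter. In the pure $\GSpin$ case (\cite{MR3484114}*{5.16}), such cocharacters correspond bijectively to isotropic lines $\Fil^1 \bm{L}_\mathscr{O} \subset \bm{L}_\mathscr{O}$ lifting $\Fil^1 \bm{L}_{\mathrm{dR},x}$, the induced filtration on $\bm{L}$ always taking the form $\Fil^1 \bm{L} \subset (\Fil^1 \bm{L})^\perp \subset \bm{L}$.

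To pass from $\GSpin$-cocharacters to $G$-cocharacters I impose commutation with the $F$-action, equivalently compatibility with the decomposition \eqref{eq:orthogonaldecomp}. The Shimura cocharacter acts nontrivially only on the factor $L \otimes_{\cO_F, \rho'} W$ of this decomposition, because the Hermitian domain $X$ is concentrated at the real place $\rho = \iota \circ \rho'$; consequently the isotropic line arising from any such cocharacter (or its $G_\mathscr{O}$-conjugates) lies inside $L \otimes_{\cO_F, \rho'} E$, which is exactly the $\rho'(\epsilon_i)$-eigenspace of $\bm{\pi}_{\epsilon_i, \mathrm{cris}, x} \otimes 1$ for every $i$. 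This yields the condition defining $\Iso$ and hence the bijection \eqref{eq:lift}. For \eqref{eq:liftspecial}, the crystalline class of a special endomorphism $v$ extends uniquely to a horizontal section of $\bm{L}_\mathscr{O}$ at each infinitesimal stage; by Grothendieck--Messing applied to $v$ viewed inside $\End(\bm{H}_\mathscr{O})$, this horizontal extension descends to a morphism of abelian schemes if and only if it lies in $\Fil^0 \bm{L}_\mathscr{O}$, equivalently (since $\bm{L}$ sits in Clifford degree one) if and only if $v$ is orthogonal to the isotropic line $\Fil^1 \bm{L}_\mathscr{O}$. Conversely, a lifted filtration orthogonal to $v$ produces the desired $\tilde{v} \in L(\mathcal{A}_{\tilde{x}})$, necessarily unique since $L(\mathcal{A}_{\tilde{x}}) \hookrightarrow L(\mathcal{A}_x)$.

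The main obstacle is the $F$-linear bookkeeping: verifying that the tensors $\bm{\pi}_{\epsilon_i}$ cut out exactly the $G$-cocharacters among $\GSpin$-cocharacters, so that the passage from isotropic lines in $\bm{L}_\mathscr{O}$ to isotropic lines in the single eigenspace $L \otimes_{\cO_F, \rho'} E$ matches the $G$-filtration condition coming from the local model $\Spf R_{G_W}$, and confirming that the signature of the Shimura datum forces concentration in the $\rho'$-component for every infinitesimal lift.
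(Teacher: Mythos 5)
Your outline matches the paper's strategy in most respects: induct along the thickenings $\cO_E/(\varpi^n)\twoheadrightarrow\cO_E/(\varpi^{n-1})$ using \eqref{eq:SerreTateGrothendieckMessing}, algebraize with \Cref{thm:Grothendieckexistence}, derive the eigenline condition from the fact that the archimedean Hodge filtration is given by a point of $X$ (the paper makes this precise through the de Rham--Betti comparison carrying $\bm{\pi}_{\epsilon_i,\mathrm{dR},\tilde{x}}$ to $\bm{\pi}_{\epsilon_i,B,\iota(\tilde{x})}$), and characterize liftability of a special endomorphism by $v_{\mathrm{cris}}\in\Fil^0\bm{L}$, i.e.\ orthogonality to $\Fil^1\bm{L}$, together with the uniqueness of $\tilde v$.

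The genuine gap is in how you get bijectivity of \eqref{eq:lift}, and it sits exactly at the crux of the lemma. You assert that ``a lift arises from $\mathscr{S}_{K_pK^p}$ precisely when the lifted Hodge filtration on $\bm{H}_{\mathscr{O}}$ is induced by a cocharacter of $G_{\mathscr{O}}$,'' and then identify $G$-cocharacter filtrations with isotropic lines in the $\rho'$-eigenspace. But the preliminaries you can quote (Kisin's 1.4.5/2.3.5, as recalled in \S2.2) give only the forward direction, and only for points valued in finite extensions $E'$ of $W[p^{-1}]$, not for Artinian quotients $\cO_E/(\varpi^n)$ where ``induced by a cocharacter'' is itself a delicate notion; the converse --- that every isotropic line in $\Iso_n$ lifting $l_{n-1}$ actually comes from a point of $\widehat{U}_x(\cO_E/(\varpi^n))$ --- is precisely the surjectivity you need and cannot simply be cited. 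The paper closes this by a concrete torsor argument at each infinitesimal step: injectivity of \eqref{eq:lift_infinitesimal} follows from Grothendieck--Messing plus $\Fil^1\bm{H}=\ker(\Fil^1\bm{L})$, and then both sides are identified with $1$-dimensional $\cO_E/(\varpi)$-vector spaces --- the left side because $\widehat{U}_x$ is formally smooth of relative dimension $1$ over $W$, the right side by the explicit computation $\{u\in l_1^{\perp}:\ (\bm{\pi}_{\epsilon_i,\mathrm{cris},x}\otimes 1)(u)=\rho'(\epsilon_i)u\}/l_1$ inside the nondegenerate rank-$3$ eigenspace --- so the injection is forced to be a bijection. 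If you want to keep your local-model phrasing instead, you would have to actually prove the ``if'' direction of your criterion (essentially redoing Madapusi's 5.16 with the tensors $\bm{\pi}_{\epsilon_i}$ over Artinian bases), which is more work than the dimension count; as written, the key step is asserted rather than proved.
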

\begin{proof}
    For any lift $\tilde{x}$ of $x$, via  the comparison $H^1\dR(\mathcal{A}_{\tilde{x}}/E)\xrightarrow{\simeq}H^1\cris(\mathcal{A}_x/W)\otimes_W E$, the Hodge filtration $\Fil^1H^1\dR(\mathcal{A}_{\tilde{x}}/E)\subset H^1\dR(\mathcal{A}_{\tilde{x}}/E)$ gives a filtration $\Fil^1(H^1\cris(\mathcal{A}_{x}/W)\otimes_W E )
    \subset H^1\cris(\mathcal{A}_{x}/W)\otimes_W E$ that is split by a cocharacter $\mu: \bG_{m, E}\to G_W\otimes_W E$. Since $G_W\subset \GSpin(\bm{L}_{\mathrm{cris}, x})$, it induces a splitting \[\bm{L}_{\mathrm{cris},x} \otimes_{W}E = \Fil^1 (\bm{L}_{\mathrm{cris}, x}\otimes_W E) \oplus (\bm{L}_{\mathrm{cris}, x}\otimes_W E)^0\oplus \overline{\Fil}^1(\bm{L}_{\mathrm{cris}, x}\otimes_W E),\] where $\Fil^1 (\bm{L}_{\mathrm{cris}, x}\otimes_W E)$ is an isotropic line such that \begin{equation}\label{eq:filtration}
        \Fil^1(H^1\cris(\mathcal{A}_{x}/W)\otimes_W E) = \ker (\Fil^1(\bm{L}_{\mathrm{cris}, x}\otimes_W E)) = \im(\Fil^1(\bm{L}_{\mathrm{cris}, x}\otimes_W E)).
    \end{equation}
    Note that $\Fil^1 (\bm{L}_{\mathrm{cris}, x}\otimes_W E)$ lifts $\Fil^1 \bm{L}_{\mathrm{dR}, x}$ as $\Fil^1(H^1\cris(\mathcal{A}_{x}/W)\otimes_W E)$ lifts $\Fil^1H^1\dR(\mathcal{A}_{x}/k)$. 
    By construction, the filtration on $H^1\dR(\mathcal{A}_{\tilde{x}}/E) \otimes_{E, \iota} \bC \simeq  H^1_B(\mathcal{A}_{\iota(\tilde{x})}(\bC), \bQ)\otimes_{\bQ} \bC$ is induced by a cocharacter $\mu_{\mathbf{h}}$ defined by a Hodge structure $\mathbf{h}\in X$ such that $\Fil^1 H_{\bC} = \ker (\Fil^1 V_{\bC})$. The action of $\bm{\pi}_{{\epsilon_i}, B, \iota(\tilde{x})} \otimes 1$ on $\Fil^1 V_{\bC}$ is scalar multiplication by $\rho(\epsilon_i)$.  If $w$ is a generator of the isotropic line $\Fil^1 V_{\mathrm{dR},\tilde{x}}$, then since \eqref{eq:Betti_deRham} takes $\bm{\pi}_{{\epsilon_i}, \mathrm{dR}, \tilde{x}} \otimes 1$ to $\bm{\pi}_{{\epsilon_i}, B, \iota(\tilde{x})} \otimes 1$, we have $\bm{\pi}_{{\epsilon_i}, \mathrm{dR}, \tilde{x}} (w) \otimes_{E, \iota} 1= w\otimes_{E, \iota}\rho({\epsilon_i})$, i.e., $\bm{\pi}_{{\epsilon_i}, \mathrm{dR}, \tilde{x}} (w) = \rho'({\epsilon_i}) w$. 
    
    To show the map \eqref{eq:lift} is a bijection, we work successively with the thicknings $\mathcal{O}_E /(\varpi^n)\twoheadrightarrow \mathcal{O}_E/(\varpi^{n-1})$ following the proof of \cite{MR3484114}*{5.16}  and then apply Grothendieck existence theorem. 
    Let $\Iso_n$ denote the set of isotropic lines in  $\bm{L}_{\mathrm{cris},x}\otimes_W (\mathcal{O}_E/(\varpi^n))$ on which each $\bm{\pi}_{{\epsilon_i}, \mathrm{cris}, x}\otimes 1$  acts as $\rho'({\epsilon_i})\otimes 1$.
    Suppose $x_{n-1}\in \widehat{U}_x(\cO_E/(\varpi^{n-1}))$ gives rise to $l_{n-1}\in \Iso_{n-1}$ that lifts $\Fil^1\bm{L}_{\mathrm{dR}, x}$, and consider 
    \begin{equation} \label{eq:lift_infinitesimal}
        \left\{x_n \in \widehat{U}_x(\cO_E/(\varpi^{n})) \text{ lifting }x_{n-1} \right\} \xrightarrow{ }\left\{ l_n\in \Iso_n\text{ lifting } l_{n-1}\right\}.
    \end{equation}
    Injectivity of \eqref{eq:lift_infinitesimal} follows from \eqref{eq:SerreTateGrothendieckMessing} and the correspondence \begin{equation*}
        \Fil^1\bm{H}_{\cO_E/(\varpi^{n})} = \ker (\Fil^1(\bm{L}_{\mathrm{cris}, x}\otimes_W (\cO_E/(\varpi^{n})))) .
    \end{equation*} Then we can prove \eqref{eq:lift_infinitesimal} is a bijection by showing both sides are vector spaces over $\mathcal{O}_E/(\varpi)$ of the same dimension. 
    Since $\widehat{U}_x$ is formally smooth of relative dimension $1$ over $W$ (\cite{MR2669706}*{2.3.5}), %should be, since smooth integral model of a Shimura curve
    the left-hand side of \eqref{eq:lift_infinitesimal} is a $1$-dimensional vector space over $\mathcal{O}_E/(\varpi)$. On the other hand,  since $\bm{L}_{\mathrm{cris},x}$ is non-degenerate modulo $(\varpi)$, 
    there exists $l_n =\langle w_n\rangle \in \Iso_{n}$ lifting $l_{n-1}$ and the right-hand side of \eqref{eq:lift_infinitesimal} is $\{\langle w_n + \varpi^{n-1} u\rangle \subset \bm{L}_{\mathrm{cris}, x}\otimes_W (\cO_E/(\varpi^n)): \varpi^{n-1} u \perp w_n, (\bm{\pi}_{\epsilon_i, \mathrm{cris}, x}\otimes 1)(u) = \rho'(\epsilon_i)u \}\simeq \{u\in \bm{L}_{\mathrm{cris}, x}\otimes_W (\cO_E/(\varpi)): u \in l_1^{\perp}, (\bm{\pi}_{\epsilon_i, \mathrm{cris}, x}\otimes 1)(u) = \rho'(\epsilon_i)u\}/l_1$, 
    which is a $1$-dimensional $\mathcal{O}_E/(\varpi)$-vector space. Hence \eqref{eq:lift_infinitesimal} is a bijection. Applying \Cref{thm:Grothendieckexistence} to a compatible system of polarized abelian varieties associated to $\{l_n\}$, the map \eqref{eq:lift} is a bijection. 
    
    For \eqref{eq:liftspecial}, by \Cref{thm:Grothendieckexistence}, given a lift $\tilde{x}\in \mathscr{S}_{K_pK^p}(E)$ of $x$, there is $\tilde{v} \in L(\cA_{\tilde{x}})$ lifting $v$ if $v$ lifts to a compatible system $\{v_n\}$, 
    where $v_n$ is an endomorphism of the reduction of $\cA_{\tilde{x}}$ modulo $(\varpi^n)$.
    With $x_n \in \widehat{U}_x(\mathcal{O}_E/(\varpi^n))$ lifting $x$, we have $v\in L(\mathcal{A}_x)$ lifts to an endomorphism of $\mathcal{A}_{x_n}$ if and only if its crystalline realization $v_{\mathrm{cris},n}\in \bm{L}_{\cO_E/(\varpi^n)}$ preserves the Hodge filtration $\Fil^1 \bm{H}_{\cO_E/(\varpi^n)} $ if and only if $v_{\mathrm{cris},n}\in \Fil^0\bm{L}_{\cO_E/(\varpi^n)}$ if and only if $v_{\mathrm{cris},n}$ is orthogonal to $\Fil^1\bm{L}_{\cO_E/(\varpi^n)}$. 
\end{proof}

\subsection{Structure of the space of special endomorphisms} \label{sec:specialendsupersingular} %Define L(\mathcal{A}_x)

Let $x\in \mathscr{S}_{K_pK^p}(k)$ be a closed point and $v\in L(\mathcal{A}_x)$ be a special endomorphism. By \Cref{lem:lifting} and its proof, there exists some lift $\tilde{x}$ of $x$ with special endomorphism $\tilde{v}$ lifting $v$. The abelian variety $\cA_{\iota(\tilde{x})}$ defines a Hodge structure $V\otimes_{\bQ}\bC = V^{-1,1}\oplus V^{0,0}\oplus V^{1,-1}$, where $L(\mathcal{A}_{\tilde{x}})\otimes_{\bZ}\bQ \simeq V\cap V^{0,0}$ has an $F$-linear structure defined by the absolute Hodge cycles $\bm{\pi}_{{\epsilon_i}}$. Therefore, there is an $\mathcal{O}_F$-linear structure on the sublattice $L(\mathcal{A}_{\tilde{x}}) \xhookrightarrow{} L(\mathcal{A}_x)$ such that $\bm\pi_{{\epsilon_i}} (\tilde{v})$ agrees with $\bm\pi_{{\epsilon_i}, \mathrm{cris}, x}(v\cris)$ in $\bm{L}_{\mathrm{cris},x}$. Applying this process to a basis for $L(\mathcal{A}_x)$ shows that it is stable under the $\mathcal{O}_F$-action on $\bm{L}_{\mathrm{cris},x}$, thereby endowing $L(\mathcal{A}_x)$ with an $\mathcal{O}_F$-linear structure compatible with that on $\bm{L}_{cris,x}$ and $\bm{L}_{\ell,x}$ for $\ell\neq p$.

Let $V(\cA_x) = L(\cA_x)\otimes_{\bZ}\bQ$ and $Q'$ denote the quadratic form on $V(\cA_x)$ over $\bQ$. Since $\Tr_{F/\bQ}$ is perfect, and for $u,w\in L(\cA_x)$, $[\epsilon_iu, w]_{Q'} =[\bm{\pi}_{\epsilon_i, \mathrm{cris},x} u\cris,w\cris]\cris = [u\cris, \bm{\pi}_{\epsilon_i, \mathrm{cris},x} w\cris]\cris = [u, \epsilon_i w]_{Q'}$, there is a quadratic form $Q_F'$ on $V(\mathcal{A}_x)$ over $F$ determined by $\Tr_{F/\bQ}(f[u,w]_{Q_F'}) = [fu,w]_{Q'}$ for all $f\in F$. In particular, $Q' = \Tr_{F/\bQ}\circ Q_F'$. The orthogonal direct sum decompositions \begin{align*}
        (V(\cA_x)\otimes_{\bQ} W[p^{-1}], Q') = \bigoplus_{\sigma':F\xhookrightarrow{} W[p^{-1}]} (V(\cA_x)\otimes_{F, \sigma'}W[p^{-1}], \sigma'\circ Q_F'), \\
        \bm{L}_{\mathrm{cris}, x} [p^{-1}]\simeq  (V\otimes_{\bQ} W[p^{-1}], Q)=\bigoplus_{\sigma':F\xhookrightarrow{}W[p^{-1}]}(V\otimes_{F, \sigma'} W[p^{-1}], \sigma'\circ Q_F)
\end{align*} and the $F$-linear isometric embedding \begin{equation}\label{eq:comparison_quadoverWQ}
        V(\cA_x) \otimes_{\bQ}W[p^{-1}]\xhookrightarrow{} \bm{L}_{\mathrm{cris},x}[p^{-1}]
    \end{equation}induce an isometric embedding \begin{equation}\label{eq:comparison_quadoverW}
        (V(\cA_x)\otimes_{F, \sigma'} W[p^{-1}], \sigma'\circ Q_F') \xhookrightarrow{} (V\otimes_{F, \sigma'}W[p^{-1}], \sigma'\circ Q_F)
    \end{equation}of quadratic spaces over $W[p^{-1}]$ for each $\sigma': F\xhookrightarrow{}W[p^{-1}]$. Moreover, $(V(\cA_{\tilde{x}}), Q_F) \xhookrightarrow{}(V(\cA_{x}), Q_F')$ is isometric for any lift $\tilde{x}$ of $x$. 
    Similarly, for $\ell\neq p$, we have orthogonal direct sum decompositions \begin{align*}
        (V(\cA_x)\otimes_{\bQ} \bQ_\ell, Q') = \bigoplus_{v|\ell} (V(\cA_x)\otimes_{F}F_v, \Tr_{F_v/\bQ_\ell}\circ Q_F'), \\
        \bm{V}_{\ell, x} \simeq  (V\otimes_{\bQ} \bQ_\ell, Q)=\bigoplus_{v|\ell}(V\otimes_{F} F_v, \Tr_{F_v/\bQ_\ell}\circ Q_F)
    \end{align*} of quadratic spaces over $\bQ_\ell$. Since the natural embedding \begin{equation}\label{eq:comaprison_quadoverQl}
        V(\cA_x) \otimes_{\bQ} \bQ_\ell\xhookrightarrow{} \bm{V}_{\ell,x}
    \end{equation} is $(F\otimes_{\bQ}\bQ_\ell)$-linear by construction and $\Tr_{F_v/\bQ_\ell}$ is perfect, we have an isometric embedding \begin{equation}\label{eq:comparison_quadoverFv}
        (V(\cA_x)\otimes_{F} F_v,  Q_F')\xhookrightarrow{} (V\otimes_{F}F_v,  Q_F)
    \end{equation} of quadratic spaces over $F_v$ for each $v|\ell$. The next lemma shows that \eqref{eq:comparison_quadoverWQ} -- \eqref{eq:comparison_quadoverFv} are isomorphisms when $\cA_x$ is supersingular. 
    
\begin{lemma} \label{lem:specialend}
    Suppose $\cA_x$ is supersingular. 
    \begin{enumerate}
        \item As a quadratic space over $\bQ$, $V(\cA_x)$ is positive definite with the same dimension $(= 3[F:\bQ])$ and determinant as $V$, but with Hasse invariant \[\epsilon(V(\cA_x)_{\bQ_\ell}) = \begin{cases}\epsilon(V_{\bQ_\ell}) = 1 & \text{ if }\ell\neq p \\ -\epsilon(V_{\bQ_\ell}) = -1 & \text{ if }\ell= p\end{cases}\] for all finite rational primes $\ell$. 
        \item As a quadratic space over $F$, $V(\cA_x)$ is positive definite at all real places with the same dimension $(=3)$ as $V$, and with Hasse invariant $\epsilon(V(\cA_x)_{F_v}) = \epsilon(V_{F_v}) = 1 $ for all finite primes $v\nmid p$.
    \end{enumerate}
\end{lemma}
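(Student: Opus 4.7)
The proof naturally splits into three steps: a dimension count using Tate's theorem for supersingular abelian varieties, a positivity argument via the Rosati involution, and a global product-formula computation to determine the remaining local invariants.

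The first step is to establish $\dim_\bQ V(\cA_x) = 3[F:\bQ]$ by showing that the embeddings \eqref{eq:comparison_quadoverWQ}--\eqref{eq:comparison_quadoverFv} are isomorphisms whenever the coefficient ring contains $p^{-1}$ (resp.\ $\ell^{-1}$). After replacing $k$ by a finite extension, supersingularity of $\cA_x$ implies that a power of the geometric Frobenius acts as a scalar on $H^1\cris(\cA_x/W)[1/p]$ and on each $H^1_{\et}(\cA_x,\bQ_\ell)$. Consequently, Frobenius acts trivially on $\bm V_{\ell, x} \subset \End(H^1_{\et}(\cA_x,\bQ_\ell))$ and on $\bm V_{\mathrm{cris},x}[1/p] \subset \End(H^1\cris(\cA_x/W))[1/p]$. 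Tate's theorem for endomorphisms of abelian varieties over finite fields (for $\ell\neq p$) and its crystalline analogue then upgrade the inclusions $V(\cA_x)\otimes_\bQ \bQ_\ell \subseteq \bm V_{\ell,x}^{\mathrm{Frob}}$ and $V(\cA_x)\otimes_\bQ W[1/p] \subseteq \bm V_{\mathrm{cris},x}[1/p]^{\varphi}$ to equalities, yielding the claimed rank.

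Next, for positive-definiteness, fix a polarization on $\cA_x$ inherited from the Siegel embedding, and let $\dagger$ denote the induced Rosati involution on $\End^0(\cA_x)$. For any special $f \in V(\cA_x)$ the Clifford-algebra identity $f \circ f = Q'(f)\cdot \mathrm{id}_{\cA_x}$ (inherited from the relation $v^2 = Q(v)$ in $C(V)$), combined with the positivity of the Rosati trace pairing, shows $Q'(f) > 0$ for $f \neq 0$. Hence $(V(\cA_x)_\bR, Q')$ is positive definite, and by the orthogonal decomposition along real embeddings of $F$, each summand $(V(\cA_x)\otimes_{F,\sigma}\bR,\sigma\circ Q_F')$ is positive definite of dimension $3$ as well.

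The statements on determinants and Hasse invariants then follow from step one combined with Hilbert's product formula. Since \eqref{eq:comaprison_quadoverQl} and \eqref{eq:comparison_quadoverFv} are now isometric bijections for all $\ell \neq p$ and $v \nmid p$, Hasse invariants and determinants at these places coincide with those of $V$; determinants also match at each archimedean place because both forms are positive of the same total dimension, giving part (2) at finite $v\nmid p$ and reducing the determinant computation of part (1) to a single place, which is forced by Hasse--Minkowski. To pin down $\epsilon(V(\cA_x)_{\bQ_p}) = -\epsilon(V_{\bQ_p})$, apply the product formula $\prod_v \epsilon_v(\cdot) = 1$ to both forms and take the ratio: only $\infty$ and $p$ contribute non-trivially, and a direct computation of the archimedean Hasse invariants of diagonal forms of signatures $(3[F:\bQ]-2, 2)$ and $(3[F:\bQ], 0)$ gives $\epsilon_\infty(V(\cA_x))/\epsilon_\infty(V) = -1$, forcing the compensating flip at $p$.

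The hardest step is the crystalline half of the dimension count: one must check that Frobenius really is trivial on $\bm V_{\mathrm{cris},x}[1/p]$ after a finite extension of $k$ and that the crystalline Tate-type statement identifies the $\varphi$-fixed subspace with $V(\cA_x)\otimes_\bQ W[1/p]$. This is available in the $\GSpin$ framework of Madapusi-Pera, but in our setting it must also be checked to be compatible with the $F$-linear structure coming from the absolute Hodge tensors $\bm\pi_{\epsilon_i}$; the bulk of the technical effort will go into this compatibility verification.
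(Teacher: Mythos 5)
Your proposal is correct in substance and rests on the same deep input as the paper, but it packages the local computations differently. In both arguments the crux is the Tate conjecture for special endomorphisms at a supersingular point: the paper invokes \cite{MR3370622}*{6.4} to identify $V(\cA_x)\otimes_{\bQ}\bQ_p$ with the $\varphi$-invariants of $\bm{L}_{\mathrm{cris},x}\otimes_W W(\bar k)[p^{-1}]$, and then reads off the dimension, the determinant, and the sign change of the Hasse invariant at $p$ directly from the Howard--Pappas computation \cite{MR3705249}*{4.2.5} of that invariant quadratic space for a basic isocrystal; positive definiteness is simply quoted from \cite{MR3484114}*{5.12}, whose proof is exactly your Rosati argument (note it needs the fact that special endomorphisms are fixed by the Rosati involution, which is part of that lemma rather than a formality). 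Your alternative for the invariants --- match everything at $\ell\neq p$ and at $\infty$, then force the flip at $p$ by the Hilbert product formula together with the signature comparison $(3[F:\bQ]-2,2)$ versus $(3[F:\bQ],0)$, and pin down the determinant at $p$ by the ``local square almost everywhere implies global square'' argument (calling this Hasse--Minkowski is loose but harmless) --- is legitimate and somewhat more elementary once the rank is known; what it gives up is the intrinsic description of $V(\cA_x)_{\bQ_p}$ at $p$, which the paper reuses in \Cref{lem:specialendp}.

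Two soft spots to repair. First, at $\ell\neq p$ classical Tate's theorem does not by itself upgrade $V(\cA_x)\otimes_{\bQ}\bQ_\ell\subseteq \bm{V}_{\ell,x}$ to an equality: a Frobenius-invariant element of $\bm{V}_{\ell,x}$ is, by Tate, the realization of a quasi-endomorphism, but specialness is defined through the crystalline realization, so you must either quote the $\ell$-adic half of \cite{MR3370622}*{6.4} or, as the paper does, note that once the crystalline computation gives $\dim_{\bQ}V(\cA_x)=3[F:\bQ]$, the isometric embedding into $\bm{V}_{\ell,x}$ is an isomorphism for dimension reasons. Second, the crystalline Frobenius is $\sigma$-semilinear, so ``Frobenius acts trivially on $\bm{V}_{\mathrm{cris},x}[p^{-1}]$'' should be replaced by the statement that the endomorphism isocrystal is isoclinic of slope $0$, whence its $\varphi$-invariants over $W(\overline{\bF}_p)[p^{-1}]$ form a $\bQ_p$-form of full rank; the base change to $W(\bar k)$ is needed here. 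The compatibility with the $F$-linear structure that you flag as the main technical burden is indeed the right concern, but it is already arranged in the paper: the tensors $\bm{\pi}_{\epsilon_i}$ are included among the $s_\alpha$, and the induced $\cO_F$-structure on $L(\cA_x)$ and the $F$-bilinear form $Q_F'$ are set up in \cref{sec:specialendsupersingular}, from which part (2) follows exactly as in your last step.
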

\begin{proof}
    Positive-definiteness follows from \cite{MR3484114}*{5.12}. 
    The Tate-conjecture (\cite{MR3370622}*{6.4}) implies that under the natural isometric embedding $V(\cA_x)_{\bQ_p} = L(\cA_x)\otimes_{\bZ} \bQ_p \xhookrightarrow{} \bm{L}_{\mathrm{cris}, x}\otimes_W W(\bar{k})[p^{-1}]$,   the subspace $V(\cA_x)_{\bQ_p}$ consists of Frobenius invariant vectors in the isocrystal $\bm{L}_{\mathrm{cris}, x}\otimes_W W(\bar{k})[p^{-1}]$, which is a $\bQ_p$-quadratic space with the same dimension and determinant as $V_{\bQ_p}$, but has Hasse invariant $-1$ by \cite{MR3705249}*{4.2.5}. For $\ell\neq p$, the natural isometric embedding $V(\mathcal{A}_x) \otimes_\bQ \bQ_\ell \xhookrightarrow{} \bm{V}_{\ell,x} \simeq V\otimes_{\bQ}\bQ_\ell$ is an isomorphism for the dimension reason. 

    Since $Q' = \Tr_{F/\bQ} \circ Q'_F$, we have an orthogonal direct sum decomposition \[V(\cA_x)\otimes_{\bQ} \bR = \oplus_{\sigma:F\xhookrightarrow{}\bR} V(\cA_x)\otimes_{F, \sigma} \bR\] of quadratic space spaces over $\bR$, then positive definiteness of $ Q_F'$ follows from positive-definiteness of $Q'$. Similarly, if $\ell\neq p$, then the natural map $V(\cA_x)\otimes_{\bQ}\bQ_\ell \xrightarrow{\simeq} \bm{V}_{\ell,x} \simeq V\otimes_{\bQ}\bQ_\ell$ is an $F$-linear isometry of $\bQ_\ell$-quadratic spaces; therefore, $V(\cA_x)\otimes_{F} F_v \simeq V\otimes_{F} F_v$ for all prime $v|\ell$. 
\end{proof}

\begin{lemma}\label{lem:specialendp}
    Suppose $p$ is totally split in $F$.
    If $\cA_x$ is supersingular, then $V(\cA_x)$ is a quadratic space over $F$ of dimension $3$ and determinant $1$. Morevoer, the Hasse invariant of $V(\cA_x)$ is $-1$ at $\fp$ over $p$ corresponding to $\rho': F\xhookrightarrow{}W[p^{-1}]$, and $1$ at other finite primes of $F$. 
\end{lemma}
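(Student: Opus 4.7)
The plan is to refine \Cref{lem:specialend} by exploiting the $F$-decomposition of the crystalline cohomology, which becomes transparent when $p$ is totally split in $F$. Positive-definiteness at real places, dimension $3$, and $\epsilon(V(\cA_x)_{F_v}) = 1$ for finite $v \nmid p$ follow directly from \Cref{lem:specialend}. What remains is to pin down the local Hasse invariant at each $\fp \mid p$ and to show $\det V(\cA_x) = 1 \in F^\times/F^{\times 2}$.

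The first step is the $F$-decomposition. Since $p$ is totally split in $F$, every embedding $\sigma' : F \hookrightarrow W[p^{-1}]$ factors through $\bQ_p$, and such embeddings are in bijection with primes $\fp \mid p$. The $\cO_F$-action on $\bm{L}_{\mathrm{cris},x}$, arising from the absolute Hodge cycles $\bm{\pi}_{\epsilon_i,\mathrm{cris},x}$, commutes with Frobenius $\phi$, and because $\sigma$ acts trivially on $\bQ_p$, $\phi$ preserves each $\sigma'$-eigenspace of $\bm{L}_{\mathrm{cris},x} \otimes_W W(\bar{k})[p^{-1}]$. Combined with the Tate-conjecture identification used in \Cref{lem:specialend}, this yields compatible decompositions
\[
V(\cA_x) \otimes_F F_\fp \;\simeq\; \bigl(V \otimes_{F,\sigma'_\fp} W(\bar{k})[p^{-1}]\bigr)^{\phi}
\]
as $\bQ_p$-quadratic spaces, one for each $\fp \mid p$.

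Next I would analyze the two cases separately. At the distinguished prime $\fp_0$ (with $\sigma'_{\fp_0} = \rho'$), the Hodge cocharacter $\mu$ carries weights $(-1,0,1)$, so this summand is exactly the $3$-dimensional local $\GSpin$-type isocrystal of signature $(1,2)$ analyzed by Kisin; applying \cite{MR3705249}*{4.2.5} to this piece yields a $\bQ_p$-quadratic space of Frobenius invariants with the same determinant as $V \otimes_F F_{\fp_0}$ but Hasse invariant flipped to $-1$. At each $\fp \neq \fp_0$, the cocharacter $\mu$ is trivial on the $\sigma'$-component (its nonzero weights live only on the $\rho$-eigenspace over $\bC$, cf.\ \S\ref{sec:ShimuraDatum}), so the corresponding $F$-crystal is \'etale; Frobenius then takes the form $1 \otimes \sigma$ in a suitable trivialization, and its invariants recover $V \otimes_F F_\fp$ as a $\bQ_p$-quadratic space. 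Thus $V(\cA_x) \otimes_F F_\fp \simeq V \otimes_F F_\fp$ for every $\fp \neq \fp_0$.

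To conclude, $\det V_F = 1$ is read off directly from any quaternion basis of $(B^0, \nrd)$, which diagonalizes as $\langle -a, -b, ab\rangle$ with product $a^2 b^2$. Combining \Cref{lem:specialend} with the preceding local analysis, $V(\cA_x)$ and $V$ are isometric at every place of $F$ except possibly $\fp_0$, and their determinants coincide even at $\fp_0$. Therefore $\det V(\cA_x) \cdot (\det V)^{-1} \in F^\times/F^{\times 2}$ is a square at every place, hence globally trivial by the rank-$1$ case of Hasse--Minkowski. I expect the main obstacle to be justifying that on the components with $\sigma' \neq \rho'$ the Frobenius is truly \'etale (not merely of slope $0$): one must unpack the Kisin integral model to verify that $\phi$ preserves the Dieudonn\'e lattice on these components and acts on it via an element of $G_W$ whose effect on the $\phi$-invariant $\bQ_p$-quadratic space is trivial.
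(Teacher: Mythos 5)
Your setup (the eigenspace decomposition of the isocrystal using that $p$ splits totally, Howard--Pappas \cite{MR3705249}*{4.2.5} at the distinguished prime, and the determinant bookkeeping at the end) follows the same lines as the paper, but there is a genuine gap at the step you yourself flag: the claim that for $\fp\neq\fp_0$ the $\Phi$-invariants of $V\otimes_{F,\sigma'}K$ recover $V\otimes_F F_\fp$ as a quadratic space. Note first that slope considerations cannot do this work: when $\cA_x$ is supersingular the whole of $\bm{L}_{\mathrm{cris},x}[p^{-1}]$ is isoclinic of slope $0$, including the $\rho'$-component whose invariants have Hasse invariant $-1$. For a $3$-dimensional quadratic space over $\bQ_p$ there are exactly two isomorphism classes of slope-$0$ isocrystals with quadratic form, distinguished precisely by the Hasse invariant of the invariants, so ``the Hodge cocharacter is trivial on this component, hence the crystal is \'etale, hence the invariants are the standard form'' does not follow without an extra input. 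That extra input (either that $\Phi$ preserves a self-dual $W$-lattice on those components and can then be trivialized inside the smooth group scheme $\SO$ over $W$ by a Lang-type argument, or an identification of the relevant $\sigma$-conjugacy class) is the actual content of the lemma beyond \Cref{lem:specialend}, and your proposal leaves it as an expectation rather than proving it. The same issue feeds back into the distinguished prime: applying \cite{MR3705249}*{4.2.5} ``to this piece'' presupposes that the restriction of Frobenius to the $\rho'$-component lies in the nontrivial basic class, which you only know once the other components have been pinned down (or once the global Frobenius has been identified).

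The paper closes this gap group-theoretically rather than lattice-theoretically: since $G^{\der}\simeq\Res_{F/\bQ}\SL_{1,B}$ is simply connected and $G/G^{\der}\simeq\bG_m$, the $\sigma$-conjugacy class of Frobenius in $G(K)$ is determined by its Newton and Kottwitz invariants, so for supersingular $x$ it admits the explicit representative $b=x_3(p^{-1}x_1+x_2)$ with $x_1,x_2,x_3$ in the $\rho'$-eigenspace. Because $b$ is a product of two vectors lying in $M=V\otimes_{F,\rho'}\bQ_p$, conjugation by $b$ is trivial on $M^{\perp}$, so $V_K^{\Phi}=M_K^{\Phi}\oplus M^{\perp}$ on the nose, the components away from $\fp_0$ are literally $V\otimes_F F_\fp$, and \cite{MR3705249}*{4.2.5, 4.2.6} is invoked only for $M$. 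If you want to keep your route, you must actually carry out the integral verification you mention (Frobenius preserves the self-dual lattice on the components where the filtration vanishes, lands in $\SO$ of that lattice, and is a $\sigma$-coboundary there); as written, the proposal asserts the conclusion of that verification rather than establishing it.
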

\begin{proof}
    When $p$ is totally split in $F$, fix a $\bZ_p$ basis $x_1, \dotsc, x_n$ ($n= 3[F:\bQ]$) of $L\otimes_{\bZ}\bZ_p \simeq \oplus_{\sigma':\mathcal{O}_F\to \bZ_p} L\otimes_{\mathcal{O}_F, 
    \sigma'}\bZ_p$ for which $x_1, x_2, x_3\in L\otimes_{\mathcal{O}_F, 
    \rho'}\bZ_p$ and the matrix of inner products has the form \[
    \begin{pmatrix}
    0 & 1 &&&&\\
    1 & 0 &&&&\\
     & & * &&&\\
     &&& * &&\\
     & &&& \ddots&\\
     &&&&& *
    \end{pmatrix}.
    \] 
    As in \cite{MR3705249}*{4.2.1}, this defines a $G_{\bZ_p}$-valued cocharacter $\mu$ by $\mu(t) = t^{-1} x_1x_2 + x_2x_1$ such that \[\mu(t) \cdot x_i = \begin{cases}
        t^{-1}x_i & i = 1,\\ tx_i & i = 2, \\x_i & 3\leq i \leq n. 
    \end{cases}\] 
    Under the fixed isomorphism $\iota: \overline{W[p^{-1}]}\to \bC$, the cocharacters $\mu$ and $\mu_{h}^{-1}$ are conjugate, where $\mu_h$ is the Hodge cocharacter.  %Kisin
    
    Following \cite{MR3705249}*{4.2.5, 4.2.6}, since the derived group of $G$ is $G^{\der} \simeq  \Res_{F/\bQ} \SL_{1,B}$, which is simply connected, and the embedding $G\xhookrightarrow{} \GSpin(V, Q)$ induces $G/G^{\der} \simeq \GSpin(V, Q)/\GSpin(V, Q)^{\der}\simeq \mathbb{G}_m$, if we set $b = x_3(p^{-1}x_1+x_2)\in G(\bQ_p)\subset \GSpin(V, Q)(\bQ_p)$, then 
    the isocrystal $\bm{L}_{\mathrm{cris}, x}\otimes_W K$ is isomorphic to the isocrystal structure on $V_K$ defined by $\Phi = b\circ \sigma$, where $\sigma$ is the automorphism of $K=W(\overline{\bF}_p)[p^{-1}]$ induced by the absolute Frobenius on $ \overline{\bF}_p$.  
    If we define $M = V\otimes_{F, \rho'} \bQ_p = \bQ_p x_1+\bQ_p x_2+\bQ_p x_3$, then $V_K^{\Phi} = M_K^{\Phi}\oplus M^{\perp}$, where $M^{\perp} = \oplus_{\sigma'\neq \rho'} V\otimes_{F, \sigma'} \bQ_p$. Moreover, $M_K^{\Phi}$ and $M$ have the same dimension and determinant, but different Hasse invariants. In particular, as a quadratic space over $F_{\lambda}\simeq \bQ_p$, $V(\mathcal{A}_x)\otimes_F F_{\lambda}$ has dimension $3$, determinant $1$ and Hasse invariant $-1$. 
\end{proof}

By definition, we have \begin{equation}
    L(\cA_x) = \{v\in V(\cA_x): v\cris \in \bm{L}_{\mathrm{cris}, x}, \, v_\ell\in \bm{L}_{\ell,x} \text{ for all }\ell\neq p\}. 
\end{equation} Then we have $L(\cA_x) \otimes_\bZ W \simeq \bm{L}_{\mathrm{cris},x}$ and $L(\cA_x) \otimes_{\bZ} \bZ_\ell \simeq \bm{L}_{\ell,x}$ for $\ell\neq p$ when $\cA_x$ is supersingular.

\begin{lemma} \label{lem:auto}
    Let $p$ be a rational prime that is totally split in $F$.
    Suppose $\cA_x$ is supersingular and there is an order $2$ automorphism $\alpha\in \SO (V(\cA_x), Q_F')$ such that $\alpha(\bm{L}_{\mathrm{cris},x}) = \bm{L}_{\mathrm{cris}, x}$ under the identification $L(\cA_x) \otimes_{\bZ}W\simeq \bm{L}_{\mathrm{cris},x}$ and $\alpha (\bm{L}_{\ell,x}) = \bm{L}_{\ell,x}$ under the identification $L(\cA_x)\otimes_{\bZ}\bZ_\ell \simeq \bm{L}_{\ell,x}$ for all $\ell\neq p$, then there is $u\in L(\cA_x)$ such that $Q_F'(u) = 1$ and $\alpha(u) = u$.
\end{lemma}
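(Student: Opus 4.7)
The plan is to identify $u$ as a suitably normalized generator of the $\alpha$-fixed sublattice $M := L(\mathcal{A}_x)\cap W$, where $W\subset V(\mathcal{A}_x)$ is the $+1$-eigenspace of $\alpha$. Since $\alpha\in\SO(V(\mathcal{A}_x),Q_F')$ has order $2$ on a $3$-dimensional $F$-quadratic space, $W$ is a line over $F$ and $V(\mathcal{A}_x)=W\oplus W^\perp$ orthogonally. By the class-number-one assumption on $F$, the $\cO_F$-module $M$ is free of rank one; fix a generator $u_0$ and set $a:=Q_F'(u_0)\in\cO_F$. Any other element of $M$ has the form $cu_0$ with $c\in\cO_F$ and norm $c^2a$, so the lemma reduces to showing $a\in(\cO_F^\times)^2$.

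I would verify this by a local-global argument. The quadratic space $(V(\mathcal{A}_x),Q_F')$ is totally positive definite, so $\sigma(a)>0$ at every real place $\sigma$ of $F$. At each finite prime $\lambda$ of $F$ with $\lambda\nmid 2p$, the lattice $L(\mathcal{A}_x)_\lambda$ is identified with $L_\lambda$ via the $\ell$-adic comparison and is thus unimodular; since $2\in\cO_{F_\lambda}^\times$, the projectors $\tfrac12(1\pm\alpha)$ are integral, and $L(\mathcal{A}_x)_\lambda$ splits orthogonally as a direct sum of $\alpha$-eigensublattices. This forces $M_\lambda$ to be a unimodular rank-one lattice, hence $a\in\cO_{F_\lambda}^{\times 2}$. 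The same projector argument applies at the primes above $p$ (of odd residue characteristic), now using the crystalline comparison $L(\mathcal{A}_x)\otimes_\bZ W\simeq\bm{L}_{\mathrm{cris},x}$ together with the supersingular structure of $V(\mathcal{A}_x)$ from the preceding lemmas.

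The main obstacle is the analysis at the unique prime $\fp_2$ above $2$ (unique by the inertness hypothesis on $2$), since there the projector $\tfrac12(1\pm\alpha)$ fails to be integral. I would handle this through the quaternionic picture: by the preceding lemma, $V(\mathcal{A}_x)$ is the trace-zero subspace of the definite quaternion $F$-algebra $B'$ ramified exactly at the archimedean places and at $\fp$, and $\alpha=\mathrm{ad}(\beta)$ for some $\beta\in V(\mathcal{A}_x)=B'^0$ with $W=F\beta$. Identifying $L(\mathcal{A}_x)$ with the trace-zero part of a maximal order $\cO_{B'}$, the intersection $R:=L(\mathcal{A}_x)\cap F(\beta)$ is an $\cO_F$-order in the CM quadratic extension $F(\beta)/F$. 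Since $\fp_2\ne\fp$, $B'$ is split at $\fp_2$, so $F(\beta)_{\fp_2}$ is a quadratic \'etale algebra containing $R_{\fp_2}$ as a maximal order; analyzing its trace-zero summand explicitly exhibits a generator of $M_{\fp_2}$ whose norm is a unit square in $\cO_{F_{\fp_2}}$.

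Combining the local calculations with the injectivity of $F^\times/F^{\times 2}\to\prod_v F_v^\times/F_v^{\times 2}$ yields $a\in F^{\times 2}$, say $a=b^2$ with $b\in\cO_F$ by integral closedness. Since $a$ is a unit at every finite place, so is $b$, giving $b\in\cO_F^\times$. Setting $u:=b^{-1}u_0$ then yields $u\in L(\mathcal{A}_x)\cap W$ with $Q_F'(u)=1$ and $\alpha(u)=u$, as required.
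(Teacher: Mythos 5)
Your reduction (take a generator $u_0$ of the rank-one $\cO_F$-module $M=L(\cA_x)\cap W$, where $W$ is the $+1$-eigenline, and show $a:=Q_F'(u_0)$ is the square of a unit) is a fine reformulation of the lemma, but the local steps that are supposed to deliver this do not. The inference ``$M_\lambda$ is a unimodular rank-one lattice, hence $a\in\cO_{F_\lambda}^{\times 2}$'' is a non-sequitur: unimodularity of a rank-one quadratic $\cO_{F_\lambda}$-lattice only says the norm of a generator is a \emph{unit}, not a unit \emph{square} (e.g.\ in the trace-zero lattice of $M_2(\bZ_p)$ with $Q=\nrd$, the fixed line of conjugation by $\mathrm{diag}(1,-1)$ is generated by an element of norm $-1$, a non-square unit when $p\equiv 3\bmod 4$). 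The same problem recurs at the primes above $p$, and the $\fp_2$-step has additional issues: $L(\cA_x)\cap F(\beta)$ is not an order (every element of $L(\cA_x)\subset V(\cA_x)\simeq B'^0$ has trace zero, so this intersection is just $M$ again), and the identification of $L(\cA_x)$ with the trace-zero part of a maximal order of $B'$ is nowhere established in the paper -- in particular its structure at the ramified prime $\fp$ and at $\fp_2$ is exactly what would need proof. Since none of the local arguments can yield more than ``$a$ is a unit at every finite place,'' the concluding appeal to injectivity of $F^\times/F^{\times2}\to\prod_v F_v^\times/F_v^{\times2}$ has nothing to feed on, and the proof does not close.

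The gap is repairable, and the repair is essentially what the paper does: you should aim only for ``$Q_F'$ of your fixed vector has \emph{even valuation} at every finite place,'' and then finish globally rather than locally. The paper writes $\alpha$ as conjugation by $\beta\in B'^0$, transports $u'=f(\beta)$ at each finite place into the \emph{split} algebra $B$ via the comparison isometries \eqref{eq:comparison_quadoverW} and \eqref{eq:comparison_quadoverFv}; because $\alpha$ preserves the local lattices, the transported vector normalizes the local maximal order, so $u_v\in F_v^\times\cO_v^\times$ and $\nrd(u_v)$ has even valuation -- this works uniformly at all finite places, including above $2$ and $p$, with no need for the eigenlattice to split off or for any explicit order in $B'$. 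Then $h(F)=1$ lets you rescale so that $Q_F'(u)\in\cO_F^\times$, and since this unit is totally positive (positive definiteness) and $F$ has narrow class number one (every totally positive unit is a square), a further unit rescaling gives $Q_F'(u)=1$; integrality of the rescaled vector is checked by the same normalizer description. If you want to keep your formulation via the generator $u_0$, the same two global inputs (class number one to make $a$ a unit up to squares, narrow class number one to make a totally positive unit a square) must replace your local-square/Hasse-type argument; they cannot be obtained place by place.
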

\begin{proof}
    By \Cref{lem:specialend} and \Cref{lem:specialendp}, there exists a totally definite quaternion algebra $B'$ over $F$ that is ramified at $\fp$ over $p$ corresponding to $\rho': F\xhookrightarrow{} W[p^{-1}]$ and an isometry of quadratic spaces \begin{equation*} 
        f: (B'^{0}, \nrd|_{B'^{0}})\xrightarrow{\sim} (V(\mathcal{A}_x), Q_F').
    \end{equation*} Since $\SO(B'^0, \nrd|_{B'^{0}}) = B'^\times/F^\times$ with $B'^\times$ acts on $B'^0$ by conjugation, there exists $\beta \in B'$ such that $\alpha(x) = f(\beta f^{-1}(x) \beta^{-1}) $  for all $x\in V(\cA_x)$. The automorphism $\alpha$ has order $2$, so $\beta^2 \in F$ and $\beta\notin F$, which implies $\beta \in B'^0$. Let $u' = f(\beta) \in V(\cA_x)$. Note that $\alpha(cu') = cu'$ for all $c\in F$. 

    For each $\sigma': F\xhookrightarrow{}W[p^{-1}]$, let \[g_{\sigma'}: (V(\cA_x)\otimes_{F, \sigma'} W[p^{-1}], \sigma'\circ Q_F') \xrightarrow{\sim} (V\otimes_{F, \sigma'}W[p^{-1}], \sigma'\circ Q_F)\] be the isometry  \eqref{eq:comparison_quadoverW} over $W[p^{-1}]$, 
    and \[u_{\sigma'} = g_{\sigma'}(u'\otimes 1) \in V\otimes_{F,\sigma'} W[p^{-1}] = (B\otimes_{F, \sigma'}{W[p^{-1}}])^0,\] then $g_{\sigma'}\circ \alpha\circ g_{\sigma'}^{-1}$ acts as $1$ on $\langle u_{\sigma'}\rangle$ and $-1$ on $\langle u_{\sigma'}\rangle^{\perp}$, so $g_{\sigma'}\circ \alpha\circ g_{\sigma'}^{-1}(x) = u_{\sigma'} xu_{\sigma'}^{-1}$ for all $x\in V\otimes_{F, \sigma'} W[p^{-1}]$. Since the automorphism preserves $\bm{L}_{\mathrm{cris}, x} \simeq  L\otimes_{\bZ} W=\bigoplus_{\sigma':F\xhookrightarrow{}W[p^{-1}]}L\otimes_{\cO_F, \sigma'} W $, where $L = B^0\cap \cO$, 
    and $\cO\otimes_{\cO_F, \sigma'}W$ is a maximal order in $B\otimes_{F, \sigma'}{W[p^{-1}}]\simeq M_2(W[p^{-1}])$, we have
    \[u_{\sigma'} \in N_{(B\otimes_{F, \sigma'}{W[p^{-1}}])^\times}(\cO\otimes_{\cO_F, \sigma'}W) = W[p^{-1}]^\times (\cO\otimes_{\cO_F, \sigma'}W)^\times.\]  In particular, $\sigma'(Q_F'(u')) = \nrd(u_{\sigma'})\in W[p^{-1}]^\times$ has even valuation, and $u_{\sigma'} \in L\otimes_{\cO_F, \sigma'} W$ if and only $\nrd(u_{\sigma'})$ has zero valuation. Here since $p$ splits in $F$, each $\sigma'$ corresponds to a unique place of $F$ above $p$.

    For $v\nmid p$, let \[ g_v:  (V(\cA_x)\otimes_{F} F_v,  Q_F')\xrightarrow{\sim} (V\otimes_{F}F_v,  Q_F) \] be the isometry \eqref{eq:comparison_quadoverFv} over $F_v$, and \[u_{v}  = g_v(u'\otimes1)\in V\otimes_{F}F_v = B_v^0,\] then $g_v\circ \alpha\circ g_v^{-1}(x) = u_vxu_v^{-1}$ for all $x\in V\otimes_F F_v$. Similarly as before, since the automorphism preserves $\bm{L}_{\ell,x} \simeq L\otimes_{\bZ}\bZ_\ell = \oplus_{v|\ell} L\otimes_{\cO_F}\cO_{F_v}$, we have \[u_{v}\in N_{B_v^\times}(\cO_v) = F_v^\times \cO_v^\times.\] In particular, $Q_F'(u') = \nrd(u_v) \in F_v^\times$ has even valuation, and $u_v\in L\otimes_{\cO_F}\cO_{F_v}$ if and only if $\nrd(u_v)$ has zero valuation. 

    Since $h(F) = 1$ and $Q'_F(u')$ has even valuations at all finite places of $F$, there exists $c_1\in F^\times$ such that $Q_F'(c_1u') = c_1^2 Q_F'(u)\in \cO_F^\times$. Moreover, since $Q_F'$ is positively definite, and every totally positive unit in $F$ is a square ($F$ has units of independent signs), there exists $c_2\in \cO_F^\times$ such that $Q_F'(c_1c_2u') = 1$. Let $u = c_1c_2 u'$, then $u\in L(\cA_x)$ since $\sum_{\sigma':F\xhookrightarrow{}W[p^{-1}]}c_1c_2 u_{\sigma'} \in \bm{L}_{\mathrm{cris},x}$ and $\sum_{v|\ell} c_1c_2 u_v \in \bm{L}_{\ell,x}$ for all $\ell\neq p$. 
\end{proof}

\subsection{Pairing at points without even order automorphisms}

Let $T$ be an $\mathscr{S}_{K_pK^p}$-scheme, then $x\in \mathscr{S}_{K_pK^p}(T)$ gives a triple $(\cA_x, \lambda_x, \bar{\eta}_x)$, where $(\cA_x, \lambda_x: \cA_x\to \cA_x^\vee)$ is a polarized abelian scheme over $T$ up to isomorphism 
and the level structure $\eta\in \Gamma(T, \Isom_G(H_{\widehat{\bZ}^p} , \varprojlim_{p\nmid n}\cA_x[n])/K^p)$. 
An integral model $\mathscr{S}_{K_0}$ of $\Gamma_0 \backslash X^+$ is constructed from  $\mathscr{S}_{K_pK^p}$ by action of a finite group.
Two points $x_1, x_2 \in \mathscr{S}_{K_pK^p}(\bC)$, corresponding to $(\cA_{x_1}, \lambda_1, \bar{\eta}_1), (\cA_{x_2}, \lambda_2, \bar{\eta}_2)$, are identified in $\mathscr{S}_{K_0}$ if there is a prime-to-$p$ quasi-isogeny $\cA_{x_1} \to \cA_{x_2}$ that preserves polarization up to $\bZ_{(p)}^\times$, sends $s_{\alpha, B, x_1}$ to $s_{\alpha, B, x_2}$ and  takes $\bm{L}_{\ell,x_1}$ to $\bm{L}_{\ell,x_2}$ for all $\ell\neq p$. 

\begin{proposition} \label{lem:pairwoaut}
    Let $\lambda \in \cO_F$ be a totally positive prime of $F$ above $p$ and $\phi: F(\sqrt{-\lambda}) \xhookrightarrow{}B$ defining a CM cycle \eqref{eq:CMcycle}. For $x\in \mathscr{S}_{K_0}(k)$ without even order automorphisms, \footnote{The automorphisms are defined over $\bar{k}$. } points in the CM cycle lifting $x$ are paired, i.e., there is a map $$\{\text{lifts of }x\text{ in } \eqref{eq:CMcycle}  \}\longrightarrow \{\langle v\rangle : v\in L(\cA_x), Q_F'(v) = \lambda\}/\Aut(x)$$ such that the fiber of every element in the image has size $2$. 
\end{proposition}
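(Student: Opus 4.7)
I would begin by making the map explicit. Each CM lift $\tilde{x}$ in the cycle carries the canonical special endomorphism $\tilde{v}=\phi(\sqrt{-\lambda})\in L(\cA_{\tilde{x}})$, which reduces along the injection $L(\cA_{\tilde{x}})\hookrightarrow L(\cA_x)$ to some $v\in L(\cA_x)$ with $Q_F'(v)=\nrd(\phi(\sqrt{-\lambda}))=\lambda$. Because the identification of the reduction of $\cA_{\tilde{x}}$ with $\cA_x$ is unique only up to $\Aut(x)$, the assignment $\tilde{x}\mapsto[\langle v\rangle]$ is well-defined into $\{\langle v\rangle\mid Q_F'(v)=\lambda\}/\Aut(x)$, and $\langle v\rangle$ (rather than $v$) is the correct target because the conjugate embedding $\bar\phi$ gives $-v$.

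To count the fiber over a class $[\langle v\rangle]$ in the image, I would apply the special endomorphism bijection \eqref{eq:liftspecial} of \Cref{lem:lifting}. After passing to a sufficiently large finite extension $E/W[p^{-1}]$, lifts of $(\cA_x,v)$ are in bijection with isotropic lines $\ell\subset\bm{L}_{\mathrm{cris},x}\otimes_W E$ that lift $\Fil^1\bm{L}_{\mathrm{dR},x}$, lie in the $\rho'$-eigenspace $V\otimes_{F,\rho'}E$ under the $\bm{\pi}_{\epsilon_i}$-eigencondition, and are orthogonal to the image $v_{\rho'}$ of $v$ in that eigenspace. Since $Q_F'(v)=\lambda\neq 0$, the vector $v_{\rho'}$ is anisotropic, and $v_{\rho'}^{\perp}$ is a non-degenerate two-dimensional quadratic space over $E$; any such space containing an isotropic line is hyperbolic and therefore contains exactly two. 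Because $[\langle v\rangle]$ is in the image, at least one isotropic line exists, so the fiber over a chosen representative $\langle v\rangle$ contains exactly two $E$-valued lifts $\tilde{x}_1,\tilde{x}_2$ in the formal deformation space $\widehat{U}_x$.

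The main obstacle is to verify that these two lifts remain distinct in $\mathscr{S}_{K_0}$ after the $\Aut(x)$-quotient, and that no extra identifications shrink the fiber below two. An element $\gamma\in\Aut(x)$ that does not stabilize $\langle v\rangle$ identifies the pair over $\langle v\rangle$ with the pair over $\langle \gamma(v)\rangle$ inside the same $\Aut(x)$-orbit, so the fiber stays at two. An element $\gamma\in\Aut(x)$ that stabilizes $\langle v\rangle$ acts orthogonally on $v_{\rho'}^{\perp}$; swapping the two isotropic lines of the hyperbolic plane requires the restriction of $\gamma$ to $v_{\rho'}^{\perp}$ to have determinant $-1$, which, combined with $\gamma\in\SO(V(\cA_x))$ and the fact that $\det(\gamma|_{\langle v\rangle})\cdot\det(\gamma|_{v_{\rho'}^{\perp}})=1$, forces $\gamma(v)=-v$ and makes $\gamma$ an order-two element of $\Aut(x)$. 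The hypothesis that $x$ has no even order automorphism is precisely what excludes such involutions, so the fiber in $\mathscr{S}_{K_0}$ is exactly two. The technical heart of this last step is identifying the action of $\Aut(x)\subset\Gamma_0$ on both the formal deformation $\widehat{U}_x$ and the quadratic space $V(\cA_x)$, and checking that the parity assumption is tight for the fiber count.
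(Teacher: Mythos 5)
Your overall strategy is the same as the paper's: send a CM lift to the line spanned by the reduction of its special endomorphism, count lifts over a ramified extension via the bijection \eqref{eq:liftspecial} of \Cref{lem:lifting}, and use the absence of even order automorphisms to keep the two lifts distinct in $\mathscr{S}_{K_0}$. However, your fiber count has a genuine gap. The right-hand side of \eqref{eq:liftspecial} consists of isotropic lines that are orthogonal to $v$ \emph{and lift} $\Fil^1\bm{L}_{\mathrm{dR},x}$. Your argument only counts isotropic lines in the hyperbolic plane $v_{\rho'}^{\perp}\otimes E$ (there are two) and never checks that both of them reduce to $\Fil^1\bm{L}_{\mathrm{dR},x}$ modulo the maximal ideal of $\cO_E$; the reason you give, anisotropy of $v_{\rho'}$ over $E$ because $Q_F'(v)=\lambda\neq 0$, is beside the point. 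The essential input, which you never use, is that $\lambda$ lies above $p$ at the place $\rho'$, so $\rho'(\lambda)$ is a uniformizer: then $\bar v_{\rho'}$ is isotropic over $k$, the two isotropic lines are defined only over a ramified quadratic extension and both reduce to the same line $\langle \bar v_{\rho'}\rangle=\Fil^1\bm{L}_{\mathrm{dR},x}$ — this is exactly the paper's \Cref{lem:isopair}. Without it the count "exactly two" does not follow from \eqref{eq:liftspecial}: when $Q_F'(v)$ is a $p$-adic unit (the vector $e_1$ in \Cref{lem:pairingwithauto}) the two isotropic lines reduce to two distinct lines and at most one of them lifts the Hodge filtration, so your reasoning, applied verbatim, would give the wrong answer there. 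So the valuation argument is not an optional refinement; it is the step that makes the pairing work and is where the hypothesis that $\lambda$ is a prime above $p$ enters.

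A second, smaller omission: to conclude that every nonempty fiber has size exactly two you must rule out asymmetric identifications, i.e.\ that a point of $\mathscr{S}_{K_0}$ identifying a lift over $v$ with a lift over $v'$ also identifies the two partner lifts. You assert that an element of $\Aut(x)$ not stabilizing $\langle v\rangle$ "identifies the pair with the pair", and you flag the $\Aut(x)$-equivariance of the deformation-theoretic picture as the technical heart, but this is precisely what has to be proved: the paper does it by taking the prime-to-$p$ quasi-isogeny realizing the identification, passing to its crystalline realization, showing (using that a line defined over $L(\cA_x)$ in the $\rho'$-eigenspace cannot be isotropic) that it carries $\langle v\rangle$ to $\langle v'\rangle$, and hence carries $\Fil^1\bm{L}_{\mathrm{dR},\tilde{x}_2}$ to $\Fil^1\bm{L}_{\mathrm{dR},\tilde{x}_2'}$. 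Your determinant argument showing that an automorphism swapping the two isotropic lines must send $v\mapsto -v$ and have order two is correct and is a clean alternative to the paper's phrasing of that particular step, but the equivariance statement itself still needs the quasi-isogeny/crystalline argument.
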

\begin{proof}
    Suppose $\tilde{x}$ is a point in the CM cycle lifting $x$, then $\tilde{x}$ corresponds to an embedding $F(\sqrt{-\lambda})\xhookrightarrow{}B$. As in \cref{sec:special_end}, the image of $\sqrt{-\lambda}$ has reduced trace zero and reduced norm $\lambda$, and it gives a special endomorphism $\tilde{v}\in L(\cA_{\tilde{x}})$ such that $Q_F(\tilde{v}) = \lambda$. Then $\cA_x$ is supersingular by \ref{supersingular}. Let $v$ be the image of $\tilde{v}$ under the $\cO_F$-linear isometric map $L(\cA_{\tilde{x}})\xhookrightarrow{}L(\cA_x)$ and $v\cris$ be the image of $v$ under the $\cO_F$-linear isometric map $L(\cA_{x})\xhookrightarrow{}\bm{L}_{\mathrm{cris},x}$, then $\Fil^1\bm{L}_{\mathrm{dR}, x}\in v\cris^{\perp}$ over $k$. By \eqref{eq:orthogonaldecomp}, we have an orthogonal direct sum decomposition $v\cris = \sum_{\sigma': F\xhookrightarrow{}W[p^{-1}]} v_{\mathrm{cris}, \sigma'}$ and $Q_{\mathrm{cris},x}(v_{\mathrm{cris},\sigma'})=\sigma' (Q_F(\tilde{v})) = \sigma'(\lambda)$, where $Q_{\mathrm{cris}, x}$ denotes the quadratic form on $\bm{L}_{\mathrm{cris},x}$. By \cref{sec:def_field}, the abelian varieties on the CM cycle are defined over a field whose ramification index over $p$ is at most $2$, so we can apply \Cref{lem:lifting}. There are exactly two lifts $\cA_{\tilde{x}_1}, \cA_{\tilde{x}_2}$ with $\tilde{v}_1\in L(\cA_{\tilde{x}_1}), \tilde{v}_2\in L(\cA_{\tilde{x}_2})$ lifting $v$, corresponding to the two isotropic lines lifting $\bm{L}_{\mathrm{dR},x}$ that are orthogonal to $(v_{\mathrm{cris},\rho'}\otimes 1)$ in the subspace of $\bm{L}_{\mathrm{cris},x}\otimes_W\cO_E$ where each $\bm{\pi}_{\epsilon_i, \mathrm{cris},x}$ acts as $\rho'(\epsilon_i)\otimes 1$, by the linear algebra computation \Cref{lem:isopair}. 
    
    If $\cA_{\tilde{x}_1}$ and $\cA_{\tilde{x}_2}$ give the same point in $\Gamma_0\backslash X^+$, then there is a quasi-isogeney $f: \cA_{\tilde{x}_1} \to \cA_{\tilde{x}_2}$ that sends $s_{\alpha, ?,\tilde{x}_1}$ to $s_{\alpha, ?,\tilde{x}_2}$, where $? = \ell, p, \mathrm{dR}$, and takes $\bm{L}_{\ell,\tilde{x}_1}$ to  $\bm{L}_{\ell,\tilde{x}_1}$ and $\bm{L}_{p,\tilde{x}_1}$ to  $\bm{L}_{p,\tilde{x}_1}$. In particular, $f(v_1) = f\circ v_1 \circ f^{-1} \in V(\cA_{\tilde{x}_2})\cap (\prod_{\ell \text{ prime}} \bm{L}_{\ell,\tilde{x}_2}) = L(\cA_{\tilde{x}_2})$. 
    Then $f$ induces a quasi-isogeny of $\mathcal{A}_{x}$ such that $f:H^1\cris(\mathcal{A}_{x}/W) \otimes_{W} E \to H^1\cris(\mathcal{A}_{x}/W)\otimes_{W} E$ preserves all $s_{\alpha, \mathrm{cris}, x}$,  and $f(\Fil^1 H^1\dR(\mathcal{A}_{\tilde{x}_1}/E)) = \Fil^1 H^1\dR(\mathcal{A}_{\tilde{x}_2}/E) $, 
    which gives a $\cO_F$-linear isometry of $\bm{L}_{\mathrm{cris}, x} \otimes_{W} \cO_E$ taking $\Fil^1 \bm{L}_{\mathrm{dR}, \tilde{x}_1}$ to $\Fil^1\bm{L}_{\mathrm{dR}, \tilde{x}_2}$.
    The isotropic line $\Fil^1\bm{L}_{dR, \tilde{x}_2} \in (v\cris \otimes 1)^{\perp} \cap f(v\cris\otimes 1)^{\perp}$ and it lies in the $3$-dimensional $\rho'$-eigenspace of $\bm{L}_{\mathrm{cris},x}$. There is an isometry from $L(\cA_x)\otimes_{\cO_F, \rho'} W$ to the $\rho'$-eigenspace of $\bm{L}_{\mathrm{cris},x}$, where $v\otimes 1$ (resp. $f(v)\otimes 1$) maps to $v_{\mathrm{cris},\rho'}$ (resp. $f(v)_{\mathrm{cris},\rho'}$). Then $v\otimes 1$ and $f(v)\otimes1$ must span the same line in $L(\cA_x)\otimes_{\cO_F, \rho'}W$ since otherwise $(v\otimes 1)^{\perp} \cap (f(v)\otimes 1)^{\perp} \subset L(\cA_x)\otimes_{\cO_F, \rho'}W$ is a one-dimensional subspace defined in $L(\cA_x)$, which cannot be isotropic.  Then $f (\Fil^1 \bm{L}_{\mathrm{dR}, \tilde{x}_2}) \in f(v_{\mathrm{cris},\rho'}\otimes1)^\perp = (v_{\mathrm{cris},\rho'}\otimes1)^{\perp}$ implies $f (F^1 \bm{L}_{dR, \tilde{x}_2}) = F^1 \bm{L}_{dR, \tilde{x}_1}$. Therefore, $f$ fixes the line spanned by $v_{\mathrm{cris},\rho'}\otimes 1$ and switches the two isotropic lines $\Fil^1 \bm{L}_{\mathrm{dR}, \tilde{x}_1}, 
    \Fil^1 \bm{L}_{\mathrm{dR}, \tilde{x}_2}$. In particular, $f$ has even order as an automorphism of $x$. 

    If $\cA_{\tilde{x}_1}, \cA_{\tilde{x}_2}$ (resp. $\cA_{\tilde{x}_1'}$, $\cA_{\tilde{x}_2'}$) correspond to lifting $v$ (resp. $v'$) in $L(\cA_x)$, and $\cA_{\tilde{x}_1}$ and $\cA_{\tilde{x}_1'}$ give the same point in $\Gamma_0\backslash X^+$, then there is a quasi-isogeney $f: \cA_{\tilde{x}_1} \to \cA_{\tilde{x}_1'}$ that sends $s_{\alpha, ?,\tilde{x}_1}$ to $s_{\alpha, ?,\tilde{x}_1'}$, where $? = \ell, p, \mathrm{dR}$, and takes $\bm{L}_{\ell, \tilde{x_1}}$ to $\bm{L}_{\ell, \tilde{x_1}'}$ and $\bm{L}_{p, \tilde{x_1}}$ to $\bm{L}_{p, \tilde{x_1}'}$. By the same argument, $v'\otimes 1$ and $f(v)\otimes 1$ must span the same line in $L(\cA_x)\otimes_{\cO_F, \rho'} W$, and then $f$ takes $\Fil\bm{L}_{\mathrm{dR}, \tilde{x}_2}$ to $\Fil\bm{L}_{\mathrm{dR}, \tilde{x}_2'}$, which implies that  $\cA_{\tilde{x}_2}$ and $\cA_{\tilde{x}_2'}$ give the same point in $\Gamma_0\backslash X^+$. 
\end{proof}

\begin{lemma} \label{lem:isopair}
    Let $k\subset \overline{\bF}_p$ be a field of odd characteristic $p$. Let $W = W(k)$ and  $(L, Q)$ be a rank $3$ quadratic space over $W$ such that $L\otimes_W k$ is nondegenerate. If $v\in L$ such that $Q(v) = t$ is a uniformizer of $W$, then there are exactly two isotropic lines in the $2$-dimensional subspace $(v\otimes 1)^\perp \subset L\otimes_W \overline{W[p^{-1}]}$. The two isotropic lines can be defined in $L\otimes_W \cO_E$ where $E$ is a finite extension of $W[p^{-1}]$ of ramification index $2$, and reduce to the same line modulo $\fp$, where $\fp$ is the maximal ideal of $\cO_E$.  
\end{lemma}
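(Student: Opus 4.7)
My plan is to reduce the lemma to an explicit linear-algebra calculation in a well-chosen $W$-basis of $v^\perp$, then read off the two isotropic lines via the quadratic formula after passing to a ramified quadratic extension of $K := W[p^{-1}]$. The crux is to choose a basis so that the valuations of the entries of the Gram matrix of the restricted form are controlled.

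First I would analyze the integral structure of $v^\perp$. Non-degeneracy of $L\otimes_W k$ forces $\det L \in W^\times$ and, crucially, the functional $L\to W$, $x\mapsto [v,x]$, is surjective: its reduction modulo $t$ is non-zero because $\bar v$ is isotropic but not in the radical of the pairing on $L\otimes k$. Hence $v^\perp \subset L$ is a saturated rank-$2$ sub-$W$-module, and $\bar v \in v^\perp\otimes k$ lies in the radical of the induced form on $v^\perp\otimes k$. I would then pick $w \in L$ with $[v,w] = -2$ and set $f_1 := v + tw \in v^\perp$; this makes $\bar f_1 = \bar v$. Completing to a basis $\{f_1, f_2\}$ with $\bar f_2$ a non-isotropic generator of $(v^\perp\otimes k)/\langle\bar v\rangle$, a direct computation gives $A := 2Q(f_1) \in tW^\times$, $C := 2Q(f_2) \in W^\times$, and $B := [f_1,f_2] \in tW$. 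Hence $\Delta := B^2 - AC$ has valuation exactly $1$.

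Second, I would construct the extension and write down the two lines. Since $\Delta$ has valuation $1$, the polynomial $x^2 - \Delta$ is Eisenstein over $W$, so $E := K(\sqrt{\Delta})$ is a totally ramified quadratic extension of $K$ with uniformizer $\varpi := \sqrt{\Delta}$ and residue field $k$. Over $E$, applying the quadratic formula to $Ax^2 + 2Bxy + Cy^2 = 0$ yields the two isotropic vectors
\[
g_\pm := (-B \pm \varpi)\, f_1 + A\, f_2 \in v^\perp \otimes_W \cO_E.
\]
Any non-degenerate binary quadratic form over a field has at most two isotropic lines, so $\langle g_+\rangle$ and $\langle g_-\rangle$ exhaust the isotropic lines in $(v\otimes 1)^\perp \subset L\otimes_W \overline{K}$.

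Finally, I verify the reduction assertion. Using $v_E(A) = 2$, $v_E(B) \geq 2$, and $v_E(\varpi) = 1$, the coordinates of $g_\pm$ have $v_E$-valuations $(1, 2)$. The rescaled vectors $h_\pm := \varpi^{-1} g_\pm = (-B/\varpi \pm 1)\, f_1 + (A/\varpi)\, f_2$ are therefore primitive elements of $v^\perp \otimes_W \cO_E$, and their reductions modulo $\fp = (\varpi)$ both equal $\pm \bar f_1$. Hence the two isotropic lines reduce to the common line $\langle \bar f_1\rangle = \langle \bar v\rangle$, as claimed. I expect the main obstacle to be the integral basis choice in the first step: the substitution $f_1 = v + tw$, which forces $\bar f_1 = \bar v$ and hence $v(A) = 1$ exactly, is what pins down the Eisenstein condition on $\Delta$ and the uniform reduction of both isotropic lines; without it the valuations of the coordinates of $g_\pm$ could misalign and the conclusion would become less transparent.
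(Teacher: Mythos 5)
Your proof is correct, and it takes a slightly different (and arguably cleaner) route than the paper. The paper diagonalizes $Q$ over $W$ (using that $2\in W^\times$ and $W$ is local), writes $v=\sum c_ie_i$, observes that at least two $c_i$ are units, and solves the two equations $a_1x^2+a_2y^2+a_3z^2=0$, $\sum a_ic_ix_i=0$ explicitly, producing generators of the two lines over $E=W[p^{-1}](\sqrt{-a_1a_2a_3t})$ and checking that both reduce to $\langle \bar v\rangle$. You instead work intrinsically in the saturated rank-$2$ sublattice $v^\perp$, choosing a basis $\{f_1,f_2\}$ with $f_1=v+tw\equiv v \pmod t$, so that the Gram data $A\in tW^\times$, $B\in tW$, $C\in W^\times$ and hence $v_W(\Delta)=1$ drop out immediately; the quadratic formula then gives the two lines over $K(\sqrt{\Delta})$ and the reduction statement is transparent from the valuations of the coordinates. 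What your approach buys is the avoidance of the diagonalization and the ``WLOG $c_1,c_2\in W^\times$'' bookkeeping, and it isolates the structural reason both lines collapse to $\langle\bar v\rangle$: the binary form on $v^\perp$ degenerates modulo $t$ with radical $\langle \bar v\rangle$. The paper's computation, in exchange, is completely coordinate-explicit. Two trivial points you leave implicit and should state: $\bar v\neq 0$ (else $Q(v)\in t^2W$, contradicting $v_W(Q(v))=1$), which is what makes $x\mapsto[v,x]$ surjective; and $[f_1,f_2]\in tW$ follows even more directly from $[f_1,f_2]=t[w,f_2]$ since $f_2\perp v$.
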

\begin{proof}
    Since $W$ is a local PID and $2\in W^\times$, the quadratic form $Q$ over $W$ is diagonalizable. 
    Let $e_1, e_2, e_3$ be an orthogonal basis of $L$ with $Q(e_i) = a_i, \, i =1,2,3$. By assumption that $L\otimes_W k$ is nondegenerate, each $a_i\in W^\times$. Suppose $xe_1+ye_2+ze_3$ is a generator of an isotropic line orthogonal to $v = \sum_{i=1}^3 c_i e_i,\, c_i \in W$, then we have $a_1x^2 + a_2y^2 + a_3 z^2 = 0$ and $a_1c_1 x+ a_2 c_2 y+ a_3c_3z =0 $ with $\sum_{i=1}^3 a_ic_i^2 = t$. Note that at least two $c_i\in W^\times$ since $t$ is a uniformizer and $a_i \in W^\times$. Without loss of generality, assume $c_1, c_2\in W^\times$. Solving the equation gives two lines generated by $(-a_1a_2a_3c_1c_2\pm a_3c_3 \sqrt{-a_1a_2a_3t} )e_1 + a_1a_3(t-a_2c_2^2) e_2 + (\mp a_1c_1\sqrt{-a_1a_2a_3 t} - a_1a_2a_3 c_2 c_3) e_3$ defined over $E := W[p^{-1}](\sqrt{-a_1a_2a_3t})$ where $-a_1a_2a_3t$ is a uniformizer of $W$. Note that $a_1a_3(t-a_2c_2^2) \in W^\times$, and modulo the maximal ideal of $\cO_E$, both lines reduce to $\langle c_1e_1 + c_2e_2 + c_3 e_3\rangle = \langle v\rangle$. 
\end{proof}

\subsection{Pairing at points with an even order automorphism}

\begin{proposition}\label{lem:pairingwithauto}
    Let $p$ be a rational prime that is totally split in $F$ and $\lambda$ be the totally positive prime of $F$ above $p$ corresponding to $\rho': F\xhookrightarrow{}W[p^{-1}]$. \begin{enumerate} 
    \item For $x\in \mathscr{S}_{K_0}(k)$ with an even order automorphism, there is a bijection $$\{\text{lifts of }x \text{ admitting a special endomorphism }\sqrt{-\lambda}\}\longrightarrow \{\langle v\rangle: v\in L(\cA_x), \, Q_F'(v) = \lambda\}/\Aut(x) $$ where the elements on the right-hand side are paired such that $\langle v\rangle$ and $\langle w\rangle$ are paired if and only if $v\perp w$. In particular, all CM lifts of $x$ arising from all optimal embeddings $S\xhookrightarrow{} \cO$, where $S\subset F(\sqrt{-\lambda})$ is an order containing $\sqrt{-\lambda}$, are paired. 
    \item Moreover, such CM lifts are defined over an extension $E$ of $W(k)[p^{-1}]$ of ramification index $2$ and the two lifts in a pair are isomorphic modulo $\fp^2$, but they are not isomorphic to the lift with special endomorphism $\sqrt{-1}$ modulo $\fp^2$, where $\fp$ is the maximal ideal of $\cO_E$. 
    \end{enumerate}
\end{proposition}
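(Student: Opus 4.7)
The strategy is to extend the argument of \Cref{lem:pairwoaut} by incorporating the order-two automorphism of $x$, whose action identifies pairs of isotropic lines produced by \Cref{lem:lifting} and \Cref{lem:isopair}.

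First, I would apply \Cref{lem:auto} to fix an element $u \in L(\cA_x)$ with $Q_F'(u) = 1$ such that the order-two automorphism $\alpha \in \Aut(x)$ is realized as conjugation by $u$ under the identification $(V(\cA_x), Q_F') \simeq (B'^{0}, \nrd)$ furnished by \Cref{lem:specialend}. Then $u^2 = -1$ in $B'$, so $u$ is the special endomorphism referred to as $\sqrt{-1}$ in the statement; moreover $\alpha$ fixes $Fu$ and acts as $-1$ on $u^\perp$. For any optimal embedding of an $\cO_F$-order $S$ with $\cO_F[\sqrt{-\lambda}] \subseteq S \subseteq \cO_{F(\sqrt{-\lambda})}$ into $\cO$, the image $v$ of $\sqrt{-\lambda}$ generates a quadratic subfield of $B'$ that anticommutes with $Fu$, so $v \in u^\perp$ and $\alpha(v) = -v$; this places all CM lifts of interest in the case $v \perp u$.

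For Part (1), fix $v$ with $Q_F'(v) = \lambda$ and $v \perp u$. By \Cref{lem:lifting} and \Cref{lem:isopair}, the lifts of $x$ admitting $v$ as a special endomorphism are parametrized by the two isotropic lines in $(v_{\mathrm{cris},\rho'})^\perp \subset L(\cA_x)\otimes_{\cO_F,\rho'} E$ lifting $\Fil^1\bm{L}_{\mathrm{dR},x}$, defined over a ramified quadratic extension $E$ of $W[p^{-1}]$. The key new step, compared with \Cref{lem:pairwoaut}, is that $\alpha$ swaps these two isotropic lines: the relations $\alpha(u)=u$ and $\alpha(uv) = u(uv)u^{-1} = -uv$ express $\alpha$ as $\mathrm{diag}(1,-1)$ on the orthogonal plane $(v_{\mathrm{cris},\rho'})^\perp = \langle u_{\mathrm{cris},\rho'}, (uv)_{\mathrm{cris},\rho'}\rangle$, which exchanges the isotropic lines $\langle (uv)_{\mathrm{cris},\rho'}\pm\sqrt{-\rho'(\lambda)}\,u_{\mathrm{cris},\rho'}\rangle$ produced in the proof of \Cref{lem:isopair}. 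Hence the two upstairs lifts coincide in $\mathscr{S}_{K_0}$, and the induced map from lifts to $\{\langle v\rangle\}/\Aut(x)$ becomes a bijection rather than the $2$-to-$1$ map of \Cref{lem:pairwoaut}. For the orthogonal pairing, set $w := uv$; a direct quaternion computation gives $Q_F'(w) = Q_F'(u)Q_F'(v) = \lambda$ and $v \perp w$, and the binary form on $v^\perp = \langle u, w\rangle$ is $a^2 + \lambda b^2$, so $Q_F'(au + bw) = \lambda$ forces $(a,b) = (0,\pm 1)$ when $\lambda$ is a prime of $F$. Hence $\langle w\rangle$ is the unique norm-$\lambda$ line in $v^\perp$, and the identity $u(uv) = -v$ shows $\langle v\rangle \leftrightarrow \langle uv\rangle$ is an involution; every CM lift in the statement is paired with the CM lift corresponding to $uv$.

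For Part (2), the ramification index $2$ follows directly from the formula in \Cref{lem:isopair}, since the defining element $\sqrt{-a_1a_2a_3\,\rho'(\lambda)}$ has odd valuation in $W$ (with $\rho'(\lambda)$ a uniformizer and $a_1a_2a_3 \in W^\times$). For the congruence assertions, I use \Cref{lem:intersection_auto} to identify $\widehat{\cO}_{\mathscr{S}_{K_0},x} \simeq W[[T]]^{\langle\alpha\rangle} = W[[S]]$ with $S = T^2$. The $\sqrt{-1}$-lift corresponds to an isotropic line in $(u_{\mathrm{cris},\rho'})^\perp$, where the binary form $\lambda(a^2+b^2)$ makes the isotropic directions defined over the unramified extension $W[\sqrt{-1}]$; hence this lift has $T$-value of positive $W$-valuation and $S$-value of $W$-valuation at least $2$. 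The two CM lifts in a pair correspond to $T$-values $T_v, T_{uv}$ of $W$-valuation $\tfrac{1}{2}$, so $T_v^2, T_{uv}^2 \in pW^\times$; matching the leading coefficients in the explicit formula of \Cref{lem:isopair} for $v$ and $uv$ via the symmetry $w = uv$ yields $T_v^2 \equiv T_{uv}^2 \pmod{\fp^2}$. This gives intersection number at least $2$ between the two CM lifts in a pair and intersection exactly $1$ with the $\sqrt{-1}$-lift, which translates into the claimed isomorphisms modulo $\fp^2$.

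The main obstacle is the final explicit computation $T_v^2 \equiv T_{uv}^2 \pmod{\fp^2}$, which requires tracking the universal deformation coordinate $T$ through the Kodaira--Spencer description coming from \Cref{lem:lifting} and exploiting the quaternion relation $w = uv$ to produce the cancellation of the leading term in $T_v^2 - T_{uv}^2$. The remaining steps---the realization of $\alpha$ as conjugation by $u$, its action on the two isotropic lines, the definition and uniqueness of the orthogonal pairing, and the ramification assertion---are direct linear-algebra computations within the quadratic space structures established in \cref{sec:specialendsupersingular}.
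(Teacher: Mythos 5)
Your overall strategy is the paper's: produce the norm-one special endomorphism $u=e_1$ via \Cref{lem:auto}, pair $\langle v\rangle$ with $\langle uv\rangle$ (the paper's $e_3=e_1e_2$ in \Cref{lem:pairend}), use the two isotropic lines over the ramified quadratic extension from \Cref{lem:lifting}/\Cref{lem:isopair}, let the automorphism (conjugation by $u$) exchange the two lines orthogonal to a fixed $v$ to get the bijection in (1), and compare lines modulo $\fp^2$ for (2). However, there are genuine gaps. First, you never show $uv\in L(\cA_x)$: your "direct quaternion computation" only places $uv$ in $V(\cA_x)$ with the right norm, whereas the pairing statement and the claim that the partner of a CM lift is again a CM lift require an \emph{integral} special endomorphism; the bulk of \Cref{lem:pairend} is devoted to exactly this, via the crystalline and $\ell$-adic lattice comparisons and the normalizer structure of local maximal orders. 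Second, you never rule out an automorphism of $x$ carrying $v$ to $\pm uv$; without this the "pair" can collapse to a single element of $\{\langle v\rangle\}/\Aut(x)$ (equivalently the two CM lifts could be the same point), which would destroy the parity statement that the proposition feeds into \Cref{lem:square} and \Cref{lem:nonzerosq}. The paper proves this non-collapse in the last paragraph of \Cref{lem:pairend} by a nontrivial argument at the prime above $2$ (using that $v(2)$ is odd) together with \Cref{lem:auto} and \Cref{lem:quad_global}; nothing in your sketch supplies it. Third, your justification that $v\perp u$ ("the subfield generated by $v$ anticommutes with $Fu$") is not a valid argument—two quadratic subfields of a quaternion algebra need not anticommute; the orthogonality is an arithmetic fact (\Cref{lem:quad_global}(1)) requiring $\lambda$ totally positive and large and anisotropy at $\lambda$ and the real places. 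Likewise "$a^2+\lambda b^2=\lambda$ forces $(a,b)=(0,\pm1)$" is false over $F$ (the conic has infinitely many rational points); one needs integrality and total positivity, and control of the lattice $L(\cA_x)\cap v^\perp$.

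For part (2), you yourself flag the decisive congruence (your $T_v^2\equiv T_{uv}^2\pmod{\fp^2}$) as an unresolved "main obstacle" requiring a Kodaira--Spencer-type computation you have not carried out, so this half of the statement is not proved in your proposal. The paper avoids any universal-coordinate computation: it applies \Cref{lem:nondeg} to see that $e_2$ and $e_3=e_1e_2$ span the same line modulo $p$, writes the four isotropic lines explicitly as $\langle\pm\sqrt{-\rho'(\lambda)}e_1+e_2\rangle$, $\langle\pm\sqrt{-\rho'(\lambda)}e_1+e_3\rangle$, and compares them (and the unramified $\sqrt{-1}$-line $\langle e_2\pm\sqrt{-1}e_3\rangle$) modulo $\fp^2$ directly through the deformation-theoretic bijection of \Cref{lem:lifting}; the intersection-number consequences via \Cref{lem:intersection_auto} are then drawn downstream, in \Cref{lem:nonzerosq}, not inside this proof. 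So while your outline identifies the right ingredients for part (1) modulo the lattice-theoretic and automorphism-theoretic gaps above, part (2) needs to be completed along the lines of the explicit mod-$\fp^2$ comparison of isotropic lines rather than left as a deformation-coordinate computation.
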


The proof of \Cref{lem:pairingwithauto} uses \Cref{lem:pairend} and the linear algebra computation \Cref{lem:nondeg}. The proof of \Cref{lem:pairend} uses the linear algebra computation \Cref{lem:quad_global}. 

\begin{lemma} \label{lem:quad_global}
        Let $F$ be a totally real number field and $\mathcal{O}_F$ be its ring of integers. Let $\lambda$ be a large enough totally positive prime of $F$ ($\lambda > 4$ at all real places of $F$).\footnote{Given a totally positive number $\lambda\in F$ such that $\Nm_{F/\bQ}(\lambda)$ is large enough, there exists $u \in \mathcal{O}_F^\times$ such that $\rho_i(u^2) > \frac{4}{\rho_i(\lambda)}$ for each $\rho_i: F\xhookrightarrow{}\bR$; thus, after replacing $\lambda$ by $u^2\lambda$, $\rho_i(\lambda)>4$  is not an extra condition for us. The existence of such $u$ follows from Dirichlet's unit theorem. Indeed, since the image $\Gamma$ of $\mathcal{O}_F^\times \to \mathbb{R}^{r}, \,x\mapsto (-\log|\rho_i(x)|)_{\rho_i:F\xhookrightarrow{}\bR}$ is a complete lattice in the trace zero hyperplane $H =\{(x_{\rho_i}): \sum_{i=1}^r x_{\rho_i} = 0\}$, where $r=[F:\bQ]$ for $F$ totally real, we pick a basis $\gamma_1 = (\gamma_{1,1}, \dotsc, \gamma_{1, r-1}, -\sum_{i=1}^{r-1}\gamma_{1,i}), \dotsc, \gamma_{r-1} = (\gamma_{r-1,1}, \dotsc, \gamma_{r-1, r-1}, -\sum_{i=1}^{r-1}\gamma_{r-1,i})$ for the rank $(r-1)$ lattice $2\Gamma \subset H$. Write $a_i$ = $\log(\rho_i(\lambda))-\log(4)$ for each $i=1, \dotsc, r$ and $a=\sum_{i=1}^r a_i = \log \Nm_{F/\bQ}(\lambda) - r\log(4)$. Consider $\sum_{i=1}^{r-1}c_i\gamma_i = (a_1 -\frac{a}{2(r-1)}, \dotsc, a_{r-1} -\frac{a}{2(r-1)}, a_r -\frac{a}{2}) \in H$ with $c_1, \dotsc, c_{r-1} \in \bR$, and $\gamma = \sum_{i=1}^{r-1}[c_i]\gamma_i$ where each $[c_i]\in \bZ$ with $|[c_i] - c_i|\leq \frac{1}{2}$. 
        If $a > (r-1) \max_{1\leq j \leq r-1} \sum_{i=1}^{r-1}|\gamma_{i,j}|$, then the cube $C = \{(x_{\rho_i}): \sum_{i=1}^r x_{\rho_i} = 0, \, a_j -\frac{a}{r-1} < x_{\rho_j} < a_j\text{ for each }j=1, \dotsc, r-1\}\subset H$ contains $\gamma\in 2\Gamma$.} Suppose $Q: L \to \mathcal{O}_F$ is a quadratic module over $\mathcal{O}_F$ such that the extension $Q: L \otimes_{\mathcal{O}_F} F \to F$ is a quadratic form anisotropic at $\lambda$ and at all real places of $F$. 
        \begin{enumerate}
            \item If $Q(e_1) = 1$ and $ Q(e_2) = \lambda$ where $e_1, e_2\in L$, then $e_1\perp e_2$.
            \item Suppose $L$ has rank $3$. If $Q(e_1) = 1, Q(e_2) =Q(e_3) =\lambda$, where $e_1, e_2, e_3\in L$ and $e_2, e_3$ are linearly independent, and $\alpha\in \SO(L, Q)$, then $\alpha(e_1)  = \pm e_1$. 
        \end{enumerate}
\end{lemma}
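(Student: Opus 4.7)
My plan is to prove (1) first by playing local conditions at $\lambda$ against those at the real places of $F$, and then to deduce (2) as a short linear-algebra consequence using that the orthogonal complement of $\Span_F(e_2, e_3)$ in a rank-$3$ anisotropic space is one-dimensional.

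For part (1), I first observe that $e_1$ and $e_2$ must be $F$-linearly independent, since otherwise $\lambda = c^2$ for some $c \in F^\times$, contradicting that $(\lambda)$ is a prime ideal. Set $a := [e_1, e_2]_Q \in \mathcal{O}_F$, so that the restriction of $Q$ to $\Span_F(e_1, e_2)$ is the binary form $x^2 + a xy + \lambda y^2$ with discriminant $a^2 - 4\lambda$; this restricted form inherits anisotropy at $\lambda$ and at every real place. The goal is to force $a = 0$.

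At the $\lambda$-adic place, if $\nu_\lambda(a) = 0$, then $a^2 - 4\lambda \equiv a^2 \pmod{\lambda}$ is a nonzero square in the residue field, and Hensel's lemma (applicable in odd residue characteristic, which matches the setting of the paper) gives $a^2 - 4\lambda \in (F_\lambda^\times)^2$, making the binary form isotropic at $\lambda$ --- a contradiction. Hence $\lambda \mid a$, so $\Nm(\lambda) \mid \Nm(a)$. At each real embedding $\rho_i$, the restricted form is positive-definite (since $\rho_i(1) > 0$ and it is anisotropic), so its determinant $\rho_i(\lambda) - \rho_i(a)^2/4$ is positive, giving $\rho_i(a)^2 < 4\rho_i(\lambda)$. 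Multiplying over all $[F:\bQ]$ real places yields $|\Nm(a)|^2 < 4^{[F:\bQ]}\,\Nm(\lambda)$. If $a$ were nonzero, then $\Nm(\lambda) \mid \Nm(a)$ would force $|\Nm(a)|^2 \geq \Nm(\lambda)^2$, which together with the previous inequality would give $\Nm(\lambda) < 4^{[F:\bQ]}$, contradicting the hypothesis $\rho_i(\lambda) > 4$ for all $i$. Thus $a = 0$ and $e_1 \perp e_2$.

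For part (2), I apply (1) to the pairs $(e_1, e_2)$ and $(e_1, e_3)$ to see that $e_1 \in \Span_F(e_2, e_3)^\perp$. This orthogonal complement is one-dimensional because the Gram matrix of $Q$ on $\Span_F(e_2, e_3)$ has nonzero determinant $\lambda^2 - [e_2, e_3]_Q^2/4$ --- otherwise some nonzero $c_2 e_2 + c_3 e_3$ would be isotropic, contradicting anisotropy. Since $Q(\alpha(e_1)) = Q(e_1) = 1$, applying (1) to $(\alpha(e_1), e_2)$ and $(\alpha(e_1), e_3)$ puts $\alpha(e_1)$ in the same line $Fe_1$; writing $\alpha(e_1) = c e_1$ and using $c^2 = Q(\alpha(e_1)) = 1$ yields $c = \pm 1$. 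The main obstacle is the precise matching of the $\lambda$-adic and archimedean bounds in part (1); the hypothesis $\rho_i(\lambda) > 4$ for every $i$ is exactly what is needed to dominate the factor $4^{[F:\bQ]}$ produced by multiplying the archimedean estimates, and without it the argument would leave room for small nonzero values of $a$.
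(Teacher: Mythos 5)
Your proof is correct and follows essentially the same route as the paper: anisotropy at $\lambda$ (via Hensel's lemma) forces $\lambda \mid [e_1,e_2]_Q$, anisotropy at the real places gives $\rho_i([e_1,e_2]_Q)^2 < 4\rho_i(\lambda)$, and the hypothesis $\lambda>4$ at every real place then forces the pairing to vanish, with part (2) being the same orthogonal-complement argument. The only cosmetic differences are that the paper applies Hensel directly to $f(x)=Q(e_2-xe_1)=x^2-nx+\lambda$ and concludes from $|\rho_i(n/\lambda)|<1$ for all $i$ that $n=0$, whereas you lift a square root of the discriminant and finish with the norm inequality $\Nm(\lambda)^2 \leq \Nm(a)^2 < 4^{[F:\bQ]}\Nm(\lambda)$.
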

\begin{proof}
    \begin{enumerate}
        \item Let $n = Q(e_1+e_2)-Q(e_1)-Q(e_2)$. Consider $f(x) = Q(e_2-xe_1) = x^2-nx+\lambda \in \mathcal{O}_F[x]$. Note that $e_1, e_2$ are linearly independent. By Hensel's lemma, since $f(0) = \lambda$ and $ f'(0) = -n$, $f$ has no roots in $F_{\lambda}$ implies $n=a\lambda$ for some $a\in\mathcal{O}_F$. On the other hand, $f$ has no roots at all real places of $F$ implies that $n^2-4\lambda$ is totally negative. Then $a^2 < \frac{4}{\lambda}<1$ at all real places of $F$, which gives $a = 0$. 
        \item Since $Q(\alpha(e_1)) = 1$, we have $\alpha(e_1) \perp e_2, \alpha(e_1)\perp e_3$, which implies $\alpha(e_1) \in \Span(e_1)$.
    \end{enumerate}
\end{proof}

\begin{lemma} \label{lem:pairend}
    Let $p$ be a large enough rational prime that is totally split in $F$ and $\lambda$ be a totally positive prime of $F$ above $p$ corresponding to $\rho': F\xhookrightarrow{}W[p^{-1}]$. Let $x\in \mathscr{S}_{K_0}(k)$. Suppose $\cA_x$ is supersingular, and $x$ has an even order automorphism, then there exists $e_1 \in L(\cA_x)$ such that $Q_F'(e_1) = 1$ by \Cref{lem:auto}. If there is $e_2\in L(\cA_x)$ with $Q_F'(e_2) = \lambda$, then there is $e_3 \in L(\cA_x)$ orthogonal to both $e_1$ and $e_2$ such that $Q_F'(e_3) =\lambda$. Moreover, there is no $\alpha\in \SO(L(\cA_x), Q_F')$ such that $\alpha(e_2) = \pm e_3$. 
\end{lemma}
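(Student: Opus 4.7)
The plan is to exploit the fact that $(V(\cA_x),Q_F')$ naturally acquires a quaternion-algebra multiplication under which $L(\cA_x)$ becomes the trace-zero part of a maximal order. By \Cref{lem:specialend} and \Cref{lem:specialendp}, there is a totally definite quaternion $F$-algebra $B'$ ramified exactly at the archimedean places of $F$ and at $\lambda$, together with an $F$-linear isometry $f:(B'^{0},\nrd)\xrightarrow{\sim}(V(\cA_x),Q_F')$. Since at every finite place $v$ the localization $L(\cA_x)\otimes\cO_{F,v}$ is a maximal lattice (split for $v\neq\lambda$, and the trace-zero part of the maximal order of the division algebra for $v=\lambda$), these local lattices glue to give $L(\cA_x)=f(\cO_{B'}^{0})$ for a maximal $\cO_F$-order $\cO_{B'}\subset B'$. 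Setting $i:=f^{-1}(e_1)\in\cO_{B'}$, the condition $Q_F'(e_1)=1$ translates into $i^{2}=-1$, so $E:=F(i)\subset B'$ is an imaginary quadratic subfield. Fix $j\in B'^{\times}$ anticommuting with $E$; then $B'=E\oplus Ej$ and $(Fi)^{\perp}\cap B'^{0}=Ej$. By \Cref{lem:quad_global}(1) we have $e_1\perp e_2$, hence $f^{-1}(e_2)=c_2 j$ for some $c_2\in E$.

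The natural candidate is $e_3:=f(i\cdot c_2 j)$, the image under $f$ of the quaternion product $(ic_2)j\in Ej$. The four required properties are immediate: (i) $e_3\in f(Ej)=(Fe_1)^{\perp}$, so $e_3\perp e_1$; (ii) $i,c_2 j\in\cO_{B'}$ and $\cO_{B'}$ is closed under multiplication, so $(ic_2)j\in\cO_{B'}^{0}$ and $e_3\in L(\cA_x)$; (iii) multiplicativity of the reduced norm gives $Q_F'(e_3)=\nrd(i)\nrd(c_2 j)=\lambda$; (iv) using $jc=\bar c j$ for $c\in E$, a direct calculation gives $(c_2 j)(ic_2 j)+(ic_2 j)(c_2 j)=-i\Nm(c_2)j^{2}+i\Nm(c_2)j^{2}=0$, hence $e_2\perp e_3$.

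For the moreover part, assume $\alpha\in\SO(L(\cA_x),Q_F')$ with $\alpha(e_2)=\pm e_3$. Since $e_2$ and $e_3$ are linearly independent of norm $\lambda$, \Cref{lem:quad_global}(2) forces $\alpha(e_1)=\pm e_1$, so $\alpha$ preserves the orthogonal splitting $V(\cA_x)=Fi\oplus Ej$. Transporting $\alpha|_{Ej}$ through the $F$-linear isomorphism $Ej\simeq E$, $cj\mapsto c$, it becomes an isometry of the rank-two norm form on $E$, hence either multiplication by some $c\in E^{1}$ (if $\det=+1$) or such a multiplication composed with Galois conjugation (if $\det=-1$). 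The constraint $\alpha(c_2 j)=\pm(ic_2)j$ then pins $c$ down to a finite list of candidates in each of the two cases, and I would verify that none of them extends to a lattice isometry of the full rank-three lattice $\cO_{B'}^{0}$. The main obstacle will be this last verification: naively, left multiplication by $i$ on $Ej$ extended by the identity on $Fi$ visibly preserves $\cO_{B'}^{0}$, so the argument must use the largeness of $p$ (so that $c_2$ is not a root of unity and $e_2$ is sufficiently generic) together with the fine structure of $\cO_{B'}$ at $2$ (where $F$ has a unique unramified prime by assumption) and at $\lambda$ to exclude each candidate. Equivalently, because this lemma is used in \Cref{lem:pairingwithauto} to pair lifts modulo $\Aut(x)$, one may instead restrict from $\SO$ to the image of $\Aut(x)\hookrightarrow\SO(L(\cA_x),Q_F')$, for which the lattice constraints are strictly stronger and the exclusion of $\alpha$ with $\alpha(e_2)=\pm e_3$ becomes more tractable.
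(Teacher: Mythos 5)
Your construction of $e_3$ is in essence the paper's: identify $(V(\cA_x),Q_F')$ with $(B'^0,\nrd)$ and take $e_3=e_1e_2$, with $Q_F'(e_3)=\lambda$, trace-zero-ness and the orthogonality relations coming from the quaternion multiplication. However, your integrality argument rests on the unproven assertion that $L(\cA_x)=f(\cO_{B'}^0)$ for a maximal order $\cO_{B'}\subset B'$; neither the local structure of $L(\cA_x)$ at $\lambda$ (nor at $2$) needed for that gluing is established, and it is not needed. The paper checks integrality place by place: it transports $e_1,e_2$ through the comparison isometries $g_{\sigma'}$ and $g_v$ into $\cO\otimes_{\cO_F,\sigma'}W$ resp.\ $\cO_v$ on the $B$-side, multiplies there, intersects with the trace-zero part, and concludes $e_3\in L(\cA_x)$ from the definition of $L(\cA_x)$ via crystalline and $\ell$-adic lattices. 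That portion of your argument is repairable along these lines.

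The genuine gap is the ``moreover'' statement, which you explicitly leave open. After \Cref{lem:quad_global} reduces to $\alpha(e_1)=\pm e_1$, excluding the remaining candidates \emph{is} the content of the lemma, and your ``naive candidate'' --- left multiplication by $i$ on $Ej$ extended by the identity on $Fi$, i.e.\ conjugation by $1+e_1$ --- is exactly a map sending $e_2\mapsto e_3$; remarking that it ``visibly preserves'' the lattice concedes the point rather than proving it (in fact it does \emph{not} preserve the lattice, and showing this is the heart of the proof). The paper argues: if $\alpha(e_1)=e_1$ and $\alpha(e_2)=\pm e_3$, then $\alpha$ is conjugation by $1\pm e_1$, and for $v\mid 2$ preservation of $\bm{L}_{\ell,x}\simeq L\otimes_{\cO_F}\cO_{F_v}$ forces $g_v(1\pm e_1)$ into the normalizer $F_v^\times\cO_v^\times$ of the maximal order, so $v(\nrd(1\pm e_1))=v(2)$ would be even --- impossible since $[F:\bQ]$ odd guarantees some $v\mid 2$ with $v(2)$ odd. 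The case $\alpha(e_1)=-e_1$ is handled separately: then $\alpha^2=1$, \Cref{lem:auto} produces $u\in L(\cA_x)$ with $Q_F'(u)=1$ and $\alpha(u)=u$, and \Cref{lem:quad_global} forces $u\in\Span(e_1)$, whence $\alpha(u)=-u$, a contradiction. Neither mechanism appears in your proposal, and your fallback of restricting to the image of $\Aut(x)$ would weaken the statement below what \Cref{lem:pairingwithauto} requires, since there the isometry arises from a quasi-isogeny rather than an automorphism.
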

\begin{proof}
    Note that $e_2\perp e_1$ by \Cref{lem:quad_global}.
    Since $Q'_F$ is a ternary quadratic form over $F$ with determinant $1\in F^\times/F^{\times 2}$, there exists a quaternion algebra $B'$ over $F$ with an isometry $(B'^{0}, \nrd|_{B'^{0}})\simeq (V(\mathcal{A}_x), Q_F')$. Under this identification, let $e_3 = e_1e_2$ where multiplication is taken in the quaternion algebra $B'$. Since $e_1\perp e_2$ and $Q'_F(e_1) = 1, Q'_F(e_2) = \lambda$, we have $e_3\in V(\mathcal{A}_x)$ and $Q'_F(e_3) = \lambda$. 

    For each $\sigma': F\xhookrightarrow{}W[p^{-1}]$, let \[g_{\sigma'}: (V(\cA_x)\otimes_{F, \sigma'} W[p^{-1}], \sigma'\circ Q_F') \xrightarrow{\sim} (V\otimes_{F, \sigma'}W[p^{-1}], \sigma'\circ Q_F) = ((B\otimes_{F, \sigma'}W[p^{-1}])^0,  \nrd)\] be the isometry  \eqref{eq:comparison_quadoverW} over $W[p^{-1}]$, and for $v\nmid p$, let \[ g_v:  (V(\cA_x)\otimes_{F} F_v,  Q_F')\xrightarrow{\sim} (V\otimes_{F}F_v,  Q_F) = (B_v^0, \nrd) \] be the isometry \eqref{eq:comparison_quadoverFv} over $F_v$. 
    Each isometry $g_{\sigma'}$ extends to a $W[p^{-1}]$-algebra isomorphism or anti-isomorphism $B'\otimes_{F, \sigma'}W[p^{-1}] \xrightarrow{\sim} B\otimes_{F,\sigma'} W[p^{-1}]$. Under this isomorphism, since $e_1, e_2\in L(\cA_x)$, we have $g_{\sigma'}(e_1\otimes1), g_{\sigma'}(e_2\otimes 1) \in \mathcal{O}\otimes_{\cO_F,\sigma'} W$, and then $g_{\sigma'}(e_3 \otimes 1) = \pm g_{\sigma'}(e_1 \otimes 1)g_{\sigma'}(e_2 \otimes 1)\in\mathcal{O}\otimes_{\cO_F,\sigma'} W\cap (B\otimes_{F,\sigma'} W[p^{-1}])^0 = L\otimes_{\cO_F, \sigma'} W$. Thus, $\sum_{\sigma'} g_{\sigma'} (e_3\otimes 1)\in \bm{L}_{cris,x}$. Similarly, for every $v\nmid p$, the isometry $g_v$ extends uniquely to an $F_v$-algebra isomorphism or anti-isomorphism $B'_{v} \xrightarrow{\sim} B_{v}$. %[Quaternion algebra, 5.2.4]
    Under this isomorphsim, since $e_1, e_2\in L(\mathcal{A}_x)$, we have $g_v(e_1\otimes1), g_v(e_2\otimes 1) \in \mathcal{O}_{v}$ and then $g_v(e_3\otimes 1) = \pm g_v(e_1\otimes 1) g_v(e_2\otimes 1) \in \mathcal{O}_{v}\cap B_{v}^0 = L\otimes_{\cO_F}\cO_{F_v}$. 
    Therefore, we have $e_3\in L(\mathcal{A}_x)$.

    Suppose $\alpha\in \SO(L(\cA_x))$, then $\alpha(e_1) = \pm e_1$ by \Cref{lem:quad_global}. If $\alpha(e_1) = e_1, \, \alpha(e_2) = \pm e_3$, then $\alpha(e_3) =  \mp e_2$, and $\alpha$ is conjugation by $1\pm e_1 \in B'$. For $v|2$, since under the extended isomorphism or anti-isomorphism $g_v: B'_v \to B_v$, the automorphism $g_v \circ \alpha \circ g_v^{-1}: B_v\to B_v$ corresponding to $\alpha$, which is conjugation by $g_v(1 +  e_1)$ or $g_v(1- e_1)$, preserves $\cO_v$, we have $g_v(1+e_1) \in F_v^\times\cO_v^\times$ or  $g_v(1-e_1) \in F_v^\times\cO_v^\times$, which implies particular $v(\nrd(1+e_1)) = v(\nrd(1-e_1)) = v(2)$ is even. However, $[F:\bQ]$ is odd, so there is some $v|2$ with odd ramification index, i.e., $v(2)$ is odd. 
    If $\alpha(e_1) = -e_1, \alpha(e_2) = \pm e_3$, then $\alpha(e_3) = \pm e_2$, then $\alpha^2 = 1$. By \Cref{lem:auto}, there is $u\in L(\cA_x)$ with $Q_F'(u) = 1$ and $\alpha(u) = u$. By \Cref{lem:quad_global}, we have $u\perp e_2$ and $u\perp e_3$, which implies $u\in \Span(e_1)$ and $\alpha(u) = -u$, leading to contradiction. 
\end{proof}

\begin{lemma}\label{lem:nondeg}
        Let $K$ be a local field with  valuation ring $\mathcal{O}$, maximal ideal $\mathfrak{p}$, residue field $k$, and a uniformizer $\varpi$. Suppose $Q: L \to \mathcal{O}$ is a rank $3$ quadratic module over $\mathcal{O}$ such that the reduction $ L \otimes_{\mathcal{O}} k \to k$ is nondegenerate over $k$. Suppose $e_1, e_2, e_3\in L$ satisfy $v_\mathfrak{p}(Q(e_1)) = 0$, $v_\mathfrak{p}(Q(e_2)) = v_\mathfrak{p}(Q(e_3)) = 1$, and $v_{\mathfrak{p}}([e_i, e_j]) > 0$ for $i\neq j$. Then 
        \begin{enumerate}[label=(\arabic*)]
            \item $e_1, e_2, e_3$ are nonzero modulo $\mathfrak{p}$;
            \item $e_1, e_i$ are linearly independent modulo $\mathfrak{p}$ for $i=2,3$;
            \item $e_1, e_2, e_3$ are linearly dependent modulo $\mathfrak{p}$;
            \item $e_2$ and $e_3$ span the same line in $L \otimes_{\mathcal{O}} k$. 
        \end{enumerate}
    \end{lemma}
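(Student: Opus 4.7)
The plan is to pass to the Gram matrix $\bigl([e_i,e_j]\bigr)_{1\le i,j\le 3}$ modulo $\mathfrak{p}$. The hypothesis $v_\mathfrak{p}([e_i,e_j])>0$ for $i\ne j$ together with $v_\mathfrak{p}(Q(e_2))=v_\mathfrak{p}(Q(e_3))=1$ force every entry except the $(1,1)$-entry to vanish in $k$, while $[e_1,e_1]=2Q(e_1)$ reduces to $2\bar Q(e_1)\in k^\times$ (using that $2\in\mathcal{O}^\times$, as holds in the intended application where the residue characteristic is odd). Since the reduction of $Q$ is assumed nondegenerate, the Gram matrix of any $k$-basis of the free $\mathcal{O}$-module $L\otimes_\mathcal{O} k$ is nondegenerate; this rigidity drives all four claims.

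First I would dispatch (1): $\bar e_1\neq 0$ since $[e_1,e_1]$ reduces to a unit, and any $e_i\in\varpi L$ would give $v_\mathfrak{p}(Q(e_i))\ge 2$, contradicting the hypothesis for $i=2,3$. For (2), suppose toward contradiction that $\bar e_i\in k\bar e_1$; lifting to $e_i=c\,e_1+\varpi f$ with $c\in\mathcal{O}$ and $f\in L$, I would compute $[e_1,e_i]=2c\,Q(e_1)+\varpi[e_1,f]$ and use $v_\mathfrak{p}([e_1,e_i])>0$ together with $v_\mathfrak{p}(Q(e_1))=0$ to force $c\in\mathfrak{p}$, so $\bar e_i=0$, contradicting (1).

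For (3), if $\bar e_1,\bar e_2,\bar e_3$ were a $k$-basis, then the reduced Gram matrix in this basis would have the shape $\operatorname{diag}(2\bar Q(e_1),0,0)$, of rank $1$, contradicting nondegeneracy. Finally for (4), I would take a nontrivial dependence $a\bar e_1+b\bar e_2+c\bar e_3=0$ provided by (3). Item (2) precludes either $b=0$ or $c=0$ individually (each would collapse the relation to a dependence between $\bar e_1$ and one of $\bar e_2,\bar e_3$), so both $b,c\in k^\times$. Pairing the relation with $\bar e_1$ and using the reduced Gram matrix then yields $2a\,\bar Q(e_1)=0$, whence $a=0$ and $\bar e_3\in k^\times\bar e_2$.

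The argument is essentially linear algebra over $k$, so I do not foresee any serious obstacle; the only point to flag is the tacit use of $2\in\mathcal{O}^\times$, which is automatic whenever this lemma is invoked.
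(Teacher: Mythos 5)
Your proof is correct and follows essentially the same route as the paper's: reduce the Gram matrix modulo $\mathfrak{p}$, note that all off-diagonal entries and $2Q(e_2),2Q(e_3)$ vanish while $2Q(e_1)$ stays a unit, and read off the four claims by pairing relations with $e_1$. The tacit use of $2\in\mathcal{O}^\times$ that you flag is also implicit in the paper's argument (its step (2) needs $[e_1,e_1]$ to be a unit), and is harmless since the lemma is only invoked for odd residue characteristic.
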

    \begin{proof}
       \begin{enumerate}[label=(\arabic*)]
            \item $\mathfrak{p}^2\nmid Q(e_i), i=1,2,3$.
            \item For $i=2,3$, if $e_i = c e_1 + \varpi v$ for some $c\in \mathcal{O}, v\in L$, then $0 \equiv [e_1, e_i] \equiv c[e_1, e_1] \pmod{\mathfrak{p}}$ implies $c\equiv 0 \pmod{\mathfrak{p}}$, contradicting $e_i$ being nonzero modulo $\mathfrak{p}$. 
            \item $[e_3, e_i] \equiv 0 \pmod{\mathfrak{p}}, i = 1,2,3$.
            \item Write $e_3 = c_1 e_1+c_2e_2+\varpi v$ for some $c_1, c_2\in\mathcal{O}, v\in L$, then $ 0 \equiv [e_1, e_3] \equiv c_1[e_1, e_1] \pmod{\mathfrak{p}}$ implies $c_1\equiv 0\pmod{\mathfrak{p}}$.
        \end{enumerate}
    \end{proof}
    
\begin{proof}[Proof of \Cref{lem:pairingwithauto}]
    Suppose $\tilde{x}$ is a point in the CM cycle lifting $x$ with $\tilde{v}\in L(\cA_{\tilde{x}})$ such that $Q_F(\tilde{v}) = \lambda$, then $\cA_x$ is supersingular by \ref{supersingular}. Let $e_1\in L(\cA_x)$ such that $Q_F'(e_1) = 1$ by \Cref{lem:auto}. Let $e_2$ be the image of $\tilde{v}$ under the $\cO_F$-linear isometric map $L(\cA_{\tilde{x}})\xhookrightarrow{}L(\cA_x)$, then $Q'_F(e_2) = \lambda$, and let $e_3\in L(\cA_x)$ with $Q'_F(e_3) = \lambda$ be constructed as in \Cref{lem:pairend}.

     By \Cref{lem:nondeg}, the pairwise orthogonal elements $e_1, e_2, e_3\in L(\mathcal{A}_x)$ form a basis of  $V(\mathcal{A}_x)\otimes_{F, \rho'} W[p^{-1}]\simeq V\otimes_{F, \rho'} W[p^{-1}]$ such that $e_1, e_2$ (resp. $e_1, e_3$) are linearly independent modulo $p$, while $e_2, e_3$ span the same line modulo $p$.  
     Let $E =(W[p^{-1}])(\sqrt{-\rho'(\lambda)})$ and $\mathfrak{p} = (\sqrt{-\rho'(\lambda)})$ be the maximal ideal in $\mathcal{O}_E$. Over $E$, the two isotropic lines in $\langle e_2\rangle ^\perp$ (resp. $\langle e_3\rangle ^{\perp}$) are $l_2 = \langle \sqrt{-\rho'(\lambda)} e_1 +  e_3\rangle$ and $l_2' = \langle -\sqrt{-\rho'(\lambda)} e_1 +  e_3\rangle$ (resp. $l_3 = \langle \sqrt{-\rho'(\lambda)} e_1 +  e_2\rangle$ and $l_3' = \langle -\sqrt{-\rho'(\lambda)} e_1 +  e_2\rangle$). The automorphism $e_1$ acts as $1$ on $\langle e_1\rangle $ and $-1$ on $\langle e_1\rangle ^{\perp}$. Therefore, the two liftings corresponding to $l_2$ and $l_2'$ (resp. $l_3$ and $l_3'$) are isomorphic. Over $\mathcal{O}_E/\mathfrak{p}=k$, the isotropic lines  $l_2, l_2', l_3, l_3'$ are all spanned by $e_2$. Over $\mathcal{O}_E/\mathfrak{p}^2$ both $l_2$ and  $l_3$ are spanned by $\sqrt{-\rho'(\lambda)} e_1 +  e_2$, where $e_1, e_2$ are linearly independent. Therefore, by \cite{MR3484114}*{5.16} or the proof of \Cref{lem:lifting}, the liftings corresponding to $e_2$ and $e_3$ are isomorphic modulo $\mathfrak{p}^2$. If the liftings $\tilde{x}_2$ and $\tilde{x}_3$ corresponding to $l_2$ and $l_3$, respectively, are isomorphic, then by the same argument as in \Cref{lem:pairwoaut}, there exists a quasi-isogeny $\cA_{\tilde{x}_2} \to \cA_{\tilde{x}_3}$ inducing an automorphism $\alpha\in \SO(L(\cA_x), Q_F')$ with $\alpha(e_2) =\pm(e_3)$, which contradicts \Cref{lem:pairend}.

     After possibly extending $k$, we may assume $\sqrt{-1}\in W$. Over $W[p^{-1}]$, the two isotropic lines in $\langle e_1\rangle^{\perp}$ are $l_1 = \langle e_2 + \sqrt{-1} e_3\rangle$ and $l_1' = \langle  e_2 - \sqrt{-1} e_3\rangle$. Since $e_2, e_3$ are nonzero modulo $p$, at least one of $e_2 \pm \sqrt{-1} e_3$ is nonzero modulo $p$, then at least one of $l_1, \, l_1'$ reduces modulo $p$ to a line spanned by $e_2$, in which case it corresponds to a lift with special endomorphism $\sqrt{-1}$. 
     Over $\mathcal{O}_E/\mathfrak{p}^2$, $l_1$ (or $l_1'$) is spanned by $e_2$, while $l_2, l_3$ are spanned by $\sqrt{-\sigma(\lambda)} e_1 +  e_2$, with $e_1, e_2$ are linearly independent. Therefore, over $\mathcal{O}_K/ \mathfrak{p}^2$, the lifts corresponding to $e_2$ and $e_3$ are isomorphic to each other, but not to the lift(s) correspoinding to $l_1$ or $l_1'$. 
\end{proof}

\section{Archimedean place}\label{sec:arch}

In this section, we prove \Cref{prop:equidistribution} and its slightly more general form, \Cref{prop:equidistribution_odd}, which give an equidistribution result used to control the archimedean contribution, i.e., the sign of the polynomial $P_{\lambda}(x)P_{4\lambda}(x)$ evaluated at the coordinate corresponding to a given abelian variety. In \cref{sec:Shimura}, we recall Shimura's work on the real points of Shimura varieties, specializing to the one-dimensional case, where the set of real points is the image of a finite collection of geodesics in the upper half plane. 
Under the additional assumptions on $F$, we focus on the case where it suffices to consider a single geodesic. In \cref{sec:CMgeodesic}, we study the CM points on this geodesic, and in \cref{sec:equidis}, we use equidistribution of primes to show the density of the CM points on it. 

\subsection{Real points on the Shimura curve} \label{sec:Shimura}
Let $\varphi: \mathcal{H}\to \widetilde{\Gamma} \backslash \mathcal{H}$ be the natural map.
According to Shimura \cite{MR572971}*{4.2}, the action of complex conjugation on the Shimura curve $\widetilde{\Gamma} \backslash \mathcal{H}$ satisfies \[\overline{\varphi(z)} = \varphi(\alpha(\bar{z})),\] 
for any $\alpha\in F^\times\cO^\times$ with $\rho(\nrd(\alpha)) <0$. 
Recall that $\epsilon$ denotes a unit of $F$ that is negative at $\rho$ and positive at the other real places. The field $F(\sqrt{-\epsilon})$ embeds into $B$, since $B$ is split at all finite places and $F(\sqrt{-\epsilon})$ splits $B$ at all real ramified places of $B$. 
Moreover, there exists $\alpha\in \cO$ such that $\trd(\alpha) = 0$ and $\nrd(\alpha) = -\alpha^2 = \epsilon$, as any two maximal orders in $B$ are conjugate to each other. 
Let \[U(\epsilon) := \{ \alpha \in \mathcal{O}^\times :\trd(\alpha) = 0, \nrd(\alpha) = \epsilon\} = \{ \alpha \in \mathcal{O}^\times : \alpha^2 = -\epsilon\},\] and for each $\alpha \in U(\epsilon)$, put \[Z_{\alpha} := \{z\in \mathcal{H}: \alpha(\overline{z}) = z\}.\] Since every totally poistive unit in $\cO_F^\times$ is a square, by \cite{MR572971}*{7.4}, 
the real points of $\widetilde{\Gamma}\backslash\cH$ are given by \[\bigcup_{\alpha\in U(\epsilon)}\varphi(Z_{\alpha}).\]
The union might be taken over a set of representatives for $U(\epsilon)$ modulo inner automorphisms given by the elements of $\widetilde{\Gamma}$. 
This set is in bijection with the set of $\widetilde{\Gamma}$-conjugacy classes of 
embeddings $\mathcal{O}_F[\sqrt{-\epsilon}]\xhookrightarrow{}\mathcal{O}$, where $\mathcal{O}_F[\sqrt{-\epsilon}]$ is an order in $F(\sqrt{-\epsilon})$.

\subsubsection{One geodesic case}
Assume $[F:\bQ]>1$, $\mathcal{O}_F[\sqrt{-\epsilon}]$ is the maximal order of $F(\sqrt{-\epsilon})$ \footnote{Since $F$ has narrow class number $1$, if $2$ is inert in $F$, then $2\cO_F$ ramifies in $F(\sqrt{-\epsilon})$ and $\mathcal{O}_F[\sqrt{-\epsilon}]$ is the maximal order of $F(\sqrt{-\epsilon})$.} and $F(\sqrt{-\epsilon})$ has class number $h(F(\sqrt{-\epsilon})) = 1$, then the real points of $\widetilde{\Gamma}\backslash\cH$ is $\varphi(Z_{\alpha})$ for any $\alpha\in U(\epsilon)$ by \Cref{lem:onegeo}. 

\begin{lemma}\label{lem:onegeo}
    For any quadratic $\cO_F$-order $S\subset F[\sqrt{-\epsilon}]$, the number of  $\widetilde{\Gamma}$-conjugacy classes of optimal $S\xhookrightarrow{}\cO$ is equal to the class number $h(S)$. 
\end{lemma}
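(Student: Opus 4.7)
The plan is to derive \Cref{lem:onegeo} from \Cref{lem:optemb}(2), which in our setting (with $B$ split at every finite place of $F$ and $F$ of narrow class number $1$) identifies the adelic double coset space $\phi(L)^\times\backslash\widehat{\mathcal{E}}/\widehat{\mathcal{O}}^\times$ with $\Pic(S)$, of cardinality $h(S)$. Fix once and for all an embedding $\phi:L = F(\sqrt{-\epsilon})\hookrightarrow B$ with $\phi^{-1}(\mathcal{O}) = S$, so that the notation $\mathcal{E}, \widehat{\mathcal{E}}$ of \cref{sec:constructCM} applies.

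First I would translate the $\widetilde{\Gamma}$-conjugacy classes of optimal embeddings $S\hookrightarrow\mathcal{O}$ into a double coset space. By the Skolem--Noether theorem, every embedding $L\hookrightarrow B$ has the form $z\mapsto \beta^{-1}\phi(z)\beta$ for a unique class $[\beta]\in \phi(L)^\times\backslash B^\times$, and such an embedding takes $S$ into $\mathcal{O}$ optimally precisely when $\beta\in\mathcal{E}$. Since conjugation by $\gamma\in\widetilde{\Gamma}$ on the embedding corresponds to the right translation $\beta\mapsto \beta\gamma^{-1}$ on $\mathcal{E}$, the set of $\widetilde{\Gamma}$-conjugacy classes of optimal embeddings is in natural bijection with $\phi(L)^\times\backslash\mathcal{E}/\widetilde{\Gamma}$.

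Second I would upgrade the strong approximation identity \eqref{eq:strongapprox}, $\widehat{B}^\times = B^\times_{>0}\widehat{\mathcal{O}}^\times$, to a bijection
\[
\phi(L)^\times\backslash\mathcal{E}/\widetilde{\Gamma}\;\xrightarrow{\sim}\;\phi(L)^\times\backslash\widehat{\mathcal{E}}/\widehat{\mathcal{O}}^\times.
\]
Surjectivity: given $\widehat\beta\in\widehat{\mathcal{E}}$, write $\widehat\beta = \beta\widehat{k}$ with $\beta\in B^\times_{>0}$ and $\widehat{k}\in\widehat{\mathcal{O}}^\times$; the condition cutting out $\widehat{\mathcal{E}}$ depends only on the coset $\widehat\beta\widehat{\mathcal{O}}^\times$, hence $\beta\in\mathcal{E}$. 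Injectivity: if $\beta_1,\beta_2\in\mathcal{E}$ satisfy $\widehat\beta_2 = z\widehat\beta_1\widehat{k}$ for some $z\in\phi(L)^\times$ and $\widehat{k}\in\widehat{\mathcal{O}}^\times$, then $\widehat{k}= \beta_1^{-1}z^{-1}\beta_2$ lies in $B^\times\cap\widehat{\mathcal{O}}^\times = \mathcal{O}^\times$. The crucial computation is
\[
\nrd(\widehat{k})\;=\;\nrd(\beta_1)^{-1}\,\mathrm{Nm}_{L/F}(z)^{-1}\,\nrd(\beta_2),
\]
and since $L/F$ is a CM extension the norm $\mathrm{Nm}_{L/F}$ takes values in $F^{\times,+}$; combining this with the freedom to choose $\beta_1,\beta_2\in B^\times_{>0}$ shows $\nrd(\widehat{k})\in\mathcal{O}_F^{\times,+}$, i.e.\ $\widehat{k}\in\widetilde{\Gamma}$. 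Combining this bijection with \Cref{lem:optemb}(2) yields the claim.

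The delicate point (and the main obstacle) lies in the sign bookkeeping: elements of $\mathcal{O}^\times$ need not have totally positive reduced norm, so a priori $\widetilde{\Gamma}$-conjugacy could be strictly finer than $\mathcal{O}^\times$-conjugacy. What makes the two coincide in the quotient is precisely the CM hypothesis on $L$, which forces $\mathrm{Nm}_{L/F}(\phi(L)^\times)\subset F^{\times,+}$, so that the $F^\times$-component of the double coset already absorbs the sign ambiguity at $\rho$. I would therefore pay careful attention to verifying that every $\widetilde{\Gamma}$-orbit in $\mathcal{E}$ has a representative in $\mathcal{E}\cap B^\times_{>0}$, using the freedom of multiplying $\beta$ by elements of $\phi(L)^\times$ whose norms realize the required signs at each real place --- again possible because $L$ is a totally imaginary CM extension of $F$ with narrow class number $1$.
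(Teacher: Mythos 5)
Your overall skeleton is reasonable and is genuinely parallel to the paper's argument: both reduce the $\widetilde{\Gamma}$-count to the adelic count of \Cref{lem:optemb}(2) (the paper does this by comparing $m(S,\cO;\widetilde{\Gamma})$ with $m(S,\cO;\cO^\times)$ via the trace formula \cite{MR4279905}*{30.3.14, 30.4.17}, you do it via the double cosets $\phi(L)^\times\backslash\cE/\widetilde{\Gamma}$ and strong approximation). However, the step you yourself flag as crucial rests on a false premise: $L=F(\sqrt{-\epsilon})$ is \emph{not} a CM extension of $F$. Since $\epsilon$ is negative at $\rho$, the element $-\epsilon$ is positive at $\rho$, so $L$ has two real places above $\rho$ and is imaginary only at the other real places; in particular $\Nm_{L/F}(\sqrt{-\epsilon})=\epsilon$ is negative at $\rho$, so $\Nm_{L/F}$ does not take values in $F^{\times,+}$. (You appear to have imported the hypothesis from the CM cycles of \cref{sec:constructCM}, where the fields are $F(\sqrt{-\lambda})$ with $\lambda$ totally positive; the field relevant to this lemma is the quadratic extension used for the geodesics in \cref{sec:Shimura}, which is split at $\rho$.) Consequently your "crucial computation" fails: with $\beta_1,\beta_2\in B^\times_{>0}$ the sign of $\nrd(\widehat{k})$ at $\rho$ equals the sign of $\Nm_{L/F}(z)$ at $\rho$, which can be negative. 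Your closing paragraph is also internally inconsistent: to move a representative into $\cE\cap B^\times_{>0}$ you need elements of $\phi(L)^\times$ whose norm is negative at $\rho$, which is exactly what your asserted CM positivity would forbid.

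The gap is not cosmetic, because the true mechanism is the opposite of the one you invoke. Since $B$ is unramified at $\rho$ and at all finite places, $\cO^\times$ contains units of reduced norm negative at $\rho$ (the elements of $U(\epsilon)$), so $\widetilde{\Gamma}$-conjugacy is a priori strictly finer than $\cO^\times$-conjugacy; the stabilizer in $\cO^\times$ of an optimal embedding is the image of $S^\times$, so if all norms from $S^\times$ were totally positive (as your CM claim would force), every $\cO^\times$-class would split into two $\widetilde{\Gamma}$-classes and the count would be $2h(S)$, contradicting the statement you are proving. What makes the lemma true is precisely that $S$ contains a unit whose norm is negative at $\rho$ (for the orders used here, $\sqrt{-\epsilon}\in S$ with $\Nm_{L/F}(\sqrt{-\epsilon})=\epsilon$), which absorbs the sign discrepancy between $\cO^\times$ and $\widetilde{\Gamma}$; this is exactly the content of the index computation $[\nrd(\cO^\times):\nrd(\widetilde{\Gamma})\nrd(S^\times)]=1$ in the paper's proof. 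Note also that an element of $\phi(L)^\times$ used to repair the sign in your injectivity step must in addition normalize the integral structure, i.e.\ lie in $\phi(S)^\times$, so the issue cannot be resolved at the level of the field alone. With that ingredient supplied, the rest of your reduction to the count $h(S)$ from \Cref{lem:optemb}(2) does go through.
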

\begin{proof}
    For $\cO^1\subseteq \Gamma \subseteq N_{B^\times}(\cO)$, let $m(S, \cO; \Gamma)$ denote the number of $\Gamma$-conjugacy classes of optimal $S\xhookrightarrow{}\cO$. Then $m(S, \cO; \widetilde{\Gamma}) = m(S, \cO; \cO^\times)[\nrd(\cO^\times):\nrd(\widetilde{\Gamma}) \nrd(S^\times)]$ by \cite{MR4279905}*{30.3.14}, where $[\nrd(\cO^\times):\nrd(\widetilde{\Gamma}) \nrd(S^\times)] = 1$ since $\rho(\Nm_{F(\sqrt{-\epsilon})/F}(S))\cap\bR_{<0}\neq \emptyset$. Since $F$ has narrow class number $1$, the maximal order $\cO$ of $B$ has class number $1$ and then $m(S, \cO; \cO^\times) = h(S) m(\widehat{S}, \widehat{\cO}; \widehat{\cO}^\times)$ by \cite{MR4279905}*{30.4.17}. Finally, as $B$ is split at all finite places, we have $m(\widehat{S}, \widehat{\cO}; \widehat{\cO}^\times) = \prod_{v \text{ finite}} m(S_v, \cO_v; \cO_v^\times) =  1$.
\end{proof}

We assume $F$ has only one prime $\fp_2$ above $2$, then the quaternion algebra $B \simeq \left(\frac{-\epsilon, -1}{F}\right)$ \footnote{This is the quaternion algebra unramified at all odd primes and exactly one of the real places $\rho$ of $F$.} and can be realized as \[B \simeq \left\{\begin{pmatrix}
    a & b \\ -b' & a'
\end{pmatrix}: a,b \in F(\sqrt{-\epsilon})\right\},\]  where $'$ denotes the nontrivial involution of $F(\sqrt{-\epsilon})/F$. 
Let \[\mu = \begin{pmatrix}
    \sqrt{-\epsilon} & 0 \\ 0 & -\sqrt{-\epsilon}
\end{pmatrix}, \, \nu = \begin{pmatrix}
    0 & 1 \\ -1 & 0
\end{pmatrix}, \, \mu\nu = \begin{pmatrix}
    0 & \sqrt{-\epsilon} \\ \sqrt{-\epsilon} & 0
\end{pmatrix}.\] 

\begin{lemma}
    The quaternion algebra $\left(\frac{-\epsilon, -1}{F}\right)$ %is split at the prime above $2$, then it 
    has a maximal order of the form \begin{equation} \label{eq:maxorder}
        \cO = \cO_F[\mu, \frac{a+b\mu+\nu}{2}] = \cO_F + \cO_F\mu +\cO_F\frac{a+b\mu+\nu}{2} + \cO_F\frac{-\epsilon b+a\mu+\mu\nu}{2}
    \end{equation} for some nonzero $a, b\in \cO_F$. 
\end{lemma}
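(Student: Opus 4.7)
I plan to enlarge the natural order $\cO_0 := \cO_F + \cO_F \mu + \cO_F \nu + \cO_F \mu\nu \subset B$ to a maximal order by adjoining one element of the form $x = (a + b\mu + \nu)/2$ with $a, b \in \cO_F$; the lemma then reduces to producing such $a, b$.

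First, I would compute $\disc_{\mathrm{red}}(\cO_0)$. The basis $\{1, \mu, \nu, \mu\nu\}$ is orthogonal for the trace pairing with $\nrd(\mu) = \epsilon \in \cO_F^\times$ and $\nrd(\nu) = 1$, so the Gram matrix is diagonal with determinant a unit times $16$, giving $\disc_{\mathrm{red}}(\cO_0) = 4\cO_F$. Since $\fp_2 = 2\cO_F$ by the inert hypothesis, this equals $\fp_2^2$. Because $B$ is unramified at every finite place, any maximal order $\cO$ has reduced discriminant $\cO_F$, so the $\cO_F$-module index satisfies $[\cO : \cO_0]^2 = \disc(\cO_0)/\disc(\cO) = \fp_2^4$, i.e.\ $[\cO : \cO_0] = \fp_2^2$.

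Second, suppose $a, b \in \cO_F$ satisfy
\[ a^2 + \epsilon b^2 + 1 \equiv 0 \pmod{4\cO_F}. \]
Then $x := (a+b\mu+\nu)/2$ has $\trd(x) = a \in \cO_F$ and $\nrd(x) = (a^2+\epsilon b^2+1)/4 \in \cO_F$, hence is integral; a direct check using $\mu x = (-\epsilon b + a\mu + \mu\nu)/2$ shows $\mu x$ is integral as well. Using the relations $x\mu + \mu x = a\mu - \epsilon b$ (which follows from $\mu\nu + \nu\mu = 0$) and $x^2 = ax - \nrd(x)$, a routine multiplication table verifies that
\[ \cO := \cO_F + \cO_F\mu + \cO_F x + \cO_F\mu x \]
is closed under multiplication and coincides with the lattice in the statement. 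The change-of-basis matrix from $\{1,\mu,x,\mu x\}$ to $\{1,\mu,\nu,\mu\nu\}$ is upper triangular with diagonal $(1,1,2,2)$, so $[\cO : \cO_0] = 4\cO_F = \fp_2^2$; hence $\disc_{\mathrm{red}}(\cO) = \cO_F$ and $\cO$ is maximal.

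Third, I must exhibit nonzero $a, b \in \cO_F$ solving the congruence. Since $B$ is split at $\fp_2$, the ternary form $X^2 + \epsilon Y^2 + Z^2$ is isotropic over $F_{\fp_2}$. The projective conic $C \subset \bP^2_{F_{\fp_2}}$ it defines is smooth (we are in characteristic zero) and rational over $F_{\fp_2}$ once it has a point. Starting from any primitive integral solution $(u,v,w) \in \cO_{F_{\fp_2}}^3$, at least one of $u, w$ is a unit; if $w$ is not, the second intersection of the line through $(u,v,w)$ in direction $(1,0,1)$ with $C$ is the point $(-w, v, -u)$, whose third coordinate $-u$ is a unit. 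Dividing through by a unit $Z$-coordinate yields $a_0, b_0 \in \cO_{F_{\fp_2}}$ with $a_0^2 + \epsilon b_0^2 + 1 = 0$. Since $\fp_2$ is the only prime of $F$ above $2$, the reduction $\cO_F \twoheadrightarrow \cO_{F_{\fp_2}}/\fp_2^2$ is surjective, so we lift to $a, b \in \cO_F$ with $a^2 + \epsilon b^2 + 1 \equiv 0 \pmod{4\cO_F}$. Translating $a$ or $b$ by $2\cO_F$ if necessary (which does not affect the congruence, since $(a+2c)^2 \equiv a^2 \pmod{4}$) ensures both are nonzero.

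The main obstacle is the last step: the existence of an integral point on $C$ with unit $Z$-coordinate, which is what genuinely uses the hypothesis that $B$ is split at the dyadic prime. The first two steps are bookkeeping with discriminants and the multiplication table.
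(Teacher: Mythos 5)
Your proposal is correct, and its overall architecture is the same as the paper's: adjoin the half-integral element $(a+b\mu+\nu)/2$ to $\cO_F[\mu,\nu]$, where $a,b$ are global approximations mod $4$ of a local solution of $a^2+\epsilon b^2+1=0$ at $\fp_2$, check closure under multiplication via $\trd(x)=a$, $\nrd(x)=(a^2+\epsilon b^2+1)/4\in\cO_F$, and conclude maximality from the discriminant; your index computation $[\cO:\cO_F[\mu,\nu]]=\fp_2^2$ is just the paper's direct computation $\disc(\cO)=(\tfrac14)^2(-4\epsilon)^2$ phrased differently. The one genuine divergence is how you get an \emph{integral} local solution. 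The paper invokes the existence of some $a',b'\in F_{\fp_2}$ with $a'^2+\epsilon b'^2+1=0$ and then proves that any such solution is automatically integral, using Hensel's lemma together with the standing hypothesis that $\fp_2$ ramifies in $F(\sqrt{-\epsilon})$ (equivalently, that $\cO_F[\sqrt{-\epsilon}]$ is the maximal order); this is the only place that hypothesis enters the proof. You instead take a primitive isotropic vector $(u,v,w)$ of $X^2+\epsilon Y^2+Z^2$ over $\cO_{F_{\fp_2}}$, observe that $u$ or $w$ must be a unit (if both lay in $\fp_2$ then $\epsilon v^2$ would be a unit while $u^2+w^2\in\fp_2^2$, contradicting isotropy), use the $X\leftrightarrow Z$ symmetry to put the unit in the $Z$-slot, and dehomogenize. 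This is a correct and slightly more economical route: it uses only that $B$ splits at $\fp_2$, not the ramification of $\fp_2$ in $F(\sqrt{-\epsilon})$, so it would survive weakening that hypothesis; the paper's version, by contrast, yields the stronger statement that every local solution is integral, which it does not actually need. Your final adjustments (lifting mod $\fp_2^2=4\cO_F$ and translating by $2\cO_F$ to make $a,b$ nonzero) are fine.
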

\begin{proof}
    Let $\pi$ be a generator of the prime $\fp_2$ of $F$ above $2. $
    By \cite{MR4279905}*{5.4.4}, since $\left(\frac{-\epsilon, -1}{F_{\pi}}\right)$ is split, there exists $a', b'\in F_{\pi}$ such that $a'^2+b'^2\epsilon + 1 =0$. 
    
    We must have $a', b'\in \cO_{F_{\pi}}$. Otherwise, $v_{\pi}(a') = v_{\pi}(b') = -m< 0$ and then $c'^2 + d'^2 \epsilon \equiv 0 \pmod{\pi^{2m}}$ where $c' = \pi^{m} a', \, d' =\pi^{m} b'\in \cO_{F_{\pi}}^\times$.
    If $m > e$, then $-\epsilon$ is a square in $F_\pi$ by Hensel's lemma, which contradicts $(\pi)$ being ramified in $F(\sqrt{-\epsilon})$. If $m\leq e$, then pick $c,d\in \cO_F$ with $c\equiv c'\pmod{\pi^{2m}}, d\equiv d'\pmod{\pi^{2m}}$, then $\cO_{F}[\frac{c+d\sqrt{-\epsilon}}{\pi^{m}} ]$ is an order in $F(\sqrt{-\epsilon})$, which contradicts $F(\sqrt{-\epsilon})$ having maximal order $\cO_{F}[\sqrt{-\epsilon}]$. 

    Choose $a, b\in \cO_F$ with $a\equiv a'\pmod{4}, \, b\equiv b'\pmod{4}$, then $\frac{a+b\mu+\nu}{2}$ is integral and $\cO = \cO_F[\mu, \frac{a+b\mu+\nu}{2}] = \cO_F + \cO_F\mu +\cO_F\frac{a+b\mu+\nu}{2} + \cO_F\mu\frac{a+b\mu+\nu}{2}$ is an order in $B$. Moreover, $\cO \supset \cO_F[\mu,\nu]$ and its discriminant $\disc(\cO) = (\frac{1}{4})^2\disc(\cO_F[\mu,\nu]) = (\frac{1}{4})^2(-4\epsilon)^2 =\epsilon^2$ is a unit in $F$. 
\end{proof}
We may assume $\cO$ is the maximal order as defined by \eqref{eq:maxorder} and $\alpha = \mu \in U(\epsilon)$. Then the geodesic \begin{equation} \label{eq:imagaxis}
    Z_{\alpha} = \{iy:y>0\}
\end{equation} is the positive imaginary axis and its stablizer in $\widetilde{\Gamma}$ is \[\left\{\begin{pmatrix}
    a & 0 \\ 0 & a'
\end{pmatrix}: a \in \mathcal{O}_F[\sqrt{-\epsilon}]^\times,\, \rho(aa')>0\right\} \bigcup \left\{\begin{pmatrix}
    0 & b \\ -b' & 0 
\end{pmatrix}: b \in \mathcal{O}_F[\sqrt{-\epsilon}]^\times, \, \rho(bb')>0\right\}.\]
By assumption, the Shimura curve $\widetilde{\Gamma}\backslash\cH$ is isomorphic to $\mathbb{P}^1(\bC)$. Hence, its real points form a circle. 
Fix an extension of $\rho$ to $F(\sqrt{-\epsilon})$ and let $u$ be the generator of $\ker(\mathcal{O}_F[\sqrt{-\epsilon}]^\times \xrightarrow{\Nm} \mathcal{O}_F^\times)$ with $\rho(u)>1$.\footnote{The rank of $\ker(\mathcal{O}_F[\sqrt{-\epsilon}]^\times \xrightarrow{\Nm} \mathcal{O}_F^\times)$ is $\rank(\mathcal{O}_F[\sqrt{-\epsilon}]^\times) - \rank(\mathcal{O}_F^\times) = (2+([F:\bQ]-1)-1)-([F:\bQ]-1) =1$. Thus, $\ker(\mathcal{O}_F[\sqrt{-\epsilon}]^\times \xrightarrow{\Nm} \mathcal{O}_F^\times) = \{\pm u^m:m\in\bZ\}$.} 
Then $\pm u^2 = \pm \frac{u}{u'}$ generates $\{\frac{a'}{a} : a\in \mathcal{O}_F[\sqrt{-\epsilon}]^\times \}$,\footnote{The $2$-rank of $H^1(\Gal(F(\sqrt{-\epsilon})/F), \mathcal{O}_F[\sqrt{-\epsilon}]^\times )$ is $1$ by \cite{MR963648}*{5.1, 9.1} and $-1 = \frac{\sqrt{-\epsilon}}{-\sqrt{-\epsilon}}$. Thus, the class of $u$ generates the size $2$ group $H^1(\Gal(F(\sqrt{-\epsilon})/F), \mathcal{O}_F[\sqrt{-\epsilon}]^\times )$.} and $[i, i\rho(u^2)]$ is a fundamental domain for $\varphi(Z_{\alpha})$.

\subsection{CM points on the geodesic}\label{sec:CMgeodesic}
For a CM point $x+iy\in \mathcal{H}$ corresponding to an embedding $L = F(\sqrt{-\lambda}) \xhookrightarrow{} B$, the element $\sqrt{-\lambda}$ maps to $ \begin{pmatrix}
    a\sqrt{-\epsilon} & b \\ -b' &-a\sqrt{-\epsilon}
\end{pmatrix}$ with $a\in F, b\in F(\sqrt{-\epsilon})$, and the matrix stabilizes $x+iy$. 
There exists $y>0$ such that $\rho\left(\begin{pmatrix}
    a\sqrt{-\epsilon} & b \\ -b' &-a\sqrt{-\epsilon}\end{pmatrix}\right)iy = iy$  if and only if $a = 0 $ and $
    \lambda = bb'\in \Nm_{F(\sqrt{-\epsilon})/F}(F(\sqrt{-\epsilon})^\times)$. In this case, \[y = \sqrt{\rho\left(\frac{b}{b'}\right)}.\]
    Suppose $\sqrt{-\lambda} \in \mathcal{O}$, then $2b\in \mathcal{O}_F[\sqrt{-\epsilon}]$, and since $\lambda = bb'$ and $F(\sqrt{-\epsilon})$ has only one prime above $2$ by assumption, it follows that $b \in \mathcal{O}_F[\sqrt{-\epsilon}]$ is a prime above $\lambda$. 

\begin{lemma}\label{lem:CMgeo}
    Assume $F(\sqrt{-\epsilon})$ has class number $h(F(\sqrt{-\epsilon})) = 1$. For a totally positive prime $\lambda$ of $F$ such that $-\lambda$ is a square modulo $4$, there exists $x\in \cO_{F(\sqrt{-\epsilon})}$ such that $\lambda = \Nm_{F(\sqrt{-\epsilon})/F}(x)$; moreover, if $x_1, x_2\in \cO_{F(\sqrt{-\epsilon})}$ both satisfy the norm equation, then one of  $\frac{x_1}{x_2}$, $\frac{x_1}{x_2'}$ is a unit in $\ker(\mathcal{O}_{F(\sqrt{-\epsilon})}^\times \xrightarrow{\Nm} \mathcal{O}_F^\times)$. 
\end{lemma}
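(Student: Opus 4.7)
The plan is to establish existence by first showing that $\lambda$ splits in $K := F(\sqrt{-\epsilon})$, then using $h(K) = 1$ to produce a generator of a prime above $\lambda$, and finally adjusting by units to realize $\lambda$ exactly as a norm. The uniqueness assertion will follow from unique factorization of ideals in $\cO_K$.

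For the splitting step, I will apply the quadratic reciprocity law (\Cref{thm:quadratic_reciprocity}) with $\alpha = -\lambda$ and $\beta = -\epsilon$. The element $\alpha$ is odd because $\lambda$ is an odd prime, and it is a quadratic residue modulo $4$ by hypothesis; the element $\beta$ is a unit, so $(\beta) = \cO_F$, which forces $\mathfrak{m} = \mathfrak{n} = \cO_F$ and $\left(\frac{\alpha}{\mathfrak{n}}\right) = 1$. The sign correction on the right-hand side is
\begin{equation*}
    \sum_{i=1}^{n}\frac{\sgn\rho_i(-\lambda)-1}{2}\cdot\frac{\sgn\rho_i(-\epsilon)-1}{2};
\end{equation*}
since $\lambda$ is totally positive, each factor involving $-\lambda$ equals $-1$, and since $\sgn\rho_i(-\epsilon) = +1$ exactly at the unique real place $\rho$ where $\epsilon < 0$, the sum collapses to $n-1$. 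The hypothesis that $n$ is odd makes this exponent even, so $\left(\frac{-\epsilon}{\lambda}\right) = 1$. Hence $-\epsilon$ is a square modulo $\lambda$ and $\lambda\cO_K = \fp\fp'$ splits with $\fp \neq \fp'$.

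For existence, using $h(K) = 1$ I write $\fp = (x_0)$ for some $x_0 \in \cO_K$; taking norms gives $\Nm(x_0) = u\lambda$ for some unit $u \in \cO_F^\times$. At every real place $\sigma \neq \rho$ the completion $K_\sigma \cong \bC$, so $\sigma(\Nm(x_0)) > 0$, and therefore $\sigma(u) > 0$. To control the sign at $\rho$, I exploit the unit $\sqrt{-\epsilon} \in \cO_K^\times$, whose norm is $\Nm(\sqrt{-\epsilon}) = \epsilon$, negative exactly at $\rho$; replacing $x_0$ by $x_0\sqrt{-\epsilon}$ if necessary, I may assume $u$ is totally positive. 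Because $F$ has narrow class number one, every totally positive unit of $\cO_F$ is a square (cf.\ \cite{MR963648}*{12.2}), so $u = w^2$ with $w \in \cO_F^\times$, and $x := x_0/w \in \cO_K$ satisfies $\Nm(x) = \lambda$.

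For the second part, if $x_1,x_2 \in \cO_K$ both have norm $\lambda$, then $(x_1)$ and $(x_2)$ are prime ideals of $\cO_K$ of norm $\lambda$, so each must equal $\fp$ or $\fp'$. In the two cases, either $x_1/x_2 \in \cO_K^\times$ or $x_1/x_2' \in \cO_K^\times$, and in either case $\Nm_{K/F}$ of this quotient equals $\lambda/\lambda = 1$. The main technical delicacy I foresee is the sign and parity bookkeeping in the quadratic reciprocity step, where the hypothesis that $n = [F:\bQ]$ is odd enters essentially to annihilate the reciprocity correction and deliver the splitting.
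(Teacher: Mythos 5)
Your proof is correct and follows essentially the same route as the paper: quadratic reciprocity (with $\beta=-\epsilon$ a unit, so the symbol $\left(\frac{\alpha}{\mathfrak{n}}\right)$ is trivial and the sign exponent is $n-1$, even) to show $\lambda$ splits in $F(\sqrt{-\epsilon})$, then $h=1$ to get a generator of a prime above $\lambda$, and a sign adjustment via the unit $\sqrt{-\epsilon}$ (of norm $\epsilon$) together with the fact that totally positive units of $F$ are squares. The only difference is that you also spell out the ``moreover'' clause via unique factorization of $(\lambda)=\fp\fp'$, which the paper's proof leaves implicit; that argument is fine.
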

\begin{proof}
    Since the totally negative $-\lambda$ is square modulo $4$, the quadratic reciprocity law gives \[\left(\frac{-\epsilon}{-\lambda}\right) = \left(\frac{-\lambda}{-\epsilon}\right)\left(\frac{-\epsilon}{-\lambda}\right) = (-1)^{\sum_{\sigma:F\xhookrightarrow{}\bR} \frac{\sign(\sigma(-\epsilon))-1}{2}} =(-1)^{[F:\bQ]-1} = 1,\] which implies $(\lambda)$ splits in $F(\sqrt{-\epsilon})$. Let $x\in \cO_{F(\sqrt{-\lambda})}$ be a prime above $\lambda$, then $xx' = z \lambda$ for some $z\in \cO_F^\times$ such that $\sigma(z) > 0$ for all embeddings $\sigma:F\xhookrightarrow{}\bR$ with $\sigma\neq \rho$. If $\rho(z) > 0$, then $z \in (\cO_F^\times)^2\subset \Nm_{F(\sqrt{-\epsilon})/F}(\cO_{F(\sqrt{-\epsilon})}^\times)$; if $\rho(z) < 0$, then $z \in \epsilon (\cO_F^\times)^2$, where $\epsilon = -\sqrt{-\epsilon}\sqrt{-\epsilon}$. Therefore, $\lambda$ is always a norm from $\cO_{F(\sqrt{-\epsilon})}$. 
\end{proof}

By \Cref{lem:CMgeo}, the norm equation $\lambda = bb'$ with $b\in \mathcal{O}_F[\sqrt{-\epsilon}]$ gives two points $\varphi\left(i\sqrt{\rho\left(\frac{b}{b'}\right)}\right)$ and $\varphi\left(i\rho(u)\sqrt{\rho\left(\frac{b}{b'}\right)}\right)$ on $\varphi(Z_{\alpha})$. Recall in  \cref{order} that each of the two CM cycles  defined by optimal embeddings of $\cO_{F(\sqrt{-\lambda})}$ and $\cO_{F}+\fp_2\cO_{F(\sqrt{-\lambda})}$ has a unique real point.
The two real CM points are precisely the two points obtained from the norm equation. 

\subsection{Equidistribution} \label{sec:equidis}

Given a CM cycle on the Shimura curve defined by $\widetilde{T}\subset \widetilde{G}$ and $\tau \in \Gal(\overline{\bQ}/\bQ)$, by \cite{MR654325}*{\MakeUppercase{\romannumeral 5}}, the conjugate of the CM cycle on the conjugate Shimura curve is defined by $\widetilde{T} = {}^{\tau,\mu}\widetilde{T}\subset{}^{\tau,\mu}\widetilde{G}$, where $\mu$ is the Hodge cocharacter and ${}^{\tau,\mu}\widetilde{G}$ is a twist of $\widetilde{G}$. Explicitly, as in \cite{MR2666906}*{1.4}, $^{\tau,\mu}\widetilde{G}$ arises from a quaternion algebra ${}^{\tau\!}B$ such that $\inv_{v}({}^{\tau\!}B) = \inv_{\tau\circ v}(B)$ for all infinite places $v$ of $F$ and $\inv_v({}^{\tau\!}B) = \inv_{v}(B)$ for all finite places $v$ of $F$. 
Moreover, the isomorphism $\widetilde{G}(\bA_f)\simeq {}^{\tau,\mu}\widetilde{G}(\bA_f)$ induced by $B_v\simeq {}^{\tau\!}B_v$ 
maps $\widetilde{K} = \prod\cO_v^\times$  
to ${}^{\tau,\mu}\widetilde{K} = \prod{\cO'_v}^\times$, 
where each $\cO'_v$ is a maximal order of ${}^{\tau\!}B_v$.
For a CM cycle corresponding to optimal embeddings $R\xhookrightarrow{}\cO_B$, where $R$ is an $\cO_F$-order in a CM extension of $F$, its conjugate on the conjugate Shimura curve corresponds to optimal embeddings $R\xhookrightarrow{}\cO_{{}^{\tau\!}B}$, where $\cO_{{}^{\tau\!}B}$ is a maximal order of ${}^{\tau\!}B$. %${}^{\tau\!}B_{\fp}\simeq B_{\fp}$ at all finite primes $\fp$ of $F$ 

Let $n = [F:\bQ]$ and $\rho_1, \dotsc, \rho_n:F\xhookrightarrow{}\bR$ be distinct embeddings of $F$ into $\bR$. 
For each $i=1, \dotsc, n$, let $F_i = F(\sqrt{-\epsilon_i})$, where $\epsilon_i$ is a unit of $F$ that is negative at $\rho_i$, and denote by $B_i$ the quaternion algebra over $F$ unramified at all finite places and exactly one of the real places $\rho_i$. Under the assumption that $F$ has only one prime above $2$, we have $B_i\simeq \left(\frac{-\epsilon_i, -1}{F}\right)$.

Assume $\mathcal{O}_F[\sqrt{-\epsilon_i}]$ is the maximal order of $F(\sqrt{-\epsilon_i})$ and $F(\sqrt{-\epsilon_i})$ has class number $h(F(\sqrt{-\epsilon_i})) = 1$ for all $i=1, \dots,n$.\footnote{If $F/\bQ$ is Galois, this is true if and only if one of the $\epsilon_i$ satisfies the assumption.} Then the real points of each conjugate Shimura curve is the image of one geodesic $Z_{\alpha_i}$ and we may assume $Z_{\alpha_i} = \{iy:y>0\}$ by choosing appropriate embeddings $B_i\xhookrightarrow{}M_{2}(\bR)$. Let $\varphi_i$ denote the natrual map from the upper half plane to the Shimura curve defined by $B_i$. 
The real CM points on the geodesic $\{iy:y>0\}$ are $i \sqrt{\frac{\rho_{i,1}(b_i)}{\rho_{i,1}(b_i')}}$ for some $b_i \in F_i$ satisfying $\lambda = \Nm_{F_i/F}(b_i)$, where $\rho_{i,1}$ is a real place of $F_i$ above $\rho_i$ and $\lambda\in F$ totally positive. 

Let $\cF =F_1\cdots F_n$ be the compositum. Assume $\cF$ has class number $h(\cF) = 1$. The image of $\Nm_{\cF/F} = \Nm_{F_i/F}\circ \Nm_{\cF/F_i}: \cF^\times \to F^\times$ is totally positive. Suppose $\lambda$ is a totally positve prime of $F$ such that $-\lambda$ is a square modulo $4$. Then by \Cref{lem:CMgeo},  $(\lambda)$ splits in each $F_i$, and thus splits completely in the compositum $\tilde{F}$. 

\begin{lemma}\label{lem:equidistribution}
    Let $P$ be the set of prime ideals of $\cF$. Assume $\cF$ has class number $h(\cF) = 1$. Then the set \[\left\{\left(\frac{1}{2}\log\frac{\rho_{i,1}(\Nm_{\cF/F_i}(\alpha))}{\rho_{i,1}(\Nm_{\cF/F_i}(\alpha)')}\right)_{i=1, \dotsc, n}: (\alpha) \in P\right\}\] is equidistributed in $\prod_{i=1}^n \bR/(\log \rho_{i,1}(u_i))\bZ$, where each $\rho_{i,1}$ is a real place of $F_i$ above $\rho_i$, and $u_i$ is the generator of $\ker(\cO_{F(\sqrt{-\epsilon_i})}^\times \to \cO_F^\times)$ with $\rho_{i,1}(u_i) > 1$. 
\end{lemma}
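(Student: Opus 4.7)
The plan is to apply Hecke's equidistribution theorem for prime ideals of $\cF$ via Weyl's criterion, with auxiliary characters realized as unitary Gr\"ossencharacters of $\cF$. First, I would define group homomorphisms $\ell_i:\cF^\times\to \bR$ by $\ell_i(\alpha)=\tfrac{1}{2}\log\bigl(\rho_{i,1}(\Nm_{\cF/F_i}(\alpha))/\rho_{i,1}(\Nm_{\cF/F_i}(\alpha)')\bigr)$. The argument of the logarithm is positive: since $\cF$ is totally complex (a direct check at every real place of $F$ from the sign pattern of the $\epsilon_i$, valid once $n\ge 2$), every embedding $\tau:\cF\hookrightarrow\bC$ with $\tau|_{F_i}=\rho_{i,1}$ is paired with its complex conjugate, giving $\rho_{i,1}(\Nm_{\cF/F_i}(\alpha))=\prod_{w|_{F_i}=\rho_{i,1}}|\alpha|_w>0$, where $|\cdot|_w$ is the normalized absolute value at the complex place $w$. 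Next, I would verify $\ell_i(\cO_\cF^\times)\subseteq(\log\rho_{i,1}(u_i))\bZ$: for $u\in\cO_\cF^\times$ the unit $v:=\Nm_{\cF/F_i}(u)\in\cO_{F_i}^\times$ admits a Dirichlet decomposition $v=\zeta\eta u_i^a$ with $\zeta$ a root of unity, $\eta\in\cO_F^\times$, $a\in\bZ$ (both $\cO_{F_i}^\times$ and $\cO_F^\times\langle u_i\rangle$ have rank $n$), and using $\eta'=\eta$, $u_iu_i'=1$, and $\rho_{i,1}(\zeta)=\pm 1$ (a real root of unity), one obtains $\ell_i(u)=a\log\rho_{i,1}(u_i)$. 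Since $h(\cF)=1$, every $\fp\in P$ equals $(\alpha_\fp)$ for $\alpha_\fp$ unique modulo $\cO_\cF^\times$, and the map $\pi:P\to T:=\prod_i\bR/(\log\rho_{i,1}(u_i))\bZ$, $\fp\mapsto(\ell_i(\alpha_\fp))_i$, is well-defined.

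Weyl's criterion reduces the equidistribution claim to showing $\sum_{N\fp\le X}\chi_\mathbf{m}(\pi(\fp))=o(\pi_\cF(X))$ for every nonzero $\mathbf{m}=(m_1,\dots,m_n)\in\bZ^n$, where $\chi_\mathbf{m}(t)=\exp\bigl(2\pi i\sum_j m_jt_j/\log\rho_{j,1}(u_j)\bigr)$. To realize $\chi_\mathbf{m}\circ\pi$ as an ideal character coming from a Gr\"ossencharacter, expand $\ell_j(\alpha)=\tfrac{1}{2}\sum_{w|\infty}\varepsilon_{j,w}\log|\alpha|_w$, with $\varepsilon_{j,w}=\pm 1$ if $w|_{F_j}$ equals $\rho_{j,1}$ or $\rho_{j,2}$ and $0$ otherwise, so that only the unique $j=j(w)$ with $w|_F=\rho_{j(w)}$ contributes. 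Setting $t_w:=\pi\varepsilon_{j(w),w}m_{j(w)}/\log\rho_{j(w),1}(u_{j(w)})$ and $\psi_\infty((\alpha_w)_w):=\prod_{w|\infty}|\alpha_w|_w^{it_w}$ produces a unitary character of $\cF_\infty^\times$ satisfying $\psi_\infty(\alpha)=\chi_\mathbf{m}(\pi((\alpha)))$ for every $\alpha\in\cF^\times$. The idele character
\[
\psi\bigl((\alpha_v)_v\bigr):=\psi_\infty\bigl((\alpha_w)_{w|\infty}\bigr)\cdot\chi_\mathbf{m}\Bigl(\pi\Bigl(\prod_\fp\fp^{v_\fp(\alpha_\fp)}\Bigr)\Bigr)^{-1}
\]
is then continuous, unramified at every finite place, trivial on $\cF^\times$ (the two factors cancel on principal ideles by the previous identity), and nontrivial when $\mathbf{m}\neq 0$ (some $t_w\ne 0$), hence a nontrivial unitary Gr\"ossencharacter of $\cF$.

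Finally, by Hecke's theorem (see \cite{MR1282723}*{\MakeUppercase{\romannumeral 15}, \S5}), $L(s,\psi)$ is entire and nonvanishing on $\mathrm{Re}(s)\ge 1$, so a Wiener--Ikehara/Perron argument yields $\sum_{N\fp\le X}\psi(\varpi_\fp)=o(\pi_\cF(X))$, and since $\psi(\varpi_\fp)=\chi_\mathbf{m}(\pi(\fp))^{-1}$, taking complex conjugates supplies the bound required by Weyl. The hardest step will be the bookkeeping in the middle paragraph: checking that $\psi$ is genuinely a Hecke character trivial on $\cF^\times$ reduces to the identity $\psi_\infty(\alpha)=\chi_\mathbf{m}(\pi((\alpha)))$ on $\cF^\times$, which in turn comes down to matching the coefficients $t_w$ with those of $\log|\alpha|_w$ in $\sum_j m_j\ell_j(\alpha)/\log\rho_{j,1}(u_j)$ — a computation that is algorithmic but tedious, and on which the well-definedness of the whole argument rests.
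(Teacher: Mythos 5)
Your overall route---packaging the archimedean quantities into unitary Gr\"ossencharacters of $\cF$ and invoking Hecke via Weyl's criterion and nonvanishing of $L(s,\psi)$ on $\mathrm{Re}(s)\geq 1$---is essentially the paper's argument: the paper builds a single torus-valued character of the idele class group (using $h(\cF)=1$) and cites Hecke's equidistribution directly, which amounts to the same reduction. The bookkeeping you worry about in the middle paragraph is in fact fine: the identity $\psi_\infty(\alpha)=\chi_{\mathbf{m}}(\pi((\alpha)))$ on $\cF^\times$ follows from the expansion of $\ell_j$ in the normalized $\log\|\cdot\|_w$ exactly as you set it up, and for $\mathbf{m}\neq 0$ the character $\psi$ cannot be of the form $\|\cdot\|_{\bA}^{it_0}$ (the exponents $t_w$ above a fixed $\rho_j$ with $m_j\neq 0$ occur with both signs), so ``entire and nonvanishing'' is legitimate.

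The genuine gap is in your well-definedness step. The decomposition you invoke, $\cO_{F_i}^\times=\mu_{F_i}\,\cO_F^\times\,\langle u_i\rangle$, is false for $n\geq 2$: equal ranks only give finite index, and the index is in fact $2$, since $\sqrt{-\epsilon_i}$ is a unit of $\cO_{F_i}$ lying outside this subgroup (its norm to $F$ is $\epsilon_i$, which has mixed signs, whereas every element of $\mu_{F_i}\cO_F^\times\langle u_i\rangle$ has norm $\pm\eta^2$ with $\eta\in\cO_F^\times$). So, as written, the containment $\ell_i(\cO_\cF^\times)\subseteq(\log\rho_{i,1}(u_i))\bZ$---on which the map $\pi$, and hence also the triviality of $\psi$ on $\cF^\times$, rests---is not proved. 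The containment is true, and the repair is short: for $u\in\cO_\cF^\times$ set $v=\Nm_{\cF/F_i}(u)\in\cO_{F_i}^\times$; then $v/v'$ lies in $\{a/a':a\in\cO_{F_i}^\times\}=\langle -1,\,u_i^2\rangle$, because the class of $u_i$ generates the order-two group $H^1(\Gal(F_i/F),\cO_{F_i}^\times)$ (this is the fact recorded in the footnotes of \cref{sec:Shimura}; it does not follow from a rank count), and since $v$ is a norm from the totally complex field $\cF$ it is positive at both real places of $F_i$, which excludes the sign $-1$; hence $v/v'=u_i^{2m}$ and $\ell_i(u)=m\log\rho_{i,1}(u_i)$. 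Note that the paper needs exactly the same input for its surjection induced by \eqref{eq:Hecke} to descend modulo $\cO_\cF^\times$, so incorporating this fix aligns your proof with the paper's.
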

\begin{proof}
    Since $h(\cF) = 1$, we have $\mathbb{A}_{\cF}^\times = \cF^\times (\prod_{\mathfrak{p}\nmid \infty}\mathcal{O}_{\cF_\mathfrak{p}}^\times \, \prod_{w | \infty} \mathbb{C}^\times)$. The injection $\prod_{w | \infty} \mathbb{C}^\times \xhookrightarrow{}\mathbb{A}_{\cF}^\times$ induces an isomorphism $(\prod_{w | \infty} \mathbb{C}^\times)/\mathcal{O}_{\cF}^\times \xrightarrow{\sim}\mathbb{A}_{\cF}^\times /(\cF^\times (\prod_{\mathfrak{p}\nmid \infty}\mathcal{O}_{\cF_\mathfrak{p}}^\times))$. For each $i=1, \dotsc,n$, let $N_i: \prod_{w | \rho_i} \mathbb{C}^\times \to \bR_{>0}$ be the composition \begin{equation} \label{eq:Hecke}
        \prod_{w | \rho_i}  \mathbb{C}^\times  = \prod_{v | \rho_i} \prod_{w|v} \bC^\times\xrightarrow{(\Nm_{\cF/F_i}, \Nm_{\cF/F_i})} \prod_{v|\rho_i} \bR_{>0} \xrightarrow{(x,y)\mapsto \sqrt{\frac{x}{y}}} \bR_{>0} \xrightarrow{\log} \bR,
    \end{equation} where $v$ denotes infinite places of $F_i$, $w$ denotes infinite places of $\cF$, and $\Nm_{\cF/F_i}$ is the local norm. For $\alpha \in \cF^\times \xhookrightarrow{} \prod_{w|\rho_i}\bC^\times$, we have $N_i(\alpha) = \frac{1}{2} \log \frac{\rho_{i,1}(\Nm_{\cF/F_i}(\alpha))}{\rho_{i,1}(\Nm_{\cF/F_i}(\alpha)')}$. Note that \eqref{eq:Hecke} induces a surjection $\{(z_w)_{w|\rho_i}: \prod_{w|\rho_i}\|z_w\| = 1\}/\cO_{\cF}^\times \twoheadrightarrow{}\bR/(\log \rho_{i,1}(u_i))\bZ$. Then $\prod_{i=1}^n N_i: \prod_{w|\infty}\bC^\times = \prod_{i=1}^n\prod_{w|\rho_i}\bC^\times \to \prod_{i=1}^n \bR$ induces a Hecke character $\chi: \bA_\cF^\times \to \prod_{i=1}^n\bR/(\log \rho_{i,1}( u_i)) \bZ$ such that $\chi(\bA_\cF^1) = \prod_{i=1}^n\bR/(\log\rho_{i,1}( u_i)) \bZ$, where $\bA_\cF^1 := \{(a_v)\in\bA_\cF:\prod_v\|a_v\|=1\}$, and the result follows from Hecke's equidistribution \cite{MR1282723}*{\MakeUppercase{\romannumeral 15}, \S5}.
\end{proof}

Recall that $iy \mapsto \log y$ gives an isomorphism $\varphi_i(Z_{\alpha_i}) \to \mathbb{R}/(2\log \rho_{i,1}(u_i))\bZ$, and two real CM points $\varphi_i(z_{i,1}), \varphi_i(z_{i,2})\in\varphi_i(Z_{\alpha_i})$ defined by the two orders in $L = F(\sqrt{-\lambda})$ are related by $\varphi(z_{i,2}) = \varphi(\rho_{i,1}(u_i) z_{i,1})$.

\begin{proposition} \label{prop:equidistribution}
    Let $S = \{\fq_1, \dotsc, \fq_r\}$ be a finite set of odd prime ideals of $F$. 
    For each $i=1, \dotsc, n$, let $t_{i,0}$ and $ t_{i, \infty} $ be distinct points on the circle $ \varphi(Z_{\alpha_i}) \simeq \mathbb{R}/(2\log \rho_{i,1}(u_i))\bZ $. There exists a prime ideal $\fq$ of $F$ generated by some totally positive $\lambda\in\cO_F$ satisfying the following: \begin{enumerate}
        \item $-\lambda$ is a nonzero square modulo $8\fq_1\dotsc\fq_r$; 
        \item $\varphi_i(z_{i,1}), \varphi_{i}(z_{i,2})$ lie on different open segments defined by $t_{i,0}, t_{i, \infty}$ for each  $i=1, \dotsc, n$;
        \item $\fq$ lies above a rational prime that is totally split in $\cF$. 
    \end{enumerate}
\end{proposition}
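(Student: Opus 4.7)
The plan is to realize $\lambda$ as the norm from $\cO_\cF$ of a principal prime ideal $(\alpha)$ lying above a rational prime totally split in $\cF$, and to arrange the archimedean and congruence conditions on $\alpha$ via an enlargement of the Hecke character of \Cref{lem:equidistribution}. Since $h(\cF)=1$, every prime of $\cO_\cF$ is principal, and when $(\alpha)$ lies above a rational prime $p$ totally split in $\cF$, the ideal $\Nm_{\cF/F}(\alpha)\cO_F$ is a degree-one prime of $F$, giving condition~(3). By Chebotarev applied to the Galois closure of $\cF/\bQ$, the set of such $(\alpha)$ forms a positive-density set of primes cut out by a union of Frobenius conjugacy classes.

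For condition~(1), use the Chinese Remainder Theorem together with the assumption that $F$ has units of independent signs to choose a totally positive $\lambda_0\in \cO_F$ such that $-\lambda_0$ is a nonzero square modulo $8\fq_1\cdots\fq_r$. The requirement that $-\lambda$ be a nonzero square modulo $8\fq_1\cdots\fq_r$ then becomes a finite-order character condition on $\Nm_{\cF/F}(\alpha)$, hence on $\alpha$, modulo $8\fq_1\cdots\fq_r\cO_\cF$. For condition~(2), the identity $z_{i,2}=\rho_{i,1}(u_i)z_{i,1}$ shows that the two real CM points are antipodal on $\varphi_i(Z_{\alpha_i})\simeq \bR/(2L_i)\bZ$ with $L_i=\log\rho_{i,1}(u_i)$. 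A short elementary computation parametrizing the circle shows that the set of lifts $P$ for which $P$ and $P+L_i$ lie in different open arcs determined by $t_{i,0},t_{i,\infty}$ projects onto a nonempty open subset $U_i\subset\bR/L_i\bZ$ (of measure equal to the length of the shorter arc). Condition~(2) is therefore $N_i(\alpha)\in U_i$ for each $i$, with $N_i$ the archimedean component from the proof of \Cref{lem:equidistribution}.

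To combine everything, form the Hecke character $\tilde\chi$ of $\bA_\cF^\times$ as the product of $\chi$ from \Cref{lem:equidistribution} with a finite-order character of conductor dividing $8\fq_1\cdots\fq_r\cO_\cF$ that encodes the desired congruence class. Hecke's equidistribution of primes (e.g.\ \cite{MR1282723}*{\MakeUppercase{\romannumeral 15}, \S5}) then gives equidistribution of the images $\tilde\chi((\alpha))$ in the compact target $\prod_i \bR/L_i\bZ \times A$ as $(\alpha)$ ranges over prime ideals of $\cO_\cF$. Intersecting with the Chebotarev condition for~(3) and with the preimage of $\prod_i U_i$ times the prescribed coset in $A$ still leaves a positive-density set of primes, which is therefore nonempty; this produces the desired $\alpha$, and the corresponding $\lambda = \Nm_{\cF/F}(\alpha)$ (adjusted by a unit of appropriate signs using the narrow class number one hypothesis on $F$) satisfies conditions~(1)--(3).

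The main technical obstacle is verifying that the enlarged character $\tilde\chi$ is surjective onto the relevant quotient of $\prod_i \bR/L_i\bZ \times A$ when restricted to norm-one idèles modulo $\cF^\times$, so that the archimedean, congruence and Chebotarev conditions are genuinely independent. This reduces to computing the image of $\cO_\cF^\times$ under the archimedean and finite components, for which the class number $1$ hypotheses on $F$, the $F_i$, and $\cF$, together with Dirichlet's unit theorem, suffice.
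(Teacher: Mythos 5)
Your overall strategy coincides with the paper's: an id\`ele class character of $\cF$ whose archimedean component is the one from \Cref{lem:equidistribution} and whose finite component records a congruence condition modulo $8\fq_1\cdots\fq_r$, followed by Hecke equidistribution of primes of $\cF$. The gap is in the step you yourself flag as the "main technical obstacle" and then wave off. You reduce the argument to surjectivity of the enlarged character $\tilde\chi$ on norm-one id\`ele classes and assert this follows from the class number hypotheses plus Dirichlet's unit theorem. That claim is unproven and is also stronger than what is true or needed: the finite component only sees $\lambda=\Nm_{\cF/F}(\alpha)$, so the attainable residue classes of $\lambda$ modulo $8\fq_1\cdots\fq_r$ are confined to the image of the norm map (the norm subgroup of the ray class group of modulus $8\fq_1\cdots\fq_r$ times the real places), and your plan of prescribing the coset of an arbitrary auxiliary $\lambda_0$ with $-\lambda_0$ a square may target a coset that no norm hits; nothing in your sketch rules this out. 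The paper never proves surjectivity. It only shows that the specific target set meets the image: it exhibits an auxiliary totally positive prime $\lambda$ with $-\lambda\equiv 1\pmod{8\fq_1\cdots\fq_r}$, notes via quadratic reciprocity (as in \Cref{lem:CMgeo}) that such a prime splits completely in $\cF$, so its ray class lies in the image of $\chi_0=\pi_{\fm}\circ\Nm_{\cF/F}$ on $\bA_\cF^1$; and the independence of the archimedean windows is the one-line observation that $\chi_0$ kills the archimedean components, so the image of $(\chi_0,\chi_\infty)$ on $\bA_\cF^1$ is the full product $\chi_0(\bA_\cF^1)\times\chi_\infty(\bA_\cF^1)$, with $\chi_\infty(\bA_\cF^1)$ everything by \Cref{lem:equidistribution}. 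Replacing your surjectivity claim by this nonemptiness argument is essential, not optional.

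There is a second gap in your treatment of condition (3). You impose it as a Chebotarev condition in the Galois closure and then assert that intersecting it with the set produced by Hecke equidistribution "still leaves a positive-density set". Two positive-density sets of primes need not intersect, so without an independence or joint-equidistribution argument this step is a non sequitur. The paper instead disposes of (3) by claiming that primes of $\cF$ lying over totally split rational primes have density one in $\cF$, so the condition can be added after the fact; making that airtight when $\cF/\bQ$ is not Galois requires either passing to the Galois closure (e.g.\ running the character through $\Nm$ from the closure) or otherwise incorporating the splitting condition into the equidistribution statement, but in any case the burden is to show the splitting condition is compatible with the character condition, not merely that each has positive density.
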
    
\begin{proof}
    Consider the modulus $\fm = \fm_0\fm_{\infty} = 8\fq_1\dotsc\fq_r\prod_{\fp\text{ real}}\fp$ and the canonical homomorphism $\pi_{\fm}:\bA_{F}^\times \to C_{\fm}$ realizing the ray class group $C_{\fm}$ as a quotient of $\bA_{F}^\times$. 
    For $(a_{\fp})_{\fp}\in \bA_F^\times$ with $v_{\fp}(a_{\fp}-1) \geq m(\fp)$ for all $\fp \mid \fm_0$ and $a_{\fp}>0$ for all real $\fp$, $\pi_{\fm}((a_{\fp})_{\fp})$ is the class of $\prod_{\fp \text{ finite}} \fp^{v_{\fp}(a_{\fp})}$.
    Note that $\pi_{\fm}(F^\times) = 1$ and $\pi_{\fm}(\bA_F^1) = C_{\fm}$. 
    Let $V_{\fm}$ be the image of $\{(x):x\in F^\times, v_{\fp}(x+1) \geq m(\fp)$ \text{ for all } $\fp \mid \fm_0$ and $x_{\fp} > 0$ \text{ for all real } $\fp\}$ in $C_{\fm}$, which is nonempty by weak approximation. By equidistribution there exists some prime ideal of $F$ that can be generated by some totally negative $-\lambda$ satisfying $-\lambda\equiv 1$ modulo $\fm_0$. Let  $\chi_0 =  \pi_{\fm}\circ \Nm_{\cF/F}$, then $\chi_0(\bA_\cF^1)\cap V_{\fm}\neq \emptyset$ since $(\lambda)$ splits in $\cF$. 
    
    Define a Hecke character $\chi = (\chi_0, \chi_{\infty}):\bA_{\cF}^\times \to C_{\fm} \times \left(\prod_{i=1}^n\bR/(\log \rho_{i,1}(u_i)) \bZ\right)$, where $\chi_{\infty}$ is defined in the proof of \Cref{lem:equidistribution} by \eqref{eq:Hecke}. Note that $\chi_0(\prod_{w|\infty} \bC^\times) = 1$, so we have $\chi_0(\bA_\cF^\times) = \chi_0(\ker(\chi_\infty))$, and  $\chi(\bA_\cF^1) = \chi_0(\bA_\cF^1)\times \chi_{\infty}(\bA_\cF^1)$. 
    For each $i=1, \dotsc, r$, let $V_i$ be the image in $\bR/(\log \rho_{i,1}(u_i)) \bZ$ of the open segment on the circle $ \varphi(Z_{\alpha_i}) \simeq \mathbb{R}/(2\log \rho_{i,1}(u_i))\bZ $ defined by $t_{i,0}, t_{i, \infty}$ of smaller measure. Note that a point in $V_i\subset \bR/(\log \rho_{i,1}(u_i)) \bZ$ has two preimages lie on different open segments of $\bR/(2\log \rho_{i,1}(u_i)) \bZ$. 
    Since $V:= V_{\fm}\times (\prod_{i=1}^n V_i)$ is a nonempty open subspace of the compact space $C_{\fm}\times \left(\prod_{i=1}^n\bR/(\log \rho_{i,1}(u_i)) \bZ\right) $, and $\chi(\bA_{\cF}^1) \cap V \neq \emptyset$, it follows from Hecke's equidistribution \cite{MR1282723}*{\MakeUppercase{\romannumeral 15}, \S5} that there exists a prime ideal $(\alpha)$ of $\cF$ such that $\chi((\alpha)) \in V$. Note that $\Nm_{\cF/F}(\alpha)$ is totally positive since \eqref{eq:Hecke} always gives positive terms in the middle. Let $\fq = \Nm_{\cF/F}((\alpha))$, where we may assume $(\alpha)$ lies above a totally split rational prime, as such primes have density one in $\cF$. 
\end{proof}

\begin{proposition} \label{prop:equidistribution_odd}
    Let $S = \{\fq_1, \dotsc, \fq_r\}$ be a finite set of odd prime ideals of $F$. 
    For each $i=1, \dotsc, n$, let $d_i$ be an odd positive integer, and $t_{i}, s_{i,1}, \dotsc, s_{i,d_i} \in \varphi(Z_{\alpha_i}) \simeq \mathbb{R}/(2\log \rho_{i,1}(u_i))\bZ $ such that their images in $ \mathbb{R}/(\log \rho_{i,1}(u_i))\bZ $ are distinct. There exists a prime ideal $\fq$ of $F$ generated by some totally positive $\lambda\in\cO_F$ satisfying the following: \begin{enumerate}
        \item $-\lambda$ is a nonzero square modulo $8\fq_1\dotsc\fq_r$; 
        \item for each  $i=1, \dotsc, n$, the two real CM points $\varphi_i(z_{i,1}), \varphi_{i}(z_{i,2})$ lie on different open segments defined by $t_{i}, s_{i, j}$ for an odd number of $j\in \{1, \dotsc, d_i\}$;
         \item $\fq$ lies above a rational prime that is totally split in $\cF$.
    \end{enumerate}
\end{proposition}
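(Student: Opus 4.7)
The plan is to reduce condition (2) to a nonempty open condition on the target of the Hecke character $\chi$ from the proof of \Cref{prop:equidistribution}, then conclude by the same equidistribution argument. The key observation is that the two real CM points $\varphi_i(z_{i,1})$ and $\varphi_i(z_{i,2})$ are antipodal on the circle $\varphi_i(Z_{\alpha_i}) \simeq \bR/(2\log\rho_{i,1}(u_i))\bZ$, so they share a common image $[x_i] \in \bR/(\log\rho_{i,1}(u_i))\bZ$, and this image coincides with the $i$-th component of $\chi_\infty((\alpha))$ for the prime ideal $(\alpha) \subset \cO_\cF$ one seeks.

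For each $j \in \{1, \dotsc, d_i\}$, let $g_{i,j}$ be the locally constant $\{0,1\}$-valued function on $\bR/(\log\rho_{i,1}(u_i))\bZ \setminus \{[t_i], [s_{i,j}]\}$ taking value $1$ precisely when $\{t_i, s_{i,j}\}$ separates the antipodal pair on $\bR/(2\log\rho_{i,1}(u_i))\bZ$ corresponding to $[c]$. Its discontinuities are exactly $[t_i]$ and $[s_{i,j}]$, since crossing either point moves the corresponding lift from one arc of $\{c, c+\log\rho_{i,1}(u_i)\}$ to the other, flipping the separation condition. Condition (2) for the CM pair then becomes $[x_i] \in W_i$, where $W_i := \{[c] : \sum_{j=1}^{d_i} g_{i,j}([c]) \equiv 1 \pmod{2}\}$.

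The main step, and the only place the hypothesis ``$d_i$ odd'' enters, is to show that $W_i$ is nonempty of positive measure. The $d_i + 1$ distinct points $[t_i], [s_{i,1}], \dotsc, [s_{i,d_i}]$ divide the circle into $d_i + 1$ open arcs on each of which $\sum_j g_{i,j}$ is constant. Crossing $[s_{i,j}]$ flips exactly one summand, changing parity by $1$; crossing $[t_i]$ flips all $d_i$ summands simultaneously, which also changes parity by $1$ since $d_i$ is odd. Hence parity alternates on adjacent arcs, and because $d_i + 1$ is even, parity $1$ is realized on exactly $(d_i+1)/2$ of them.

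The remaining argument is a direct transposition of the proof of \Cref{prop:equidistribution}: form the Hecke character $\chi = (\chi_0, \chi_\infty)$ on $\bA_\cF^\times$ valued in $C_\fm \times \prod_i \bR/(\log\rho_{i,1}(u_i))\bZ$, set $V := V_\fm \times \prod_i W_i$, verify $V$ is a nonempty open subset of this compact group meeting $\chi(\bA_\cF^1)$, and apply Hecke's equidistribution \cite{MR1282723}*{\MakeUppercase{\romannumeral 15}, \S5} together with density of totally split primes to obtain a prime ideal $(\alpha)$ of $\cF$ with $\chi((\alpha)) \in V$ lying above a rational prime totally split in $\cF$. Then $\fq := \Nm_{\cF/F}((\alpha))$ with totally positive generator $\lambda := \Nm_{\cF/F}(\alpha)$ satisfies (1)--(3).
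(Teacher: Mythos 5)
Your proposal is correct and follows essentially the same route as the paper: both reduce condition (2) to membership of the common image of the antipodal CM pair in a nonempty open subset of $\bR/(\log\rho_{i,1}(u_i))\bZ$ and then rerun the Hecke-equidistribution argument of \Cref{prop:equidistribution} with $V_i$ replaced by that set. Your explicit parity-alternation argument (parity flips at each of the $d_i+1$ division points, consistent around the circle because $d_i+1$ is even) supplies the justification for the nonemptiness that the paper only asserts through the existence of an odd subset $J_i$ with $(\bigcap_{j\in J_i}V_{i,j})\setminus(\bigcup_{j\notin J_i}V_{i,j})\neq\emptyset$.
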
    
\begin{proof}
    The proof follows that of \Cref{prop:equidistribution}, with the only modification being the definition of $V_i$. 
    For each $i=1, \dotsc, r$ and $j=1, \dotsc, d_i$, the two distinct points $t_i, s_{i,j}$ define two open segments in $\bR/(2\log \rho_{i,1}(u_i)) \bZ$, and let $V_{i,j}\subset \bR/(\log \rho_{i,1}(u_i)) \bZ$ be the image of the one of smaller measure. 
    Note that a point  in $\bR/(\log \rho_{i,1}(u_i)) \bZ$ has two preimages on different open segments of $\bR/(2\log \rho_{i,1}(u_i)) \bZ$ if and only if it is in $V_{i,j}$. 
    There exists $J_i \subset \{1, \dotsc, d_i\}$ such that $\#J_i$ is odd and $V_i :=(\bigcap_{j\in J_i} V_{i,j}) \backslash(\bigcup_{j\notin J_i} V_{i,j})\neq \emptyset$ is a non empty open subspace of the compact space $\bR/(\log \rho_{i,1}(u_i)) \bZ$.
\end{proof}

\section{Proof of main theorem}

In this section, we prove \Cref{thm:main}. We begin by defining the polynomials associated to the CM cycles constructed in \S3.  In \cref{sec:main2} we use the input from \S4 to study the polynomials modulo $\lambda$ after clearing denominators, and in \cref{sec:main3} we use the input from \S5 to analyze the sign of the polynomials evaluated at the coordinate of the given abelian variety and at the elliptic points with even order automorphisms. In \cref{sec:main4} we combine all of the ingredients and apply quadratic reciprocity to complete the proof.

Suppose the Shimura curve $\widetilde{\Gamma} \backslash \cH \simeq \Gamma_0\backslash X^+$ has genus $0$. Its canonical model is defined over the reflex field $\rho(F) \subset \bC$. Assume the canonical model is isomorphic to $\mathbb{P}^1_F$. Since the quaternion algebra $B$ is unramified at all finite primes, by \cite{MR772569}*{pp. 508-509}, the coarse moduli space of the integral canonical model is smooth, and thus isomorphic to $\bP^1_{\cO_F}$.   
Choose a coordinate $j$ compatible with the model over $\cO_F$.

Fix $\overline{F}\xhookrightarrow{} \bC$, and let $\tau_1, \dotsc, \tau_{[F:\bQ]} \in \Gal(\overline{F}/\bQ)$ correspond to the embeddings $\rho_1, \dotsc, \rho_{[F:\bQ]}: F\xhookrightarrow{}\bR$ and $c$ denote complex conjugation. 

Assume $2$ is inert in $F$ and $F$ satisfies all the additional assumptions in \S5. Since $h(F(\sqrt{-1}))$ is odd by \cite{MR963648}*{13.7}, there is an elliptic point of order $2$ with coordinate $j_{\infty}$ such that $[F(j_{\infty}):F]$ is odd. 
Let $j_0 \in F$ be the coordinate of the given point on the Shimura curve such that $F' := F(j_{\infty}, j_0)$ is an odd-degree extension of $F$. Let $\sigma_1, \dotsc, \sigma_{[F':F]} \in \Gal(\overline{F}/F)$ such that $\Hom_F(F', \overline{F}) = \{\sigma_1|_{F'}, \dotsc, \sigma_{[F':F]}|_{F'}\}$. To get the main idea of the proof, the reader may focus on the case when $F' = F$. 

Let $S'$ be a finite set of rational primes such that $S' \supset \{p: p \text{ is ramified in } F'\} \cup \{p: v_{\fp}(j_0) \neq 0 \text{ for some prime } \fp \text{ of } F' \text{ above } p\} \cup \{p: v_{\fp}(j_{\infty}) \neq 0 \text{ for some prime } \fp \text { of } F' \text{ above } p\}$. Let $S$ be the set of prime ideals of $F$ above the primes in $S'$. Then there exist $d_0, d_{\infty}\in \cO_F$ such that $d_0j_0 \in \cO_{F'}, d_{\infty}j_{\infty}\in \cO_{F'}$ and all primes dividing $d_0d_{\infty}$ lie in $S$.

\subsection{} \label{sec:main1}

For a large enough totally positive prime $\lambda\in F$ with $-\lambda$ square modulo $4$, let $\alpha_{1}, \dotsc, \alpha_{h_1}$ (resp. $\beta_{1}, \dotsc, \beta_{h_2}$) be the $j$-coordinates of the CM points in the CM cycle corresponding to optimal embeddings $\mathcal{O}_{F(\sqrt{-\lambda})} \xhookrightarrow{} \mathcal{O}$  (resp. $\mathcal{O}_F[\sqrt{-\lambda}] \xhookrightarrow{} \mathcal{O}$), where $h_1 = h(F(\sqrt{-\lambda}))$ (resp. $h_2 = h(\cO_F[\sqrt{-\lambda}])$ ) is odd, and define the polynomials \begin{align*}
    P_{\lambda}(x) := \prod_{l=1}^{h_1}(x-\alpha_l), \\
    P_{4\lambda}(x) := \prod_{l=1}^{h_2}(x-\beta_l).
\end{align*} By construction, $\Gal(\overline{F}/F)$ permutes $\alpha_{1}, \dotsc, \alpha_{h_1}$ (resp. $\beta_{1}, \dotsc, \beta_{h_2}$), 
so that $P_{\lambda}(x), \, P_{4\lambda}(x)\in F[x]$. Moreover, we have $\alpha_1, \dotsc, \alpha_{h_1} \in H_{F(\sqrt{-\lambda})}$ and  $\beta_1, \dotsc, \beta_{h_2} \in H_{\mathcal{O}_F[\sqrt{-\lambda}]}$, where  $H_{F(\sqrt{-\lambda})}$ is the  Hilbert class field of $F(\sqrt{-\lambda})$ and  $H_{\mathcal{O}_F[\sqrt{-\lambda}]}$ is  the ring class field  corresponding to the order $\mathcal{O}_F[\sqrt{-\lambda}]$. Note that $H_{F(\sqrt{-\lambda})} \subset H_{\mathcal{O}_F[\sqrt{-\lambda}]}$ and the extension $H_{\mathcal{O}_F[\sqrt{-\lambda}]} / H_{F(\sqrt{-\lambda})}$ is ramified at a subset of the primes dividing the conductor $2$. Since $(\sqrt{-\lambda})$ is a principal prime ideal of $\mathcal{O}_F[\sqrt{-\lambda}]$, it splits completely in $H_{\mathcal{O}_F[\sqrt{-\lambda}]}$. Let $H := H_{\mathcal{O}_F[\sqrt{-\lambda}]}$ and  $\mathfrak{l}$ be a prime of $H$ above $\lambda$. 

Let $b_{\lambda} \in \cO_F$ (resp. $b_{4\lambda}\in \cO_F$) be the denominator of $P_{\lambda}(x)$ (resp. $P_{4\lambda}(x)$) such that $b_{\lambda}P_{\lambda}(x)$ (resp. $b_{4\lambda}P_{4\lambda}(x)$) is primitive, then \[b_{\lambda} P_{\lambda}(x) = \left(b_{\lambda}\prod_{\substack{k=1 \\ v_{\mathfrak{l}}(\alpha_k)<0}}^{h_1}(x-\alpha_k)\right) \prod_{\substack{k=1 \\ v_{\mathfrak{l}}(\alpha_k)\geq0}}^{h_1}(x-\alpha_k) \in \cO_{H_{\mathfrak{l}}}[x],\] 
\[b_{4\lambda} P_{4\lambda}(x) = \left(b_{4\lambda}\prod_{\substack{k=1 \\ v_{\mathfrak{l}}(\beta_k)<0}}^{h_2}(x-\beta_k)\right) \prod_{\substack{k=1 \\ v_{\mathfrak{l}}(\beta_k)\geq0}}^{h_2}(x-\beta_k) \in \cO_{H_{\mathfrak{l}}}[x],\]
where \begin{align*}
    b_{\lambda}\prod_{\substack{k=1 \\ v_{\mathfrak{l}}(\alpha_k)<0}}^{h_1}(x-\alpha_k) \equiv \tilde{b}_{\lambda}\not\equiv 0 \pmod{\mathfrak{l}}, \qquad &\tilde{b}_{\lambda} := b_{\lambda} \prod_{\substack{k=1 \\ v_{\mathfrak{l}}(\alpha_k)<0}}^{h_1}(-\alpha_k), \\
    b_{4\lambda}\prod_{\substack{k=1 \\ v_{\mathfrak{l}}(\beta_k)<0}}^{h_2}(x-\beta_k) \equiv \tilde{b}_{4\lambda}\not\equiv 0 \pmod{\mathfrak{l}}, \qquad &\tilde{b}_{4\lambda} := b_{4\lambda} \prod_{\substack{k=1 \\ v_{\mathfrak{l}}(\beta_k)<0}}^{h_2}(-\beta_k).
\end{align*}

\subsection{} \label{sec:main2} By \Cref{lem:pairwoaut} and the first part of \Cref{lem:pairingwithauto}, roots of $P_{\lambda}(x)P_{4\lambda}(x)$ are paired modulo $\mathfrak{l}$. Then \Cref{lem:square} shows that $\Nm_{F'/F}(d_{\infty}^{h_1+h_2} b_{\lambda} P_{\lambda}(j_{\infty}) b_{4\lambda} P_{4\lambda}(j_{\infty})d_{0}^{h_1+h_2} b_{\lambda} P_{\lambda}(j_{0}) b_{4\lambda} P_{4\lambda}(j_0))\in \cO_F$ is a square in  $\cO_F/\lambda \cO_F = \cO_{H_{\mathfrak{l}}}/\mathfrak{l}\cO_{H_{\mathfrak{l}}}$. 
\begin{lemma}\label{lem:square}
    Modulo $\mathfrak{l}$, we have \begin{align}
        \Nm_{F'/F}(d_{0}^{h_1+h_2} b_{\lambda} P_{\lambda}(j_{0}) b_{4\lambda} P_{4\lambda}(j_0)) \equiv (\tilde{b}_\lambda \tilde{b}_{4\lambda})^{[F':F]} \cdot \text{square}, \label{eq:sqj0}\\
        \Nm_{F'/F}(d_{\infty}^{h_1+h_2} b_{\lambda} P_{\lambda}(j_{\infty}) b_{4\lambda} P_{4\lambda}(j_{\infty})) \equiv (\tilde{b}_\lambda \tilde{b}_{4\lambda})^{[F':F]} \cdot \text{square}.\label{eq:sqjinfty}
    \end{align}
\end{lemma}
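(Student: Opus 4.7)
The plan is to reduce the desired identity to a polynomial congruence modulo $\mathfrak l$ and then specialize at $j_0$ and $j_\infty$. The starting point is that because $\lambda$ is totally ramified in $F(\sqrt{-\lambda})/F$ and $(\sqrt{-\lambda})$ splits completely in $H/F(\sqrt{-\lambda})$, the residue fields satisfy $\cO_F/\lambda = \cO_{F(\sqrt{-\lambda})}/(\sqrt{-\lambda}) = \cO_H/\mathfrak l$. Combining this with the factorizations of $b_\lambda P_\lambda(x)$ and $b_{4\lambda} P_{4\lambda}(x)$ displayed in \cref{sec:main1}, one has the congruence
\[
b_\lambda P_\lambda(x)\, b_{4\lambda} P_{4\lambda}(x)\equiv \tilde b_\lambda\tilde b_{4\lambda}\!\!\prod_{v_{\mathfrak l}(\alpha_k)\ge 0}\!\!(x-\alpha_k)\!\!\prod_{v_{\mathfrak l}(\beta_l)\ge 0}\!\!(x-\beta_l) \pmod{\mathfrak l}
\]
in $(\cO_F/\lambda)[x]$.

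The key step will be to show that the product on the right is a perfect square in $(\cO_F/\lambda)[x]$. For each closed point $x_0$ of $\mathscr{S}_{K_0}$ in the special fiber at $\mathfrak l$, exactly one of \Cref{lem:pairwoaut} (if $x_0$ has no even-order automorphism) or \Cref{lem:pairingwithauto} (if it does) applies, and in both cases the combined multiplicity, at $x_0$, of the residues of $\{\alpha_k\}_{v\ge 0}\cup\{\beta_l\}_{v\ge 0}$ is even. It follows that the product equals $\bar Q(x)^2$ in $\bar k[x]$ for some monic $\bar Q$, where $\bar k$ denotes an algebraic closure of $\cO_F/\lambda$. Since $\lambda$ is odd, the monic square root is unique and therefore Galois invariant, so $\bar Q$ descends to $Q(x)\in(\cO_F/\lambda)[x]$. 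Lifting $Q$ to a monic $\tilde Q\in \cO_F[x]$ of degree $(N_1+N_2)/2$, where $N_1,N_2$ count the roots of $P_\lambda,P_{4\lambda}$ with nonnegative $\mathfrak l$-valuation, yields the polynomial congruence
\[
b_\lambda P_\lambda(x)\, b_{4\lambda} P_{4\lambda}(x)\equiv \tilde b_\lambda\tilde b_{4\lambda}\,\tilde Q(x)^2 \pmod{\lambda}
\]
in $\cO_F[x]$.

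To conclude, I will specialize at $j_0$ (resp.\ $j_\infty$) and multiply by $d_0^{h_1+h_2}$ (resp.\ $d_\infty^{h_1+h_2}$) to clear denominators; this is legitimate because the hypothesis $\lambda\notin S$ ensures that $d_0,d_\infty$ are $\lambda$-units and $j_0,j_\infty$ are $\lambda$-integral in $\cO_{F'}$. The resulting identity in $\cO_{F'}/\lambda\cO_{F'}$ reads
\[
d_0^{h_1+h_2} b_\lambda b_{4\lambda} P_\lambda(j_0) P_{4\lambda}(j_0) \equiv \tilde b_\lambda \tilde b_{4\lambda}\, d_0^{h_1+h_2-N_1-N_2}\bigl(d_0^{(N_1+N_2)/2}\tilde Q(j_0)\bigr)^2 \pmod{\lambda}.
\]
Applying $\Nm_{F'/F}$ and using that $\tilde b_\lambda \tilde b_{4\lambda}\in \cO_F/\lambda$ (so its norm is $(\tilde b_\lambda\tilde b_{4\lambda})^{[F':F]}$), together with $h_1+h_2$ even (both $h_1,h_2$ are odd by \Cref{lem:classnumber}) and $N_1+N_2$ even (from the pairing), the remaining factor $d_0^{(h_1+h_2-N_1-N_2)[F':F]}$ is a perfect square, which proves \eqref{eq:sqj0}. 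The proof of \eqref{eq:sqjinfty} is identical, the only change being that the closed points in the special fiber now include $\bar j_\infty$, which lies in the elliptic locus; the pairing there is supplied by \Cref{lem:pairingwithauto} (whose second part moreover records that the two lifts paired at $\bar j_\infty$ agree only modulo $\mathfrak p^2$, information that feeds later sections).

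The main obstacle has already been overcome in \S\ref{sec:nonarch}: establishing the compatibility of pairings across both cycles $\mathcal P_\lambda$ and $\mathcal P_{4\lambda}$ at every closed point of the special fiber, in particular at points with even-order automorphisms. With that input in place, the remainder is careful bookkeeping of integrality, leading coefficients, and the parities of $h_1,h_2,N_1,N_2$.
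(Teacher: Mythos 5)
Your argument is correct and rests on exactly the same inputs as the paper's own proof: the factorizations isolating $\tilde b_\lambda,\tilde b_{4\lambda}$ (with $\tilde b_\lambda\tilde b_{4\lambda}$ a unit mod $\mathfrak{l}$ since $\lambda\notin S$), the pairing statements \Cref{lem:pairwoaut} and \Cref{lem:pairingwithauto} applied at each closed point of the special fiber to get even multiplicities of the residues of the $\alpha_k$'s and $\beta_l$'s, and the bookkeeping with $d_0,d_\infty$, $[F':F]$ odd, and $h_1,h_2$ odd. The only difference is presentational: you encode the evenness as a polynomial congruence $b_\lambda P_\lambda(x)\,b_{4\lambda}P_{4\lambda}(x)\equiv\tilde b_\lambda\tilde b_{4\lambda}\,\tilde Q(x)^2\pmod{\mathfrak{l}}$ and then specialize at $j_0,j_\infty$ and take norms, whereas the paper specializes first and groups the factors of the norm product by residue class; the mechanism is identical, so this counts as the same approach.
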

\begin{proof}
We prove \eqref{eq:sqjinfty}, and \eqref{eq:sqj0} follows by the same argument. 
Let $\sigma_1, \dotsc, \sigma_{[F':F]} \in \Gal(\overline{F}/F)$ such that $\Hom_F(F', \overline{F}) = \{\sigma_1|_{F'}, \dotsc, \sigma_{[F':F]}|_{F'}\}$. Then \[\Nm_{F'/F}(P_{\lambda}(j_{\infty})) =  \prod_{i=1}^{[F':F]}\sigma_i(P_{\lambda}(j_{\infty}))  =  \prod_{i=1}^{[F':F]} P_{\lambda}(\sigma_i j_{\infty}) = \prod_{k=1}^{h_1}\prod_{i=1}^{[F':F]}(\sigma_ij_{\infty} - \alpha_k)\] where $\prod_{i=1}^{[F':F]}(\sigma_ij_{\infty} - \alpha_k) \in H$ for each $k=1, \dotsc, h_1$. Similarly, \[\Nm_{F'/F}(P_{4\lambda}(j_{\infty})) = \prod_{k=1}^{h_2}\prod_{i=1}^{[F':F]}(\sigma_i j_{\infty} - \beta_k)\] where $\prod_{i=1}^{[F':F]}(\sigma_ij_{\infty} - \beta_k) \in H$ for each $k=1, \dotsc, h_2$.
Then we have \[\Nm_{F'/F}(d_{\infty}^{h_1} b_{\lambda}P_{\lambda}(j_{\infty})) = \left(b_{\lambda}^{[F':F]}\prod_{\substack{k=1 \\ v_{\mathfrak{l}}(\alpha_k)<0}}^{h_1} \prod_{i} (d_{\infty}(\sigma_i (j_{\infty})-\alpha_k))\right) \left(\prod_{\substack{k=1 \\ v_{\mathfrak{l}}(\alpha_k)\geq0}}^{h_1}\prod_{i} (d_{\infty}(\sigma_i (j_{\infty})-\alpha_k))\right) \in \cO_{H_{\mathfrak{l}}},\]
\[\Nm_{F'/F}(d_{\infty}^{h_2} b_{4\lambda}P_{4\lambda}(j_{\infty})) = \left(b_{4\lambda}^{[F':F]}\prod_{\substack{k=1 \\ v_{\mathfrak{l}}(\beta_k)<0}}^{h_2} \prod_{i} (d_{\infty}(\sigma_i (j_{\infty})-\beta_k))\right) \left(\prod_{\substack{k=1 \\ v_{\mathfrak{l}}(\beta_k)\geq0}}^{h_2}\prod_{i} (d_{\infty}(\sigma_i (j_{\infty})-\beta_k))\right) \in \cO_{H_{\mathfrak{l}}}.
\]
From the assumption that $\lambda\notin S$, we have $v_{\mathfrak{l}}(d_\infty) = 0$ and $v_{\mathfrak{l}'}(\sigma_i j_{\infty}) = 0 $ for any prime $\mathfrak{l}'$ above $\mathfrak{l}$, then  \[b_{\lambda}^{[F':F]}\prod_{\substack{k=1 \\ v_{\mathfrak{l}}(\alpha_k)<0}}^{h_1} \prod_{i} (d_{\infty}(\sigma_i (j_{\infty})-\alpha_k)) \equiv \tilde{b}_{\lambda}^{[F':F]} \not\equiv0\pmod{\mathfrak{l}}, \]
\[b_{4\lambda}^{[F':F]}\prod_{\substack{k=1 \\ v_{\mathfrak{l}}(\beta_k)<0}}^{h_1} \prod_{i} (d_{\infty}(\sigma_i (j_{\infty})-\beta_k)) \equiv \tilde{b}_{4\lambda}^{[F':F]} \not\equiv0\pmod{\mathfrak{l}}.\]
Let $\mathfrak{l}'$ be a prime above $\mathfrak{l}$ in the Galois closure of $F'H$. Since $\lambda\notin S$, it is unramified in $F'$, and $\mathfrak{l}'$ over $\lambda$ has ramification index $2$. 
For each point on the Shimura curve with coordinate $j \in F'H$, if it reduces to a point without even order automorphisms, then the sets $\{\alpha_k: v_{\mathfrak{l}'}(\alpha_k-j) >0 \}$ and $\{\beta_k: v_{\mathfrak{l}'}(\beta_k-j) > 0 \}$ both have even cardinality from the pairing as described in \Cref{lem:pairwoaut};  if it reduces to a point with even order automorphisms, then the set $\{\alpha_k: v_{\mathfrak{l}'}(\alpha_k-j) >0 \}\cup \{\beta_k: v_{\mathfrak{l}'}(\beta_k-j) > 0 \}$ has even cardinality from the pairing as described in \Cref{lem:pairingwithauto}.\footnote{Here we need the assumption $2$ inert in $F$.}
Therefore, for any $\bar{j} \in \cO_{H_{\mathfrak{l}}} / \mathfrak{l}\cO_{H_{\mathfrak{l}}}$, \begin{equation}\label{eq:square}\left(\prod^{h_1}_{\substack{k=1\\ \overline{\alpha_k} = \bar{j}}} \prod_i(d_{\infty} (\sigma_i(j_{\infty}) - \alpha_k))\right)\left(\prod^{h_2}_{\substack{k=1\\ \overline{\beta_k} = \bar{j}}} \prod_i(d_{\infty} (\sigma_i(j_{\infty}) - \beta_k))\right)\end{equation} is a square in $\cO_{H_{\mathfrak{l}}}/\mathfrak{l}\cO_{H_{\mathfrak{l}}}$, where $\overline{\alpha_k}$ (resp. $\overline{\beta_k}$) denotes the image of $\alpha_k$ (resp. $\beta_k$) in $\cO_{H_{\mathfrak{l}}} / \mathfrak{l}\cO_{H_{\mathfrak{l}}}$. 
\end{proof}

We consider the product of $b_{\lambda}P_{\lambda}(j_0)b_{4\lambda}P_{4\lambda}(j_0)$ and $b_{\lambda}P_{\lambda}(j_\infty)b_{4\lambda}P_{4\lambda}(j_\infty)$ to cancel the effect of the leading coefficient $\tilde{b}_\lambda \tilde{b}_{4\lambda}$ of $b_{\lambda}P_{\lambda}(x) b_{4\lambda} P_{4\lambda}(x)$ modulo $\lambda$. In order to analyze $\Nm_{F'/F}(d_0^{h_1+h_2}b_{\lambda}P_{\lambda}(j_0)b_{4\lambda}P_{4\lambda}(j_0))$ modulo $\lambda$, we need to divide appropriate power of $\lambda$ from $\Nm_{F'/F}(d_{\infty}^{h_1+h_2} b_{\lambda} P_{\lambda}(j_{\infty}) b_{4\lambda} P_{4\lambda}(j_{\infty}))$ to get a nonzero square, so we use \Cref{lem:pairingwithauto} to prove \Cref{lem:nonzerosq}.

Each $\sigma_i(j_{\infty})$ is a point with an even order automorphism, and let \[A_i := \{\alpha_k: v_{\mathfrak{l}'}(\sigma_i(j_{\infty}) - \alpha_k) > 0\} \cup \{\beta_k: v_{\mathfrak{l}'}(\sigma_i(j_{\infty}) - \beta_k) > 0\}.\] For $\lambda$ large enough, we may assume $A_{i} \cap A_{i'} = \emptyset$ whenever $\sigma_i(j_{\infty}) \neq \sigma_{i'}(j_{\infty})$, and $\#\Aut(\sigma_i(j_{\infty})_{\overline{\bF}_{\mathfrak{l}'}}) =\#\Aut(\sigma_i(j_{\infty})) = 2$.

\begin{lemma} \label{lem:nonzerosq}
    Each $\#A_i$ is even, and we have \begin{equation}\lambda^{-\sum_{i=1}^{[F':F]}\#A_i}\Nm_{F'/F}(d_{\infty}^{h_1+h_2} b_{\lambda} P_{\lambda}(j_{\infty}) b_{4\lambda} P_{4\lambda}(j_{\infty})) \equiv (\tilde{b}_\lambda \tilde{b}_{4\lambda})^{[F':F]} \cdot \text{nonzero square}\end{equation} modulo $\mathfrak{l}$. 
\end{lemma}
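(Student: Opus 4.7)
The plan combines the pairing from \Cref{lem:pairingwithauto} with a careful $\mathfrak{l}'$-adic valuation analysis at the elliptic reductions. First, I verify that $\#A_i$ is even. The reduction of $\sigma_i(j_\infty)$ modulo $\mathfrak{l}'$ is an elliptic point of order $2$ on $\mathscr{S}_{K_0}(\overline{\bF}_{\mathfrak{l}'})$. Under the assumption that $2$ is inert in $F$, the only $\cO_F$-orders in $F(\sqrt{-\lambda})$ containing $\cO_F[\sqrt{-\lambda}]$ are $\cO_F[\sqrt{-\lambda}]$ and $\cO_{F(\sqrt{-\lambda})}$, whose associated CM cycles give rise to the roots $\{\beta_k\}$ and $\{\alpha_k\}$ respectively. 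By \Cref{lem:pairingwithauto}(1), the CM lifts of the elliptic point with special endomorphism $\sqrt{-\lambda}$ coming from optimal embeddings of all such orders pair up, and $A_i$ is precisely the set of these lifts; hence $\#A_i$ is even.

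Next, I compute $v_{\mathfrak{l}'}(\sigma_i(j_\infty)-\gamma)$ for $\gamma \in A_i$. By \Cref{lem:intersection_auto}, the complete local ring of $\mathscr{S}_{K_0}$ at the elliptic point arises from that of $\mathscr{S}_{K_pK^p}$ by taking invariants under $\overline{G}_{y_{0,i}} = \{\pm 1\}$ acting as $T \mapsto -T$, giving $u := j - \sigma_i(j_\infty) = T^2$ on the coarse moduli space. The fine moduli CM lift of $\gamma$ is defined over an extension $E$ of $\bQ_p$ with $e(E/\bQ_p)=2$ by \Cref{lem:pairingwithauto}(2); since $F'H_{\mathfrak{l}'}/\bQ_p$ already has ramification $2$ coming from $L = F(\sqrt{-\lambda}) \subset H$ at $\lambda$, the compositum $E \cdot F'H_{\mathfrak{l}'}$ is unramified over $F'H_{\mathfrak{l}'}$, so $v_E$ restricts to $v_{\mathfrak{l}'}$. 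Taking $v_E(T_\gamma)=1$ (the generic value for a section of the fine moduli space reducing to the elliptic point), I obtain $v_{\mathfrak{l}'}(\sigma_i(j_\infty) - \gamma) = v_E(u_\gamma) = 2$. Summing, $\prod_i\prod_{\gamma\in A_i}(\sigma_i(j_\infty)-\gamma)$ has $\mathfrak{l}'$-valuation $2\sum_i\#A_i = v_{\mathfrak{l}'}(\lambda^{\sum_i\#A_i})$.

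For paired $\gamma_1, \gamma_2 \in A_i$, \Cref{lem:pairingwithauto}(2) gives $T_{\gamma_1} \equiv \pm T_{\gamma_2} \pmod{\mathfrak{p}^2}$; squaring and using that $p$ is odd yields $u_{\gamma_1} \equiv u_{\gamma_2} \pmod{\mathfrak{p}^3}$, hence $v_{\mathfrak{l}'}(u_{\gamma_1} - u_{\gamma_2}) \geq 3$ by the unramified extension. Combined with $v_{\mathfrak{l}'}(\lambda) = 2$, this gives $u_{\gamma_1}/\lambda \equiv u_{\gamma_2}/\lambda \pmod{\mathfrak{l}'}$, so $(\sigma_i(j_\infty)-\gamma_1)(\sigma_i(j_\infty)-\gamma_2)/\lambda^2 \equiv (u_{\gamma_1}/\lambda)^2 \pmod{\mathfrak{l}'}$, a nonzero unit square. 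Taking the product over all pairs in all $A_i$, and combining with the contributions from factors $(\sigma_i(j_\infty)-\gamma)$ for $\gamma$ not reducing to any $\sigma_i(j_\infty)$ (all units modulo $\mathfrak{l}'$ whose product carries the square structure via \Cref{lem:square}), the quantity $\lambda^{-\sum_i\#A_i}\Nm_{F'/F}(d_\infty^{h_1+h_2}b_\lambda P_\lambda(j_\infty) b_{4\lambda}P_{4\lambda}(j_\infty))$ is nonzero modulo $\mathfrak{l}'$, hence nonzero in $\cO_F/\lambda = \cO_{H_\mathfrak{l}}/\mathfrak{l}$; the square structure modulo $\mathfrak{l}$ is inherited by adapting the grouping argument of \Cref{lem:square}. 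The main obstacle is justifying the ramification identity $e(E \cdot F'H_{\mathfrak{l}'}/F'H_{\mathfrak{l}'})=1$ using the description in section 3.3, and translating \Cref{lem:pairingwithauto}(2)'s ``isomorphism modulo $\mathfrak{p}^2$'' on the fine moduli space into the coarse moduli congruence $v_{\mathfrak{l}'}(u_{\gamma_1}-u_{\gamma_2}) \geq 3$, which is exactly enough to extract a nonzero square after dividing the pair contribution by $\lambda^2$.
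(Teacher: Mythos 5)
Your strategy coincides with the paper's: pair the roots reducing to each $\sigma_i(j_\infty)$ via \Cref{lem:pairingwithauto}, convert the fine-moduli information into coarse-moduli valuations through the order-$2$ quotient structure of \Cref{lem:intersection_auto}, conclude that each pair contributes $\lambda^2$ times a nonzero square, and handle the remaining factors by the grouping of \Cref{lem:square}. Your bookkeeping ($v_{\mathfrak{l}'}(u_{\gamma_1}-u_{\gamma_2})\geq 3$ against $v_{\mathfrak{l}'}(\lambda)=2$) is essentially the paper's computation rewritten in the coordinate $T$ with $u=T^2$.

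The genuine gap is the assertion $v_E(T_\gamma)=1$, which you justify only as ``the generic value''. This exact value is not automatic, and it is precisely what the second half of \Cref{lem:pairingwithauto}(2) supplies: the $\sqrt{-\lambda}$-lifts in a pair are isomorphic to each other modulo $\fp^2$ but are \emph{not} isomorphic modulo $\fp^2$ to the lift with special endomorphism $\sqrt{-1}$, i.e.\ to the section cut out by $T=0$ through which $\sigma_i(j_\infty)$ factors (you should also verify this identification of the $T=0$ section with the $\sigma_i(j_\infty)$-section, so that $u_\gamma$ is indeed $T_\gamma^2$ up to a unit). If instead $v_E(T_\gamma)\geq 2$ for some $\gamma\in A_i$, then $v_{\mathfrak{l}'}(\sigma_i(j_\infty)-\gamma)\geq 4$, the exponent $\sum_i\#A_i$ no longer removes the full power of $\lambda$, and the displayed quantity would be $\equiv 0$ rather than a nonzero square; so the exactness is essential both for nonvanishing and for the stated power of $\lambda$. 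A smaller point you gloss: ``nonzero square modulo $\mathfrak{l}'$'' is not yet a square in $\cO_{H_{\mathfrak{l}}}/\mathfrak{l}\cO_{H_{\mathfrak{l}}} = \cO_F/\lambda\cO_F$, since the residue field at $\mathfrak{l}'$ may be strictly larger; as in the paper one must group the factors $\prod_{m}\bigl(d_{\infty}(\sigma_m(j_{\infty})-\gamma)\bigr)$ over all embeddings so that the two members of each pair lie in $\cO_{H_{\mathfrak{l}}}$ and become congruent modulo $\mathfrak{l}$ after dividing by $\lambda^{[F':F(j_{\infty})]}$; your per-factor congruence $u_{\gamma_1}/\lambda\equiv u_{\gamma_2}/\lambda \pmod{\mathfrak{l}'}$ does yield this, but the reduction to the correct residue field needs to be carried out rather than deferred to ``adapting'' \Cref{lem:square}.
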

\begin{proof}
For $\bar{j} \in \cO_{H_{\mathfrak{l}}} / \mathfrak{l}\cO_{H_{\mathfrak{l}}}$ such that $\bar{j}\neq \overline{\sigma_i(j_\infty)}$ for any $i$, \eqref{eq:square} is a nonzero square in $\cO_{H_{\mathfrak{l}}} / \mathfrak{l} \cO_{H_{\mathfrak{l}}}$ as in the proof of \Cref{lem:square}. 
From the pairing in the neighborhood of $\sigma_i(j_{\infty})_{\overline{\bF}_{\mathfrak{l}'}}$ as described in \Cref{lem:pairingwithauto}, elements in $A_i$ are paired 
and for each pair $\{\gamma_1, \gamma_2\}$, by \Cref{lem:intersection_auto}, we have \[v_{\mathfrak{l}}(\gamma_1-\gamma_2) = v_{\mathfrak{l}'}(\gamma_1-\gamma_2) \geq \#\Aut(\sigma_i(j_{\infty})_{\overline{\bF}_{\mathfrak{l}'}}) \cdot 2 >\#\Aut(\sigma_i(j_{\infty})_{\overline{\bF}_{\mathfrak{l}'}}) \cdot 1 = v_{\mathfrak{l}'}(\sigma_{i}(j_{\infty})-\gamma_1)=v_{\mathfrak{l}'}(\sigma_{i}(j_{\infty})-\gamma_2),\] where $\#\Aut(\sigma_i(j_{\infty})_{\overline{\bF}_{\mathfrak{l}'}}) =2 = v_{\mathfrak{l}'}(\lambda)$ and $\mathfrak{l}'$ is a prime above $\mathfrak{l}$ in the Galois closure of $F'H$. Then \[v_{\mathfrak{l}} \left(\prod_{m=1}^{[F':F]}(d_{\infty}(\sigma_m (j_{\infty})-\gamma_1))\right) = v_{\mathfrak{l}} \left(\prod_{m=1}^{[F':F]}(d_{\infty}(\sigma_m (j_{\infty})-\gamma_2))\right)= 2[F':F(j_{\infty})],\]
\[\lambda^{-[F':F(j_{\infty})]} \left(\prod_{m=1}^{[F':F]}(d_{\infty}(\sigma_m (j_{\infty})-\gamma_1))-\prod_{m=1}^{[F':F]}(d_{\infty}(\sigma_m(j_{\infty})-\gamma_2))\right) \in \lambda \cO_{{H}_{\mathfrak{l}}}.\] 
Therefore,  \[\lambda^{-\sum_{i=1}^{[F':F]}\#A_i} \left(\prod_{\substack{k=1 \\ v_{\mathfrak{l}}(\alpha_k)\geq0}}^{h_1}\prod_{i=1}^{[F':F]} (d_{\infty}(\sigma_i (j_{\infty})-\alpha_k))\right)\left(  \prod_{\substack{k=1 \\ v_{\mathfrak{l}}(\beta_k)\geq0}}^{h_2}\prod_{i=1}^{[F':F]} (d_{\infty}(\sigma_i (j_{\infty})-\beta_k)) \right)\] is a square in $(\cO_{H_{\mathfrak{l}}}/\mathfrak{l}\cO_{H_{\mathfrak{l}}})^\times \simeq (\cO_{F}/\lambda \cO_F)^\times$, where each $\#A_i$ is even.  
\end{proof}

\subsection{} \label{sec:main3}
As explained in \Cref{sec:equidis}, for each $\tau_i \in \Gal(\overline{F}/\bQ)$, $\tau_i(\alpha_1), \dotsc, \tau_i(\alpha_{h_1})$ (resp. $\tau_i(\beta_{1}), \dotsc, \tau_i(\beta_{h_2})$) correspond to the points in the conjugate CM cycle on the conjugate Shimura variety $\bP^1_{\tau_i(F)}$. The conjugate CM cycle is defined over $\tau_i(F) \subset \bR$, then $\{\tau_i(\alpha_1), \dotsc, \tau_i(\alpha_{h_1})\}$ (resp. $\{\tau_i(\beta_1), \dotsc, \tau_i(\beta_{h_2})\}$) is invariant under complex conjugation,
and let $\alpha_i^* \in \{\tau_i(\alpha_1), \dotsc, \tau_i(\alpha_{h_1})\}$ (resp. $\beta_i^* \in \{\tau_i(\beta_1), \dotsc, \tau_i(\beta_{h_2})\}$) be the unique real point (\Cref{lem:uniquereal}).

\begin{lemma}
    For each $\tau_i \in \Gal(\overline{F}/\bQ)$, the sign of $\tau_i(\Nm_{F'/F}(P_{\lambda}(j_0)P_{4\lambda}(j_0)P_{\lambda}(j_\infty)P_{4\lambda}(j_\infty)))$ is determined by the sign of $$\prod_{\substack{\sigma: F(j_0)\xhookrightarrow{}\bR \\ \sigma|_{F} = \rho_i}}(\sigma(j_0) - \alpha_{i}^*)(\sigma(j_0)- \beta_i^*)(\sigma_{\infty,i}(j_\infty) - \alpha_{i}^*)(\sigma_{\infty,i}(j_\infty)- \beta_i^*),$$
    where $\sigma_{\infty,i}$ is the unique embedding $ F(j_{\infty}) \xhookrightarrow{} \bR$ satisfying $\sigma_{\infty,i}|_F = \rho_i$. 
\end{lemma}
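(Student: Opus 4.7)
The plan is to expand the norm as an explicit product over field embeddings, apply $\tau_i$, and use the observation that complex-conjugate pairs of roots (or of embeddings) contribute positive factors when evaluated at a real number. Since $j_0 \in F$, we have $\Nm_{F'/F}(P_\lambda(j_0) P_{4\lambda}(j_0)) = (P_\lambda(j_0) P_{4\lambda}(j_0))^{[F':F]}$, and applying $\tau_i$ gives $(P_\lambda^{\tau_i}(\rho_i(j_0))\, P_{4\lambda}^{\tau_i}(\rho_i(j_0)))^{[F':F]}$, where $P_\lambda^{\tau_i}(x) = \prod_k (x - \tau_i(\alpha_k))$ and analogously for $P_{4\lambda}^{\tau_i}$. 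Because $[F':F]$ is odd by hypothesis, the sign of this expression is the sign of $P_\lambda^{\tau_i}(\rho_i(j_0))\, P_{4\lambda}^{\tau_i}(\rho_i(j_0))$.

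The roots $\tau_i(\alpha_1),\dotsc,\tau_i(\alpha_{h_1})$ are the $j$-coordinates of the conjugate CM cycle on the conjugate Shimura curve $\bP^1_{\tau_i(F)}$, and, as discussed in \cref{sec:equidis}, this conjugate cycle is defined over the totally real field $\tau_i(F) \subset \bR$. Hence the set of these roots is stable under complex conjugation, and by \Cref{lem:uniquereal} it contains a unique real element $\alpha_i^*$; the remaining roots are grouped in complex-conjugate pairs $\{z,\bar z\}$. Since $\rho_i(j_0)$ is real, each such pair contributes $(\rho_i(j_0)-z)(\rho_i(j_0)-\bar z) = |\rho_i(j_0)-z|^2 > 0$, so the sign of $P_\lambda^{\tau_i}(\rho_i(j_0))$ equals the sign of $\rho_i(j_0)-\alpha_i^*$; the same argument with $\beta_i^*$ handles $P_{4\lambda}^{\tau_i}$.

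For the $j_\infty$ factors, write $\Nm_{F'/F}(P_\lambda(j_\infty)) = \Nm_{F(j_\infty)/F}(P_\lambda(j_\infty))^{[F':F(j_\infty)]}$; the exponent $[F':F(j_\infty)]$ is odd because $[F':F]$ is odd, so it may be dropped for sign purposes. Applying $\tau_i$ and expanding $\Nm_{F(j_\infty)/F}$ as a product over embeddings $\sigma:F(j_\infty)\hookrightarrow\overline{\bQ}$ extending $\rho_i$, exactly one such $\sigma$ is real (namely $\sigma_{\infty,i}$, whose existence and uniqueness follow from $[F(j_\infty):F]$ being odd and $F(j_\infty)$ being the field of moduli of an elliptic point of order $2$), while the rest come in complex-conjugate pairs. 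Each non-real pair contributes $P_\lambda^{\tau_i}(\sigma(j_\infty)) \overline{P_\lambda^{\tau_i}(\sigma(j_\infty))} = |P_\lambda^{\tau_i}(\sigma(j_\infty))|^2 > 0$, and at the real embedding $\sigma_{\infty,i}$ the same pairing-of-roots argument as before reduces the sign of $P_\lambda^{\tau_i}(\sigma_{\infty,i}(j_\infty))$ to that of $\sigma_{\infty,i}(j_\infty) - \alpha_i^*$.

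Combining the four factors gives the stated sign expression, noting that the condition $\sigma:F(j_0)\hookrightarrow\bR$ with $\sigma|_F = \rho_i$ has the unique solution $\sigma = \rho_i$ since $F(j_0)=F$. The argument is essentially bookkeeping of complex-conjugate pairings; no genuine obstacle arises beyond tracking that both $[F':F]$ and $[F':F(j_\infty)]$ remain odd (which was built into the hypotheses of \Cref{thm:main}) and invoking \Cref{lem:uniquereal} to single out the real CM point on each conjugate cycle.
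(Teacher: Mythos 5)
Your core mechanism is the same as the paper's: apply $\tau_i$, pair non-real embeddings with their complex conjugates to get positive contributions, and at each real embedding pair the non-real CM coordinates so that the sign is governed by the unique real points $\alpha_i^*$, $\beta_i^*$ (via \Cref{lem:uniquereal}), with all exponents odd. Organizing this through multiplicativity of the norm and the tower formula, rather than expanding over $\Hom_F(F',\overline{F})$ at once as the paper does, is fine. However, there are two gaps relative to the statement as written.

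First, you assume $F(j_0)=F$. The sentence ``Let $j_0\in F$ be the coordinate\dots'' in \S6 is a slip: the lemma itself, its application in \cref{sec:main4} (where one needs an odd number of embeddings $\sigma$ with a sign condition), and the hypothesis of \Cref{thm:main} that the field of moduli of $A$ lies in an odd-degree extension of $F(j_\infty)$ all allow $F(j_0)$ to be a nontrivial odd-degree extension of $F$; that is precisely why the conclusion is a product over all real $\sigma: F(j_0)\hookrightarrow\bR$ with $\sigma|_F=\rho_i$. Your identities $\Nm_{F'/F}(P_\lambda(j_0)P_{4\lambda}(j_0)) = (P_\lambda(j_0)P_{4\lambda}(j_0))^{[F':F]}$ and your closing remark that $\sigma=\rho_i$ is the only such embedding are valid only in that special case. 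The repair uses exactly your own tools: write $\Nm_{F'/F}(x)=\Nm_{F(j_0)/F}(x)^{[F':F(j_0)]}$ for this factor, expand $\tau_i\circ\Nm_{F(j_0)/F}$ over $\Hom_F(F(j_0),\overline{F})$, pair the non-real embeddings, and note that the number of real embeddings of $F(j_0)$ above $\rho_i$ (like all the exponents) is odd — but as written your proof covers only the case $F'=F(j_\infty)$.

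Second, the assertion that exactly one embedding $\sigma_{\infty,i}:F(j_\infty)\hookrightarrow\bR$ extends $\rho_i$ is load-bearing (otherwise the formula would involve a product of several $j_\infty$-factors, which could change the sign), and oddness of $[F(j_\infty):F]$ yields only an odd number of real embeddings above $\rho_i$, hence existence, not uniqueness. The paper deduces uniqueness from the fact that the conjugate CM cycle containing the order-$2$ elliptic points (optimal embeddings of $\cO_{F(\sqrt{-1})}$, resp.\ $\cO_F[\sqrt{-1}]$) has a unique real point, i.e.\ from \Cref{lem:uniquereal} applied to that cycle: distinct real embeddings would produce distinct real points on that cycle. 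Your parenthetical gestures at the elliptic-point property but never makes this argument; you should invoke \Cref{lem:uniquereal} here as you already do for $\alpha_i^*$ and $\beta_i^*$.
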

\begin{proof}
When $\sigma\in \Gal(\overline{F}/F)$ with $\tau_i\sigma(F')\subset \bR$, we have 
\begin{align*}
   \prod_{\substack{l=1\\\tau_i(\alpha_l)\neq \alpha_i^*}}^{h_1}\left(\tau_i\sigma (j_\infty) - \tau_i(\alpha_l)\right)>0, \quad & \quad \prod_{\substack{l=1\\\tau_i(\beta_l)\neq \beta_i^*}}^{h_2}\left(\tau_i\sigma (j_\infty) - \tau_i(\beta_l)\right)>0,\\
   \prod_{\substack{l=1\\\tau_i(\alpha_l)\neq \alpha_i^*}}^{h_1}\left(\tau_i\sigma (j_0) - \tau_i(\alpha_l)\right)>0, \quad & \quad \prod_{\substack{l=1\\\tau_i(\beta_l)\neq \beta_i^*}}^{h_2}\left(\tau_i\sigma (j_0) - \tau_i(\beta_l)\right)>0,
\end{align*}
then 
\begin{align*}
    \sgn(\tau_i\sigma(P_{\lambda}(j_\infty)P_{4\lambda}(j_\infty))) = &\sgn \left((\tau_i\sigma(j_\infty) - \alpha_{i}^*)(\tau_i\sigma(j_\infty)- \beta_i^*)\right), \\
     \sgn(\tau_i\sigma(P_{\lambda}(j_0)P_{4\lambda}(j_0))) = &\sgn \left((\tau_i\sigma(j_0) - \alpha_{i}^*)(\tau_i\sigma(j_0)- \beta_i^*)\right).
\end{align*}
Since $F$ is totally real, we have $c\circ \tau_i\circ \sigma_k|_F = \tau_i|_F$, and $c\circ \tau_i\circ \sigma_k|_{F'} = \tau_i\circ \sigma_{k'}|_{F'}$ for a unique $k'\in \sigma_1, \dotsc, \sigma_{[F':F]} \in \Gal(\overline{F}/F)$, where $k' = k$ if and only if $\tau_i(\sigma_k(F'))\subset\bR$, then  
\begin{align*}
    &\sgn(\tau_i(\Nm_{F'/F}(P_{\lambda}(j_0)P_{4\lambda}(j_0)P_{\lambda}(j_\infty)P_{4\lambda}(j_\infty)))) \\=&\sgn\left(\tau_i \prod_{k=1}^{[F':F]} (\sigma_k (P_{\lambda}(j_0)P_{4\lambda}(j_0)P_{\lambda}(j_\infty)P_{4\lambda}(j_\infty)))\right) \\
    =&\sgn\left(\tau_i \prod^{[F':F]}_{\substack{k=1\\\tau_i\sigma_k(F')\subset \bR}} (\sigma_k (P_{\lambda}(j_0)P_{4\lambda}(j_0)P_{\lambda}(j_\infty)P_{4\lambda}(j_\infty))\right)\\
   = &\sgn\left(\prod^{[F':F]}_{\substack{k=1\\\tau_i\sigma_k(F')\subset \bR}}\left((\tau_i\sigma_k(j_0) - \alpha_{i}^*)(\tau_i\sigma_k(j_0)- \beta_i^*)(\tau_i\sigma_k(j_\infty) - \alpha_{i}^*)(\tau_i\sigma_k(j_\infty)- \beta_i^*)  \right)\right)\\
   =&\sgn\left(\prod_{\substack{\sigma: F(j_0)\xhookrightarrow{}\bR \\ \sigma|_{F} = \rho_i}}((\sigma(j_0) - \alpha_{i}^*)(\sigma(j_0)- \beta_i^*)) \prod_{\substack{\sigma: F(j_\infty)\xhookrightarrow{}\bR \\ \sigma|_{F} = \rho_i}}((\sigma(j_\infty) - \alpha_{i}^*)(\sigma(j_\infty)- \beta_i^*))\right),
\end{align*} where the last equality follows from $[F':F]$ being odd. 
Since the CM cycle corresponding to optimal embedding $\cO_{F(\sqrt{-1})}$ has a unique real point, there is a unique $\sigma_{\infty,i}: F(j_{\infty}) \xhookrightarrow{} \bR$ with $\sigma_{\infty,i}|_F = \rho_i$.
\end{proof}

\subsection{} \label{sec:main4}
By \Cref{prop:equidistribution_odd}, there exists a totally positive prime $\lambda \notin S$ of $F$ such that \begin{enumerate}
    \item $-\lambda$ is a square modulo $8$ and all primes in $S$;
    \item for each $i=1, \dotsc, [F:\bQ]$, $(\sigma(j_0) - \alpha_{i}^*)(\sigma(j_0)- \beta_i^*) (\sigma_{\infty,i}(j_\infty) - \alpha_{i}^*)(\sigma_{\infty,i}(j_\infty)- \beta_i^*)<0 $ for an odd number of $\sigma:F(j_0)\xhookrightarrow{}\bR$ with $\sigma|_F = \rho_i$.
\end{enumerate}
The first condition implies that 
\begin{equation} \label{eq:conditionS}
    \left(\frac{q}{-\lambda}\right) = \left(\frac{-\lambda}{q}\right)=1 \text{ for any totally positive odd prime } q\in \cO_F \text{ with } (q) \in S
\end{equation} by \Cref{thm:quadratic_reciprocity}. The second condition implies that $\Nm_{F'/F}(P_{\lambda}(j_0)P_{4\lambda}(j_0)P_{\lambda}(j_\infty)P_{4\lambda}(j_{\infty}))$ is totally negative. Let \[N:= \lambda^{-\sum_{i=1}^{[F':F]}\#A_i}\Nm_{F'/F}(d_{\infty}^{h_1+h_2} b_{\lambda} P_{\lambda}(j_{\infty}) b_{4\lambda} P_{4\lambda}(j_{\infty})d_{0}^{h_1+h_2} b_{\lambda} P_{\lambda}(j_{0}) b_{4\lambda} P_{4\lambda}(j_0))\in \cO_F.\] Then $N$ is a square in  $\cO_{H_{\mathfrak{l}}}/\mathfrak{l}\cO_{H_{\mathfrak{l}}} = \cO_F/\lambda \cO_F$ and $N$ is totally negative. If $\lambda | N$, then \[v_{\mathfrak{l}}\left(\left(\prod_{\substack{k=1 \\ v_{\mathfrak{l}(\alpha_k)\geq 0}}}^{h_1} \prod_{i=1}^{[F':F]} (\sigma_i(j_0)-\alpha_k)\right)\left(\prod_{\substack{k=1 \\ v_{\mathfrak{l}(\beta_k)\geq 0}}}^{h_2} \prod_{i=1}^{[F':F]} (\sigma_i(j_0)-\beta_k)\right)\right)>0, \] which implies $j_0$ is a root of $b_{\lambda}b_{4\lambda} P_{\lambda}(x)P_{4\lambda}(x)$ modulo some prime above $\lambda$, where $\lambda$ is ramified in $F(\sqrt{-\lambda})$. Assume $\lambda \nmid N$ and write $(N) = \mathfrak{m}\mathfrak{n}$, where  $\mathfrak{m}$ is an integral ideal without odd prime factors and $\mathfrak{n}$ is an odd integral ideal. 
By \Cref{thm:quadratic_reciprocity}, we have \[ \left(\frac{-\lambda}{\mathfrak{n}}\right) = (-1)^{[F:\bQ]} \left(\frac{N}{-\lambda}\right) = -1 \cdot 1 = -1,\]
and therefore, there is an odd prime $\fp| \mathfrak{n}$ with $v_\fp(N)$ odd such that $\left(\frac{-\lambda}{\fp}\right) = -1$. In particular, $\fp \notin S$ by \eqref{eq:conditionS}, so that $\fp$ is a good reduction prime and $\fp$ is unramified  in $F'$. 

If $v_\fp(\Nm_{F'/F}(b_{\lambda}P_{\lambda}(j_\infty)b_{4\lambda}P_{4\lambda}(j_\infty))> 0$, then $v_\fp(\Nm_{F'/F}(b_{\lambda}P_{\lambda}(j_\infty)b_{4\lambda}P_{4\lambda}(j_\infty)))$ is even by \Cref{lem:intersection_auto}.\footnote{This treatment follows the idea of \cite{LMPT}*{\S9.5}.} 
Therefore, $j_0$ is a root of $b_{\lambda}P_{\lambda}(x) b_{4\lambda}P_{4\lambda}(x)$ modulo some prime above $\fp$.

\section{Examples}

In this section, we provide sufficient conditions for verifying some of the assumptions in \Cref{thm:main}. In particular, all the assumptions in \Cref{thm:main} hold for the cases in \Cref{thm:Mumfordfourfold}, and hence we prove \Cref{thm:Mumfordfourfold}.

\begin{lemma}
    Let $C$ be a smooth projective geometrically connected curve over a field $k$. If $C$ has genus $0$ and there exists a divisor $D$ of degree $1$, then $C\simeq \bP^1_k$. In particular, if there are divisors $D_1$ and $D_2$ such that $\deg D_1$ and $\deg D_2$ are relatively prime, then $C\simeq \mathbb{P}^1_k$. 
\end{lemma}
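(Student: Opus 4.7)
The plan is to produce a $k$-rational point on $C$ via Riemann--Roch and then use the complete linear system of that point to exhibit $C$ as $\bP^1_k$.

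First, I would apply Riemann--Roch to $D$. Since $C$ has genus $0$, the canonical divisor $K_C$ has degree $-2$, so $K_C - D$ has degree $-3$. Because $C$ is smooth and geometrically connected over $k$, it is geometrically integral (smoothness forces geometric reducedness), hence every line bundle of negative degree has no global sections; in particular $h^0(\cO_C(K_C - D)) = 0$. Riemann--Roch then yields $h^0(\cO_C(D)) = \deg D + 1 - g = 2$. Any nonzero global section has a divisor of zeros that is an effective divisor of degree $1$, i.e., a closed point of residue degree $1$ over $k$, which is precisely a $k$-rational point $P \in C(k)$.

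Next, I would analyze the complete linear system $|P|$. The same Riemann--Roch argument gives $h^0(\cO_C(P)) = 2$. For every closed point $Q$ of $C$, the bundle $\cO_C(P - Q)$ has degree $\leq 0$: if $Q$ is a $k$-point then the degree is $0$ and $h^0(\cO_C(P-Q)) \leq 1$ (with equality only when $P \sim Q$); if $\deg Q \geq 2$ the degree is strictly negative so $h^0 = 0$. Either way $h^0(\cO_C(P - Q)) < h^0(\cO_C(P))$, so $|P|$ is base-point-free. It therefore defines a morphism $\phi \colon C \to \bP^1_k$ with $\phi^* \cO_{\bP^1}(1) \simeq \cO_C(P)$, so $\deg \phi = \deg P = 1$. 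A degree-one morphism between smooth projective curves over $k$ is an isomorphism, so $C \simeq \bP^1_k$.

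For the ``in particular'' clause, Bezout's identity gives integers $a, b \in \bZ$ with $a \deg D_1 + b \deg D_2 = 1$, so the divisor $aD_1 + bD_2$ has degree $1$, reducing to the first part.

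I do not anticipate any substantive obstacle; the argument is essentially standard. The only point requiring care is the vanishing $h^0(L) = 0$ for line bundles $L$ of negative degree over an arbitrary base field $k$, which follows from geometric integrality (any nonzero section would produce an effective divisor of negative degree, which is impossible).
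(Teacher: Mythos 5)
Your proof is correct and follows essentially the same route as the paper: Riemann--Roch applied to the degree-$1$ divisor in genus $0$ gives $h^0(\cO_C(D))=2$, and the associated complete linear system yields the isomorphism $C\simeq\bP^1_k$ (with the Bezout step handling the coprime-degrees clause, which the paper leaves implicit). The only difference is that you verify base-point-freeness and the degree-$1$ morphism by hand, whereas the paper simply cites the very-ampleness criterion from Liu (7.3.24); both are standard and your extra verification is sound.
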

\begin{proof}
    By Riemann-Roch, we have \[\dim_k H^0(C, \cO_C(D)) \geq \deg D + 1 - g(C) = 2, \] then $\cO_C(D)$ is very ample and induces an isomorphism $C\simeq \bP_k^1$ (\cite{MR1917232}*{7.3.24}). %Qing Liu, section 7.3
\end{proof}

By \Cref{lem:optemb} and \Cref{lem:realCMpt}, a CM cycle defined by $\phi: L\hookrightarrow B$ defines a divisor of degree $h(\phi^{-1}(\cO))$. %Qing Liu, section 7.3.7(a)
If there exist  CM extensions $L_1, L_2$ of $F$ such that $h(L_1)$ and $h(L_2)$ are relatively prime, then the canonical model of the Shimura curve $\cO^1\backslash\cH$ is isomorphic to $\bP^1_F$.

\begin{lemma}
    Let $F$ be a totally real number field with narrow class number $1$, and suppose there is a unique prime $\fp_2$ of $F$ lying above $2$. Suppose $F(\sqrt{u})$ has odd class number for all unit $u\in \cO_F^\times$. Let $M$ denote the compositum of $F(\sqrt{u})$ as  $u$ ranges over $\cO_F^\times $. Then every intermediate field $K$ with $F \subseteq K \subseteq M$ has class number $\prod_{F(\sqrt{u}) \subseteq M} h(F(\sqrt{u}))$. 
\end{lemma}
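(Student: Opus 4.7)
The strategy is to set up a multiquadratic tower inside $M/F$ and iteratively apply genus theory. By Dirichlet's unit theorem I would fix fundamental units $u_1,\dots,u_{r-1}$ (where $r=[F:\bQ]$) with $\cO_F^\times = \langle -1\rangle \times \prod_i \langle u_i\rangle$, so that $M = F(\sqrt{-1},\sqrt{u_1},\dots,\sqrt{u_{r-1}})$ is a multiquadratic extension of degree a power of $2$. An arbitrary intermediate field $K/F$ corresponds to a subgroup of $\cO_F^\times/\cO_F^{\times 2}$, and I would induct on $m=\dim_{\bF_2}\Gal(K/F)$ by building a tower $F=K_0\subsetneq K_1\subsetneq\cdots \subsetneq K_m=K$ with each step $K_i = K_{i-1}(\sqrt{v_i})$ obtained by adjoining the square root of a unit $v_i\in\cO_F^\times$.

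The inductive step is the ambiguous class number formula applied to $K_i/K_{i-1}$. I would prove by induction on $i$ that there is a unique prime of $K_{i-1}$ above $2$ and that exactly one prime of $K_{i-1}$ (either this prime above $2$, or some real place) ramifies in $K_i$. The base case uses the narrow class number $1$ hypothesis on $F$ to guarantee that $\fp_2$ ramifies in $F(\sqrt{-1})$; the hypothesis that $h(F(\sqrt{u}))$ is odd for every $u\in\cO_F^\times$ combined with \cite{MR963648}*{9.2} forces $\fp_2$ to be the unique finite prime of $F$ that ramifies in each $F(\sqrt{u})/F$; and the uniqueness of the prime above $2$ propagates along the tower since each previous step had a single ramified finite prime which becomes totally ramified. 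Under these hypotheses, the ambiguous class number formula for $K_i/K_{i-1}$ controls the Galois-invariant part of $\Cl(K_i)$ in terms of $h(K_{i-1})$, one ramification factor, and the Hasse unit index; combining with the odd-class-number hypothesis yields the clean multiplicative identity relating $h(K_i)$, $h(K_{i-1})$ and $h(F(\sqrt{v_i}))$, which telescopes to the desired product $\prod_{F(\sqrt{u})\subseteq K} h(F(\sqrt{u}))$.

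The main obstacle is controlling the Hasse unit index $[E_{K_{i-1}}:E_{K_{i-1}}\cap N_{K_i/K_{i-1}}(K_i^\times)]$ and extracting $h(K_i)$ itself from the Galois-invariant class group. Under the odd-class-number hypothesis the $2$-rank of $\Cl(K_i)^{\Gal(K_i/K_{i-1})}$ must vanish, forcing the index to trivialize and $h(K_i) = |\Cl(K_i)^{\Gal}|$ up to the odd part; the narrow class number $1$ hypothesis, giving $F$ units of independent signs, is used to handle the sign-unit contributions. Making this rigorous will likely require invoking a Kuroda-type class number formula for multiquadratic extensions specialized to the case where the base has class number $1$ and every quadratic subfield has odd class number, ensuring all the $2$-primary correction factors trivialize.
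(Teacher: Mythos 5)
Your proposal has the right ingredients in the background (genus theory in a quadratic tower, a Kuroda-type formula, the role of the narrow class number and of the unique prime above $2$), but the inductive scheme you build it around has two genuine gaps.

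First, the inductive claim that ``there is a unique prime of $K_{i-1}$ above $2$ and exactly one prime of $K_{i-1}$ ramifies in $K_i$'' is false for a tower chosen inside an arbitrary intermediate field $K$. The step $K_i=K_{i-1}(\sqrt{v_i})$ with $v_i\in\cO_F^\times$ of mixed signs ramifies at \emph{every} real place of $K_{i-1}$ lying above a real place of $F$ where $v_i<0$, and there are typically several such places once $[K_{i-1}:F]>1$; in addition the prime above $2$ may or may not ramify, and when it does not it can split, so uniqueness of the prime above $2$ does not propagate either. The only way to guarantee a single ramified place at each step is to pass through a totally imaginary layer first, i.e.\ to start the tower with $F(\sqrt{-1})$ (whose ramification at $\fp_2$ is forced by narrow class number $1$) — but a general intermediate $K$ need not contain $F(\sqrt{-1})$, so no tower inside $K$ has this property. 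This is exactly why the paper does not argue inside $K$: it first proves $2\nmid h(M)$ using the tower $L_0=F(\sqrt{-1})$, $L_i=L_{i-1}(\sqrt{u_i})$ (after the first layer all places are complex, the unique prime above $2$ is the only candidate for ramification, and it must ramify because $h(L_{i-1})$ is odd), and then transfers oddness down to $K$ via $h(K)\mid h(KL_0)\mid\cdots\mid h(M)$, each step being a ramified (or trivial) quadratic extension so that the norm map on class groups is surjective.

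Second, the ``clean multiplicative identity relating $h(K_i)$, $h(K_{i-1})$ and $h(F(\sqrt{v_i}))$'' is not something the ambiguous class number formula can deliver: that formula computes $|\Cl(K_i)^{\Gal(K_i/K_{i-1})}|$ in terms of $h(K_{i-1})$, the ramified places and the unit-norm index, and $h(F(\sqrt{v_i}))$ never enters. Moreover your way of trivializing the Hasse unit index — ``under the odd-class-number hypothesis the $2$-rank of $\Cl(K_i)^{\Gal}$ must vanish'' — is circular, since the hypothesis concerns only the quadratic subfields $F(\sqrt{u})$, while the oddness of $h(K_i)$ is precisely what has to be proved. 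The multiplicative statement is genuinely the content of the Kuroda-type class number formula for the multiquadratic extension $K/F$ (with $h(F)=1$), which you defer to a closing remark; in the paper it is the first step, giving $h(K)=2^{\mu(K)}\prod h(F(\sqrt{u}))$, after which the whole problem reduces to showing $2\nmid h(K)$ by the two-stage argument described above. To repair your proof you would essentially have to reorganize it into that shape.
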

\begin{proof}
    Let $r = [F:\bQ]$ and $u_1, \dotsc, u_{r-1}$ be fundamental units of $F$, then $M = F(\sqrt{-1}, \sqrt{u_1}, \dotsc, \sqrt{u_{r-1}})$ is a multiquadratic extension of $F$ with $[M:F] = 2^r$, and any quadratic subextension of $M/F$ is of the form $F(\sqrt{u})$ for some $u\in \cO_F^\times \backslash \cO_F^{\times 2}$. 
    Any intermediate field $K$ with $F\subsetneq K \subseteq M$ is a multiquadratic extension of $F$.
    As a special case of the example in \cite{MR3069610}*{pp. 329-330}, we have $h(K) = 2^{\mu(K)}\prod_{F(\sqrt{u}) \subseteq M} h(F(\sqrt{u}))$ for some nonnegative integer $\mu(K)$. 
    
    Let $L_0 := F(\sqrt{-1})$, and $L_i:= L_{i-1}(\sqrt{u_i})$.
    Inductively, for each $i=1, \dotsc, r-1$, under the assumption that $h(L_{i-1})$ is odd, there is exactly one prime (both finite and infinite), namely the unique prime lying above $2$, of $L_{i-1}$ ramifying in the quadratic extension $L_i$, then $2\nmid h(L_{i})$ by \cite{MR963648}*{9.2}. In particular, we have $2\nmid h(M)$ and $\fp_2$ is totally ramified in $M$. 

    Let $K_0 = K$ and $K_i = KL_{i-1}$ for each $i = 1, \dotsc, r$, then $K_i = K_{i-1}$ or $K_{i}/K_{i-1}$ is a ramified quadratic extension, which implies $h(K_{i-1})$ divides $h(K_i)$ by \cite{MR963648}*{5.4}. Therefore, we have $h(K)$ odd. 
\end{proof}

When the totally real field $F$ has narrow class number $1$ and there is a unique prime of $F$ lying above $2$, given units $\epsilon_1, \dotsc, \epsilon_n$ that are negative at exactly one of the distinct real embeddings $F\xhookrightarrow{} \bR$, if we can verify that $F\left(\sqrt{(-\epsilon_1)^{\delta_1} \cdots (-\epsilon_n)^{\delta_n}}\right)$ has class number $1$ for all $\delta_1, \dotsc, \delta_n \in \{0,1\}$, then $F(\sqrt{-\epsilon_1}, \dotsc, \sqrt{-\epsilon_n})$ has class number $1$.

\begin{example} \label{examples}
    The following examples from \cite{MR2476577}*{\S4} satisfy all the assumptions. 
    \begin{itemize}
        \item $[F:\bQ] = 3$, $\disc(F) = 49, 81, 169, 321, 361, \allowbreak 473, 785, 993$
        \item $[F:\bQ] = 5$, $\disc(F) = 14641, 24217, 65657, 70601, 124817, 149169, 157457, 160801, 161121, \allowbreak 173513, \allowbreak 176281, 202817, 240881$
        %101833, 161121 did not check P^1 over F, other conditions satisfied
        \item $[F:\bQ] = 7$, $\disc(F) = 20134393, 25367689, 28118369, 31056073, 32567681, 35269513$
        %25164057 did not check P^1 over F, other conditions satisfied
    \end{itemize}
    We used Magma \cite{MR1484478} to perform the computations.
\end{example}

\bibliographystyle{amsalpha}
\bibliography{references}

\end{document}